\definecolor{verydarkblue}{rgb}{0,0,0.4}
\def\@commafont{\check@mathfonts
    \fontsize\sf@size\z@\selectfont}
\DeclareRobustCommand{\cb}[1]{{%
\setbox\z@\hbox{#1}%
\ifdim\dp\z@<.1\ht\z@\ooalign{\unhbox\z@\crcr\hidewidth\lower.3ex\hbox{\@commafont,}\hidewidth}%
\else\ooalign{\unhbox\z@\crcr\hidewidth\raise.5ex\hbox{\@commafont`}\hidewidth}%
\fi}}%
\newcommand{\boldpoint}[1]{\smallskip\par\noindent\textbf{#1}}
\newcommand{\bdry}{\partial}
\newcommand{\transverse}{\pitchfork}
\newcommand{\into}{\hookrightarrow}
\newcommand{\id}{\mathrm{Id}}
\newcommand{\Z}{\mathbb{Z}}
\newcommand{\R}{\mathbb{R}}
\newcommand{\C}{\mathbb{C}}
\newcommand{\N}{\mathbb{N}}
\DeclareMathOperator{\Sym}{Sym}
\DeclareMathOperator{\Vect}{Vect}
\DeclareMathOperator{\Hess}{Hess}
\DeclareMathOperator{\Ad}{Ad}
\DeclareMathOperator{\im}{Im}
\DeclareMathOperator{\re}{Re}
\renewcommand{\Re}{\re}
\renewcommand{\Im}{\im}
\newcommand{\CP}{\mathbb{CP}}
\renewcommand{\H}{\mathbb{H}}
\renewcommand{\P}{\mathbb{P}}
\newcommand{\eps}{\varepsilon}
\newcommand{\gl}{\mathfrak{gl}}
\newcommand{\CM}{\mathcal{CM}}
\newcommand{\RP}{\mathbb{RP}}
\newcommand{\h}{\mathfrak{h}}
\newcommand{\D}{\mathcal{D}}
\newcommand{\suchthat}{\:|\:}
\newcommand{\bla}{Blaschke }
\newcommand{\titeica}{\cb{T}i\cb{t}eica }
\newcommand{\tec}{Teichm\"uller }
\newcommand{\sT}{\mathcal{T}}
\newcommand{\rot}{\varrho}
\newcommand{\bdlcubic}{\mathcal{C}}
\newcommand{\Cubic}{\mathcal{C}}
\newcommand{\NormCubic}{\mathcal{TC}}
\newcommand{\ModCubic}{\mathcal{MC}}
\newcommand{\Poly}{\mathcal{P}}
\newcommand{\NormPoly}{\mathcal{TP}}
\newcommand{\ModPoly}{\mathcal{MP}}
\newcommand{\cone}{\mathcal{K}}
\newcommand{\boldalpha}{\boldsymbol\alpha}
\newcommand{\unity}{\boldsymbol\mu}
\renewcommand{\leq}{\leqslant}
\renewcommand{\geq}{\geqslant}
\newcommand{\noproof}{\hfill\qedsymbol}
\DeclareMathOperator{\GL}{\mathrm{GL}}
\DeclareMathOperator{\SL}{\mathrm{SL}}
\DeclareMathOperator{\SO}{\mathrm{SO}}
\DeclareMathOperator{\Aut}{Aut}
\renewcommand{\hat}{\widehat}
\newcommand{\param}{{\mathchoice{\mkern1mu\mbox{\raise2.2pt\hbox{$\centerdot$}}\mkern1mu}{\mkern1mu\mbox{\raise2.2pt\hbox{$\centerdot$}}\mkern1mu}{\mkern1.5mu\centerdot\mkern1.5mu}{\mkern1.5mu\centerdot\mkern1.5mu}}}
\theoremstyle{plain}
\newtheorem{thm}{Theorem}[section]
\newtheorem{cor}[thm]{Corollary}
\newtheorem{conj}[thm]{Conjecture}
\newtheorem{lem}[thm]{Lemma}
\newtheorem{prop}[thm]{Proposition}
\newtheorem{bigthm}{Theorem}
\theoremstyle{definition}
\newtheorem*{question}{Question}
\theoremstyle{definition}
\newtheorem*{remark}{Remark}
\newtheorem*{remarksenv}{Remarks}
\newenvironment{rmenumerate}{\begin{enumerate}}{\end{enumerate}}
\newenvironment{prooflist}{\begin{proof}\mbox{}\begin{list}{(\roman{enumi})}{\usecounter{enumi}\setlength{\labelwidth}{30pt}\setlength{\itemindent}{0pt}\setlength{\leftmargin}{20pt}\setlength{\itemsep}{5pt}}}{\end{list}\end{proof}}
\begin{document}

\title[Cubic differentials and convex polygons]{Polynomial cubic differentials and\\ convex polygons in the
  projective plane}

\author[D. Dumas]{David Dumas}
\author[M. Wolf]{Michael Wolf}

\date{September 24, 2015.  (v2: January 4, 2015.  v1: July 30, 2014.)}

\maketitle

\begin{abstract}
We construct and study a natural homeomorphism between the moduli
space of polynomial cubic differentials of degree $d$ on the complex
plane and the space of projective equivalence classes of oriented
convex polygons with $d+3$ vertices.  This map arises from the
construction of a complete hyperbolic affine sphere with prescribed
Pick differential, and can be seen as an analogue of the
Labourie-Loftin parameterization of convex $\RP^2$ structures on a
compact surface by the bundle of holomorphic cubic differentials over
\tec space.
\end{abstract}

\section{Introduction}

\subsection*{Motivating problem}
Labourie \cite{labourie:flat-projective} and Loftin
\cite{loftin:parameterization} have independently shown that the
moduli space of convex $\RP^2$ structures on a compact surface $S$ of
genus $g \geq 2$ can be identified with the vector bundle
$\bdlcubic(S)$ of holomorphic cubic differentials over the \tec space
$\sT(S)$.

By definition, a convex $\RP^2$ structure on $S$ is the quotient of a
properly convex open set in $\RP^2$ by a free and cocompact action of
a group of projective transformations.  Identifying this convex domain
with the projectivization of a convex cone in $\R^3$, one can consider
smooth convex surfaces in $\R^3$ that are asymptotic to the boundary
of the cone.  A fundamental theorem of Cheng and Yau \cite{cheng-yau} (proving a
conjecture of Calabi \cite{calabi1972}) shows that there is a unique such surface which
is a \emph{complete hyperbolic affine sphere}.  Classical
affine-differential constructions equip this affine sphere with a
$\pi_1S$-invariant Riemann surface structure and holomorphic cubic
differential (the \emph{Pick differential}), and the respective
$\pi_1S$-quotients of these give a point in $\bdlcubic(S)$.  The
surjectivity of this map from $\RP^2$ structures to $\bdlcubic(S)$ is
established using a method of C.~P.~Wang, wherein the reconstruction
of an affine sphere from the cubic differential data is reduced to the
solution of a quasilinear PDE.

Since Wang's technique and the Cheng-Yau theorem apply to any properly
convex domain, it would broadly generalize the Labourie-Loftin
parameterization if one could characterize those pairs of
simply-connected Riemann surfaces and holomorphic cubic differentials
that arise from properly convex open sets in $\RP^2$.  That is the basic
motivating question we consider in this paper.

As stated, this question is probably too broad to have a
satisfactorily complete answer.  However, the question can be
specialized in many ways by asking to characterize the cubic
differentials corresponding to a special class of convex sets, or the
convex sets arising from a special class of cubic differentials.  Of
course the Labourie-Loftin parameterization can be described this way,
where one requires the convex domain to carry a cocompact group action
of a given topological type.  Benoist and Hulin have also considered
aspects of this question, first for convex domains covering a
noncompact surface of finite area \cite{benoist-hulin:finite-volume},
and later showing that domains with Gromov-hyperbolic Hilbert metrics
correspond to the Banach space of $L^\infty$ cubic differentials on
the hyperbolic plane \cite{benoist-hulin:hyperbolic}.

\subsection*{Main theorem} We consider another specialization of the
motivating question that is essentially orthogonal to those of
Labourie-Loftin and Benoist-Hulin, namely, identifying Riemann surfaces and
holomorphic cubic differentials that correspond to \emph{convex
  polygons} in $\RP^2$.  Our main result is that the associated affine
spheres give parabolic Riemann surfaces (biholomorphic to $\C$) and
cubic differentials that are complex polynomials, with the degree
of the polynomial determined by the number of vertices of the polygon.

For example, the affine sphere over the regular pentagon corresponds
to the complex plane with the cubic differential $z^2 dz^3$.  The
$5$-fold rotational symmetry of the pentagon corresponds to the
invariance of $z^2 dz^3$ under the automorphism $z \mapsto
e^{2 \pi i/5} z$.

In fact, as in the compact surface case, the affine sphere
construction gives a homeomorphic identification between two moduli
spaces.  Consider the space of cubic differentials $p(z) dz^3$ on the
complex plane where $p(z)$ is a polynomial of degree $d$, and let
$\ModCubic_d$ denote the quotient of this set by the action of the
holomorphic automorphism group $\Aut(\C) = \{ z \mapsto az + b \}$.
Let $\ModPoly_{n}$ denote the space of projective equivalence classes
of convex polygons in $\RP^2$ with $n$ vertices.  We show:

\begin{bigthm}
\label{introthm:main}
The affine sphere construction determines a homeomorphism
$$\boldalpha: \ModCubic_d \to \ModPoly_{d+3}.$$
That is, each polynomial cubic differential $C$ is the Pick differential
of a complete hyperbolic affine sphere $S \subset \R^3$, uniquely
determined up to the action of $\SL_3\R$ and asymptotic to the cone
over a convex polygon $P$.  The map $\boldalpha$ is defined by
$$\boldalpha([C]) = [P],$$
where $[C]$ denotes the $\Aut(\C)$-equivalence class of $C$ and $[P]$ the
$\SL_3\R$-equivalence class of $P$.
\end{bigthm}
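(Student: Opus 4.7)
The plan is to construct $\boldalpha$ via the Wang/Cheng--Yau machinery applied to polynomial data on $\C$, identify the $(d+3)$-gon by an asymptotic analysis of the resulting affine sphere, and then promote the resulting bijection to a homeomorphism by dimension-counting and properness.

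Given $C = p(z)\,dz^3$ on $\C$ with $\deg p = d$, Wang's method reduces the construction of an affine sphere with Pick differential $C$ to a semilinear PDE for the log-conformal factor of the Blaschke metric relative to $|dz|^2$. I would first establish the existence of a complete solution on $\C$; the classical affine-differential data then produce a complete hyperbolic affine sphere $S \subset \R^3$ which, by Cheng--Yau, is asymptotic to the cone over a properly convex open $\Omega \subset \RP^2$. The affine sphere construction is equivariant under $\Aut(\C)$ on the source and the resulting ambient $\SL_3\R$ action on cones, so the class $[\Omega]$ depends only on $[C]$, defining $\boldalpha$.

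The main obstacle is showing that $\bdry \Omega$ is a polygon with exactly $d+3$ vertices. The central tool is the canonical coordinate $d\zeta = p^{1/3}\,dz$, in which $C = d\zeta^3$ and the flat singular metric $|p|^{2/3}|dz|^2$ becomes Euclidean away from the zeros of $p$. Large circles in $z$ have $\zeta$-length of order $r^{(d+3)/3}$, so this flat metric has a cone point of total angle $2\pi(d+3)/3$ at $\infty$. The cone dissects along the separatrix rays $\arg p^{1/3} \in \tfrac{\pi}{3}\Z$ into $d+3$ half-planes of flat-angle $2\pi/3$; each half-plane is asymptotically modeled by $(\C, d\zeta^3)$, whose affine sphere sits over a single triangle. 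Comparison with this model should show that within each sector the developing map of $S$ converges at infinity to a single point of $\RP^2$, that consecutive sectors yield distinct limits, and that $\bdry\Omega$ is the convex polygon with these $d+3$ points as vertices, cyclically connected by line segments.

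To upgrade $\boldalpha$ to a homeomorphism I would use that both $\ModCubic_d$ and $\ModPoly_{d+3}$ are real manifolds of dimension $2d-2$. Continuity follows from continuous dependence of both the PDE solution and the sectorial asymptotic analysis on the polynomial coefficients. Injectivity is immediate: the polygon $\Omega$ determines its cone, hence by the Cheng--Yau uniqueness determines $S$, and hence its Pick differential $C$ up to the source $\Aut(\C)$ and ambient $\SL_3\R$. Properness is obtained by showing that degenerations of $[C_n]$ (after normalization against the $\Aut(\C)$-action, leading coefficients blowing up or zeros coalescing) force associated polygons to degenerate via vertex collisions or loss of strict convexity. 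Invariance of domain then makes $\boldalpha$ an open embedding between equidimensional manifolds, and properness promotes it to a homeomorphism onto $\ModPoly_{d+3}$.
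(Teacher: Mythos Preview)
Your overall strategy matches the paper's: solve Wang's equation for the \bla metric, integrate to get a complete affine sphere, and analyze its asymptotics via the natural coordinate to find a $(d+3)$-gon. Your route to the homeomorphism via invariance of domain (continuity, injectivity, properness, equal dimensions) is a genuine alternative to the paper's explicit construction of $\boldalpha^{-1}$; the paper discusses exactly this option in Section~\ref{sec:conjectures} under ``Continuity method'' and notes it would give a slightly shorter proof. One caution: $\ModCubic_d$ and $\ModPoly_{d+3}$ are orbifolds, so invariance of domain must be applied on the manifold covers $\NormCubic_d$ and $\NormPoly_{d+3}$ after checking $\Z/(d+3)$-equivariance.

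The step you label ``comparison with this model should show\ldots'' is where the real work lies, and your sketch understates it. The paper's mechanism (Section~\ref{sec:polynomials-to-polygons}) is a competition between the decay of the \bla error relative to the flat model, which is $O(e^{-2\sqrt{3}r}/\sqrt{r})$, and the growth of the \titeica frame, which is $O(e^{c(\theta)r})$ with $c(\theta)\le 2\sqrt{3}$. In stable directions the error wins and an osculation map converges to a fixed \titeica surface; but there are $2(d+3)$ unstable directions where $c(\theta)=2\sqrt{3}$, and crossing one shifts the limiting \titeica surface by a specific unipotent element of $\SL_3\R$ (the $1/\sqrt{r}$ factor is sharp here, controlling a Gaussian integral). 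These unipotent transitions are what force the $d+3$ limit vertices to be joined by straight segments; without them you would know only that $\partial\Omega$ contains $d+3$ accumulation points, not that it is polygonal. Your properness sketch is also thin: the paper's argument (Theorem~\ref{thm:main-in-text}) requires a uniform bound on the location of Pick zeros over nearby polygons (Lemma~\ref{lem:compact-containing-zeros}), which relies on the Benoist--Hulin continuity of affine invariants in the Hausdorff topology---machinery from the ``inverse'' direction that you would still need or replace.
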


The spaces $\ModPoly_n$ and $\ModCubic_d$ and their properties are
discussed in more detail in Sections \ref{sec:polygons} and
\ref{sec:cubic}, respectively.  Since both of these are smooth
orbifolds, it is natural to ask about the smoothness of the map
$\boldalpha$ itself.  This issue and related conjectures are discussed
in Section \ref{sec:conjectures}.

Our proof of the main theorem is direct:  We construct mutually
inverse maps $\ModCubic_d \to \ModPoly_{d+3}$ and $\ModPoly_{d+3} \to
\ModCubic_d$ and show that each is continuous.  (Most of the time we
actually work with the lifted map between manifold covers of these orbifold
moduli spaces, but we elide this distinction in the introduction.)

\subsection*{Existence of polynomial affine spheres}
The construction
of the map $\boldalpha: \ModCubic_d \to \ModPoly_{d+3}$ first requires
an existence theorem, i.e.~for any polynomial cubic differential on
$\C$ there exists a corresponding complete hyperbolic affine sphere.  As in the
work of Labourie and Loftin, through the technique of Wang this
becomes a problem about finding a conformal metric---the
\emph{Blaschke metric} of the affine sphere---that satisfies a
quasilinear PDE involving the norm of the cubic differential.  The
cubic differential and the Blaschke metric together determine a flat
$\mathfrak{sl}_3\R$-valued connection form, the trivialization of
which gives the affine sphere (and a framing thereof).

The technique we apply to solve for the Blaschke metric (the method of super-
and sub-solutions) naturally gives a result for a somewhat more
general class of equations.  In Section \ref{sec:vortex} we show:

\begin{bigthm}
\label{introthm:vortex}
Let $\phi = \phi(z) dz^k$ be a holomorphic differential on $\C$ with
$\phi(z)$ a polynomial.  Then there exists a unique complete and
nonpositively curved conformal metric $\sigma$ on $\C$ whose Gaussian
curvature function $K$ satisfies
$$K = (-1 + |\phi|_\sigma^2).$$
Here
$|\phi|_\sigma$ denotes the pointwise norm of $\phi$ with respect
to the Hermitian metric $\sigma^{-k}$ on the bundle of order-$k$
holomorphic differentials.
\end{bigthm}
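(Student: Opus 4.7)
My plan is to recast the curvature equation as a semilinear elliptic PDE on $\C$ and solve it by the method of sub- and super-solutions combined with an exhaustion by Euclidean disks. Writing $\sigma = e^{2u}|dz|^2$ and using $K = -e^{-2u}\Delta u$ (with $\Delta$ the Euclidean Laplacian), the equation $K = -1 + |\phi|_\sigma^2$ becomes
\begin{equation*}
\Delta u \;=\; e^{2u} - |\phi(z)|^2 e^{-2(k-1)u} \;=:\; F(z,u).
\end{equation*}
The essential analytic feature is that $F$ is smooth and strictly monotone increasing in $u$ at each fixed $z$, which is precisely the structure under which a sub-/super-solution sandwich can be promoted to a smooth solution via monotone iteration.

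For the subsolution I would take $u_- = \frac{1}{k}\log|\phi|$, the conformal factor of the singular flat metric $|\phi|^{2/k}|dz|^2$. Away from zeros of $\phi$, a direct computation gives $\Delta u_- = 0 = F(z,u_-)$, so $u_-$ is an exact solution there; at each zero, $\Delta u_-$ contributes a positive Dirac mass, so in the distributional sense $\Delta u_- \geq F(z,u_-)$ throughout $\C$. For the supersolution I would use the log-perturbation $u_+ = \frac{1}{2k}\log(|\phi|^2 + 1) + V$ for a large constant $V$. A direct computation gives
\begin{equation*}
\Delta u_+ \;=\; \frac{2|\phi'(z)|^2}{k(|\phi(z)|^2+1)^2},
\end{equation*}
which is globally bounded on $\C$ because $\phi$ is polynomial, while one checks $F(z,u_+) \geq (|\phi|^2+1)^{1/k}\bigl(e^{2V} - e^{-2(k-1)V}\bigr)$, so choosing $V$ large enough yields $\Delta u_+ \leq F(z,u_+)$ everywhere and automatically $u_+ \geq u_-$.

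With global barriers $u_- \leq u_+$ in hand, I would run the monotone iteration on each disk $D_R = \{|z|<R\}$ with boundary data $u|_{\partial D_R} = u_+|_{\partial D_R}$ to produce smooth solutions $u_R$ satisfying $u_- \leq u_R \leq u_+$, then pass to the limit $R \to \infty$ via interior Schauder estimates and a diagonal argument to obtain a smooth $u$ on all of $\C$ still sandwiched between $u_-$ and $u_+$. The lower barrier $u_- \leq u$ delivers both remaining properties for free: $|\phi|_\sigma^2 = |\phi|^2 e^{-2ku} \leq |\phi|^2 e^{-2k u_-} = 1$ gives $K \leq 0$, and $\sigma$ dominates the flat metric $|\phi|^{2/k}|dz|^2$, whose radial arc length $\int_1^\infty r^{d/k}\,dr$ is infinite for polynomial $\phi$ of degree $d$, giving completeness. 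For uniqueness within the class of complete nonpositively curved solutions, I would compare two such $u_1, u_2$ via the Omori--Yau maximum principle on the complete Riemannian surface $(\C,\sigma_1)$: the strict monotonicity of $F$ in $u$ forbids a non-trivial supremum or infimum of $u_1 - u_2$, forcing $u_1 \equiv u_2$. The hardest step will be verifying the supersolution condition uniformly on all of $\C$, since $F$ involves two competing exponentials in $u$ whose $z$-coefficients grow at different polynomial rates, so the choice of $V$ and the cutoff scale $1$ in $\log(|\phi|^2+1)$ must be balanced carefully; a secondary subtlety is controlling the exhaustion limit tightly enough that both completeness and the sandwich bound survive the passage $R \to \infty$.
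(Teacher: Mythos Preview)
Your approach is essentially the paper's: sub/super-solution method with the flat metric $|\phi|^{2/k}$ as subsolution, a log-perturbation $\frac{1}{2k}\log(|\phi|^2+\text{const})$ as supersolution, completeness and nonpositive curvature read off from the lower barrier, and Omori--Yau for uniqueness. Two points need tightening.

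First, your subsolution $u_- = \tfrac{1}{k}\log|\phi|$ equals $-\infty$ at the zeros of $\phi$, so the monotone iteration on $D_R$ has no bounded lower barrier. Without one you cannot choose the linearization constant uniformly (since $\partial F/\partial u$ blows up as $u\to-\infty$ wherever $\phi\neq 0$), and the iterates are not a priori bounded below. The paper repairs this by replacing $u_-$ on a large disk containing the zeros with $\max\bigl(\tfrac{1}{k}\log|\phi|^2,\, h\bigr)$, where $h$ is the log-density of the Poincar\'e metric of curvature $-1$ on a slightly larger disk. Since $\Delta h = 2e^{h} \geq F(z,h)$, this $h$ is itself a subsolution; the pointwise maximum of two subsolutions is again one, and is now continuous and bounded below. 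Outside the disk one keeps $\tfrac{1}{k}\log|\phi|^2$, arranging the radii so the pieces match continuously.

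Second, your uniqueness sketch jumps straight to Omori--Yau, but that principle requires the function $u_1-u_2$ to be bounded above before you can extract an almost-maximizing sequence. The paper inserts a preliminary step: from the differential inequality satisfied by $\eta=u_1-u_2$ on the complete manifold $(\C,\sigma_1)$, a growth lemma of Cheng--Yau gives $\sup\eta<\infty$. Only then does the generalized maximum principle apply and force $\sup\eta\leq 0$; swapping roles gives equality.
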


The existence part of this result appears in Theorem
\ref{thm:existence}, and the uniqueness in Theorem
\ref{thm:uniqueness}; the details are modeled on those found in
\cite{wan:thesis}, \cite{au-wan94}, \cite{han} and \cite{httw}.  We
call the differential equation considered in this theorem the
\emph{coupled vortex equation}; the connection between this equation
and the classical vortex equation from gauge theory is described at
the beginning of Section \ref{sec:vortex}.

Returning to the construction of $\boldalpha$, the case $k=3$ of
Theorem \ref{introthm:vortex} implies that there exists a complete
affine sphere with any given polynomial Pick differential. The next
step is to show that these affine spheres are asymptotic to cones over
convex polygons with the right number of vertices.

\subsection*{\titeica asymptotics}
The simplest examples of complete affine spheres with nonzero Pick
form are the \emph{\titeica surfaces}.  These can be characterized as
the affine spheres conformal to $\C$ and having constant nonzero Pick
differential; all such surfaces are projectively equivalent, and each
is asymptotic to the cone over a triangle.

A polynomial cubic differential on $\C$ can be identified, after
removing a compact set containing the zeros, with the result of gluing
of a finite collection of half-planes, each equipped with a constant
differential $c dz^3$.  Using this description, we show that the asymptotic
geometry of an affine sphere with polynomial Pick differential
(briefly, a \emph{polynomial affine sphere}) can be asymptotically modeled by gluing
together pieces of finitely many \titeica surfaces.  Corresponding
pieces of the asymptotic triangular cones glue to give the cone over a
convex polygon.

An important aspect of the picture that is missing from the sketch
above represents most of the work we do in Section
\ref{sec:polynomials-to-polygons}: In order to compare an affine
sphere to a \titeica surface, one must control not only the Pick
differential, but also the Blaschke metric.  Analyzing the structure
equations defining an affine sphere, this becomes a question of
comparing the \emph{Blaschke error}, meaning the difference between the
Blaschke metric of a polynomial affine sphere and that of a \titeica
surface, to the \emph{frame size} of the \titeica surface, meaning the
spectral radius of its affine frame in the adjoint representation.  If
the product of frame size and Blaschke error decays to zero in some region, we
find a unique \titeica surface to which the polynomial affine sphere
is asymptotic.

Using our estimates for solutions to the coupled vortex equation, we
find that the \bla error decays exponentially, and more precisely that 
$$ \text{Blaschke error} \: = \:O \left (\frac{e^{-2 \sqrt{3}\: r}}{\sqrt{r}} \right
),$$
where $r$ is the distance from the zeros of the polynomial (measured
in a natural coordinate system).  
In the same coordinates, the affine frame of the \titeica surface grows
exponentially, but the rate depends on direction, i.e.~
$$ \text{Frame size} \: \approx \: C e^{c(\theta) r}$$
along a ray of angle $\theta$, where $c(\theta)$ is an explicit
function satisfying
$$c(\theta) \leq 2 \sqrt{3}$$
with equality for exactly $2(d+3)$ directions, for a polynomial of
degree $d$.  Away from these \emph{unstable directions}, the error
decay is therefore more rapid than the frame growth and we have a unique asymptotic
\titeica surface, giving $2(d+3)$ \titeica surfaces in all.

\subsection*{Assembling a polygon}
The final step in the construction of $\boldalpha$ is to understand
how the limiting \titeica surface changes when we cross an unstable
direction.  Perhaps surprisingly, in this analysis the $1/\sqrt{r}$
factor in our bound for the error turns out to be crucial (and sharp).

By integrating the affine connection form over an arc of a circle of
radius $R$ joining two rays that lie on either side of an unstable
direction, we determine the element of $\SL_3\R$ relating the limiting
\titeica surfaces along these rays.  The interplay between the
quadratic approximation to $c(\theta)$ near its $2 \sqrt{3}$ maximum
and the \bla error estimate show that this integral is essentially a
Gaussian approximation to a delta function in $\theta$ multiplied by
an off-diagonal elementary matrix.  Letting $R \to \infty$ we find
that the neighboring \titeica surfaces are related by a particular
type of unipotent projective transformation.

These unipotent factors are the ``glue'' that let us move from an
understanding of asymptotics in one direction, or in a sector, to the
global picture.  Using them, we find that each of the $2(d+3)$
transitions across an unstable direction reveals either a new edge or
a new vertex, alternating to give a chain that closes up to form a
convex polygon $P$ with $(d+3)$ vertices.  The polynomial affine
sphere is asymptotic to the cone over $P$.

After the fact, the triangles associated to the individual \titeica
surfaces can also be described directly in terms of the polygon $P$:
Each vertex of $P$ forms a triangle with its two neighbors, giving
$(d+3)$ \emph{vertex inscribed triangles} of $P$.  Each edge of $P$
forms a triangle with the lines extending the two neighboring edges,
giving $(d+3)$ \emph{edge circumscribed triangles} of $P$.  Each
transition across an unstable direction flips from a vertex inscribed
triangle to one of its neighboring edge circumscribed triangles, or
vice versa (see Figure \ref{fig:triangles}).

\begin{figure}
\begin{center}
\includegraphics[width=\textwidth]{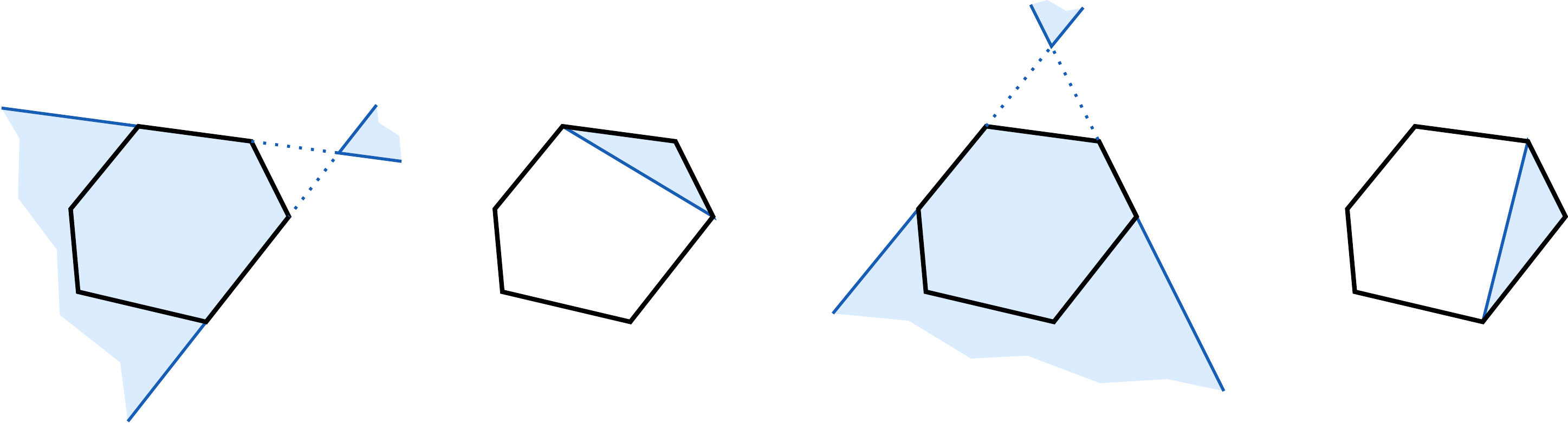}
\caption{The vertex inscribed triangles and edge circumscribed
  triangles of a convex polygon in $\RP^2$ alternate in a natural
  cyclic order.  \titeica surfaces over these triangles assemble into
  an asymptotic model for the affine sphere over the polygon.}
\label{fig:triangles}
\end{center}
\end{figure}

\subsection*{From polygons to polynomials}
Constructing the inverse map $\boldalpha^{-1} : \ModPoly_{d+3} \to
\ModCubic_d$ amounts to showing that an affine sphere over a convex
polygon has parabolic conformal type (i.e.~is isomorphic to $\C$ as a
Riemann surface) and that the Pick differential is a polynomial in the
uniformizing coordinate.  These properties are established in Section
\ref{sec:polygons-to-polynomials}.

In this case there is no question of existence, as the Cheng-Yau
theorem gives a complete affine sphere over any convex polygon (or
indeed any properly convex set).  To understand the conformal type and
Pick differential of this affine sphere, we once again use the
\titeica surface as the key model and comparison object.  The
arguments are somewhat simpler than in the construction of
$\boldalpha$ described above.

Benoist and Hulin, using interior estimates of Cheng-Yau, showed that
the $k$-jet of the affine sphere at a point depends continuously on
the corresponding convex domain \cite{benoist-hulin:finite-volume}.  This implies
that the Blaschke metric and Pick differential vary continuously in
this sense (see Theorem \ref{thm:support-function-continuous} and
Corollary \ref{cor:blaschke-and-pick-continuous} below).  We use this
continuity principle and projective naturality to compare
the affine sphere over a polygon to the \titeica surface over one of
its vertex inscribed polygons.

We find that the Blaschke metric and the Pick differential of the
polygonal affine sphere are comparable (with uniform multiplicative
constants) to those of the \titeica surface, at any point that is
sufficiently close to the edges shared by the triangle and the
polygon.  Applying this construction to each of the vertex inscribed
triangles, we find a ``buffer'' around the boundary of the polygon
that contains no zeros of the Pick differential, and where the
Blaschke metric is approximately Euclidean.  It follows
easily that the Pick form has finitely many zeros, that the
conformal type of the Blaschke metric is $\C$, and finally that the
Pick form is a polynomial.

\subsection*{Mapping of moduli spaces}

Having established that polynomial affine spheres correspond to
polygons and vice versa, the proof of the main theorem is completed by
Theorem \ref{thm:main-in-text}, where we use estimates and results
from the preceding sections to show that this bijection between moduli
spaces and its inverse are continuous.  Also, since the normalization
conventions of Sections \ref{sec:polynomials-to-polygons} and
\ref{sec:polygons-to-polynomials} implicitly require working in manifold
$\Z/(d+3)$-covers of the moduli spaces, we verify at this stage
that our the constructions have the necessary equivariance properties
to descend to the orbifolds themselves.

\subsection*{Related error estimates}

The comparison between rates of error decay and frame growth which
yields the finite set of asymptotic \titeica surfaces for a polynomial
affine sphere is an apparently novel element of our work on this class
of surfaces.  However, we build upon a substantial history of error
estimates for Wang's equation and other geometric PDE.  While both
Loftin \cite{loftin:parameterization} and Labourie
\cite{labourie:flat-projective} focused on the Wang equation in their
studies of affine spheres, Loftin (\cite{loftin:compactification-i},
\cite{loftin:limits}, \cite{loftin:neck-separation}) began and
developed a theory of error estimates for this equation.  The
structurally similar Bochner equation governing harmonic maps to
hyperbolic surfaces had an analogous development of error estimates in
\cite{minsky-harmonic}, \cite{wolf:degenerations}, and
\cite{han}. Crucial to refined error estimates are the use of sub- and
super-solutions, first constructed in the present setting by Loftin
\cite{loftin:compactification-i}; for the analogous Bochner equation
and for open Riemann surfaces, this technique began with
\cite{wan:thesis} (see also \cite{au-wan94}).

The passage from estimates for the solution of Wang's equation to an
asymptotic description of the affine frame field requires an
understanding the behavior of solutions to certain ordinary differential
equations (i.e.~the affine structure equations restricted to a curve).
The technique we use here was introduced by Loftin in
\cite{loftin:compactification-i}; there was no lower-dimensional
Bochner equation analogue to this technique.

\subsection*{Other perspectives}

To conclude the introduction we briefly comment on the potential
relations between our main theorem and techniques from areas such as
Higgs bundles and minimal surfaces.  Some more concrete conjectures
and possible extensions of our work are described after the proof of
the main theorem, in Section \ref{sec:conjectures}.

In \cite{labourie:flat-projective}, Labourie interprets the
parameterization of convex $\RP^2$ structures by cubic differentials
in terms of certain rank-$3$ Higgs bundles, thus identifying Wang's
equation for the Blaschke metric of an affine sphere with Hitchin's
self-duality equations for these bundles.  The same construction
applies in our setting: Compactifying $\C$ to $\CP^1$, our polynomial
affine spheres correspond to \emph{irregular Higgs bundles} on the
projective line defined by a vector bundle and a meromorphic
endomorphism-valued $1$-form (which in this case has a
single high-order pole).

Our existence theorem for Wang's equation (Theorem
\ref{thm:existence}) is therefore equivalent to the existence of
solutions to the self-duality equations for these bundles.  The
self-duality equations have been studied in the irregular case by
Biquard-Boalch \cite{biquard-boalch}, extending Simpson's work for the
case of a simple pole \cite{simpson}.  However, there is a mismatch of
hypotheses preventing us from deducing Theorem \ref{thm:existence}
directly from results in the Higgs bundle literature: The
\emph{irregular type} of the Higgs fields we consider, i.e.~the
Laurent expansion of the Higgs field at the pole, has nilpotent
coefficient in the most singular term, while the type is usually
assumed to be a regular semisimple element (for example in the results
of \cite{biquard-boalch}).  Witten has proposed a way to generalize
results from the semisimple case to arbitrary irregular types using
branched coverings \cite{witten:wild-ramification}, however a sketch
is provided only in rank $2$ and the details of a corresponding
existence theorem have not appeared.  We expect that pursuing these
ideas further, one could give an alternate proof of Theorem
\ref{thm:existence} entirely by Higgs bundle methods.

The connection with irregular Higgs bundles also suggests that our
main theorem could be related to the general phenomenon of equivalence
between \emph{Dolbeault moduli spaces} of Higgs bundles and
\emph{Betti moduli spaces} of representations of the fundamental group
of a Riemann surface into a complex Lie group.  It is known that the
generalized Betti moduli space which corresponds to irregular Higgs
bundles includes extra \emph{Stokes data} at each pole of the Higgs
field (see e.g.~\cite{boalch:geometry-and-braiding}).  This data
consists of a collection of unipotent matrices cyclically ordered
around each pole, and the possibility of a connection with the
unipotent factors we find for a polygonal affine sphere in Lemma
\ref{lem:unipotent-factors} is intriguing. We hope that by further
developing this connection, one could interpret Theorem
\ref{introthm:main} as identifying the space of convex polygons in
$\RP^2$ (stratified by the number of vertices) as a generalization of
the Hitchin component of $\SL_3\R$ representations to the punctured
Riemann surface $\C$ (with a stratification by the order of pole of
the associated Higgs field at infinity).

Finally, we mention another interpretation of the affine sphere
construction discussed by Labourie in \cite{labourie:flat-projective}:
The combination of the Blaschke metric and the affine frame field of a
convex surface in $\R^3$ induces a map to the symmetric space $\SL_3\R
/ \SO(3)$ which is a minimal immersion exactly when the
original surface is an affine spherical immersion (the local version
of being an affine sphere).  By this construction, the flats of the
symmetric space correspond to \titeica affine spheres.  Our main
theorem can therefore be seen as identifying a moduli space of minimal
planes in $\SL_3\R / \SO(3)$ that are asymptotic to finite collections
of flats (in some sense that corresponds to Theorem
\ref{thm:exponential-bound}) with a space of polynomial cubic
differentials.  It would be interesting to develop this picture more
fully, for example by characterizing these minimal planes directly in
terms of Riemannian geometry of the symmetric space, and possibly
generalizing to the symmetric space of $\SL_n\R$ for $n > 3$.

\subsection*{Acknowledgments}

The authors thank David Anderson, Steven Bradlow, Oscar Garcia-Prada,
Fran\c{c}ois Labourie, St\'ephane Lamy, John Loftin, and Andrew
Neitzke for helpful conversations.  They thank Qiongling Li, Xin Nie
and an anonymous referee for pointing out errors in an earlier draft
of this paper and the referee for a careful reading and helpful advice
on the final draft.  The authors gratefully acknowledge support from
U.S.~National Science Foundation through individual grants DMS 0952865
(DD), DMS 1007383 (MW), and through the GEAR Network (DMS 1107452,
1107263, 1107367, ``RNMS: GEometric structures And Representation
varieties'') which supported several workshops and other programs
where parts of this work were conducted.  MW appreciates the
hospitality of the Morningside Center where some of this work was
done.

\section{Polygons}
\label{sec:polygons}

As in the introduction we will consider polygons in $\RP^2$ up to the
action of the group $\SL_3\R \simeq \mathrm{PGL}_3\R$ of projective
transformations.  An elementary fact we will frequently use is:

\begin{prop}
The group $\SL_3\R$ acts simply transitively on $4$-tuples of points
in $\RP^2$ in general position. \noproof
\end{prop}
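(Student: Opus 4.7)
The plan is to exhibit a canonical ``projective basis'' construction that sends any 4-tuple in general position to a standard one, and then verify that the only projective transformation fixing the standard 4-tuple is the identity. These two steps give transitivity and simple transitivity respectively.

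For transitivity, fix the standard 4-tuple $(e_1,e_2,e_3,e_1+e_2+e_3)$ in $\R^3$, i.e.~the points $\homog{1}{0}{0},\homog{0}{1}{0},\homog{0}{0}{1},\homog{1}{1}{1}$ in $\RP^2$. Given an arbitrary 4-tuple $(p_1,p_2,p_3,p_4)$ in general position, I would choose arbitrary lifts $v_1,v_2,v_3,v_4\in \R^3\setminus\{0\}$. Since $p_1,p_2,p_3$ are not collinear, $v_1,v_2,v_3$ form a basis, so one may write $v_4 = a_1 v_1 + a_2 v_2 + a_3 v_3$. The general-position hypothesis on the remaining triples $(p_i,p_j,p_4)$ forces each $a_i$ to be nonzero. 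After replacing $v_i$ by $a_i v_i$ for $i=1,2,3$, we have a basis satisfying $v_1+v_2+v_3 = v_4$. The unique linear map sending $e_i\mapsto v_i$ then carries the standard 4-tuple to $(p_1,p_2,p_3,p_4)$ projectively, and after rescaling by the unique real cube root of its inverse determinant it lies in $\SL_3\R$.

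For the stabilizer calculation, suppose $A\in\SL_3\R$ fixes each of the four standard points. Fixing $\homog{1}{0}{0},\homog{0}{1}{0},\homog{0}{0}{1}$ forces $A$ to be diagonal, say $A=\diag(\lambda_1,\lambda_2,\lambda_3)$. Fixing $\homog{1}{1}{1}$ then forces $\lambda_1=\lambda_2=\lambda_3$, and the condition $\det A = 1$ over $\R$ gives $\lambda_i = 1$. So the stabilizer is trivial.

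Combining these, the action is simply transitive. There is no real obstacle here; the only subtle point is the compatibility between $\SL_3\R$ and $\mathrm{PGL}_3\R$, which works because every nonzero real has a unique real cube root so the map $\SL_3\R\to\mathrm{PGL}_3\R$ is bijective, and correspondingly the stabilizer computation in $\SL_3\R$ is not polluted by nontrivial central elements (unlike in the complex case, where cube roots of unity would appear).
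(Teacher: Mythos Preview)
Your proof is correct and is the standard argument for this classical fact. The paper does not actually give a proof here: the proposition is introduced as ``an elementary fact we will frequently use'' and is stated with the \noproof\ marker, so there is nothing to compare against beyond noting that your argument is exactly the one any reader would supply.
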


Our convention is that a \emph{convex polygon} in $\RP^2$ is a bounded
open subset of an affine chart $\R^2 \subset \RP^2$ that is the
intersection of finitely many half-planes.  In particular a polygon is
an open $2$-manifold homeomorphic to the disk $D^2$.  As usual, a
polygon can also be specified by the $1$-complex that forms its
boundary, or by its set of vertices.

By \emph{orientation} of the polygon we mean an orientation of its
interior, as a $2$-manifold, or equivalently an orientation of its
boundary as a $1$-complex.  Whenever we list the vertices of an
oriented polygon, it is understood that the list is ordered
consistently with the orientation.

\subsection{Spaces of polygons}
Let $\Poly_k$ denote the set of oriented convex polygons in $\RP^2$
with $k$ vertices (briefly, \emph{convex $k$-gons}); this is an open
subset of the symmetric product $\Sym^k(\RP^2)$.  The group
$\SL_3\R$ of projective automorphisms acts on $\Poly_k$ with quotient
$$\ModPoly_k = \Poly_k / \SL_3\R,$$
the \emph{moduli space} of convex polygons.  For $P \in \Poly_k$ we
denote by $[P]$ its equivalence class in $\ModPoly_k$.

By suitably normalizing a polygon, one can construct a natural
``quasi-section'' of the map $\Poly_k \to \ModPoly_k$: Choose an
oriented convex quadrilateral $Q_0 \subset \RP^2$ with vertices
$(q_1,q_2,q_3,q_4)$.
This polygon $Q_0$ and the labeling of its vertices will be fixed
throughout the paper.  We say that $P \in \Poly_k$, $k\geq 4$, is
\emph{normalized} if it is obtained from $Q_0$ by attaching a convex
$(k-2)$-gon to its $(q_4,q_1)$ edge.  Equivalently, an oriented convex polygon is
normalized if its vertices are
$$ (q_1, q_2, q_3, q_4, p_5, \ldots, p_k)$$
for some $p_i \in \RP^2$, $i = 5 \ldots k$.  In particular the
vertices of a normalized polygon have a canonical labeling by $1,
\ldots, k$.

Let $\NormPoly_k \subset \Poly_k$ denote the set of normalized convex
$k$-gons in $\RP^2$.  Having fixed four vertices, the set
$\NormPoly_k$ is naturally is an open subset of $(\RP^2)^{k-4}$.  In
fact $\NormPoly_k$ is contractible:

\begin{prop}
\label{prop:polygon-global-topology}
The space $\NormPoly_k$ is diffeomorphic to $\R^{2n-8}$.
\end{prop}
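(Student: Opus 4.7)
The plan is to induct on $k \geq 4$. The base case $k = 4$ is immediate: $\NormPoly_4 = \{Q_0\}$ is a single point, hence diffeomorphic to $\R^0$.

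For the inductive step, I would analyze the forgetful map $\pi : \NormPoly_k \to \NormPoly_{k-1}$ that deletes the last vertex $p_k$. This is well-defined, since removing a single vertex from a strictly convex $k$-gon leaves the remaining $k-1$ vertices in strictly convex position and in the same cyclic order, and because $p_k$ is not among the normalizing vertices $q_1,\ldots,q_4$, the result still lies in $\NormPoly_{k-1}$.

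The key step is to identify the fiber. For $P' = (q_1,\ldots,p_{k-1}) \in \NormPoly_{k-1}$, a point $p_k \in \RP^2$ lies in $\pi^{-1}(P')$ exactly when inserting it between $p_{k-1}$ and $q_1$ yields a convex $k$-gon. Convexity at every vertex other than $p_{k-1}$, $p_k$, and $q_1$ is inherited from $P'$, and the three remaining left-turn conditions become linear inequalities on $p_k$: for the condition at $p_k$ itself, this uses the identity $(p_k - p_{k-1}) \times (q_1 - p_k) = (p_k - p_{k-1}) \times (q_1 - p_{k-1})$. Each inequality picks out a half-plane (in an affine chart containing $P'$) bounded by one of the three lines through $(p_{k-2}, p_{k-1})$, $(p_{k-1}, q_1)$, or $(q_1, q_2)$. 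Their common intersection is an open projective triangle, hence diffeomorphic to $\R^2$, and varies smoothly with $P'$.

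Thus $\pi$ is a smooth fiber bundle with fiber $\R^2$, and by the inductive hypothesis the base $\NormPoly_{k-1} \cong \R^{2k-10}$ is contractible. The conclusion $\NormPoly_k \cong \R^{2k-10} \times \R^2 = \R^{2k-8}$ then follows once we promote this to a smooth trivialization. The main technical point is precisely this promotion, but it can be handled concretely: smoothly in $P'$, choose a projective transformation carrying the triangular fiber to a fixed standard triangle in $\R^2$, producing global smooth $\R^2$-coordinates on each fiber. Iterating the construction vertex by vertex assembles a global smooth coordinate system on $\NormPoly_k$, completing the induction.
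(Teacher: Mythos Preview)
Your proof is correct and takes a genuinely different route from the paper's. The paper works in a single explicit affine chart (with $q_1=(0,1)$, $q_2=(0,0)$, $q_3=(1,0)$, $q_4=(1,1)$) and writes down global coordinates directly: setting $p_j=(x_j,y_j)$ and letting $m_j$ denote the slope of the edge $\overline{p_{j-1}p_j}$, convexity becomes exactly the chain of inequalities $1>x_5>\cdots>x_k>0$ together with $m_5<\cdots<m_k$, so $\NormPoly_k$ is identified with a product of two open simplices and hence with $\R^{2k-8}$. Your inductive fibration argument is more structural: it never needs a clever choice of global coordinates, and the same template would adapt to other configuration spaces built one point at a time. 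The cost is that you must justify smooth triviality of the bundle, which is routine here but still an extra step; the paper's approach is shorter precisely because it bypasses this by exhibiting an explicit chart all at once. One small sharpening of your trivialization: the four consecutive vertices $p_{k-2},p_{k-1},q_1,q_2$ of $P'$ are already in general position, so the unique projective transformation sending them to four fixed points depends smoothly on $P'$ and carries the fiber triangle to a fixed one---this avoids choosing a fourth reference point ad hoc.
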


\begin{proof}
Choose an affine chart of $\RP^2$ in which $q_1 = (0,1)$, $q_2 =
(0,0)$, $q_3 = (1,0)$, and $q_4 = (1,1)$.  By convexity, the remaining vertices
of a normalized polygon must lie in the half-strip
$$\{ (x,y) \suchthat 0 < x <
1, \:y > 1 \}.$$  Setting $p_k = (x_k,y_k)$ we also have that $x_k$ is
monotonically decreasing with $k$, while the slopes $m_k$ of the
segments $\overline{p_{k-1}p_k}$ are monotonically increasing.  Up to
those two constraints, we may freely choose the pairs $(x_j, m_j)$ for
$j=5,...,k$, which determine the polygon completely.  Thus
$\NormPoly_k$ is parameterized by a product of open simplices:
$$\{(x_j, m_j) \suchthat  1 > x_5>\cdots>x_k>0, \: -\infty < m_5 < \cdots < m_k < \infty\} \simeq
D^{2k-8}.$$
\end{proof}

It is easy to see that $\NormPoly_k$ intersects every $\SL_3\R$-orbit
in $\Poly_k$: Given $P \in \Poly_k$, any four adjacent vertices of $P$
are in general position and can therefore be mapped to
$(q_1,\ldots,q_4)$ by a unique element $A \in \SL_3\R$.  Thus $A
\cdot P \in \NormPoly_k$ and we say that $A$ \emph{normalizes} $P$.

The only choice in the normalization construction is that of a vertex
to map to $q_1$, and so there are exactly $k$ ways to normalize an
oriented convex $k$-gon $P$ (though possibly some of them give the
same normalized polygon).  Equivalently, the set of intersection
points of a $\SL_3\R$-orbit in $\Poly_k$ with $\NormPoly_k$ has
cardinality at most $k$, and the projection $\NormPoly_k \to
\ModPoly_k$ is finite-to-one.

Furthermore, each fiber of this projection is the orbit of a natural
$\Z/k$-action on $\NormPoly_k$: Given $P \in \NormPoly_k$ with
vertices $(q_1,\ldots,q_4,p_5,\ldots, p_k)$, there is a projective
transformation $A = A(P)$ uniquely determined by its action on the
$4$-tuple,
$$ A : (q_2,q_3,q_4,p_5) \mapsto (q_1,q_2,q_3,q_4), $$
Then defining $\rot(P) = A(P) \cdot P$ we have a map $\rot : \NormPoly_k
\to \NormPoly_k$.  By construction $P$ and $\rot(P)$ lie in the same
$\SL_3\R$-orbit and $\rot^k = \id$ follows since $\rot^k(P) = B \cdot
P$ where $B$ is a projective transformation fixing the vertices of
$Q_0$, hence $B = \id$.  It is straightforward to check that $\rot$
acts diffeomorphically on $\NormPoly_k \subset (\RP^2)^{k-4}$, and in
fact it acts by the restriction of a rational map defined over $\Z$.

Summarizing the discussion above, we find:

\begin{prop}
\label{prop:polygon-cyclic-quotient}
The projection $\NormPoly_k \to \ModPoly_k$ can be identified with the
quotient map of the $\Z/k$ action on $\NormPoly_k$.  Thus
$\ModPoly_k$ has the structure of an orbifold with universal
cover $\NormPoly_k \simeq \R^{2n-8}$. \noproof
\end{prop}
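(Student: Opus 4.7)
The plan is to verify two claims: first, that the fibers of the projection $\pi : \NormPoly_k \to \ModPoly_k$ coincide with the orbits of the cyclic group $\langle \rot \rangle$ acting on $\NormPoly_k$; second, that the resulting quotient presents $\ModPoly_k$ as an orbifold with universal cover $\NormPoly_k$.

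For the first claim, one containment is immediate from the construction: $\rot(P)$ lies in the $\SL_3\R$-orbit of $P$, so every $\langle \rot \rangle$-orbit is contained in a single fiber of $\pi$. For the reverse inclusion, suppose $P, P' \in \NormPoly_k$ satisfy $P' = g \cdot P$ for some $g \in \SL_3\R$. A projective automorphism must send the vertices of $P$ (recognizable as the extreme points of the convex region, or equivalently the non-smooth points of $\bdry P$) to the vertices of $P'$; because $\SL_3\R$ is connected, $g$ is isotopic to the identity and thus preserves cyclic order on the oriented boundary. Writing $v_1, \ldots, v_k$ for the cyclically ordered vertices of $P$ (with $v_i = q_i$ for $i \leq 4$), there is therefore a unique $j \in \Z/k$ such that $g$ carries $(v_{j+1}, v_{j+2}, v_{j+3}, v_{j+4})$ to $(q_1, q_2, q_3, q_4)$. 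Iterating the construction of $\rot$ shows that this same four-tuple condition characterizes the projective transformation taking $P$ to $\rot^j(P)$, so by unique determination from a $4$-tuple in general position we conclude $P' = \rot^j(P)$.

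For the second claim, $\rot$ is a diffeomorphism of $\NormPoly_k$ (being the restriction of a rational map defined over $\Z$, as already noted) and satisfies $\rot^k = \id$, so it generates a smooth $\Z/k$-action. The quotient of a smooth manifold by a finite smooth group action is naturally an orbifold, and by the first claim this orbifold is precisely $\ModPoly_k$. Finally, Proposition \ref{prop:polygon-global-topology} identifies $\NormPoly_k$ with $\R^{2k-8}$, which is simply connected, so it serves as the orbifold universal cover of $\ModPoly_k$. The main point requiring genuine justification rather than bookkeeping is the appeal to connectedness of $\SL_3\R$ to guarantee cyclic-order preservation in the first claim; everything else follows from direct application of the definitions.
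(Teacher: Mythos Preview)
Your proposal is correct and matches the paper's own argument, which is given in the discussion immediately preceding the proposition rather than as a separate proof (the proposition is marked \texttt{\textbackslash noproof}). The paper phrases the key step as ``the only choice in the normalization construction is that of a vertex to map to $q_1$,'' which is exactly your observation that any $g \in \SL_3\R$ carrying one normalized polygon to another must send some cyclic shift of four consecutive vertices to $(q_1,q_2,q_3,q_4)$; your appeal to connectedness of $\SL_3\R$ is a slight embellishment, since orientation-preservation is already built into the definition of $\Poly_k$ as a space of \emph{oriented} polygons.
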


The fact that any convex polygon has a unique normalization once a
vertex is chosen also shows that $\NormPoly_k$ can be identified with
a quotient space related to $\ModPoly_k$: If we consider a space of
pairs $(P,v)$ where $P \in \Poly_k$ and $v$ is a vertex of $P$, then
the quotient of this space of \emph{labeled polygons} by $\SL_3\R$
is in canonical bijection with $\NormPoly_k$.  In this
description, the $\Z/k$ action cycles $v$ around $P$ while the map
$(P,v) \mapsto P$ corresponds to $\NormPoly_k \to \ModPoly_k$.

The notations $\NormPoly_k, \ModPoly_k$ are meant to suggest
\emph{Teichm\"uller space} and \emph{Moduli space}, respectively.  In
this analogy, the additional data of a normalization of a polygon is
like the marking of a Riemann surface, and the $\Z/k$ action
plays the role of the mapping class group.  On both sides of the
analogy, the Teichm\"uller space is contractible and smooth, while the
quotient moduli space is only an orbifold.  (Compare the ``toy model''
of the space of convex projective structures described in
\cite{fock-goncharov:real-projective}.)

So far we have considered our spaces of polygons to be topological
spaces using what could be called the \emph{vertex topology}, i.e.~by
considering the set of vertices of the polygon as a point in the
symmetric product of $\RP^2$.  Alternatively, one could introduce a
topology on polygons using the Hausdorff metric on closed subsets of $\RP^2$; here
(and throughout the paper) we take the closures of the convex domains
whenever the Hausdorff topology is considered.

In fact these topologies are equivalent; it is immediate that vertex
convergence implies Hausdorff convergence, and conversely we have:

\begin{prop} \label{prop:hausdorff-vertex-equivalent} If a sequence
$P_n$ of convex polygons converges in the Hausdorff topology to a
convex polygon $P$, and if $P$ and $P_n$ all have the same number of
vertices, then $P_n$ also converges to $P$ in the vertex topology.
\end{prop}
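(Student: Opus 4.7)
The plan is to show that every subsequence of $(P_n)$ has a further subsequence whose vertex multisets converge in $\mathrm{Sym}^k(\RP^2)$ to the vertex multiset of $P$; this implies convergence of the full sequence in the vertex topology.

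First, I would choose an affine chart of $\RP^2$ containing $\bar P$, and note that Hausdorff convergence forces all but finitely many of the $\bar P_n$ to lie in a fixed compact subset of this chart. Choose any cyclic labeling $(v^{(n)}_1,\ldots,v^{(n)}_k)$ of the vertices of $P_n$; the resulting ordered tuples lie in a compact subset of $(\RP^2)^k$. Passing to a subsequence, I may assume $v^{(n)}_i \to w_i$ for some $w_1,\ldots,w_k \in \RP^2$.

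Next I would invoke the standard fact that the convex-hull operation $K \mapsto \mathrm{conv}(K)$ is continuous on the space of compact subsets of a fixed ball in an affine chart, with respect to the Hausdorff metric. Since $\bar P_n = \mathrm{conv}(v^{(n)}_1,\ldots,v^{(n)}_k)$ and $\{v^{(n)}_1,\ldots,v^{(n)}_k\} \to \{w_1,\ldots,w_k\}$ in Hausdorff, this gives $\bar P = \mathrm{conv}(w_1,\ldots,w_k)$.

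Now the vertices of a convex polygon are precisely its extreme points. Since $P$ has exactly $k$ vertices while the extreme points of $\mathrm{conv}(w_1,\ldots,w_k)$ form a subset of $\{w_1,\ldots,w_k\}$ of size at most $k$, equality forces the $w_i$ to be pairwise distinct and to coincide, as a set, with the vertex set of $P$. Hence along this subsequence the vertex multisets converge in $\mathrm{Sym}^k(\RP^2)$ to the vertex multiset of $P$. Since the same conclusion holds for every convergent subsequence, the original sequence converges in the vertex topology.

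The main technical point is the continuity of the convex hull under Hausdorff convergence of finite point sets, combined with the equal-vertex hypothesis: without the latter one could have distinct vertices of the $P_n$ colliding in the limit, producing a limit polygon with strictly fewer vertices and no convergence in $\mathrm{Sym}^k(\RP^2)$. Everything else is bookkeeping and standard convex geometry.
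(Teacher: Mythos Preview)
Your argument is correct. The route differs from the paper's, though both are short. The paper argues directly that each vertex $v$ of $P$ must be a limit of vertices of $P_n$: if not, $v$ would be a limit of interior points of edges whose endpoints stay away from $v$, and a subsequential limit of those edges would exhibit $v$ as an interior point of a segment in $\bar P$, contradicting that $v$ is extreme. Having produced $k$ such sequences with distinct limits, the equal-vertex-count hypothesis forces them to be eventually disjoint, giving the required labeling. Your approach instead passes to a subsequence by compactness, invokes continuity of the convex-hull map to identify $\bar P$ as the hull of the limit points $w_1,\ldots,w_k$, and then uses the extreme-point count to force the $w_i$ to be distinct and to exhaust the vertices of $P$. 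The paper's argument is a bit more hands-on and isolates the general convex-geometric principle (extreme points of a Hausdorff limit are limits of extreme points); yours is a clean compactness-plus-black-box argument that avoids that lemma at the cost of the subsequence bookkeeping. Either is perfectly adequate here.
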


\begin{proof}
First observe\footnote{This is an example of a general
fact in convex geometry: In a Hausdorff-convergent sequence of compact convex
subsets of a $\R^N$, each extreme point of the limit set is a limit of
extreme points of the sequence.}
 that each vertex of $v$ of $P$ is a limit of a sequence
$v_n$ of vertices of $P_n$: Otherwise $v$ would be a limit of
interior points of edges, but not of their endpoints.  By passing to a
subsequence we could arrange for this sequence of edges to converge, giving
a line segment in $P$ of which $v$ is an interior point, contradicting
the assumption that $v$ is a vertex.

Suppose $P$ has $k$ vertices.  Applying the observation above to each
vertex in turn we obtain $k$ sequences, each having as
$n^{\mathrm{th}}$ element a vertex of $P_n$.  While these sequences
may overlap for some $n$, this can only happen for finitely many
terms since these sequences have distinct limit points.  Thus for all
large $n$ we have labeled the $k$ vertices of $P_n$ in such a way that
vertices with a given label converge, as required.
\end{proof}

\subsection{Example: pentagons}
We have $\NormPoly_5 \simeq \R^2$, and in fact the space is naturally
an open triangle in $\RP^2$: The fifth vertex $p_5$ can be any point
inside the triangle on the exterior of $Q_0$ formed by the $[q_4,q_1]$
edge and the lines extending its neighboring edges.  (This model of
$\NormPoly_5$ is shown in Figure \ref{fig:circles} in Section
\ref{sec:conjectures} below.)

Working in the affine coordinates of Proposition
\ref{prop:polygon-global-topology} the space becomes a half-strip,
$$\NormPoly_5 = \{ (x,y) \suchthat 0 \leq x \leq 1, \: y > 1 \}.$$  In the
same coordinates, the generator of the $\Z/5$ action is the Cremona
transformation
$$  \rot(x,y) = \left ( \frac{y(y-1)}{x^2 + x(y-1) + y(y-1)}, \frac{y(x +
  y -1)}{(x^2 + x(y-1) + y(y-1))} \right )$$
which can be expressed in a suitable homogeneous coordinate system as
$$ [X:Y:Z] \mapsto [X(Z-Y):Z(X-Y):XZ].$$
This birational automorphism of $\P^2$ resolves to a biregular
automorphism of a degree $5$ del Pezzo surface by blowing up the four
vertices of $Q_0$ (see e.g.~\cite[Sec.~4.6]{de-fernex}).  We are grateful
to St\'ephane Lamy for explaining this projective model.  Returning to the
affine half-strip model above, the unique fixed point of the $\Z/5$
action on $\NormPoly_5$ is $\left ( \tfrac{1}{2}, \tfrac{1}{4}(3 +
\sqrt{5}) \right )$, which corresponds to the regular pentagon.  The
differential of $\rot$ at this fixed point is linearly conjugate to a
rotation by $3/5$ of a turn.

Topologically, the $\Z/5$ action rotates the interior of triangle $\NormPoly_5$
about the fixed point.  Blowing up the two vertices the
triangle shares with $Q_0$ gives a pentagon in which the $\Z/5$ action
can be seen as standard pentagonal rotational symmetry.

The quotient $\ModPoly_5 = \NormPoly_5 / \langle \rot \rangle$ is a
topological open disk with an interior orbifold point (cone point) of
order $5$.

\section{Cubic differentials}
\label{sec:cubic}

We define a \emph{polynomial cubic differential} to be a holomorphic
differential on $\C$ of the form $C(z) dz^3$, where $C(z)$ is a
polynomial function.

\subsection{Spaces of cubic differentials}
Let $\Cubic_d \simeq \C^* \times \C^d$ denote the vector
space of polynomial cubic differentials of degree $d$ (with nonzero
leading coefficient).

The group $\Aut(\C) = \{ z \mapsto az+b \}$ acts on these
differentials by pushforward.  Let $\ModCubic_d$ denote the quotient
of $\Cubic_d$ by this action.  Given a cubic differential $C$, we
denote its equivalence class by $[C]$.

As in the polygon case, the relationship between $\Cubic_d$ and
$\ModCubic_d$ is clarified by considering an intermediate object
space of ``normalized'' objects:  If a polynomial cubic differential
is written as
\begin{equation}
\label{eqn:polynomial}
 C = \left ( c_d z^d + c_{d-1} z^{d-1} + \cdots + c_0 \right )dz^3
\end{equation}
then we say $C$ is \emph{monic} if $c_d=1$ and \emph{centered} if
$c_{d-1}=0$.  The latter condition means that the roots of $C$ sum to
zero.  A cubic differential that is both monic and centered is \emph{normalized}.

Let $\NormCubic_d \subset \Cubic_d$ denote the space of normalized
polynomial cubic differentials. The set $\NormCubic_d\simeq \C^{d-1}$
intersects every $\Aut(\C)$-orbit in $\Cubic_d$.  Note that $z \mapsto
(az + b)$ maps the differential \eqref{eqn:polynomial} to
$$T^* C = \left ( c_d a^{d+3}
(z + b/a)^d + c_{d-1} a^{d+2} (z + b/a)^{d-1} + \cdots + c_0
a^3\right)dz^3.$$ Thus acting by $z \mapsto c_d^{-1/(d+3)} z + b$ makes an
arbitrary differential monic, and a suitable translation factor $b$
moves the root sum to zero.  Moreover, if two normalized cubic
differentials are related by $T(z) = az+b$, then the monic condition
gives $a^{d+3} = 1$ and the centering implies $b=0$, hence $T$ is
multiplication by a $(d+3)$-root of unity.

We therefore recover a description of $\ModCubic_d$ as a space of
orbits in $\NormCubic_d$ of the group $\unity_{d+3} \simeq \Z/(d+3)$
of roots of unity, where $\zeta \in \unity_{d+3}$ acts on coefficients
by
\begin{equation}
\label{eqn:weighted-action}
(c_{d-2}, c_{d-3}, \ldots, c_{0}) \mapsto (\zeta^{d+1} c_{d-2},
\zeta^d c_{d-3}, \ldots, \zeta^3 c_0).
\end{equation}
The quotient by this action is a ``weighted affine space'', i.e.~an
affine chart of the weighted projective space $\CP(d+3,\hat{d+2},d+1,
\ldots, 3)$.  To summarize,
\begin{prop}
The space $\ModCubic_d$ is a complex orbifold (and a complex algebraic
variety), the quotient of $\NormCubic_d \simeq \C^{d+1}$ by the action
of $\unity_{d+3}$ described in \eqref{eqn:weighted-action}.
\end{prop}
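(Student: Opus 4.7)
The plan is to verify that the preceding normalization discussion already assembles into the claimed orbifold/variety description, with the proof amounting to a clean identification of fibers and a computation of weights.

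First I would check that the natural map $\pi \colon \NormCubic_d \to \ModCubic_d$, sending $C$ to its $\Aut(\C)$-class $[C]$, is surjective. This is immediate from the explicit normalization recipe displayed earlier: given any $C \in \Cubic_d$, the choice $a = c_d^{-1/(d+3)}$ makes $T^*C$ monic, after which a unique translation $b$ forces the subleading coefficient to vanish, landing $T^*C$ in $\NormCubic_d$.

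Next I would identify the fibers of $\pi$ with orbits of $\unity_{d+3}$. Suppose $C, C' \in \NormCubic_d$ are $\Aut(\C)$-equivalent via $T(z) = az+b$. Reading off the top coefficient of $T^*C$ from the formula $T^*C = \bigl(c_d a^{d+3}(z+b/a)^d + \cdots\bigr)\,dz^3$ and comparing to $c'_d = 1 = c_d$ forces $a^{d+3} = 1$, so $a = \zeta \in \unity_{d+3}$; the vanishing of the subleading coefficient of $C'$ then forces $b=0$, as explained in the paragraph before the statement. Substituting $T(z) = \zeta z$ into the transformation law gives
\[
T^*(c_j z^j\, dz^3) \;=\; c_j\, \zeta^{j+3}\, z^j\, dz^3,
\]
so the induced action on the free coefficients $(c_{d-2},\ldots,c_0)$ is exactly the one displayed in \eqref{eqn:weighted-action}. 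Hence $\pi$ descends to a bijection $\NormCubic_d/\unity_{d+3} \to \ModCubic_d$.

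Finally I would conclude the orbifold and algebraic statements from general principles. Because $\unity_{d+3}$ is a finite group acting holomorphically and linearly on $\NormCubic_d$, the quotient $\NormCubic_d/\unity_{d+3}$ inherits a canonical complex orbifold structure, and by taking $\mathrm{Spec}$ of the finitely generated polynomial ring of invariants it acquires the structure of a complex affine variety, realized as an affine chart of the weighted projective space $\CP(d+3,\widehat{d+2}, d+1, \ldots, 3)$ noted in the surrounding discussion. I do not expect any substantive obstacle: the only content is the coefficient identity above, and even that follows by direct substitution in the formula already given.
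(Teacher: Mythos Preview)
Your proposal is correct and follows essentially the same approach as the paper: the proposition there is stated as a summary of the preceding normalization discussion (no separate proof is given), and your argument simply reorganizes that discussion into a clean surjectivity-plus-fiber-identification format, with the same coefficient computation $c_j \mapsto \zeta^{j+3} c_j$ yielding the weighted action \eqref{eqn:weighted-action}. One small remark: the dimension in the statement should be $\C^{d-1}$ (the free coefficients are $c_{d-2},\ldots,c_0$), matching the earlier text; this is a typo in the proposition, not an issue with your argument.
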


\subsection{Example:  Quadratic polynomials}

We have $\NormCubic_2 = \{ (z^2 + c) dz^3 \} \simeq \C$.  The action
of $\Z/5$ is generated by the rotation $z \mapsto \zeta z$, where $\zeta =
\exp(2 \pi i/5)$, acting by $c \mapsto \zeta^3 c$.  The unique
fixed point is $c=0$ and the quotient $\ModCubic_2 = \NormCubic_2 /
\langle \zeta \rangle$ is a Euclidean cone with cone angle $2 \pi /5$.
Alternatively $\ModCubic_2$ is the affine chart of $\CP(5,3)$ in which
the first homogeneous coordinate is nonzero.

\subsection{Natural coordinates}

A \emph{natural coordinate} for a cubic differential $C$ is a local
coordinate $w$ on an open subset of $\C$ in which $C = 2 dw^3$.  (The
factor of $2$ here is not standard, but it simplifies calculations
later.)  Such a coordinate always exists locally away from the zeros
of $C$, because near such a point one can choose a holomorphic cube
root of $C$ and take
\begin{equation}
\label{eqn:developing-map}
w(z) = \int_{z_0}^z \left ( \tfrac{1}{2}C\right )^{1/3}.
\end{equation}
Up to adding a constant, every natural coordinate for $C$ has this
form.  Thus any two natural coordinates for $C$ differ by
multiplication by a power of $\omega = \exp(2 \pi i/3)$ and adding a
constant.

This local construction of a natural coordinate (and the integral
expression above) analytically continues to any simply-connected set
in the complement of the zeros of $C$ to give a \emph{developing map},
a holomorphic immersion that pulls back $2 dw^3$ to $C$.  The
developing map need not be injective, but any injective restriction of
it is a natural coordinate.

The metric $|C|^{2/3}$ defines a flat structure on $\C$ with
singularities at the zeros of $C$.  In a natural coordinate this is
simply the Euclidean metric $2^{2/3} |dw|^2$.  We call this the
\emph{flat structure} or \emph{flat metric} associated to $C$.  A zero
of $C$ of multiplicity $k$ is a cone point with angle
$\frac{2\pi}{3}(3 + k)$.  Straight lines in a natural coordinate are
geodesics of this metric.

\subsection{Half-planes and rays} \label{subsec:half-planes-rays}
We define a $C$-\emph{right-half-plane} to be a pair $(U,w)$ where $U
\subset \C$ is open and $w$ is a natural coordinate for $C$ that maps $U$
diffeomorphically to the right half-plane $\{ \Re w > 0 \}$.  Note
that $U$ then determines $w$ up to addition of a purely imaginary
constant.  Given a half-plane $(U,w)$ there is an associated family of
parallel right-half-planes $(U^{(t)},w^{(t)})$, $t \in \R^+$, defined by $w^{(t)}
= w - t$ and $U^{(t)} = w^{-1}( \{ \Re w > t \}) \subset U$.

A path in $\C$ whose image in a natural coordinate for $C$ is a
Euclidean ray with angle $\theta$ will be called a $C$\emph{-ray with
  angle $\theta$}.
Note that the angle is well-defined mod $2 \pi/3$.  In a
suitable natural coordinate, a $C$-ray has the parametric form $t \mapsto b
+ e^{i \theta} t$.

Similarly, a $C$-\emph{quasi-ray with angle $\theta$} is a path
that can be parameterized so that its image in a natural coordinate
$w$ has the form $t \mapsto e^{i \theta} t + \delta(t)$ where
$|\delta(t)| = o(t)$.

Before discussing the geometry of an arbitrary polynomial cubic
differential, we describe a configuration of $C$-rays and
$C$-right-half-planes for the cubic differential $C=z^d dz^3$ that we intend to generalize:  Consider the
``star'' formed by the $(d+3)$ Euclidean rays from the origin in $\C$,
$$ \star_d := \{ z \suchthat z^{d+3} \in \R^+ \} = \{ \arg z = 0 \: \text{ mod } 2
\pi/(d+3) \}.$$ Since there is a natural coordinate for $z^d dz^3$
that is a real multiple of $z^{(d+3)/3}$, these are also $z^d
dz^3$-rays with angle zero.

Now consider Euclidean sectors of angle $3 \pi / (d+3)$ centered on
each of the rays in $\star_d$; each such sector is naturally a $z^d
dz^3$-right-half-plane in which the corresponding ray of $\star_d$
maps to $\R^+$.  These sectors are pairwise disjoint except when
surrounding neighboring rays in $\star_d$, in which case they overlap
in a sector of angle $\pi/(d+3)$.  In particular this overlap, when
nonempty, maps by a natural coordinate to a sector in that coordinate of angle $\pi/3$.
Finally, we observe that each of these Euclidean sectors of 
angle $3 \pi / (d+3)$ is contained in the region
between its neighboring rays from $\star_d$.

Thus we have constructed a system $\{(U_k,w_k)\}_{k=0,\ldots,d+2}$ of
$z^ddz^3$-right-half-planes that cover $\C^*$ and which are neighborhoods
of the rays in $\star_d$, each neighborhood being disjoint from the other
rays.  Replacing these with the associated parallel half-planes
$(U_k^{(t)},w_k^{(t)})$, for some $t \in \R^+$, we have a collection
of ``eventual neighborhoods'' of the rays of $\star_d$ that cover all
but a compact set in $\C$.

\begin{figure}
\begin{center}
\includegraphics[width=\textwidth]{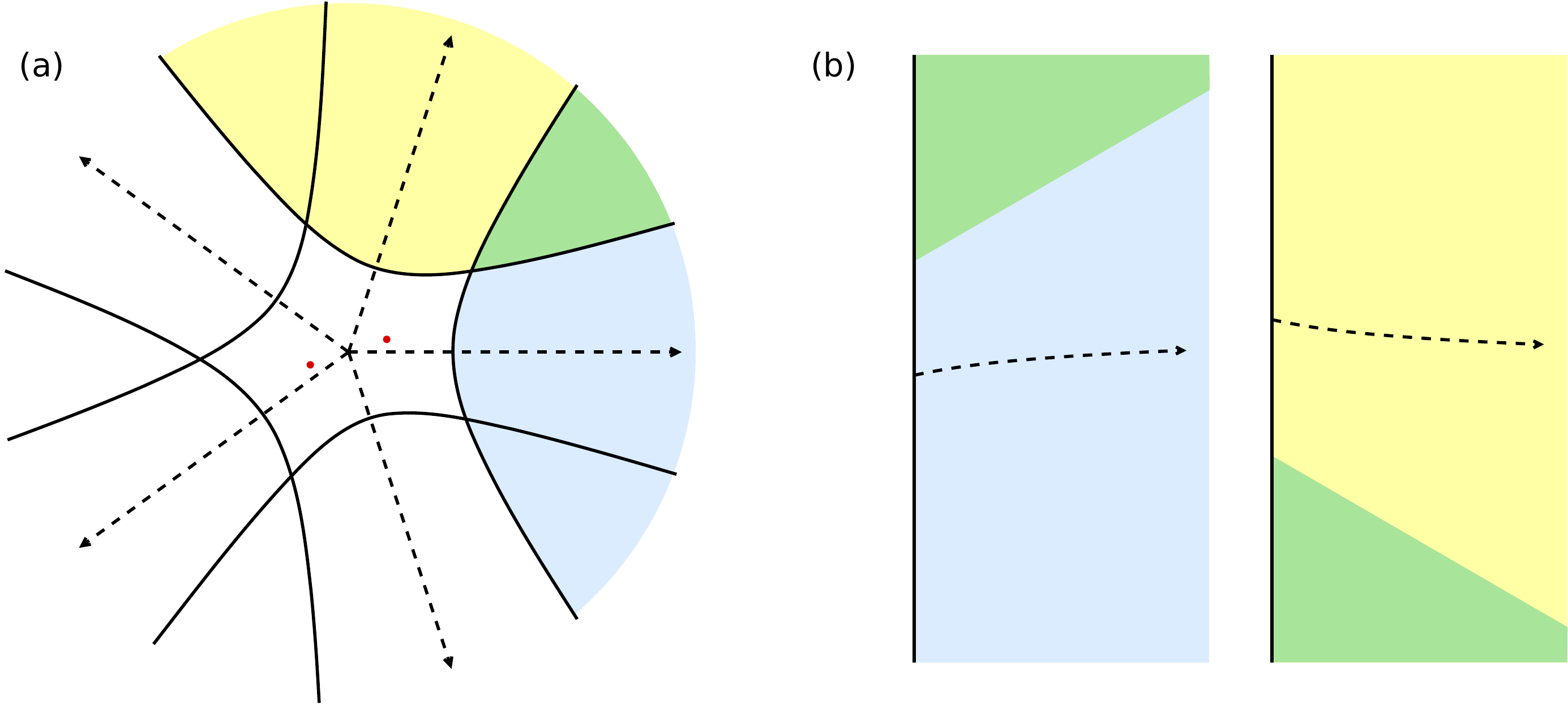}
\caption{(a) Covering a neighborhood of infinity by five $C$-right-half-planes
  for the differential $C=(z^2 - (3+i)^2)dz^3$.  (b) The edges of
  $\star_d$ (dashed) map to $C$-quasi-rays in the natural coordinates of
  these half-planes.}
\label{fig:halfplanes}
\end{center}
\end{figure}

In fact a collection of half-planes like this exists for any monic
polynomial cubic differential, except that the rays of $\star_d$ will
now only be \emph{quasi-rays} in their respective half-plane
neighborhoods.  Specifically, we have:

\begin{prop}[Standard half-planes]
\label{prop:standard-half-planes}
Let $C$ be a monic polynomial cubic differential.  Then there are
$(d+3)$ $C$-right-half-planes $\{ (U_k, w_k) \}_{k=0,\ldots,d+2}$ with the
following properties:
\begin{rmenumerate}
\item The complement of $\bigcup_k U_k$ is compact.

\item The ray $\{ \arg(z) = \frac{2 \pi k}{d+3} \}$ is
eventually contained in $U_k$.

\item The rays $\{ \arg(z) = \frac{2 \pi (k\pm 1)}{d+3} \}$ 
are disjoint from $U_k$.

\item On $U_k \cap U_{k+1}$ we have $w_{k+1} = 
\omega^{-1}
w_k +
c$ for some constant $c$, and each of $w_k, w_{k+1}$ maps this
intersection onto a sector of angle $\pi/3$ based at a point on $i \R$. (Recall $\omega = \exp(2
\pi i/3)$.)

\item Each ray of $\star_d$ is a $C$-quasi-ray of angle zero in the
associated half-plane $U_k$.  More generally any Euclidean ray in $\C$
is a $C$-quasi-ray and is eventually contained in $U_k$ for some $k$.
\end{rmenumerate}
\end{prop}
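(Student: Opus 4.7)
The plan is to build the $U_k$ as preimages of the right half-plane under carefully chosen branches of the natural coordinate, based on the observation that asymptotically this natural coordinate behaves like a $(d+3)/3$-th power of $z$. Specifically, since $C$ is monic of degree $d$, for $|z|$ larger than the modulus of every zero of $C$ we may expand $(\tfrac{1}{2}C(z))^{1/3} = 2^{-1/3}z^{d/3}(1+O(z^{-1}))$ on any simply connected subregion; integrating yields
$$w(z) = \alpha\, z^{(d+3)/3}\bigl (1+o(1) \bigr ), \qquad \alpha = \tfrac{3}{d+3}\cdot 2^{-1/3}.$$
Thus the natural coordinate is asymptotically a power map with $(d+3)$-fold monodromy around infinity.

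For each $k\in\{0,\ldots,d+2\}$, let $V_k$ be the intersection of $\{|z|>R\}$ with the open Euclidean sector of angular width slightly exceeding $3\pi/(d+3)$ centered on the $k$-th ray of $\star_d$, with $R$ chosen large enough that $V_k$ is simply connected and disjoint from the zeros of $C$. On $V_k$ choose a holomorphic branch of $z^{(d+3)/3}$, and let $w_k$ be the branch of the natural coordinate normalized so that $w_k(z)\sim\omega^{-k}\alpha\,z^{(d+3)/3}$, with the branch on the right chosen to be positive on the $k$-th ray of $\star_d$. The leading term then sends $V_k$ diffeomorphically onto a sector of opening slightly exceeding $\pi$ bisected by $\R^+$, which strictly contains the right half-plane. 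Enlarging $R$ if necessary so that the $o(1)$ correction is sufficiently small, $w_k$ itself is a diffeomorphism from a subset of $V_k$ onto the right half-plane, and we define $U_k = w_k^{-1}(\{\Re w > 0\})$.

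The first three properties are then immediate consequences of the asymptotic analysis: (i) the $V_k$ cover the complement of a disk, so the $U_k$ do too; (ii) the $k$-th ray of $\star_d$ is the bisector of $V_k$ and is mapped by the leading term of $w_k$ into $\R^+$, so it lies eventually in $U_k$; (iii) neighboring rays of $\star_d$ lie at angular distance $2\pi/(d+3) > 3\pi/(2(d+3))$ from the axis of $V_k$, hence outside $V_k$ and \emph{a fortiori} outside $U_k$. For (iv), both $w_k$ and $w_{k+1}$ are natural coordinates on the overlap, so $w_{k+1} = \zeta w_k + c$ for some cube root of unity $\zeta$, and comparing leading asymptotics forces $\zeta = \omega^{-1}$. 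The image $w_k(U_k\cap U_{k+1})$ is then the set where $\Re w > 0$ and $\Re(\omega^{-1}w + c) > 0$; using $\omega^{-1} = -\tfrac{1}{2} - \tfrac{\sqrt 3}{2}i$, this reduces to the sector of angular opening $\pi/3$ bounded by the line $\Re w = 0$ and the line $\sqrt{3}\,\Im w = \Re w - 2\Re c$, which meet at the point $(0,\,-2\Re c/\sqrt 3)\in i\R$. A symmetric calculation handles the image under $w_{k+1}$. For (v), any Euclidean ray at angle $\theta$ is eventually contained in some $V_k$, and after reparametrizing by $s=|\alpha| t^{(d+3)/3}$ its $w_k$-image takes the form $s\,e^{i\phi}+o(s)$, verifying the quasi-ray definition.

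The main technical step is the passage from the leading-order power model to the actual map $w_k$, in particular injectivity of $w_k$ on $U_k$ and surjectivity onto $\{\Re w > 0\}$. Injectivity follows because the angular width of $V_k$ is well under $6\pi/(d+3)$, so under the leading $(d+3)/3$-power the image has angular width under $2\pi$; the $o(1)$ correction preserves injectivity once $R$ is large. Surjectivity holds because the leading term already carries $V_k$ strictly over the right half-plane, so the correction cannot obstruct covering for $R$ large; if needed one may pass to a parallel half-plane $U_k^{(t)}$ in the final construction.
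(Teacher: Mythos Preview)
Your approach is essentially the same as the paper's: both construct $U_k$ as a preimage of the right half-plane under a suitably normalized branch of the natural coordinate, using the asymptotic $w \sim \alpha\, z^{(d+3)/3}$ (the paper names this leading term $\zeta_k$ and works in that intermediate coordinate, being somewhat more careful about injectivity via near-convexity, while you argue directly and do a more explicit sector computation for (iv)). One small slip: your normalization clause is self-contradictory --- if the branch of $z^{(d+3)/3}$ is already chosen positive on the $k$-th ray, no factor $\omega^{-k}$ is needed (that factor is what you would use with a single \emph{global} branch positive on $\R^+$); fix either convention and the rest goes through.
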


Figure \ref{fig:halfplanes} shows an example of the configuration of
half-planes given by this proposition.

Considering $C$ as a meromorphic differential on $\CP^1$, this
proposition describes the local structure of natural coordinates near
a higher-order pole, and in this formulation it is well-known.  For
example, the corresponding description of half-planes for a
meromorphic quadratic differential is given in
\cite[Sec.~10.4]{strebel:book}, and those arguments are easily adapted
to cubic differentials.  A proof of the proposition above is given in
Appendix \ref{appendix:half-planes}.

\section{Affine spheres and convex sets}
\label{sec:affine}

In this section, we briefly recall the definitions and results on
affine spheres necessary to prove our main results.  For more detailed
background material we refer the reader to \cite{nomizu-sasaki}
\cite{li-simon-zhao} \cite{loftin:survey}.

\subsection{Affine spheres} \label{subsec:affine sphere basics}

We consider locally strictly convex surfaces $M \subset \R^3$.  A
basic construction in affine differential geometry associates to such
a surface a transverse vector field $\xi \transverse M$, the
\emph{affine normal} field, which is equivariant with respect to
translations and the linear action of $\SL_3\R$.  An \emph{affine
  sphere} is a surface whose affine normal lines are concurrent at a
point (the \emph{center}).  By applying a translation we can
move the center of the affine sphere to the origin, in which case we have
$$ \xi(p) = -H p, \text{ for all } p \in M \subset \R^3$$
for some constant $H \in \R$, the \emph{mean curvature}.  We assume
this normalization of the center from now on.  We will consider only
\emph{hyperbolic} affine spheres, which are those with $H < 0$; by
applying a dilation such a sphere can be further normalized so that $H
\equiv -1$.

The second fundamental form of the convex surface $M$ (relative to the
transversal $\xi$) can be used to define an 
$\SL_3\R$-invariant Riemannian metric $h$, the \emph{Blaschke metric};
specifically, this metric is seen in the Gauss equation which
decomposes the flat connection of $\R^3$ into its tangential ($TM$)
and normal $(\R\xi)$ components:
$$ D_X Y = \nabla_X Y + h(X,Y) \xi, \;\;X,Y \in \Vect(M)$$
We then have two connections on $TM$: The tangential component
$\nabla$ of the flat connection, and the Levi-Civita connection
$\nabla^h$ of the Blaschke metric.  The difference $(\nabla -
\nabla^h)$ is a tensor of type $(2,1)$, and using the isomorphism $TM
\simeq T^*M$ induced by $h$ we have an associated cubic form $A$ on
$TM$, the \emph{Pick form} \cite{pick}.

We use the conformal class of the Blaschke metric to regard $M$ as a
Riemann surface.  Blaschke showed that for an affine sphere, the Pick
form $A$ is the real part of a holomorphic cubic differential $C =
C(z) dz^3$ \cite[p.~211]{blaschke}.  We call $C$ (which is uniquely
determined by $A$) the \emph{Pick differential} of the affine sphere.

All of the affine differential-geometric constructions above are
local and can therefore be applied to immersed (rather than embedded)
locally strictly convex surfaces in $\R^3$.  This gives the notion of
an \emph{affine spherical immersion} $f: M \to \R^3$ of a Riemann
surface $M$ into $\R^3$ and associated Blaschke conformal metric $h$
and Pick differential $C \in H^0(M,K_M^3)$.

We will refer to an affine spherical immersion as \emph{complete} if
the domain is complete with respect to the Blaschke metric.  (This
notion is sometimes called ``affine complete''.)  Completeness in this
sense has strong consequences:

\begin{thm}[A.-M.~Li \cite{li:calabi-conjecture-1} \cite{li:calabi-conjecture-2}]
\label{thm:completeness-equivalence}
If an affine spherical immersion $f : M \to \R^3$ is complete, then it
is a proper embedding, and its image is the boundary of an open convex set. \noproof
\end{thm}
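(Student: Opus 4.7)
The plan is to normalize so that the center of the affine sphere lies at the origin and the mean curvature is $H=-1$, so that the affine normal satisfies $\xi(p)=p$ for every $p\in f(M)$. Positive definiteness of the Blaschke metric $h$ is equivalent to strict local convexity of $f$: near each $q\in M$ the image $f(M)$ lies locally on one side of its affine tangent plane at $f(q)$. This is the local ingredient from which all the global conclusions must be assembled.

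\textbf{Properness via a subharmonic exhaustion.} The first nontrivial step is showing that $f$ is proper. I would produce a proper exhaustion function on $(M,h)$ by studying $u=|f|^2$. Differentiating $u$ twice and applying the Gauss formula $D_X Y = f_*\nabla_X Y + h(X,Y)\xi$ together with $\xi=f$, one finds
\[
\Delta_h u \;=\; 4\,|f|^2 \;+\; 2\,\tr_h g,
\]
where $g$ denotes the first fundamental form of $f$; the cross term involving the difference tensor $(\nabla-\nabla^h)$ drops out precisely because the Pick tensor is trace-free (apolarity). Hence $u$ is strictly subharmonic with $\Delta_h u \geq 4u$. On a complete Riemannian manifold this forces $u$ to be unbounded (for instance via an Omori--Yau type maximum principle), and in fact $|f(q)|\to\infty$ as $q$ exits any compact subset of $M$. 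Thus $f$ is proper.

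\textbf{Embedding and convexity.} Once $f$ is proper, $f(M)$ is a closed, properly immersed, locally strictly convex surface in $\R^3$. A van Heijenoort--Hadamard type theorem for complete locally convex hypersurfaces then implies that $f$ is in fact an embedding and that $f(M)=\partial\Omega$ for some open convex $\Omega\subset\R^3$. The relation $\xi=p$ together with the noncompactness of $M$ forces $\Omega$ to be unbounded with the origin in $\bar{\Omega}$, producing the asymptotic cone picture that is used throughout the rest of the paper.

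\textbf{Main obstacle.} The principal difficulty is extracting a Hessian or Laplacian inequality for $|f|^2$ that is uniform in the cubic form $C$, whose pointwise $h$-norm is in general unbounded on $M$. Apolarity of the Pick tensor is indispensable here: only after tracing does the $C$-dependent contribution drop out, leaving the sphere-condition term $4|f|^2$ to dominate. Once this uniform subharmonicity estimate is in hand, the maximum-principle argument giving properness and the van Heijenoort step delivering embedding and convexity are both classical.
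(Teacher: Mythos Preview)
First, a clarification about the paper: it does not give its own proof of this theorem. The statement is attributed to A.-M.~Li and marked with \verb|\noproof|; the paper only remarks afterward that ``the proof uses some of the estimates and the techniques developed by Cheng--Yau.'' So the relevant comparison is with Li's actual argument, not with anything in the paper.

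Your Laplacian computation is correct. With the normalization $\xi=f$, apolarity of the Pick tensor kills the difference-tensor contribution and one obtains
\[
\Delta_h |f|^2 \;=\; 4|f|^2 + 2\,\tr_h g \;\geq\; 4|f|^2,
\]
with $u=|f|^2>0$ everywhere since the center $0$ is not on the surface. Combined with the curvature lower bound $K(h)=-1+2|C|_h^2\geq -1$ coming from Wang's equation, the Omori--Yau maximum principle applies and shows that $u$ is \emph{unbounded}.

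The gap is the next sentence: ``and in fact $|f(q)|\to\infty$ as $q$ exits any compact subset of $M$.'' Unboundedness does not imply properness. On the flat plane, $u(x,y)=\cosh(2x)$ satisfies $\Delta u=4u$, is strictly positive and unbounded, yet equals $1$ along the entire $y$-axis. Nothing in your argument rules out the analogous behaviour for $|f|^2$ on $(M,h)$: a sequence diverging in the Blaschke metric along which $|f|$ stays bounded. Since the van Heijenoort step you invoke requires completeness of the \emph{induced Euclidean} metric (equivalently, properness of $f$), the argument stalls precisely here.

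This is exactly the hard step, and it is what Li's proof addresses by different means. As the paper indicates, Li uses Cheng--Yau type interior estimates to compare the Blaschke metric $h$ with the induced Euclidean metric $g$, showing that completeness of $h$ forces completeness of $g$; only then does the Hadamard--Sacksteder--van Heijenoort machinery apply. Your subharmonic-exhaustion idea is a natural first attempt, and the identity you derive is genuinely useful, but on its own it does not produce the metric comparison needed to pass from affine completeness to Euclidean completeness.
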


While stated here only in the $2$-dimensional case, the result of Li
applies to affine spheres of any dimension.  The proof uses some of
the estimates and the techniques developed by Cheng-Yau to prove a
fundamental existence theorem for affine spheres (stated below as
Theorem \ref{thm:cheng-yau}).  More recently, Trudinger and Wang
\cite{trudinger-wang:affine-complete} have shown that completeness of
the Blaschke metric gives the same conclusions as the theorem above
(properness, etc.) under much weaker conditions---rather than
requiring the immersion to be affine spherical immersion, local strict
convexity alone is enough.

Completeness of an affine sphere also implies that the Blaschke metric
is nonpositively curved.  This was first proved (as a statement about
non-positive Ricci curvature for hyperbolic affine spheres of
dimension at least two) by Calabi \cite{calabi1972}, using a
differential inequality on the norm of the Pick differential.  A
different proof was given by Benoist-Hulin
\cite{benoist-hulin:finite-volume} using Wang's equation (see
\eqref{eqn:wang} below).  Li-Li-Simon \cite{Li-Li-Simon} show (by
techniques in the spirit of those of Calabi) that if the curvature
vanishes at a point, then it is identically zero and the surface is
projectively equivalent to a specific example, the \titeica surface,
which discussed in more detail below.  Summarizing, we have:

\begin{thm}[{\cite{calabi1972},
\cite{benoist-hulin:finite-volume}; \cite{Li-Li-Simon}}] 
\label{thm:npc} 
The Blaschke metric of a complete hyperbolic affine sphere $M$ has
nonpositive curvature.  In fact, the curvature is either strictly
negative or identically equal to zero, and in the latter case the
affine sphere is $\SL_3\R$-equivalent to a surface of the form
$x_1x_2x_3 = c$ for some nonzero constant $c$. \noproof
\end{thm}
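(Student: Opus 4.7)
The plan is to deduce all three claims from Wang's equation. In a local conformal coordinate with $h = e^{2u}|dz|^2$ and $C = c(z)\,dz^3$, the affine structure equations for a hyperbolic affine sphere reduce to
$$K \: = \: -1 + |C|_h^2,$$
so I would first combine this with the harmonicity of $\log|c|$ away from the zeros of $C$ and the conformal Gauss formula $K = -e^{-2u}\Delta u$ to derive the elliptic identity
$$\Delta_h \log |C|_h^2 \: = \: 6\bigl(|C|_h^2 - 1\bigr) \: = \: 6K$$
on $M \setminus C^{-1}(0)$. The three assertions then amount, respectively, to the global bound $|C|_h^2 \leq 1$, a strong-maximum-principle dichotomy at points where equality holds, and an explicit integration of the structure equations in the equality case.

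For nonpositivity of $K$, I would argue by contradiction on $\Omega = \{|C|_h^2 > 1\}$. The identity above makes $\log|C|_h^2$ strictly subharmonic on $\Omega$ with $\log|C|_h^2 = 0$ on $\bdry \Omega$, so Hopf's maximum principle rules out an interior maximum. The essential obstacle, and the only place the completeness hypothesis on $h$ is decisive, is ruling out that the supremum is attained only at infinity. Here I would invoke the Omori--Yau maximum principle at infinity, whose Ricci lower bound hypothesis is satisfied for free since Wang's equation gives the pointwise bound $K \geq -1$; the resulting sequence $p_n \in \Omega$ with $|C|_h^2(p_n) \to \sup_\Omega |C|_h^2$ and $\limsup \Delta_h \log|C|_h^2(p_n) \leq 0$ forces $\sup_\Omega |C|_h^2 \leq 1$, a contradiction. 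A natural alternative more in the spirit of this paper is to compare $u$ to the super-solutions of the coupled vortex equation developed in Section \ref{sec:vortex} and conclude via the sub/super-solution machinery used there.

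For the dichotomy, suppose $K(p_0) = 0$ for some $p_0 \in M$. Then $|C|_h^2(p_0) = 1$, so $p_0 \notin C^{-1}(0)$ and $\phi := \log|C|_h^2 \leq 0$ attains its global maximum $0$ at the interior point $p_0$. Rewriting $\Delta_h \phi = 6(e^\phi - 1)$ as a linear elliptic equation with bounded coefficient via the mean value theorem puts us in the setting of Hopf's strong maximum principle, which forces $\phi \equiv 0$ on a neighborhood of $p_0$; the set $\{K=0\}$ is therefore open as well as closed, and so equal to all of $M$. Finally, when $K \equiv 0$ the identity $|C|_h^2 \equiv 1$ shows that $C$ vanishes nowhere and $h$ is flat. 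A natural coordinate $w$ on the simply connected cover then trivializes both $C = 2\,dw^3$ and the Blaschke metric (up to a constant scale), so the affine structure equations reduce to a constant-coefficient $\mathfrak{sl}_3\R$-valued ODE system on $\C$ which can be solved in closed form; comparing the resulting embedding to the classical \titeica parametrization identifies it, up to the action of $\SL_3\R$, with a surface of the form $x_1 x_2 x_3 = c$.
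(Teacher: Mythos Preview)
The paper does not supply its own proof of this theorem; it is stated with the \verb|\noproof| marker and attributed to the cited references. The surrounding discussion sketches the history: Calabi obtained nonpositivity via a differential inequality on the norm of the Pick form, Benoist--Hulin reproved it from Wang's equation, and Li--Li--Simon established the rigidity dichotomy. Your proposal is essentially a clean synthesis of these three approaches---you derive the Calabi-type identity $\Delta_h \log|C|_h^2 = 6(|C|_h^2 - 1)$ directly from Wang's equation, then run maximum-principle arguments---so there is nothing in the paper to compare against beyond noting that your outline matches the cited literature.

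One genuine gap worth fixing: before invoking the Omori--Yau maximum principle you must know that $\sup_M |C|_h^2 < \infty$, since Omori--Yau applies only to functions bounded above. Your contradiction argument on $\Omega = \{|C|_h^2 > 1\}$ tacitly assumes a finite supremum when you produce the sequence $p_n$, but nothing you have written so far excludes $\sup_\Omega |C|_h^2 = +\infty$. The standard repair is the Cheng--Yau differential-inequality theorem (their Theorem~8, invoked elsewhere in this paper in the proof of Theorem~\ref{thm:uniqueness}): on a complete manifold with $K \geq -1$, the inequality $\Delta_h \phi \geq 6(e^\phi - 1)$ with superlinear right-hand side forces $\phi = \log|C|_h^2$ to be bounded above. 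With that step inserted, the rest of your argument---the Omori--Yau contradiction, the strong-maximum-principle dichotomy, and the explicit integration in a global natural coordinate on the complete flat simply connected $M \simeq \C$---is correct.
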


\subsection{Frame fields}

In much the same way that a surface immersed in Euclidean space can be
locally reconstructed (up to an ambient Euclidean isometry) from its
first and second fundamental forms, an affine spherical immersion can
be recovered (up to the action of $\SL_3\R$) from the data of its
Blaschke metric and Pick differential.  In both cases one can consider
this reconstruction as the integration of a connection $1$-form with
values in a Lie algebra to obtain a suitable frame field on the
surface.

To describe the relevant integration process for an affine sphere, we
introduce the complexified frame field $F$ of an affine spherical
immersion $f : M \to \R^3$,
$$ F = \left ( f \; f_z \; f_{\bar{z}} \right ) \in \GL_3\C.$$
In fact, since $f$ is real-valued, the frame $F$ takes values in a
fixed right coset of $\GL_3\R$ within $\GL_3\C$.  Our standing assumption
that $f$ is normalized to have center at the origin means that $f$ is
proportional to its affine normal $\xi$, hence the components of this
frame give both the affine normal direction and a complex basis for
the tangent space.  Here and throughout we use $z$ to denote a local
conformal coordinate for the Blaschke metric. 

Following Wang \cite{wang} and Simon-Wang \cite{simon-wang}, the Gauss
and Weingarten structure equations for an affine spherical immersion
can be stated in terms of the Darboux derivative $F^{-1} dF$ of the
frame field.  Writing $h= e^{u}|dz|^2$ and $C=C(z) dz^3$, the frame
field of an affine sphere with mean curvature $H \equiv -1$ satisfies
\begin{equation} \label{eqn:structure}
F^{-1} dF =
\begin{pmatrix}
0 & 0 & \frac{1}{2}e^u\\
1 & u_z & 0\\
0 & C e^{-u} & 0
\end{pmatrix}
dz + 
\begin{pmatrix}
0 & \frac{1}{2}e^u & 0\\
0 & 0 & \bar{C} e^{-u}\\
1 & 0 & u_{\bar{z}}
\end{pmatrix}
d\bar{z}.
\end{equation}

The integrability of this $\gl_3\C$-valued form is equivalent to two
additional (structure) equations on $u$ and $C$:

\begin{equation}
C_{\bar{z}} =0 \label{eqn:PickHolomorphic}
\end{equation} 
\begin{equation}
\Delta u = 2 \exp(u) - 4 |C|^2 \exp(-2u) \label{eqn:wang}
\end{equation}

The first equation simply requires the cubic differential $C$ to be
holomorphic.  In the second equation \eqref{eqn:wang}, the (flat) Laplacian
$\Delta$ is the operator $\Delta = 4\frac{\partial^2}{\partial
  z \partial \bar{z}}$.  This nonlinear condition can be seen as
imposing a relationship between the curvature of the Blaschke metric
and the norm of the holomorphic cubic differential.  More precisely
\eqref{eqn:wang} can be written as
\begin{equation}
\label{eqn:wang-intrinsic}
 K(h) = -1 + 2 |C|_h^2,
\end{equation}
where $K(h)$ denotes the Gaussian curvature function of the conformal
metric $h = e^u |dz|^2$, and $|C|_h = |C| e^{-\frac{3}{2} u}$ is the
pointwise $h$-norm of the cubic differential $C$.

Condition \eqref{eqn:wang} (or its equivalent formulation
\eqref{eqn:wang-intrinsic}) is referred to in the affine sphere
literature as \emph{Wang's equation}.  In the paper \cite{wang},
C.~P.~Wang studied its solutions to develop an intrinsic theory of
affine spheres invariant under a cocompact group of automorphisms. It
is also a variant of the \emph{vortex equation} appearing in the gauge
theory literature (see e.g. \cite{jaffe-taubes}), as explained in the
next section.  Labourie has interpreted (in
\cite[Sec.~9]{labourie:flat-projective}) this equation as an instance
of Hitchin's self-duality equations for a rank-$3$ real Higgs bundle
$(E,\Phi)$ over $M$ with trivial determinant.  In this perspective the
Higgs field $\Phi$ is determined by the cubic differential $C$, a
unitary connection $A$ on $E$ comes from the metric $h$, and
integration of \eqref{eqn:structure} corresponds to finding a local
horizontal trivialization of the associated flat connection $A + \Phi
+ \Phi^*$.

Section \ref{sec:vortex} below is devoted to a study of solutions to
equation \eqref{eqn:wang-intrinsic} for polynomial cubic differentials
on $\C$, and to a generalization to polynomial holomorphic
differentials of any degree.  These PDE results are applied in
subsequent sections to construct the mapping of moduli spaces that
is the subject of the main theorem.

\subsection{Monge-Ampere, the Cheng-Yau Theorem and estimates}

As we noted in the discussion of the completeness of affine spheres in
Section~\ref{subsec:affine sphere basics}, the seminal existence
result for hyperbolic affine spheres is due to Cheng-Yau
\cite{cheng-yau}, with some clarifications on the notions of
completeness (see the last sentence of the statement below) due to
Gigena \cite{gigena}, Sasaki\cite{sasaki}, and Li
\cite{li:calabi-conjecture-1} \cite{li:calabi-conjecture-2}. (The book
\cite{li-simon-zhao} gives a comprehensive and coherent account of
this theory.)  The Cheng-Yau result says that hyperbolic affine
spheres of a given mean curvature in $\R^3$ correspond to properly
convex sets in $\RP^2$.

\begin{thm}[Cheng-Yau \cite{cheng-yau:complete-affine}] 
\label{thm:cheng-yau}
Let $\cone \subset \R^3$ denote an open convex cone which
contains no lines.  Then there is a unique complete hyperbolic affine
sphere $M \subset \R^3$ of mean curvature $H=-1$ which is asymptotic
to $\partial \cone$.

On the other hand, any complete affine sphere $M \subset \R^3$ with
center $0$ is asymptotic to the boundary of such a convex cone; this
cone can be described as the convex hull of $M \cup \{0\}$. \noproof
\end{thm}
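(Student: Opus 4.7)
The plan is to convert the geometric problem into a Dirichlet problem for a singular real Monge-Ampère equation on the cross-section of the cone, solve that PDE by the continuity method, and then read off completeness, uniqueness, and the converse from the analytic information. First I would fix an affine hyperplane $H \subset \R^3$ meeting $\cone$ in a bounded properly convex open domain $\Omega$; such an $H$ exists precisely because $\cone$ contains no line. Representing the sought-after affine sphere as a radial graph over $\Omega$ and using the classical duality between a centered convex hypersurface and its support function, the condition ``complete hyperbolic affine sphere of mean curvature $-1$ asymptotic to $\partial\cone$'' becomes the Dirichlet problem
\begin{equation*}
\det(D^2 u) = \left(\frac{-1}{u}\right)^{4} \text{ on } \Omega, \qquad u|_{\partial\Omega} = 0,
\end{equation*}
for a convex function $u : \Omega \to (-\infty,0)$. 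This reformulation is $\SL_3\R$-equivariant, so solving it once for each $\Omega$ is enough.

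Next I would solve this Dirichlet problem. For smoothly bounded, uniformly convex $\Omega$ one runs the continuity method along a path of smooth strictly convex domains joining $\Omega$ to a Euclidean ball, for which an explicit solution can be read off from the \titeica example $x_1 x_2 x_3 = \mathrm{const}$. Openness follows from the implicit function theorem; closedness requires uniform $C^{2,\alpha}$ estimates along the family. Interior estimates are of Pogorelov type, exploiting convexity of $u$ and the shape of the right-hand side. The main obstacle is boundary behavior: the right-hand side blows up as $u \to 0$, so barriers must be constructed that match the correct vanishing rate of $u$ near $\partial\Omega$, which is a definite fractional power of $\dist(\cdot,\partial\Omega)$. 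For a general properly convex $\Omega$—which is the case that actually concerns the present paper, since the cross-section of a cone over a polygon is only a polygon—I would approximate by smoothly bounded strictly convex $\Omega_n \nearrow \Omega$ with solutions $u_n$, and extract a limit using the interior estimates.

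Once $u$ has been obtained, the associated radial graph $M$ is a smooth affine spherical immersion, and the quantitative vanishing rate of $u$ at $\partial\Omega$ forces the Blaschke metric $h$ on $M$ to be complete (the induced length of any path escaping every compact subset of $\Omega$ diverges). For uniqueness, two solutions $u_1,u_2$ of the Dirichlet problem have identical boundary values and satisfy the same Monge-Ampère equation, whose strict monotonicity in $u$ yields a comparison principle that forces $u_1 \equiv u_2$ on $\Omega$. For the converse direction, let $M$ be any complete hyperbolic affine sphere centered at $0$; Theorem~\ref{thm:completeness-equivalence} supplies proper embeddedness, so $M$ bounds a convex open set and the convex hull $\cone$ of $M \cup \{0\}$ is open and convex. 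Proper embeddedness together with the centered normalization $\xi(p)=p$ rules out any affine line in $\cone$ (such a line would force $M$ to contain limit points along one of its rays, contradicting properness), and by construction $M$ is asymptotic to $\partial\cone$, since each ray from $0$ into $\cone$ crosses $M$ exactly once and these crossing points exit every compact set as the ray approaches $\partial\cone$.
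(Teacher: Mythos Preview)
The paper does not prove this theorem at all: it is stated with the \verb|\noproof| marker and attributed to Cheng--Yau (with subsequent clarifications by Gigena, Sasaki, and Li noted in the surrounding discussion). So there is no proof in the paper to compare your proposal against; the authors simply quote the result as background.

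That said, your sketch is broadly in line with the original Cheng--Yau approach, and the paper itself confirms the Monge--Amp\`ere reformulation you use (the support function satisfies $\det(\Hess u)=u^{-4}$ with zero boundary data). One factual slip: the explicit solution over a Euclidean ball is the hyperboloid, not the \titeica surface---the latter sits over a simplex, as the paper explains in its discussion of fundamental examples. Also, your converse argument leans on Theorem~\ref{thm:completeness-equivalence}, but historically the asymptotic-cone statement required separate work (the papers of Gigena, Sasaki, and Li cited in the text), so invoking proper embeddedness alone is a bit thin; the actual argument needs control on how the surface approaches the cone boundary, not just that it is properly embedded.
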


The Cheng-Yau theorem emerges from an approach to affine differential
geometry through the analysis of nonlinear PDE of Monge-Ampere type.
This approach is quite different from the frame field integration
methods described above, and some of the estimates on affine invariants
that come from the Monge-Ampere theory will be used in subsequent
sections.  Therefore, we will now briefly review the basics of this
approach before formulating the estimates we need.

Cones $\cone$ of the type considered in the Cheng-Yau theorem
above correspond to properly convex open sets $\Omega \subset \RP^2$.
(\emph{Properly convex} means that the set can be realized as a
bounded convex subset of an affine chart.)  Given $\cone$, we
define $\Omega \subset \RP^2$ to be the set of lines through the
origin in $\R^3$ that intersect $\cone$ nontrivially (and hence
in a ray). Conversely, the union of lines corresponding to points
of a properly convex set $\Omega$ gives a ``double cone'' $\cup
\Omega$ in which the origin is a cut point.  Removing one of the two
sides of the origin gives a convex cone containing no line, to which
the Cheng-Yau theorem applies.

In fact, applying the Cheng-Yau theorem to either side of the double
cone $\cup \Omega$ gives essentially the same affine sphere; like the
cones themselves, the spheres are related by the antipodal map $p
\mapsto -p$.  Up to this ambiguity, one can therefore think of
complete affine spheres in $\R^3$ as being parameterized by properly convex
open sets in $\RP^2$.

Using this correspondence, we can consider any complete affine sphere
in $\R^3$ as being parameterized as a ``radial graph'' over its
corresponding projection to $\RP^2$.  More precisely, if we consider
$\Omega$ as a subset of an affine chart which we identify with the
plane $\{ (x,y,1) \in \R^3\}$, then the point of the affine sphere
that lies on the ray through $(x,y,1)$ has the form
$$ - \frac{1}{u(x,y)} \cdot (x,y,1) $$
where $u = u(x,y)$ is a certain real, negative function on $\Omega$.
We call $u = u_\Omega$ the \emph{support function} that defines the affine
sphere.  Since the surface is properly embedded and asymptotic to the
boundary of the cone, we have $u \to 0$ on the boundary of $\Omega$.
Moreover, the condition that the surface is an affine sphere becomes a
Monge-Ampere equation that the support function must satisfy:
$$ \det(\Hess(u)) = u^{-4}.$$
The Cheng-Yau theorem is established by studying the Dirichlet problem
for this equation (and its generalization to higher dimensions).

Following this approach, Benoist and Hulin use the maximum principle
and interior estimates of Cheng-Yau to show that the support function
and its derivatives depend continuously on the convex domain, in a
sense which we now describe.

Let $\frak{C}_*$ denote the set of pointed properly convex open sets
in the real projective plane $\RP^2$, i.e.
$$\frak{C}_*= \{(\Omega,x) \suchthat \Omega \subset \R^2 \subset
\RP^2 \text{ open, bounded, and convex}, x \in \Omega\}.$$ We equip
$\frak{C}_*$ with the product of the Hausdorff topology on closed sets
$\bar{\Omega}$ and the $\RP^2$ topology.  Similarly let $\frak{C}$
denote the set of open properly convex sets with the Hausdorff
topology.  Then we have:

\begin{thm}[{Benoist and Hulin \cite[Cor.~3.3]{benoist-hulin:finite-volume}}] \label{thm:support-function-continuous}
For any $k \in \N$, the $k$-jet of the support function $u_\Omega$ at
$p \in \Omega$ is continuous as a function of $(\Omega,p) \in
\frak{C}_*$.

More generally, the restriction of the support function and its
derivatives to a fixed compact set depend continuously on the domain.
That is, consider a properly convex open set $\Omega \subset \RP^2$, a
compact set $K \subset \Omega$, and a neighborhood $U$ of $\Omega$ in
$\frak{C}$ small enough so that $K \subset \Omega'$ for all $\Omega'
\in U$.  Then the restriction of the support function to $K$ varies
continuously in the $C^k$ topology as a function of $\Omega \in U$.
\end{thm}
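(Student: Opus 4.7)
The plan is to combine the Cheng--Yau interior estimates for the Monge--Amp\`ere equation $\det(\Hess u) = u^{-4}$ with a uniqueness/maximum-principle argument to pass subsequential limits to the unique Cheng--Yau solution. Throughout, I would reduce both statements in the theorem to the single claim that whenever $\Omega_n \to \Omega$ in the Hausdorff topology, the support functions $u_n := u_{\Omega_n}$ converge to $u_\Omega$ in $C^k_{\mathrm{loc}}(\Omega)$ for every $k$; continuity of the $k$-jet at a moving point $p_n \to p$ then follows by evaluating derivatives at $p_n$ inside a small neighborhood on which $C^k$ convergence holds.

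First I would set up compactness. Fix a compact $K \subset \Omega$. By Hausdorff convergence $K \subset \Omega_n$ for all large $n$, with $\dist(K, \partial \Omega_n)$ bounded below. Sandwich the $\Omega_n$ between fixed convex sets $\Omega_- \Subset \Omega_n \Subset \Omega_+$; the Cheng--Yau solutions satisfy a maximum principle (larger domain yields smaller support function in absolute value), so $|u_n|$ is uniformly bounded above and below on $K$. The interior a priori estimates of Cheng--Yau for the Monge--Amp\`ere equation $\det(\Hess u_n) = u_n^{-4}$ then give bounds on $\|u_n\|_{C^k(K)}$ in terms of $\sup_K |u_n|$, $\inf_K |u_n|$, and $\dist(K, \partial \Omega_n)$. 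These bounds are therefore uniform in $n$.

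Next I would extract a limit. A standard Arzel\`a--Ascoli/diagonal argument over an exhaustion $K_1 \subset K_2 \subset \cdots \subset \Omega$ produces a subsequence along which $u_n \to v$ in $C^k_{\mathrm{loc}}(\Omega)$ for every $k$. The limit $v$ is $C^\infty$, negative, convex in each affine chart, and satisfies $\det(\Hess v) = v^{-4}$ on $\Omega$. The crucial step is identifying the boundary behavior: one must show that $v(x) \to 0$ as $x \to \partial \Omega$. For this I would use barriers built from the support functions of circumscribing convex sets: for any $q \in \partial \Omega$ and any $\varepsilon > 0$, choose convex sets $\Omega_\pm$ with $\Omega_- \subset \Omega_n \subset \Omega_+$ for large $n$ and with $|u_{\Omega_\pm}|$ small near $q$; passing the two-sided inequality $|u_{\Omega_+}| \le |u_n| \le |u_{\Omega_-}|$ to the limit sandwiches $|v|$ by a quantity that goes to zero at $q$. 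Uniqueness in the Cheng--Yau theorem then forces $v = u_\Omega$, and since the limit is independent of the subsequence, the full sequence converges in $C^k_{\mathrm{loc}}(\Omega)$.

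The main obstacle is the boundary barrier step, particularly at points of $\partial \Omega$ that are not strictly convex (for example, vertices or flat edges of a polygon, which is exactly the setting we care about later): one must verify that support functions of suitable nearby convex sets actually vanish uniformly near such boundary points, so that the sandwich really does pin $v$ to zero at $\partial \Omega$. Once that is handled the $C^k_{\mathrm{loc}}$ convergence is immediate, and the continuity of the $k$-jet at a varying basepoint $p_n \to p$ follows because the $k$-th derivatives of $u_n$ converge uniformly on a compact neighborhood of $p$, and evaluating a uniformly continuous function at a convergent sequence of points is continuous.
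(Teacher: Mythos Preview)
The paper does not give its own proof of this theorem; it is quoted from Benoist--Hulin \cite[Cor.~3.3]{benoist-hulin:finite-volume}, with only the remark afterward that their argument already establishes $C^k$ convergence on compacta, from which pointwise jet continuity follows by evaluation. Your outline---maximum-principle sandwich for two-sided $C^0$ bounds, Cheng--Yau interior estimates for higher derivatives, Arzel\`a--Ascoli on an exhaustion, then identification of the limit via uniqueness of the Cheng--Yau solution---is exactly that standard argument, so there is nothing substantive to compare.

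On the obstacle you flag: the worry about non--strictly-convex boundary points is less serious than you make it sound, because the outer barriers $\Omega_+$ can always be taken to be \emph{triangles}, whose support functions are explicit (in this paper, the \titeica surface). For $q \in \partial\Omega$ with supporting line $\ell$, any large triangle $T$ with one side on a line parallel to $\ell$ and slightly outside $\Omega$ contains all $\Omega_n$ for $n$ large, and $|u_T|$ is explicitly small near that side, hence near $q$. This dissolves the potential circularity in your sketch (where controlling $u_{\Omega_\pm}$ near $q$ would itself seem to require the continuity being proved) and works without change at corners and along flat edges.
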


\begin{remark}
The statement of Corollary 3.3 in \cite{benoist-hulin:finite-volume} involves only
the $k$-jet at a point.  However, their proof also establishes the 
uniform $C^k$ continuity on a compact subset that we have included
in the theorem above.  In fact, they derive the pointwise $k$-jet continuity
from the $C^k$ continuity on compacta.
\end{remark}

As pointwise differential invariants, the Blaschke metric and the Pick
form can be computed from derivatives of the support function.
In fact, the Blaschke metric at a point is determined by the $2$-jet of the
support function at that point, and the Pick differential by the $3$-jet.
Therefore, the continuous variation statement above also yields:

\begin{cor}
\label{cor:blaschke-and-pick-continuous}
The Blaschke metric and the Pick differential of the affine sphere over a
properly convex domain $\Omega$ depend continuously on the domain, in
the same sense considered in Theorem
\ref{thm:support-function-continuous} (i.e.~pointwise or on a
fixed compact subset).  \noproof
\end{cor}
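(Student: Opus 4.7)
My proposal is to derive the corollary directly from Theorem \ref{thm:support-function-continuous} by expressing the Blaschke metric and the Pick differential as universal pointwise formulas in the jets of the support function, and then invoking continuous dependence of those jets on the domain.

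First I would recall the radial graph parameterization from the preceding discussion: after identifying $\Omega$ with a bounded convex set in an affine chart $\{(x,y,1)\}\subset\R^3$, the affine sphere is the image of
\[
F : \Omega \to \R^3, \qquad F(x,y) = -\frac{1}{u_\Omega(x,y)}(x,y,1),
\]
where $u_\Omega<0$ is the support function. The pullback of the affine differential-geometric invariants under $F$ gives tensors on $\Omega$ that are universal differential expressions in $u_\Omega$ and its derivatives, because the construction of the affine normal, second fundamental form, Blaschke metric, and Pick form is local and $\SL_3\R$-equivariant (and hence insensitive to the affine chart used). Concretely, the Blaschke metric is conformal to the second fundamental form of $F$ relative to the affine normal, and the Blaschke metric and the Pick form depend only on the $2$-jet and $3$-jet of $u_\Omega$, respectively. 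I would either quote the explicit Monge--Amp\`ere-style formulas available in \cite{loftin:survey} or \cite{nomizu-sasaki}, or simply observe that the standard construction reads off $h$ and $C$ from finitely many partial derivatives of $u_\Omega$ and from $u_\Omega$ itself (which never vanishes on $\Omega$), so the output is a rational expression in the $3$-jet of $u_\Omega$ with denominator bounded away from zero on any compact $K\subset\Omega$.

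Next, given $\Omega$, a compact $K\subset\Omega$, and a neighborhood $U$ of $\Omega$ in $\frak{C}$ as in Theorem \ref{thm:support-function-continuous} such that $K\subset\Omega'$ for all $\Omega'\in U$, the theorem gives that $u_{\Omega'}|_K$ varies continuously in the $C^k$ topology for every $k$. Since $h$ and $C$ are continuous (in fact rational) functions of the jets of $u_{\Omega'}$ on $K$ with non-vanishing denominator, their values on $K$ also vary continuously in the $C^{k-2}$ and $C^{k-3}$ topologies respectively, as $\Omega'\to\Omega$ in $U$. This establishes the uniform $C^k$ continuity on a compact subset. Specializing $K=\{p\}$ and using the pointwise $k$-jet continuity statement then yields that the $k$-jet of $h$ and of $C$ at $p\in\Omega$ depends continuously on $(\Omega,p)\in\frak{C}_*$.

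The only substantive point I would need to verify carefully is the claim that $h$ and $C$ are genuinely rational in the jets of $u_\Omega$ with denominator controlled on compacta; once that is in hand the proof is a two-line composition. There is no real analytic obstacle, because the potential singularity $u_\Omega=0$ occurs only on $\partial\Omega$ and is kept away from $K$ by the choice of $U$. I would therefore present the proof as: write $h$ and $C$ explicitly in terms of $u_\Omega$ and its derivatives via the radial graph formula, note that the resulting expressions are continuous in the $3$-jet on any compact subset of $\Omega$, and apply Theorem \ref{thm:support-function-continuous} to conclude.
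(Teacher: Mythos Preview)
Your proposal is correct and is exactly the argument the paper gives: the corollary is stated with \verb|\noproof| because the preceding sentence already notes that the Blaschke metric and Pick differential are determined pointwise by the $2$-jet and $3$-jet of the support function, so continuity is immediate from Theorem~\ref{thm:support-function-continuous}. Your more detailed justification (rational dependence on jets with denominator $u_\Omega$ bounded away from zero on compacta) simply spells out what the paper leaves implicit.
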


\subsection{Fundamental examples}

Either connected component of a two-sheeted hyperboloid in $\R^3$ is a
hyperbolic affine sphere.  The center is the vertex of the cone to
which the hyperboloid is asymptotic, and the corresponding
convex domain in $\RP^2$ is bounded by a conic.  The \bla metric is
the hyperbolic metric, considering the hyperboloid as the Minkowski
model of $\H^2$, and the Pick differential vanishes identically.  This affine
sphere is homogeneous, in that it carries a transitive action of a
subgroup of $\SL_3\R$, in this case conjugate to $\mathrm{SO}(2,1)$.

Another homogeneous affine sphere, having nonzero Pick differential, will play
an essential role in our proof of the main theorem.  For any nonzero
real constant $c$, each connected component of the surface $x_1 x_2
x_3 = c$ in $\R^3$ is a hyperbolic affine sphere centered at the
origin \cite{titeica}; the mean curvature of this surface is constant, depending on
$c$.  We call any surface that is equivalent to one of these under the
action of $\SL_3\R$ (and hence also an affine sphere) a \emph{\titeica
  surface}.  Each such surface carries a simply transitive action of a
maximal torus of $\SL_3\R$; in the case of $x_1 x_2 x_3 = c$, it is
the diagonal subgroup.

We now introduce a parameterization of a \titeica surface that will be
used extensively in Section \ref{sec:polynomials-to-polygons}.  Let
$\h : \C \to \mathfrak{sl}_3\R$ be the map defined by
\begin{equation}  \label{eqn:hdef}
 \h(z) = \begin{pmatrix}
 2 \Re(z) & 0 & 0\\
0 & 2 \Re(z / \omega) & 0\\
0 & 0 & 2\Re(z / \omega^2)
\end{pmatrix},
\end{equation}
where $\omega = e^{2 \pi i / 3}$. 
Let 
\begin{equation} \label{eqn:Hdef}
H(z) = \exp(\h(z))
\end{equation}
denote the associated map to the diagonal
subgroup of $\SL_3\R$.  Then we obtain a parameterization of the
component of $x_1 x_2 x_3 = c$ in the positive octant by the orbit map
\begin{equation}\label{eqn:norm-titeica-def}
T(z) = H(z) \cdot \frac{1}{c^{1/3}}(1,1,1) 
= \frac{1}{c^{1/3}} (e^{2 \Re(z)}, e^{2 \Re (z/\omega)}, e^{2 \Re(z/\omega^2)}).
\end{equation}

Taking $c = \frac{1}{3 \sqrt{3}}$ gives a surface with mean curvature
$H = -1$.  In terms of the parameterization above, the Blaschke metric
of this affine sphere is $e^u |dz|^2 = 2 |dz|^2$ and the Pick differential is
$C = 2dz^3$.  We call this parameterized affine sphere $T$ the
\emph{normalized \titeica surface}.

Let $F_T$ denote the frame field of the normalized \titeica surface $T$.
By homogeneity under the action of the diagonal group we have
$$ F_T(z) = H(z) \cdot F_T(0).$$

Because they correspond to complete, flat Blaschke metrics and
constant, nonzero Pick differentials, the \titeica surfaces are
natural comparison objects for any class of affine spheres with small
(or decaying) Blaschke curvature. Later we will see that affine
spheres corresponding to polynomial cubic differentials on $\C$ have
this behavior at infinity.

Because it is asymptotic to the boundary of the positive octant in
$\R^3$, projecting the normalized \titeica surface to $\RP^2$ gives an
open set $\P(T)$ that is the interior of a triangle whose vertices
correspond to the coordinate axes; we denote this triangle by
$\triangle_0$, and its vertices by $v_{100}$, $v_{010}$, and $v_{001}$
according to their homogeneous coordinates.  We use analogous notation
for the three edges of the triangle, calling them $e_{011}, e_{101},
e_{110}$ according to the homogeneous coordinates of a point in the
interior of the edge.

\subsection{Affine spheres from planar data}

Anticipating the construction in Section
\ref{sec:polynomials-to-polygons} of an affine sphere with prescribed
polynomial Pick differential, we finish this section by observing that
the results on affine spheres described thus far reduce this problem
to one of constructing suitable planar data (i.e.~a conformal metric
on $\C$ satisfying Wang's equation).

\begin{prop}\label{prop:conformal data enough}
Let $C(z)dz^3$ be a holomorphic cubic differential on the complex
plane $\C$.  Let $h = e^u|dz|^2$ solve the Wang
equation~\eqref{eqn:wang} and suppose that $e^u |dz|^2$ is a complete
metric on $\C$.  Then integration of the form~\eqref{eqn:structure}
gives the complexified frame field $F : \C \to \GL_3\C$ of an
affine sphere $f : \C \to \R^3$ with Blaschke metric $h$ and Pick differential $C$.
The map $f$ is a proper embedding, and its image is asymptotic to the
boundary of the cone over a convex domain in $\RP^2$.
\end{prop}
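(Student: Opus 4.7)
The plan is to construct the frame field $F$ by integrating the $\gl_3\C$-valued $1$-form $\Omega$ specified by the right side of \eqref{eqn:structure}, extract the affine sphere $f$ as the first column of $F$, and then invoke Li's theorem and the Cheng--Yau theorem for the embedding and asymptotic cone statements respectively.

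First I would verify integrability. Writing $\Omega = A\,dz + B\,d\bar z$ with $A$, $B$ the coefficient matrices of \eqref{eqn:structure}, the Maurer--Cartan condition $d\Omega + \Omega \wedge \Omega = 0$ is a direct calculation whose components reduce exactly to \eqref{eqn:PickHolomorphic} (holomorphicity of $C$) together with Wang's equation \eqref{eqn:wang}, both of which hold by hypothesis. Since $\C$ is simply connected, fixing any initial value $F(0) \in \GL_3\C$ determines a unique smooth $F : \C \to \GL_3\C$ with $F^{-1}\,dF = \Omega$.

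Next I would check reality, which is the one step requiring genuine care. Let $P \in \GL_3\R$ denote the permutation swapping the last two basis vectors. A short computation shows $P\bar A P = B$ and $P\bar B P = A$, so that $\tilde F(z) = \overline{F(z)}\,P$ satisfies the same Maurer--Cartan equation as $F$. Choosing $F(0)$ in the nonempty real coset $\{M \in \GL_3\C : \overline{M}\,P = M\}$, and in particular in the geometric form $(v,w,\bar w)$ with $v \in \R^3$ and $w$ spanning a transverse direction, forces $\tilde F \equiv F$, i.e.~$\bar F = FP$ throughout $\C$. This is exactly the assertion that the first column $f$ of $F$ is real and that the remaining two columns are $f_z$ and $f_{\bar z} = \overline{f_z}$. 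Because $\tr \Omega = du$ gives $\det F$ proportional to $e^u \neq 0$, the two tangential columns span a real $2$-plane and $f$ is an immersion; the entries of $\Omega$ then realize term-by-term the Gauss--Weingarten equations for an affine spherical immersion centered at the origin with mean curvature $-1$, Blaschke metric $h = e^u|dz|^2$, and Pick differential $C\,dz^3$.

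The remaining claims follow from results already recorded in this section. Completeness of $h$ together with Li's theorem (Theorem \ref{thm:completeness-equivalence}) upgrades $f$ to a proper embedding whose image is the boundary of an open convex set in $\R^3$. The second assertion of Cheng--Yau (Theorem \ref{thm:cheng-yau}) then identifies this image as being asymptotic to the boundary of the convex hull of $f(\C) \cup \{0\}$, which is an open convex cone containing no lines and hence corresponds to a properly convex open domain in $\RP^2$. The only step requiring real calculation is the reality verification; everything else is a combination of standard Maurer--Cartan integration and direct quotation of the completeness and Monge--Ampère theory already cited.
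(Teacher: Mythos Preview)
Your argument is correct and follows essentially the same outline as the paper's proof: integrability from holomorphicity of $C$ and Wang's equation, then Li's theorem (Theorem~\ref{thm:completeness-equivalence}) for properness, then the second part of Cheng--Yau (Theorem~\ref{thm:cheng-yau}) for the asymptotic cone. The paper's proof is terser and simply asserts that integration of \eqref{eqn:structure} yields an affine spherical immersion, whereas you spell out the reality check via the involution $F\mapsto \overline{F}P$ and the choice of initial frame in the fixed coset; this is a welcome elaboration of a point the paper leaves implicit.
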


\begin{proof}
Integrability of the structure equations \eqref{eqn:structure} follows
because we assumed that $C$ is holomorphic and that $h,C$ satisfy
\eqref{eqn:wang}. This guarantees an affine spherical immersion $\C
\to \R^3$.  By hypothesis the Blaschke metric of this immersion is
complete, hence by Theorem \ref{thm:completeness-equivalence} it is a
proper embedding.
The last sentence of the Cheng-Yau theorem (\ref{thm:cheng-yau}) then
completes the proof.
\end{proof}

Of course we have already seen one instance of this proposition: The \titeica
surface is the result of integrating $e^u = 2 |dz|^2$ and $C = 2
dz^3$; it is easy to check that this pair satisfies \eqref{eqn:wang}.

\section{The coupled vortex equation}
\label{sec:vortex}

\subsection{Existence theorem}
\label{subsec:existence}

In this section we study the problem of prescribing a certain
relationship between the curvature of a conformal metric on $\C$ and
the norm of a holomorphic differential.  The Wang equation
\eqref{eqn:wang} is one example of the class of equations we consider
(and the only instance that is used in subsequent sections), but
in this section we consider a more general class of equations to
which our techniques naturally apply.

We begin with the following existence result:
\begin{thm}
\label{thm:existence}
Let $\phi = \phi(z) dz^k$ be a holomorphic differential of order $k$
on $\C$, with $\phi(z)$ a polynomial that is not identically zero.
Then there exists a complete, smooth, nonpositively curved conformal metric
$\sigma = \sigma(z) |dz|^2$ on $\C$ satisfying
\begin{equation}
\label{eqn:vortex}
K_\sigma = (-1 + |\phi|_\sigma^2)
\end{equation}
where
\begin{itemize}
\item $K_\sigma(z) = -(2\sigma(z))^{-1} \Delta (\log \sigma(z))$ is the
Gaussian curvature, and
\item $|\phi|_\sigma(z) = \sigma(z)^{-k/2} |\phi(z)|$ is the pointwise norm
with respect to $\sigma$.
\end{itemize}
Furthermore this metric can be chosen to satisfy $\sigma \geq
|\phi|^{2/k}$, with equality at some point if and only if $\phi(z)$ is
constant and $\sigma = |\phi|^{2/k}$.
\end{thm}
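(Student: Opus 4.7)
The plan is to apply the method of sub- and super-solutions on an exhaustion of $\C$ by disks and then extract a limit. Writing $\sigma = e^u |dz|^2$, equation~(\ref{eqn:vortex}) becomes the semilinear elliptic PDE
\begin{equation}
\label{eqn:vortex-scalar}
\Delta u \;=\; 2 e^u \,-\, 2|\phi(z)|^2 e^{(1-k)u},
\end{equation}
with $\Delta = 4\partial_z\partial_{\bar z}$. Crucially, the right-hand side $F(u,z)$ is monotone increasing in $u$ for $k\geq 1$ (since $\partial_u F = 2 e^u + 2(k-1)|\phi|^2 e^{(1-k)u} > 0$), so barrier methods apply in their simplest form.

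The natural sub-solution is $u_- := (2/k)\log|\phi|$, i.e.\ $\sigma_- = |\phi|^{2/k}$. Off the zeros of $\phi$ this function is harmonic and satisfies (\ref{eqn:vortex-scalar}) with both sides equal to zero; at each zero of $\phi$ it has a logarithmic singularity contributing a positive distributional $\delta$-mass to $\Delta u_-$, so $u_-$ is a distributional sub-solution. I would replace it by a smooth classical sub-solution by mollifying on small neighborhoods of the zeros, in the style of Wan \cite{wan:thesis}. For the super-solution on a large disk $B_R$ I would use the hyperbolic (Poincar\'e) metric $u_+^{(R)}$, which satisfies $\Delta u_+^{(R)} = 2 e^{u_+^{(R)}}$, and is therefore automatically a super-solution of (\ref{eqn:vortex-scalar}) (the extra negative $-2|\phi|^2 e^{(1-k)u}$ term only helps) and blows up at $\partial B_R$.

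The monotone-iteration scheme---iterating $\Delta u_{n+1} - \lambda u_{n+1} = F(u_n,z) - \lambda u_n$ with $\lambda$ chosen large enough to dominate $\partial_u F$ on the relevant range---then produces a smooth classical solution $u_R$ on $B_R$ with $u_- \leq u_R \leq u_+^{(R)}$. A comparison argument exploiting the monotonicity of $F$ shows $u_{R'} \leq u_R$ on $B_R$ for $R' \geq R$, so the decreasing pointwise limit $u := \lim_{R\to\infty} u_R$ exists and satisfies $u \geq u_-$. Interior Schauder estimates, using that $\{u_R\}$ is locally uniformly bounded between $u_-$ and any fixed $u_{R_0}$, upgrade this to smooth convergence on compacta, so $u$ is a smooth classical solution of (\ref{eqn:vortex-scalar}) on $\C$; the remaining logarithmic singularities of $u_-$ at zeros of $\phi$ are removable for the limit $u$ since the sub-solution was only weakly singular.

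For the remaining claims: because $\phi$ is a polynomial of positive degree, $|\phi|^{2/k}|dz|^2$ is already complete at infinity, and $\sigma \geq |\phi|^{2/k}$ forces $\sigma$ to be complete as well. The same bound gives $|\phi|_\sigma^2 \leq 1$, so (\ref{eqn:vortex}) yields $K_\sigma \leq 0$. If $\sigma = |\phi|^{2/k}$ at a single point then the strong maximum principle applied to the difference $u-u_-$ (which solves a linear elliptic inequality with nonnegative potential) forces $\sigma \equiv |\phi|^{2/k}$, and then $|\phi|^{2/k}$ itself must be a smooth complete flat metric on $\C$, which happens only for $\phi$ a nonzero constant. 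I expect the main obstacle to be the pass to the limit $R\to\infty$: one must ensure $u$ does not collapse onto the degenerate $u_-$ (the flat metric $\sigma_-$ is not itself a solution since it has curvature zero, not $-1+|\phi|_{\sigma_-}^2 = 0$ only by coincidence at those points); this is precisely where the polynomial growth of $|\phi|$ enters, via the asymptotic completeness of $\sigma_-$ and the strict monotonicity of the decreasing family $\{u_R\}$.
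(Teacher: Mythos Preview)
Your proposal contains a sign error that breaks the argument: the Poincar\'e metric $u_+^{(R)}$ on $B_R$ is a \emph{sub}solution of \eqref{eqn:vortex-scalar}, not a supersolution. Indeed $\Delta h_R = 2e^{h_R}$, and since the extra term $-2|\phi|^2 e^{(1-k)u}$ is nonpositive we get $\Delta h_R = 2e^{h_R} \geq 2e^{h_R} - 2|\phi|^2 e^{(1-k)h_R} = F(h_R,z)$, which is the subsolution inequality. The ``extra negative term only helps'' remark has the direction backwards. Even setting the sign aside, the ordering $u_- \leq u_+^{(R)}$ would fail for large $R$: on a fixed compact set the Poincar\'e density on $B_R$ tends to zero as $R\to\infty$, while $u_- = \tfrac{2}{k}\log|\phi|$ is positive wherever $|\phi|>1$. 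So the monotone iteration between your barriers cannot be run as stated, and the decreasing-in-$R$ comparison you invoke has no footing.

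The paper's proof uses the Poincar\'e metric exactly the other way around: as an ingredient in the \emph{sub}solution, setting $u_- = \max(u_\phi, h_{2d})$ on a large disk to regularize the logarithmic singularities of $u_\phi$ at the zeros of $\phi$. The genuine supersolution is the global function $u_+ = \tfrac{1}{k}\log(a + |\phi|^2)$; a direct computation shows $\Delta u_+ \leq F(z,u_+)$ once $a$ is chosen large enough (this is where the polynomial hypothesis on $\phi$ enters, to control $|\phi_z|^2$ against $(a+|\phi|^2)^{1+1/k}$). With these global barriers in hand the paper invokes the noncompact sub/super method directly, and the bound $u_- \leq u \leq u_+$ immediately gives $\sigma \geq |\phi|^{2/k}$, completeness, and $K_\sigma \leq 0$. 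Your final paragraph also contains a confusion worth flagging: away from its zeros, $\sigma_- = |\phi|^{2/k}$ \emph{is} an exact solution of \eqref{eqn:vortex} (both sides vanish), so ``collapse onto $u_-$'' is not the danger you describe; the issue is rather that $u_\phi$ is singular at the zeros, and the role of the Poincar\'e piece in $u_-$ (and of the strong maximum principle at the end) is to keep $u$ strictly above $u_\phi$ and hence smooth.
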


Note that up to scaling of the holomorphic differential by a constant
factor, the case $k=3$ of \eqref{eqn:vortex} is Wang's equation.

Before proceeding with the proof, we briefly explain a connection
(also noted in \cite[Sec.~3.1]{dunajski}) between \eqref{eqn:vortex}
and the \emph{vortex equations} from gauge theory.  These equations
were introduced in the Ginzburg-Landau model of superconductivity
\cite{ginzburg-landau} and subsequently generalized and extensively
studied in relation to Yang-Mills-Higgs theory (see
e.g.~\cite{jaffe-taubes} \cite{bradlow:special-metrics}
\cite{garcia-prada:direct-existence}
\cite{witten:superconductors}). In one formulation, the vortex
equations on a Riemann surface reduce to a single equation for a
Hermitian metric of a holomorphic line bundle; specifically, one fixes
a holomorphic section $\phi$ of the bundle and asks for a Hermitian
metric whose curvature differs from the pointwise norm of $\phi$ by a
constant.  Since the curvature of the Hermitian metric is an
endomorphism-valued $2$-form, it is first contracted with the K\"ahler
form of fixed background metric on the surface to define a scalar
equation.

In our situation we have a holomorphic section $\phi$ of the
$k^{\mathrm{th}}$ tensor power of the canonical bundle, and a
Hermitian metric on this bundle is simply the tensor power of a
conformal metric $\sigma$ on the Riemann surface itself.  If we use
the K\"ahler form of $\sigma$ instead of a fixed background metric,
the associated vortex equation becomes \eqref{eqn:vortex}.  Thus, our
equation involves an additional ``coupling'' between the curvature and
norm functions that does not appear in the classical vortex equation
setting, and we refer to \eqref{eqn:vortex} as the \emph{coupled
  vortex equation}.

\begin{proof}[Proof of Theorem \ref{thm:existence}.]
Writing $\sigma(z) = \exp(u(z))$ the equation \eqref{eqn:vortex} from
the theorem becomes
\begin{equation}
\label{eqn:vortex-log}
\Delta u = 2 e^u - 2 |\phi|^2 e^{-(k-1)u}
\end{equation}
and it is this form we use in the proof.  We denote
the right hand side of the equation above by $F(z,u)$.

We apply the method of sub-solutions and super-solutions for complete
noncompact manifolds to \eqref{eqn:vortex-log}: For
equations of the form $\Delta u = F(z,u)$ where $\partial F/\partial u
> 0$ it suffices to construct a pair of continuous functions on $\C$
which weakly satisfy
\begin{equation*}
\begin{split}
\Delta u_+ &\leq F(z,u_+),\\
\Delta u_- &\geq F(z,u_-).
\end{split}
\end{equation*}
and where $u_- \leq u_+$.  Then the method
(cf.~\cite[Thm.~9]{wan:thesis}) gives a smooth solution $u$ on $\C$
satisfying $u_- \leq u \leq u_+$.

In our case, both $u_-$ and $u_+$ will be slight modifications of the
function
$$u_\phi := \frac{1}{k}\log |\phi|^2,$$
which corresponds to the conformal metric $|\phi|^{2/k}$.  Define
\begin{equation*}
\begin{split}
u_+ &= \frac{1}{k} \log \left ( a + |\phi|^2 \right ),\\
u_- &= \begin{cases}
u_\phi & \text{ if }|z|>d,\\
\max(u_\phi, h_{2d} ) & \text{ otherwise,}
\end{cases}
\end{split}
\end{equation*}
where $a$ and $d$ are positive constants whose values will be chosen later and 
$$h_R(z) = 2 \log \left ( \frac{2 R}{R^2 - |z|^2} \right )$$
is the logarithm of the Poincar\'e metric density on the disk $\{ |z| < R \}$
of constant curvature $-1$.  In the exceptional case that $\phi$ is a
constant function we modify the definition above and take $u_- = u_\phi$.

We must verify that for suitable choices of $a$ and $d$ these
functions satisfy the required conditions.  First, differentiating the
expression for $u_+$ above we find that $\Delta u_+ \leq F(z,u_+)$ is
equivalent to
$$ 2 |\phi_z|^2 \leq k (a + |\phi|^2)^{\frac{1}{k} + 1}.$$
Using that $\phi$ is a polynomial and comparing the rates of growth of
the two sides, we find this inequality is always satisfied for $|z|$
sufficiently large.  Thus, we may choose $a$ large enough 
the inequality holds for all $z$.

Turning to the function $u_-$, 
if $\phi$ is nonconstant then we must first check the continuity 
of $u_-$ on $\{ |z| =
d \}$ and at the zeros of $\phi$.  We assume that $d$ is large enough so
that all points $z$ where $|\phi(z)| \leq 1$ lie in $\{ |z| < d \}$, and
also that $d > 4/3$ so $h_{2d}(z) < 0$ for $|z| \leq d$.  This
means that $u_- = u_\phi$ a neighborhood of $|z|=d$, and $u_-$ is
continuous there.  Also, since $h_{2d}$ is continuous on $|z| < d$ and
bounded below, the function $\max(u_\phi,h_{2d})$ is continuous at the
zeros of $\phi$.

The function $u_\phi$ is subharmonic and $F(z,u_\phi) = 0$ on the
complement of the zeros of $\phi$, while $h_{R}$ satisfies $ \Delta
h_{R} = 2 e^{h_{R}} \geq F(z,h_{R})$, thus, in a neighborhood of any
point, the function $u_{-}$ is either a subsolution of \eqref{eqn:vortex-log} or a
supremum of two subsolutions.  Hence $u_-$ is itself a subsolution.

Finally we must compare $u_+$ and $u_-$.  It is immediate from the
definition that $u_+ \geq u_\phi$, and taking $a > 1$ we also have $u_+
\geq h_{2d}$ on $\{ |z| < d\}$ because then
$$ \inf u_+ \geq \frac{1}{k} \log a > 0 > \sup_{|z| < d} h_{2d}.$$
It follows that $u_+ \geq u_-$, and so the sub/supersolution method yields
a $C^\infty$ solution $u$
and thus a corresponding metric $\sigma = e^u |dz|^2$.

By construction $u \geq u_- \geq u_\phi$, implying $\sigma = e^u \geq
|\phi|^{2/k}$.  Since the metric $|\phi|^{2/k}$ is complete, 
the metric $\sigma$
is also complete.  The condition $u \geq u_\phi$ also gives $e^u \geq
2 |\phi|^2 e^{-(k-1)u}$ and thus by \eqref{eqn:vortex-log}
we have $\Delta u \geq 0$, which implies that the metric $\sigma$ is
nonpositively curved.

Since $u \geq u_\phi$ with $u$ a solution and $u_\phi$ a subsolution of the
equation \eqref{eqn:vortex-log}, the strong comparison
principle (e.g.~\cite[Thm.~2.3.1]{jost:pde}) implies that on any
domain where $u_\phi$ is continuous up to the boundary we have either $u
> u_\phi$ or $u \equiv u_\phi$.  Thus if $u(z_0) = u_\phi(z_0)$ for some $z_0$
(which therefore satisfies $\phi(z_0) \neq 0$), then $u$ and $u_\phi$ agree
in the complement of the zero set of $\phi$.  Since $u_\phi$ is unbounded near
these zeros, while $u$ extends smoothly over them, this means $\phi$ has
no zeros at all, i.e.~$\phi$ is constant, and thus $u = u_\phi$ everywhere.
\end{proof}

By its construction from super- and sub-solutions, the proof above
also gives the following basic bounds on the solution $u$:

\begin{cor}[Coarse bounds]
\label{cor:coarse-bound}
Let $e^u |dz|^2$ be the solution of \eqref{eqn:vortex} constructed
in the proof of Theorem \ref{thm:existence}.  Then there exist
constants $m,M$ depending continuously on the coefficients of the
polynomial $\phi$ such that
$$ u(z) \geq \max(-m, u_\phi(z)) $$
and
$$ u(z) \leq u_\phi(z) + \frac{M}{|\phi(z)|^2}.$$
In particular we have $u(z) - u_\phi(z) \to 0$ as $z \to \infty$.
\end{cor}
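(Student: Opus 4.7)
The plan is to read off both bounds directly from the sub- and super-solutions $u_- \leq u \leq u_+$ constructed in the proof of Theorem~\ref{thm:existence}. For the upper bound, I rewrite
$$u_+ \;=\; \frac{1}{k}\log\bigl(a+|\phi|^2\bigr) \;=\; u_\phi + \frac{1}{k}\log\bigl(1 + a|\phi|^{-2}\bigr)$$
and apply the elementary inequality $\log(1+x)\leq x$ (valid for $x\geq 0$) to conclude $u \leq u_\phi + a/(k|\phi|^2)$. Taking $M=a/k$ yields the desired estimate.

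For the lower bound, $u\geq u_\phi$ is immediate from $u \geq u_- \geq u_\phi$, so only the constant floor $u \geq -m$ requires work. I compute $h_{2d}(z) = 2\log\bigl(4d/(4d^2 - |z|^2)\bigr)$ explicitly on the closed disk $|z|\leq d$; it attains its minimum value $-2\log d$ at the origin. Hence inside this disk $u \geq u_- \geq h_{2d} \geq -2\log d$, while for $|z|\geq d$ we have $u \geq u_\phi \geq 0$ because $d$ was chosen so that $|\phi|\geq 1$ there. Setting $m = 2\log d$ then yields $u(z) \geq \max(-m, u_\phi(z))$ at every point.

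For continuous dependence of $m$ and $M$ on the coefficients, I trace through the selection of $a$ and $d$ in the proof of the theorem: both are governed by explicit polynomial-type inequalities (on the location of the roots of $\phi$ and on the pointwise estimate $2|\phi_z|^2 \leq k(a+|\phi|^2)^{1+1/k}$), and these can be solved by choices depending continuously on the coefficients. The only mild obstacle will be verifying that $a$ and $d$ can be chosen to vary continuously (equivalently, uniformly on compact subsets of coefficient space), but this is routine once the required inequalities are recast as comparisons between polynomials of controlled degree. Finally, the ``in particular'' assertion follows at once: when $\phi$ is nonconstant, $|\phi(z)|\to\infty$ as $z\to\infty$, and the two bounds give $0 \leq u(z) - u_\phi(z) \leq M/|\phi(z)|^2 \to 0$; the constant case is immediate since then $u\equiv u_\phi$.
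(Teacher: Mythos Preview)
Your proof is correct and follows essentially the same approach as the paper: both read off the bounds directly from the sub- and super-solutions $u_-\leq u\leq u_+$ of Theorem~\ref{thm:existence}, with $M=a/k$ from the inequality $\log(1+x)\leq x$ and $m=2\log d$ from the minimum of $h_{2d}$ at the origin. Your treatment is slightly more explicit (splitting the lower bound into the cases $|z|\leq d$ and $|z|>d$, and commenting on the continuous choice of $a,d$), but the argument is the same.
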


\begin{proof}
We assume $\phi$ is not constant, since otherwise $u = u_\phi$ and all
of the bounds are trivial.

The subsolution $u_-$ from Theorem
\ref{thm:existence} satisfies
$$ u_-(z) \geq \max(\inf h_{2d}, u_\phi(z)),$$
since $\inf h_{2d}$ is achieved at $z=0$ (and in particular within $|z|<d$).
Taking $m = -\inf h_{2d} = 2 \log d$ gives the lower bound.

On the other hand we have
$$ u_+  = u_\phi + \frac{1}{k} \log \left ( 1 + \frac{a}{|\phi|^2} \right )
\leq u_\phi + \frac{a}{k |\phi|^2}$$
where $a$ is a (again positive) constant that depends on the coefficients of $\phi$.
Taking $M = a/k$ gives the desired upper bound.

Finally, as $z \to \infty$ we have $|\phi(z)| \to \infty $ and thus
these bounds give $u(z) - u_\phi(z) \to 0$.
\end{proof}

We will improve these coarse bounds in Theorem~\ref{thm:exponential-bound}.

\subsection{Uniqueness}
\label{subsec:uniqueness}

Complementing the existence theorem above, we have:

\begin{thm}
\label{thm:uniqueness}
For any polynomial holomorphic differential $\phi$ of degree $k$,
there is a unique complete and nonpositively curved solution of
\eqref{eqn:vortex}.
\end{thm}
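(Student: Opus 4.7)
The plan is to compare two hypothesized solutions via a maximum principle argument carried out with respect to the intrinsic metric of one of them. Let $u_1, u_2$ be two solutions of the log-form equation $\Delta u = 2 e^u - 2|\phi|^2 e^{-(k-1)u}$ with $e^{u_i}|dz|^2$ complete and nonpositively curved, and set $v = u_1 - u_2$. The nonlinearity on the right is strictly increasing in $u$ (its $u$-derivative is $2 e^u + 2(k-1)|\phi|^2 e^{-(k-1)u} > 0$), so $\Delta v$ has the strict sign of $v$. More precisely, subtracting the two equations and multiplying by $e^{-u_2}$ yields
\begin{equation*}
\Delta_{\sigma_2} v \;=\; 2\bigl(e^v - 1\bigr) \;+\; 2\, |\phi|_{\sigma_2}^2 \bigl(1 - e^{-(k-1)v}\bigr),
\end{equation*}
where $\sigma_2 = e^{u_2}|dz|^2$; the right-hand side is strictly positive when $v > 0$ and strictly negative when $v < 0$.

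Next I would invoke the Omori--Yau maximum principle on $(\C, \sigma_2)$. The metric $\sigma_2$ is complete by hypothesis, and the equation $K_{\sigma_2} = -1 + |\phi|_{\sigma_2}^2$ together with the nonpositivity assumption yields $-1 \leq K_{\sigma_2} \leq 0$, so the Ricci curvature is bounded below. If $v$ were bounded above with $\sup v = S > 0$, Omori--Yau would produce a sequence $z_n$ with $v(z_n) \to S$ and $\limsup_n \Delta_{\sigma_2} v(z_n) \leq 0$; but the displayed identity forces $\liminf_n \Delta_{\sigma_2} v(z_n) \geq 2(e^S - 1) > 0$, a contradiction. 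The symmetric argument exchanging $u_1$ and $u_2$ treats $\inf v < 0$, so $v \equiv 0$.

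The crux of the proof is therefore to verify that $v$ is actually bounded, which is the hypothesis needed to apply Omori--Yau. Any complete nonpositively curved solution satisfies $u \geq u_\phi := \frac{1}{k}\log|\phi|^2$, since $K_\sigma \leq 0$ is equivalent to $|\phi|_\sigma \leq 1$. Writing $w_i = u_i - u_\phi \geq 0$ and using that $\Delta u_\phi = 0$ off the zero set of $\phi$, a short computation analogous to the one above yields the remarkably clean equation
\begin{equation*}
\Delta_{\sigma_i} w_i \;=\; 2\bigl(1 - e^{-k w_i}\bigr),
\end{equation*}
whose right-hand side is uniformly bounded in $[0, 2]$. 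Combining this structural equation with completeness of $\sigma_i$ and the sub-/super-solution techniques used in the existence proof (following \cite{wan:thesis} and \cite{au-wan94}), I expect to conclude that each $w_i$ is globally bounded and in fact satisfies $w_i \to 0$ at infinity, mirroring the coarse upper bound of Corollary~\ref{cor:coarse-bound} established there for the specific constructed solution. This boundedness of the $w_i$ immediately implies boundedness of $v = w_1 - w_2$, closing the Omori--Yau argument. The main technical obstacle is precisely this boundedness step for an arbitrary complete solution; everything else is routine once it is in hand.
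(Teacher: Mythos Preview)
Your overall strategy---compare two solutions via the Omori--Yau maximum principle after establishing a bound on the difference---matches the paper's. The gap is precisely where you flag it: the boundedness step. Your route through $w_i = u_i - u_\phi$ does not close. The equation $\Delta_{\sigma_i} w_i = 2(1 - e^{-kw_i})$ has right-hand side bounded in $[0,2)$, so it imposes no growth penalty for large $w_i$; Cheng--Yau's boundedness theorem needs the right-hand side to dominate a function $g$ with $\int^\infty g^{-1/2} < \infty$, and a bounded right-hand side cannot do this. Sub/supersolution arguments are also unavailable here: those construct one particular solution with bounds built in and say nothing about an \emph{arbitrary} complete nonpositively curved solution. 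Likewise, Corollary~\ref{cor:coarse-bound} is specific to the solution produced in Theorem~\ref{thm:existence} and does not transfer.

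The paper avoids this detour entirely by bounding $v$ directly from its own equation. From your displayed identity together with $|\phi|_{\sigma_2}^2 \leq 1$ (nonpositive curvature of $\sigma_2$) one obtains the global inequality
\[
\Delta_{\sigma_2} v \;\geq\; 2e^{v} - 2e^{-(k-1)v} - 2,
\]
whose right side grows like $2e^{v}$ as $v \to +\infty$. Theorem~8 of Cheng--Yau \cite{cheng-yau}, applied with $f(t) = 2e^{t} - 2e^{-(k-1)t} - 2$ and $g(t) = e^{t}$, then yields $\sup v < \infty$ immediately, after which your Omori--Yau argument finishes exactly as you wrote it. So the missing idea is to extract the upper bound from the difference equation itself---where the coupled structure provides the needed exponential growth---rather than trying to bound each solution separately against $u_\phi$.
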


\begin{proof}
Suppose that $u$ and $w$ are log-densities of solutions to
\eqref{eqn:vortex}, with $u$ complete and nonpositively
curved.  Note that both metrics have curvature bounded below by $-1$.
We will show that $w \leq u$, following the method of
\cite[Sec.~5]{wan:thesis}.

Let $\eta = w - u$.
In terms of the Laplace-Beltrami operator $\Delta_u = e^{-u}\Delta$ of
$u$ and the pointwise norm $|\phi|_u = |\phi|e^{-ku/2}$, the fact that both
$w$ and $u$ are solutions implies
$$ \Delta_u \eta = 2e^\eta - 2|\phi|_u^2 e^{-(k-1)\eta} - 2 +  2|\phi|_u^2.$$
By \eqref{eqn:vortex} the nonpositive curvature of $u$
implies that $|\phi|_u \leq 1$, giving
$$ \Delta_u \eta \geq 2e^\eta -  2e^{-(k-1)\eta} - 2.$$
By a result \cite{cheng-yau} of Cheng and Yau, this differential inequality implies
that $\eta$ is bounded above: Since $u$ is complete and has a lower curvature
bound, applying Theorem 8 of \cite{cheng-yau} with $f(t) = 2e^t -
2e^{-(k-1)t}-2$ and $g(t) = e^t$ gives 
$$\sup \eta = \bar{\eta} < \infty.$$
Applying the generalized maximum principle \cite{omori} \cite{yau} to
$\eta$ we find that there is a sequence $z_k \in \C$ such that
\begin{equation*}
\begin{split}
\lim_{k \to \infty} \eta(z_k) &= \bar{\eta}\\
\limsup_{k \to \infty} \Delta_u \eta(z_k) &\leq 0.
\end{split}
\end{equation*}
Using the bound $|\phi|_u \leq 1$ and passing to a subsequence we can
assume $|\phi|_u(z_k)^2$ converges, say to $\lambda \in [0,1]$.  Then
substituting the expression for $\Delta_u \eta$ into the inequality
above we find
$$(e^{\bar{\eta}}
- 1) - \lambda(e^{-(k-1)\bar{\eta}} - 1) \leq 0.$$
This gives $\bar{\eta} \leq 0$, or equivalently, $w \leq u$.

If $u$ and $w$ are \emph{both} complete and nonpositively curved
we can apply this argument with their roles reversed to conclude
$u=w$, hence there is at most one solution with these properties.  By Theorem
\ref{thm:existence} there is at least one such solution.
\end{proof}

\subsection{Continuity}
\label{subsec:continuity}

For later applications it will be important to know that the metric
satisfying \eqref{eqn:vortex} depends continuously on the holomorphic
differential $\phi$.

Let $\D^k_d \simeq \C^{d}$ denote the space of holomorphic
differentials of the form $\phi(z) dz^k$, with $\phi$ a monic
polynomial of degree $d$.

Let $\CM$ denote the set of smooth, strictly positive conformal metrics
on $\C$, which we identify with $C^\infty(\C)$ using log-density
functions (i.e.~the metric $e^u |dz|^2$ is represented by the function
$u$).

\begin{thm}[Global $C^0$ and local $C^1$ continuity]
\label{thm:continuity}
For any $k,d \in \Z^{\geq 0}$, considering the unique complete and
nonpositively curved solution of \eqref{eqn:vortex} as a function of
$\phi$ defines an embedding
$$\D^k_d \into \CM$$
which is continuous in the uniform topology, i.e.~as a map into
$C^0(\C)$.  Furthermore, restricting to any compact set $K \subset \C$
defines a continuous map into $C^1(K)$.
\end{thm}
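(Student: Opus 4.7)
The plan is to combine local elliptic compactness (for $C^1$-convergence on compacta) with uniform decay at infinity (for $C^0$-convergence on all of $\C$). Fix $\phi \in \D^k_d$ and a sequence $\phi_n \to \phi$, and let $u_n, u$ denote the log-densities of the corresponding unique complete, nonpositively curved solutions of \eqref{eqn:vortex-log}. The first task is to extract uniform local $L^\infty$ bounds. Inspecting the proof of Theorem \ref{thm:existence}, the constant $a$ in the supersolution $u_+ = \tfrac{1}{k}\log(a + |\phi|^2)$ and the ball radius used in the subsolution $u_-$ may be chosen continuously in the coefficients of $\phi$; taking them uniformly on a neighborhood of $\phi$ yields $u_{\phi_n} \leq u_n \leq \tfrac{1}{k}\log(a + |\phi_n|^2)$ together with a uniform lower bound $u_n \geq -m$. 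In particular $\{u_n\}$ is uniformly bounded in $L^\infty(K)$ for every compact $K \subset \C$.

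Next I would apply elliptic regularity. With the $L^\infty$-bounds in place and $\phi_n \to \phi$ in $C^\infty_{\mathrm{loc}}$, the right-hand side of \eqref{eqn:vortex-log} is uniformly bounded in $L^\infty_{\mathrm{loc}}$, and a standard interior bootstrap (local $W^{2,p}$ estimates followed by Schauder) yields uniform bounds on $u_n$ in $C^{m,\alpha}(K)$ for every $m$ and every compact $K$. By Arzel\`a-Ascoli, any subsequence of $\{u_n\}$ has a further subsequence converging in $C^m_{\mathrm{loc}}$ to some smooth $v$ satisfying \eqref{eqn:vortex-log} for $\phi$. The bound $v \geq u_\phi$ passes to the limit from $u_n \geq u_{\phi_n}$ and implies completeness of $e^v|dz|^2$ as well as, via \eqref{eqn:vortex}, nonpositive curvature. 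Uniqueness (Theorem \ref{thm:uniqueness}) forces $v = u$, so the whole sequence converges in $C^m_{\mathrm{loc}}$, yielding the local $C^1$ (in fact $C^\infty$) continuity.

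For the global $C^0$ statement I would use the decay bound from Corollary \ref{cor:coarse-bound}, namely $0 \leq u_n - u_{\phi_n} \leq M/|\phi_n|^2$ with $M$ uniform in a neighborhood of $\phi$. Given $\eps > 0$, choose $R$ large so that $M/|\phi(z)|^2 < \eps/4$ for $|z| \geq R$. By continuity of roots the zeros of $\phi_n$ remain in a fixed bounded set, and since $\phi$ is monic of degree $d$ we have $|\phi_n/\phi| \to 1$ uniformly on $\{|z| \geq R\}$; hence for large $n$ both $M/|\phi_n|^2 < \eps/4$ and $|u_{\phi_n} - u_\phi| < \eps/4$ hold on $\{|z| \geq R\}$. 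Combining gives $|u_n - u| < \eps$ outside $\{|z| \leq R\}$, while the preceding step yields uniform convergence on this compact set. Injectivity of $\D^k_d \to \CM$, implicit in the \emph{embedding} assertion, follows from uniqueness together with the observation that two monic polynomials with the same pointwise modulus must coincide.

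The main subtlety lies in the uniformity of constants: one must verify that those appearing in the sub-/supersolution construction and in the decay estimate of Corollary \ref{cor:coarse-bound} can genuinely be chosen to depend continuously on $\phi$, and that the zero set of $\phi$ only interacts with the argument through a fixed compact region. Once these uniformity checks are in hand the proof reduces to a fairly standard combination of elliptic compactness with the a priori uniqueness theorem, and I would not anticipate further serious obstacles beyond careful bookkeeping.
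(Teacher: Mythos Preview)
Your argument is correct, but it proceeds along a genuinely different route from the paper.  The paper works directly with the difference $\eta = v - u$ of log-densities: it derives the elliptic equation \eqref{eqn:continuity-error-equation} satisfied by $\eta$, bounds the inhomogeneity $|\phi|_u^2 - |\psi|_u^2$ by $C\epsilon$ in terms of the coefficient perturbation, and then applies the Cheng--Yau differential inequality together with the Omori--Yau generalized maximum principle to conclude $\sup_\C |\eta| \leq C'\epsilon$ in one stroke.  This yields the global $C^0$ bound quantitatively, without any separate treatment of the tail; the local $C^1$ continuity then follows by bounding $|\Delta_u \eta|$ and invoking interior gradient estimates.  Your approach, by contrast, is a soft compactness-plus-uniqueness argument: uniform sub/supersolutions and elliptic bootstrap give precompactness in $C^m_{\mathrm{loc}}$, any subsequential limit satisfies the hypotheses of Theorem~\ref{thm:uniqueness} and hence equals $u$, and finally the uniform decay from Corollary~\ref{cor:coarse-bound} upgrades local convergence to global $C^0$.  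The paper's method is more direct and gives an explicit modulus of continuity; yours is more elementary (no Omori--Yau) and in fact delivers $C^\infty_{\mathrm{loc}}$ rather than just $C^1_{\mathrm{loc}}$, at the cost of losing the quantitative rate and requiring the extra tail step.
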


\begin{proof}
A monic polynomial is determined by its absolute value, so it is
immediate from \eqref{eqn:vortex} that the map is injective.

Consider a pair $\phi, \psi \in \D^k_d$.  As monic polynomials of the
same fixed degree, their difference is small in comparison to either one.
Making this precise, for any $\epsilon > 0$ we can for example ensure
that
$$ \left ||\psi(z)|^2 - |\phi(z)|^2 \right | \leq \epsilon (1 +
|\phi(z)|^2) \text{ for all } z \in \C,$$
just by requiring the coefficients of $\phi$ and $\psi$ to be
sufficiently close.

Let $u,v$ be the solutions to \eqref{eqn:vortex-log} corresponding to
$\phi$ and $\psi$, respectively, and define $\eta = v - u$.  To
establish continuity of the map $\D^k_d \to \CM$ it suffices to bound
the relevant norm of $\eta$ in terms of $\epsilon$.

Since $|\phi|_u \leq 1$ and $u$ is
bounded below (by Corollary \ref{cor:coarse-bound}), multiplying
previous inequality above by $e^{-ku}$ we find
\begin{equation}
\label{eqn:polynomial-norms-close}
| |\phi|_u^2 - |\psi|_u^2 | \leq C \epsilon,
\end{equation}
for some constant $C$ and for all $\psi$ in some neighborhood of
$\phi$.  In particular $|\psi|_u$ is bounded.

Calculating as in the proof of Theorem \ref{thm:uniqueness} we find
that $\eta$ satisfies
\begin{equation}
\label{eqn:continuity-error-equation}
\begin{split}
\Delta_u \eta &= 2e^\eta - 2|\psi|_u^2 e^{-(k-1)\eta} - 2 + 2|\phi|_u^2\\
&= 2(e^\eta - 1) -  2|\psi|_u^2 (e^{-(k-1)\eta} - 1) + 2(|\phi|_u^2  -
|\psi|_u^2).
\end{split}
\end{equation}
By \cite[Thm.~8]{cheng-yau} the associated differential inequality
$$\Delta_u \eta \geq 2e^\eta - M e^{-(k-1)\eta} - 2,$$
where $M = 2 \sup |\psi|_u^2 < \infty$, implies 
as in Theorem~\ref{thm:uniqueness} that $\eta$ is bounded
above.  Applying the generalized maximum principle and passing to a
suitable subsequence gives $\{z_k\}$ with
\begin{equation*}
\lim_{k \to \infty} \eta(z_k) = \bar{\eta}, \;\;
\lim_{k \to \infty} |\phi|_u^2(z_k) = \lambda, \;\; \lim_{k \to \infty} |\psi|_u^2(z_k) = \mu
\end{equation*}
and
$$  (e^{\bar{\eta}} - 1) -  \mu (e^{-(k-1)\bar{\eta}} - 1) +
(\lambda-\mu) \leq 0.$$
Since \eqref{eqn:polynomial-norms-close} gives $|\lambda - \mu| \leq C
\epsilon$, the inequality above implies
$$ \bar{\eta} \leq \log(1 + C \epsilon) \leq C \epsilon. $$
Repeating this argument with the roles of $\phi$ and $\psi$ reversed
we find that
$$\sup_\C |\eta| \leq C' \epsilon,$$ and global $C^0$ continuity follows.

Substituting this bound on $|\eta|$ and the bound on $| |\phi|_u^2 -
|\psi|_u^2|$ from \eqref{eqn:polynomial-norms-close} into the right
hand side of \eqref{eqn:continuity-error-equation}, we also find a
uniform bound on the $u$-Laplacian,
$$\sup_\C |\Delta_u \eta| \leq C'' \epsilon.$$

Local $C^1$ continuity now follows by standard estimates for the
Laplace equation: Given a compact set $K \subset \C$, fix an open disk
$\{ |z| < R \}$ containing $K$.  Corollary \ref{cor:coarse-bound}
provides an upper bound on $\sup_{|z| < R} u$ depending on $R$, so we
get a bound on the flat Laplacian $\Delta = e^u \Delta_u$ of the form
$$\sup_{|z|<R} | \Delta \eta| = C(R) \: \epsilon.$$
Since $|\eta|$ and $|\Delta u|$ are each bounded by a fixed multiple
of $\epsilon$ on $\{ |z| < R\}$, standard interior gradient estimates
(e.g.~\cite[Thm.~3.9]{gilbarg-trudinger}) give a proportional $C^1$
bound on $\eta$ in the compact subset $K$.  This establishes the local
$C^1$ continuity of the map $\D^k_d \to \CM$.
\end{proof}

Restricting attention to monic polynomials in the theorem above is a
convenience that ensures injectivity of the map to $\CM$.  However,
continuity holds in general. Since \eqref{eqn:vortex} is invariant by
holomorphic automorphisms, we can pull back by an automorphism of the form $z
\to \lambda z$ to make an arbitrary polynomial differential monic, and
we conclude:

\begin{cor}
The global $C^0$ and local $C^1$ continuity statements of Theorem
\ref{thm:continuity} also apply to the space of polynomial differentials of fixed
degree with arbitrary nonzero leading coefficient.
\end{cor}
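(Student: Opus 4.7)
The plan is to reduce the general case to the monic case of Theorem \ref{thm:continuity} by exploiting the invariance of equation \eqref{eqn:vortex} under the holomorphic automorphism group $\C^* \subset \Aut(\C)$. Given a polynomial differential $\phi$ of order $k$ and degree $d$ with leading coefficient $a \neq 0$, choose $\lambda \in \C^*$ with $\lambda^{d+k} = 1/a$ and set $T(z) = \lambda z$; then $\widetilde\phi := T^*\phi$ is monic and belongs to $\D^k_d$. Because the unique complete nonpositively curved solution of \eqref{eqn:vortex} transforms equivariantly under biholomorphisms, the corresponding log-densities $u$ for $\phi$ and $\widetilde u$ for $\widetilde\phi$ are related by
$$u(w) = \widetilde u(\lambda^{-1} w) - \log|\lambda|^2.$$
A local choice of $(d+k)$-th root makes $\lambda$ depend continuously on $a$, so along any convergent sequence $\phi_n \to \phi$ of polynomial differentials of fixed degree with nonzero leading coefficient we obtain $\lambda_n \to \lambda$ and monic pullbacks $\widetilde\phi_n \to \widetilde\phi$ in $\D^k_d$. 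By Theorem \ref{thm:continuity}, $\widetilde u_n$ then converges to $\widetilde u$ globally in $C^0(\C)$ and locally in $C^1$, so it remains to verify that the pullback operation $(\widetilde u, \lambda) \mapsto u$ preserves both forms of convergence.

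The local $C^1$ statement is straightforward: for any compact $K \subset \C$ the preimages $T_n^{-1}(K) = \lambda_n^{-1} K$ lie in a common compact set $K' \subset \C$ for $n$ large, and $C^1$ convergence of $\widetilde u_n$ on $K'$ together with smooth convergence $T_n^{-1} \to T^{-1}$ of the dilations yields $C^1$ convergence of $u_n$ on $K$.

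The main obstacle is the global $C^0$ statement, because the dilation rescales all of $\C$ while $\widetilde u$ is typically unbounded (growing like $\frac{1}{k}\log|\widetilde\phi|^2$ at infinity). I would decompose
$$u_n(w) - u(w) = \bigl[\widetilde u_n(\lambda_n^{-1} w) - \widetilde u(\lambda_n^{-1} w)\bigr] + \bigl[\widetilde u(\lambda_n^{-1} w) - \widetilde u(\lambda^{-1} w)\bigr] - \log\frac{|\lambda_n|^2}{|\lambda|^2}.$$
The first bracket has uniform sup-norm at most $\|\widetilde u_n - \widetilde u\|_{C^0(\C)} \to 0$, and the last term is harmless. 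For the middle bracket I would set $r_n = \lambda/\lambda_n \to 1$ and change variable to $v = \lambda^{-1} w$, reducing the task to estimating $\sup_v |\widetilde u(r_n v) - \widetilde u(v)|$. On a disk $\{|v| \leq R\}$ the local $C^1$ bound on $\widetilde u$ gives a Lipschitz constant and hence $|\widetilde u(r_n v) - \widetilde u(v)| \leq L\,|r_n - 1|\,R$, which tends to zero. On the complement $\{|v| > R\}$ the coarse upper bound of Corollary \ref{cor:coarse-bound} permits replacing $\widetilde u$ by $\frac{1}{k}\log|\widetilde\phi|^2$ up to an error $O(|v|^{-2d})$, and a direct expansion of the monic polynomial $\widetilde\phi$ shows that $|\widetilde\phi(r_n v)|^2 / |\widetilde\phi(v)|^2$ equals $|r_n|^{2d}(1 + O(|v|^{-1}))$, so the middle bracket restricted to the exterior is bounded by $\tfrac{2d}{k}|\log|r_n|| + O(R^{-1})$. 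Taking $R$ large and $r_n$ near $1$ makes both pieces uniformly small, completing the global $C^0$ continuity argument.
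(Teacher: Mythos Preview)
Your proposal is correct and follows the same route as the paper: reduce to the monic case by pulling back through a dilation $z\mapsto\lambda z$, using the invariance of \eqref{eqn:vortex} under holomorphic automorphisms. The paper's argument is a one-line sketch of exactly this reduction, whereas you have carefully filled in the only nontrivial detail (that global $C^0$ convergence survives the varying dilations $\lambda_n\to\lambda$ despite the unbounded growth of $\widetilde u$), which the paper leaves implicit.
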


\subsection{Estimates}
\label{subsec:estimates}

Our final goal in this section is to compare the solution $e^u |dz|^2$ of 
equation \eqref{eqn:vortex} to the conformal metric $|\phi|^{2/k}$ by
studying the difference of their logarithmic densities, the ``error function''
$$ u - u_\phi = u - \frac{1}{k} \log |\phi|^2. $$

We have already derived coarse bounds for this difference in Corollary
\ref{cor:coarse-bound}.  We begin by reinterpreting these in terms of
$|\phi|^{2/k}$ metric geometry:

\begin{cor}[Coarse bound, intrinsic version]
\label{cor:coarse-bound-intrinsic}
Let $\phi$ and $u$ be as above, and suppose $k>1$.  There exist
constants $A',R'$ and an exponent $\alpha > 1$ with the following
property: If the $|\phi|^{2/k}$-distance from $p$ to the zero set of
$\phi$ is $r > R'$, then
$$ 0 \leq u(p) - u_\phi(p) \leq A' r^{-\alpha}.$$
\end{cor}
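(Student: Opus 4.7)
The plan is to reinterpret the pointwise upper bound $u(p) - u_\phi(p) \leq M/|\phi(p)|^2$ of Corollary~\ref{cor:coarse-bound} in intrinsic geometric terms. The lower bound $u \geq u_\phi$ is trivial since it was part of the conclusion of Theorem~\ref{thm:existence} (it comes directly from the sub-solution $u_\phi$ used in that proof). I will assume $d := \deg \phi \geq 1$, since otherwise the zero set of $\phi$ is empty and the statement is vacuous.

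First I will compare the intrinsic distance $r(p)$ to the Euclidean size $|p|$. Picking any zero $z_0$ of $\phi$, the $|\phi|^{2/k}$-length of the straight segment from $z_0$ to $p$ is an upper bound for $r(p)$. Using the polynomial growth $|\phi(z)|^{1/k} \leq C(1 + |z|^{d/k})$, direct integration of this length yields
\[
r(p) \;\leq\; C'\bigl(|p| + |p|^{(d+k)/k}\bigr),
\]
so $r(p) \leq C'' |p|^{(d+k)/k}$ once $|p|$ is sufficiently large.

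Next I will invert this to extract the exponent $\alpha$. For $r = r(p) > R'$ large enough, the previous estimate gives $|p| \geq c\, r^{k/(d+k)}$, and in the same regime the leading monomial of $\phi$ dominates, so $|\phi(p)|^2 \geq c'\, r^{2dk/(d+k)}$. Combining with Corollary~\ref{cor:coarse-bound} this produces
\[
0 \;\leq\; u(p) - u_\phi(p) \;\leq\; A'\, r^{-\alpha}, \qquad \alpha \;=\; \frac{2dk}{d+k}.
\]

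I do not expect any substantial obstacle; the argument is a direct comparison of polynomial growth rates. The one genuine point to check is that $\alpha > 1$, which is equivalent to $d(2k-1) > k$, i.e.\ $d > k/(2k-1)$. Since $k > 1$ forces $k/(2k-1) < 1$, every $d \geq 1$ satisfies this, and the conclusion follows. It is worth remarking that the hypothesis $k > 1$ is sharp here: at $k = 1$ the inequality $\alpha > 1$ already fails at $d = 1$, so the shape of the conclusion would have to be weakened in that range.
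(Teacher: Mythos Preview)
Your proof is correct and follows essentially the same route as the paper: bound $r(p)$ above by a constant times $|p|^{(d+k)/k}$ via a length estimate, invert to get $|p| \gtrsim r^{k/(d+k)}$, use polynomial growth of $\phi$ to bound $|\phi(p)|^2$ below, and feed this into Corollary~\ref{cor:coarse-bound} to obtain $\alpha = 2dk/(d+k)$. Your verification that $\alpha > 1$ is slightly more explicit than the paper's, but the argument is the same.
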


We note that for the purposes of the next theorem, a key feature of
this coarse bound is that it yields an integrable function of $r$.

\begin{proof}
As before the constant case is trivial and we assume degree $d>0$.
Outside of a large open disk $D$ containing the zeros of $\phi$, the
polynomial $\phi(z)$ is comparable to $z^d$ by uniform multiplicative
constants.  Thus the $|\phi|^{2/k}$-distance $r=r(p)$ from a point $p$
outside $D$ to the zero set of $\phi$ is bounded above by a fixed
multiple of the $|z|^{2d/k} |dz|^2$-distance from $p$ to the origin.
The latter distance can be explicitly calculated, giving
$$r < C |p|^{(d+k)/k}.$$
Since $|\phi|$ is also bounded below by a multiple of $|z|^d$, this implies
$$ |\phi(p)| > C' |p|^d > C'' r^{dk/(d+k)}$$
and applying Corollary \ref{cor:coarse-bound} we have
$$ u(p) - u_\phi(p) < \frac{M}{|\phi(p)|^2} < \frac{A'}{r^{2dk/(d+k)}}$$
for $p$ outside $D$, with $A'$ determined by $M$ and $C''$.  Let
$\alpha = 2dk /(d+k)$, and note that $\alpha > 1$ for $d,k$ integers and
$k>1$.  Fix $R'$ large enough so that $r > R'$ implies
that $p$ is outside $D$.  Then the statement follows for these $A',R',\alpha$.
\end{proof}

Our next goal is to show that the error $|u - u_\phi|$ is not just
bounded at infinity, but exponentially small as a function of $r$.

\begin{thm}[Exponentially small error]
\label{thm:exponential-bound}
Let $\phi$ and $u$ be as above.  Then there exist
constants $A$ and $R$ with the following property: If the
$|\phi|^{2/k}$-distance from $p$ to the zero set of $\phi$ is $r > R$,
then
$$ u(p) - u_\phi(p) \leq A \frac{\exp(-\sqrt{2k} \: r)}{\sqrt{r}}.$$
\end{thm}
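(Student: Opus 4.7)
The plan is to transfer the equation to a natural coordinate $w$ for $\phi$, in which the $|\phi|^{2/k}$-metric is flat and $u_\phi$ is harmonic. Writing $\eta = u - u_\phi$ and using $\Delta_w = |\phi|^{-2/k}\Delta_z$, one checks that
\begin{equation*}
\Delta_w \eta \;=\; F(\eta), \qquad F(\eta) := 2\bigl(e^\eta - e^{-(k-1)\eta}\bigr).
\end{equation*}
The expansion $F(\eta) = 2k\eta + (1-(k-1)^2)\eta^2 + O(\eta^3)$ identifies $\sqrt{2k}$ as the linearized decay rate and singles out the modified Bessel function $K_0(\sqrt{2k}\,\rho)$ as the natural comparison object, since $\Delta K_0(\sqrt{2k}\,\rho) = 2k\,K_0(\sqrt{2k}\,\rho)$ when $\rho = |w - w_0|$ is the Euclidean distance from a point $w_0$ in the natural coordinate chart. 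The large-argument asymptotic $K_0(x) \sim \sqrt{\pi/(2x)}\,e^{-x}$ will ultimately produce the exact shape $e^{-\sqrt{2k}\,r}/\sqrt{r}$ appearing in the statement.

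To make this rigorous, I would construct a Bessel-type supersolution of the full nonlinear equation. Taking $w_0$ to be the image of a zero of $\phi$ in a single-valued natural-coordinate chart, for each $\beta < \sqrt{2k}$ and small $\epsilon > 0$ set $\psi = \epsilon + A\,K_0(\beta\rho)$. Using $\Delta \psi = \beta^2(\psi - \epsilon)$ and the Taylor expansion of $F$ gives
\begin{equation*}
\Delta \psi - F(\psi) \;=\; (\beta^2 - 2k)\psi \;-\; \beta^2 \epsilon \;+\; \bigl((k-1)^2 - 1\bigr)\psi^2 \;+\; O(\psi^3).
\end{equation*}
For $k \geq 3$ the quadratic coefficient is positive and works against the supersolution property, but the strict inequality $\beta < \sqrt{2k}$ ensures that the negative linear term dominates once $\psi$ is sufficiently small. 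This identifies a threshold $\rho > R^*(\beta, A)$ on which $\psi$ is a supersolution of $\Delta u = F(u)$.

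With the supersolution in hand, I would apply the comparison principle for $\Delta u = F(u)$ (valid because $F' > 0$) on the annulus $\{R^* < \rho < D\}$. The inner boundary condition $\psi \geq \eta$ at $\rho = R^*$ is arranged by taking $A$ large, using boundedness of $\eta$ on compact sets; the outer condition at $\rho = D$ eventually holds because $\psi \geq \epsilon > 0$ while $\eta \to 0$ by the coarse bound of Corollary \ref{cor:coarse-bound-intrinsic}. Letting $D \to \infty$ and then $\epsilon \to 0$ yields $\eta \leq A\,K_0(\beta\rho)$ on $\{\rho > R^*(\beta)\}$ for each $\beta < \sqrt{2k}$.

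The final step is to pass to $\beta \uparrow \sqrt{2k}$ (the constant $A$ remains bounded since it is controlled by $\sup \eta / K_0(\sqrt{2k}\,R^*)$) and invoke the Bessel asymptotic, yielding $\eta(p) \leq A'\,e^{-\sqrt{2k}\,r(p)}/\sqrt{r(p)}$ on a neighborhood of infinity. The main obstacle is precisely this limit: as $\beta \to \sqrt{2k}$ the threshold $R^*(\beta)$ grows because the range on which the linear term dominates the quadratic shrinks with $(2k - \beta^2)$. A careful quantitative analysis should give $R^*(\beta) = O\bigl(|\log(2k - \beta^2)|\bigr)$, which is small compared to $r$, so the limiting bound holds on $\{r > R\}$ for some finite $R$; equivalently, one can let $\beta = \sqrt{2k} - c/r$ depend on $r$ so that the exponential loss $e^{(\sqrt{2k} - \beta)r} = e^c$ is absorbed into the final constant. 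A secondary technical point---that the distance $\rho$ from a single zero agrees with the zero-set distance $r$ only within a sector---is handled by assembling sectorial estimates using the half-plane decomposition of Proposition \ref{prop:standard-half-planes}.
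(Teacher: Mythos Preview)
Your approach and the paper's share the core insight that the linearization $\Delta - 2k$ governs the decay and that Bessel-type functions supply the right comparison objects, but the executions differ and your limiting step $\beta \uparrow \sqrt{2k}$ has a real gap. The supersolution property of $\psi = \epsilon + A\,K_0(\beta\rho)$ forces $\psi \lesssim (2k-\beta^2)/(k(k-2))$ at the inner radius $R^*$ (so that the linear term beats the quadratic), hence $\psi(R^*) \to 0$ as $\beta \to \sqrt{2k}$ \emph{regardless of} $A$. The inner boundary condition $\psi \geq \eta$ then requires $\eta(R^*) \lesssim 2k-\beta^2$, which from the coarse polynomial bound $\eta \lesssim r^{-\alpha}$ of Corollary~\ref{cor:coarse-bound-intrinsic} gives $R^* \gtrsim (2k-\beta^2)^{-1/\alpha}$, not the logarithmic growth you assert. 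With the $r$-dependent choice $\beta = \sqrt{2k} - c/r$ one then picks up a factor $e^{\beta R^*} \sim \exp(C\,r^{1/\alpha})$ in the constant $A$, destroying the exponential rate. A bootstrap (first establishing some sub-sharp exponential decay, then repeating) recovers $r^{-1/2+\epsilon}e^{-\sqrt{2k}r}$ for any $\epsilon > 0$, but never the exact $r^{-1/2}$---and the paper stresses that this $1/\sqrt r$ factor is crucial and sharp for the unipotent-factor computation in Lemma~\ref{lem:unipotent-factors}. (A secondary issue: zeros of $\phi$ are precisely where the natural coordinate breaks down, so your $w_0$ cannot literally be the image of a zero.)

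The paper sidesteps the limit entirely. Rather than radial supersolutions, it places $p$ in a $|\phi|$-upper-half-plane $(U,w)$ with $\Im w(p) \geq r(p) - C$ (Proposition~\ref{prop:k-differential-half-planes}); the zeros lie outside $U$, and the coarse bound makes $u|_{\partial\H}$ both bounded and integrable. It then solves the \emph{exact} linearized Dirichlet problem $\Delta h = 2k\,h$ on $\H$ with boundary data $2u|_{\R}$; the half-plane Green's function (built from $K_0$, $K_1$) gives $h = O\bigl(\|u\|_{L^1(\R)}\,e^{-\sqrt{2k}y}/\sqrt{y}\bigr)$ directly, with no parameter to tune. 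The nonlinearity is handled by the algebraic trick $v = h - \tfrac{k-2}{2}h^2$: a short computation shows $\Delta v \leq 2k\,v - k(k-2)\,v^2$ whenever $\sup h < 4/(k-2)$, making $v$ a supersolution of (the quadratic truncation of) the full equation, and a maximum-principle argument yields $u \leq v$ on $\H$. The same quadratic correction would repair your radial approach as well---take $\psi = A\,K_0(\sqrt{2k}\rho) - \tfrac{k-2}{2}A^2\,K_0(\sqrt{2k}\rho)^2$---eliminating the need for $\beta < \sqrt{2k}$ and the problematic limit.
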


In preparation for the proof, we introduce a convenient change of
coordinates: Let $w$ be a local coordinate in which $|\phi| = |dw|^k$.
We continue to use $u$ to denote the log-density of $\sigma$, but now
considered relative to $|dw|^2$, i.e.
$$ \sigma = e^u |dw|^2.$$
so that $u$ satisfies the equation
\begin{equation}
\label{eqn:vortex-in-natural-coord}
\Delta u = 2e^u - 2e^{-(k-1)u},
\end{equation}
with $\Delta = 4\frac{\partial^2}{\partial w \partial \bar{w}}$ now
denoting (here and in the rest of this section) the flat Laplacian
with respect to $w$.

In this coordinate system $u_\phi \equiv 0$, hence our goal is to show
that $u$ itself is exponentially small.  More precisely, defining
\begin{equation}
\label{eqn:h-decay}
\eps(t) = \frac{\exp(-\sqrt{2k} \: t)}{\sqrt{t}},
\end{equation}
we must show that $u = O(\eps(r))$.

Noting that the linearization of equation \eqref{eqn:vortex-in-natural-coord}
at $u=0$ is $\Delta u = 2 k u$, we first consider the Dirichlet
problem for this linear equation in the upper half-plane $\H \subset
\C$.  Write $w = x + i y$ with $x,y \in \R$.

\begin{lem}
\label{lem:linearization-solution}
Suppose $g \in C^0(\R) \cap L^1(\R)$ and $g \geq 0$.
Then there exists $h \in C^\infty(\H)$ extending continuously to
$\R$ that is a solution of the Dirichlet problem
$$
\Delta h = 2 k h, \;\;  \left .h \right |_\R = g,\\
$$
and which satisfies
\begin{equation*}
\begin{split}
0 &\leq h \leq \sup g,\\
h &= O\left (\|g\|_1 \eps(y) \right) \text{as } y \to \infty,
\end{split}
\end{equation*}
where the implicit constants in the second estimate are independent of $g$.
\end{lem}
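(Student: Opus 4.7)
The plan is to build $h$ explicitly as the Poisson integral for the modified Helmholtz equation $\Delta h = 2kh$ on $\H$. Set $m = \sqrt{2k}$. The fundamental solution of $(\Delta - m^2)$ on $\R^2$ is a multiple of $K_0(m|w|)$, where $K_0$ is the modified Bessel function of the second kind. Applying the method of images to obtain a Green's function on $\H$ vanishing on $\R$ and differentiating in the outward normal yields the Poisson-type kernel
$$ P(x,y;t) \;=\; \frac{my}{\pi R}\, K_1(mR), \qquad R = \sqrt{(x-t)^2 + y^2}. $$
I would then define $h(x,y) = \int_\R P(x,y;t)\, g(t)\, dt$; absolute convergence follows from $g\in L^1$ and the exponential decay of $K_1(mR)/R$ as $R\to\infty$, and differentiation under the integral shows $h$ is smooth on $\H$ and satisfies $\Delta h = 2kh$ (since each slice $P(\cdot\,;t)$ does).

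To verify $h|_\R = g$, I would check that $P(\cdot,y;\cdot)$ is an approximate identity as $y\to 0^+$: the small-argument asymptotic $K_1(z)\sim 1/z$ implies that near the boundary $P$ agrees up to a smooth bounded factor with the classical harmonic Poisson kernel $y/(\pi R^2)$, so $P$ concentrates at $t=x$. Together with $\int P\,dt$ being controlled (see below), continuity of $g$ then gives $h(x,y)\to g(x)$ uniformly on compacta.

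Both quantitative bounds follow once one notes the identity
$$ \int_\R P(x,y;t)\, dt \;=\; e^{-my}. $$
This can be verified either by a direct Bessel integral or, more slickly, by observing that the right-hand side is the unique bounded solution of $\Delta u = 2ku$ on $\H$ with $u\equiv 1$ on $\R$ and the left-hand side solves the same problem. Since $P\geq 0$ and $g\geq 0$,
$$ 0 \;\leq\; h(x,y) \;\leq\; \|g\|_\infty \int_\R P(x,y;t)\, dt \;=\; \|g\|_\infty\, e^{-my} \;\leq\; \sup g. $$
For the decay estimate, I bound $h(x,y)\leq \|g\|_1 \sup_t P(x,y;t)$; as a function of $t$, $R$ is minimized at $t=x$ (where $R=y$), so $\sup_t P(x,y;t) = (m/\pi)\,K_1(my)$. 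The standard large-argument expansion $K_1(z)\sim\sqrt{\pi/(2z)}\,e^{-z}$ as $z\to\infty$ converts this to $\sup_t P(x,y;t) = O(e^{-my}/\sqrt{y}) = O(\eps(y))$ with a constant independent of $g$, yielding the claimed bound.

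The main technical wrinkle is the identity $\int P\,dt = e^{-my}$ and the matching of boundary data, both of which demand some care with the Bessel-function kernel. A parallel and perhaps more elementary route avoids $K_0,K_1$ entirely by taking the Fourier transform in $x$: one obtains $\hat h(\xi,y) = \hat g(\xi)\, e^{-y\sqrt{\xi^2+2k}}$, so $\hat P(\xi,y) = e^{-y\sqrt{\xi^2+2k}}$; setting $\xi=0$ recovers $\int P\,dt = e^{-my}$, while the pointwise bound on $P(\cdot,y;\cdot)$ is extracted from inverse-Fourier estimates of $e^{-y\sqrt{\xi^2+2k}}$, again producing exponential-over-$\sqrt{y}$ decay matching $\eps(y)$.
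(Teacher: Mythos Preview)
Your proof is correct and follows essentially the same route as the paper: both construct $h$ as the Poisson integral against the kernel $P(x,y;t)=\tfrac{\sqrt{2k}\,y}{\pi R}K_1(\sqrt{2k}\,R)$ obtained from the Green's function for $\Delta-2k$ on $\H$, and both extract the decay $h=O(\|g\|_1\,\eps(y))$ from the large-argument asymptotic of $K_1$. The only notable difference is in bounding $\int_\R P\,dt$: the paper uses the Green identity $\int_\R P\,dt = 1 - 2k\int_\H G\,|d\xi|^2 < 1$, whereas you obtain the sharper exact value $\int_\R P\,dt = e^{-\sqrt{2k}\,y}$ by recognizing $e^{-\sqrt{2k}\,y}$ as the bounded solution with boundary data $1$---a nice observation, but not a substantively different argument.
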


\begin{proof}

Throughout this proof we write $w = x + i y$, where $x,y \in \R$.  A
solution of the Dirichlet problem can be constructed by convolution
\begin{equation}
\label{eqn:dirichlet}
 h(w) = \int_{\R} \frac{\partial G(w,\xi)}{\partial \Im(\xi)}
g(\xi) \: d\xi,
\end{equation}
where $G$ is the Green's function $G$ for the positive operator
$-(\Delta - 2k)$, i.e.~$(\Delta_{\xi} - 2k)G(w,\xi) = -\delta(w)$
and $G(w,\xi) = 0$ for $\xi \in \R$.
Then $G$ and its normal derivative along $\R$ are given \cite[Formula 7.3.2-3]{polyanin-zaitsev:handbook}  in terms of the modified Bessel
functions:
\begin{equation*}
\begin{split}
G(w,\xi) &= \frac{1}{2\pi} \left ( K_0
( \sqrt{2k} \: |w-\xi| ) - K_0(
\sqrt{2k} \: |w-\bar{\xi}| ) \right )\\
\frac{\partial G(w,\xi)}{\partial \Im(\xi)} &= \frac{\sqrt{2k} \: y}{\pi |w-\xi|}
K_1(\sqrt{2k} |w-\xi|) \;  \text{ for } \xi \in \R\\
\end{split}
\end{equation*}
Since $|w - \xi| \geq y$ for $\xi \in \R$, and the Bessel function
satisfies $K_1(t) = O(\eps(t))$ as $t \to \infty$
\cite[Formula~9.7.2]{abramowitz-stegun}, we therefore have

$$ \sup_{\xi \in \R} \frac{\partial G(w,\xi)}{\partial \Im(\xi)} = O
\left( \eps(y) \right ).$$
Substituting this into \eqref{eqn:dirichlet} gives $h = O(\|g\|_1
\eps(y))$ as required.

The other bounds on $h$ are immediate from \eqref{eqn:dirichlet}:
Since $K_1 > 0$ and $g \geq 0$, we have $h \geq 0$.  Since $G$ is a
fundamental solution we have, for any $w \in \H$,
$$ \int_{\R} \frac{\partial G(w,\xi)}{\partial \Im(\xi)}
 \: d\xi = 1 - 2 k \int_{\H} G(w,\xi) |d \xi|^2 < 1$$
and thus, again noting that $K_1 > 0$ and $g \geq 0$, we have that 
$$h = \int_{\R} \frac{\partial G(w,\xi)}{\partial \Im(\xi)}
g(\xi) \: d\xi\leq \sup g \int_{\R} \frac{\partial G(w,\xi)}{\partial
  \Im(\xi)} \: d\xi \leq \sup g. $$
\end{proof}

Next we use the solution of the linearization constructed above to get
a supersolution for the quadratic approximation of
\eqref{eqn:vortex-in-natural-coord} on $\H$.  Note that the right hand
side of this equation is
$$f(u) := 2e^u - 2e^{-(k-1)u} = 2 k\,u - k(k-2)\,u^2 + O(u^3).$$

\begin{lem}
\label{lem:quadratic-supersolution}
Suppose $g \in C^0(\R) \cap L^1(\R)$ and that $0 \leq g \leq
\frac{1}{(k-2)}$.  Then there exists a function $v \in C^\infty(\H)$
extending continuously to $\R$ such that
\begin{equation*}
\begin{split}
\left . v \right |_\R &\geq g,\\
\Delta v &\leq 2 k\,v - k(k-2)\,v^2,\\
v &= O\left (\|g\|_1 \eps(y) \right ) \text{ as } y \to \infty,
\end{split}
\end{equation*}

\end{lem}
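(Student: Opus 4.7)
The plan is to construct $v$ as the classical solution of a semilinear Dirichlet problem on the half-plane, so that the required supersolution inequality holds with equality and the boundary condition $v|_\R \geq g$ holds with equality. Specifically, I would take $v$ to solve
\[
\Delta v = 2kv - k(k-2)v^2 \text{ on } \H, \qquad v|_\R = g.
\]

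For existence, I would adapt the sub- and supersolution argument from the proof of Theorem \ref{thm:existence} to the half-plane. The constant $v_- \equiv 0$ is a (sub)solution, and the constant $v_+ \equiv \sup g$ is a supersolution since $\Delta v_+ = 0$ and
\[
2k\sup g - k(k-2)(\sup g)^2 = k\sup g\,(2 - (k-2)\sup g) \geq 0
\]
by the hypothesis $(k-2)\sup g \leq 1$. The method produces $v \in C^\infty(\H) \cap C^0(\overline\H)$ with $0 \leq v \leq \sup g$.

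For the decay, I would use the integral form of the equation via the Dirichlet Green's function $G_\H$ of $-\Delta + 2k$ on $\H$:
\[
v(z) = h(z) + k(k-2) \int_\H G_\H(z,\xi)\, v(\xi)^2\, d\xi,
\]
where $h$ is the linear solution from Lemma \ref{lem:linearization-solution}. Using $v^2 \leq v/(k-2)$ (from $v \leq 1/(k-2)$) together with the fact that $v$ inherits $x$-localization from $g \in L^1$, I would split the integral into near-boundary and far-from-boundary portions. Near $\R$, $v$ can be as large as $\sup g$ but the kernel $G_\H$ satisfies $G_\H((x,y),\xi) \sim \xi_2\, e^{-\sqrt{2k}y}/\sqrt{y}$ for $\xi_2$ small (via modified Bessel functions $K_0$); far from $\R$, $v$ itself decays. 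Combining these with the $O(\|g\|_1 \eps(y))$ bound on $h$ yields $v = O(\|g\|_1 \eps(y))$.

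The main obstacle is this final decay estimate. A uniform bound $v \leq \sup g$ combined with a crude Green's function estimate produces only an $e^{-\sqrt{2k}y}$ rate, lacking the $1/\sqrt{y}$ factor; conversely a careless application of $G_\H$ to the full source $v^2$ introduces spurious polynomial factors in $y$. Deriving the sharp rate requires carefully balancing the $L^1$-structure of $g$ against the transverse $K_0$-asymptotics of the half-plane Green's function, so that the $1/\sqrt{y}$ and $\|g\|_1$ factors each appear correctly in the final bound.
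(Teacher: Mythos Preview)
Your approach differs substantially from the paper's, and the place where you acknowledge an obstacle is exactly where the paper's method sidesteps the difficulty. The lemma only asks for a \emph{supersolution} ($\Delta v \leq 2kv - k(k-2)v^2$), not a solution, so there is no need to solve the semilinear Dirichlet problem. The paper instead takes $h$ to be the solution of the \emph{linear} equation $\Delta h = 2kh$ with boundary data $2g$ (supplied by Lemma~\ref{lem:linearization-solution}) and sets $v := h - \tfrac{k-2}{2}h^2$ explicitly. A direct computation of $\Delta v - 2kv + k(k-2)v^2$ shows it is nonpositive provided $h < 4/(k-2)$, which holds since $h \leq 2\sup g \leq 2/(k-2)$. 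Because $0 < v < h$, the decay $v = O(\|g\|_1\,\eps(y))$ is then \emph{immediate} from the decay of $h$ already established in Lemma~\ref{lem:linearization-solution}; no Green's-function bootstrap is needed at all. The boundary inequality $v|_\R \geq g$ follows from the elementary observation that $q(t)=t-\tfrac{k-2}{2}t^2 \geq t/2$ on the relevant range, applied at $t = 2g$.

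In your route, by contrast, the decay is a genuine gap. The claim that ``$v$ inherits $x$-localization from $g \in L^1$'' is unjustified: a priori you know only $0 \le v \le \sup g$, which gives no integrability in $x$, and without that the integral $\int_\H G_\H(z,\xi)\,v(\xi)^2\,d\xi$ cannot be shown to decay like $\eps(y)$. Worse, since $\Delta v \le 2kv$, the comparison principle shows your $v$ lies \emph{above} the linear solution with boundary data $g$, so the linear problem provides no upper barrier for free. To get one you would need an explicit supersolution with the correct decay---which is precisely the paper's construction, making the semilinear solve redundant.
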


\begin{proof}
First, consider an arbitrary function $h$ satisfying $\Delta h =
2k\,h$.  Then $v = h - \frac{(k-2)}{2} h^2$ satisfies
$$ \Delta v - 2k\,v + k(k-2)\,v^2 = -k(k-2)^2\,h^3 + \frac{1}{4}k(k-2)^3\,h^4 - (k-2)\,|\nabla h|^2.$$
Since $|\nabla h| \geq 0$, we find that the right hand side is
negative if $\sup h < \frac{4}{k-2}$.

Now let $h$ be the solution of $\Delta h = 2k\,h$ given by Lemma
\ref{lem:linearization-solution} with boundary values $\left . h
\right|_\R = 2g$.  Since $h \leq 2\sup g < \frac{2}{k-2} $, we find
that $v = h - \frac{(k-2)}{2} h^2$ satisfies $0 < v < h$ and the calculation
above shows
$$\Delta v \leq 2 k\,v - k(k-2)\,v^2.$$
Since $0 < v < h$, it is immediate from Lemma
\ref{lem:linearization-solution} that $v = O(\|g\|_1 \eps(t))$.
Finally, we must verify that $v \geq g$ on $\R$.  This follows because
$\left .h \right |_\R = 2g$ and $v = q(h)$ where the polynomial $q(t)
= t - \frac{(k-2)}{2}t^2$ satisfies $q(t) > \frac{1}{2}t$ on the
interval $[0,\frac{1}{k-2}]$ which contains the range of $g$.
\end{proof}

\begin{proof}[Proof of Theorem \ref{thm:exponential-bound}.]
First we find a suitable region and coordinate system in which to apply
the previous lemmas.  By Corollary \ref{cor:coarse-bound-intrinsic}
there is a compact set $K$ in the plane outside of which $u - u_\phi <
\frac{1}{2(k-2)}$.  By Proposition
\ref{prop:k-differential-half-planes}, any point $p$ sufficiently far
from $K$ lies in a $|\phi|$-upper-half-plane $(U,w)$ with $U \cap K =
\emptyset$ and with $y(p) = \im(w(p)) \geq r(p) - C$ for a constant
$C$ independent of $p$.  For the rest of the proof we work in this coordinate $w$, identifying
$U$ with $\H$ and writing $\sigma = e^u |dw|^2$ and $|\phi|^{2/k} =
e^{u_\phi} |dw|^2$.  We therefore have $u_\phi(w) \equiv 0$ and  $0
\leq u(w) < \frac{1}{2(k-2)}$.

Since Proposition \ref{prop:k-differential-half-planes} gives $r(w) \geq c |w|$ for $w \in \R$ with $|w|$
sufficiently large, it follows from Corollary \ref{cor:coarse-bound-intrinsic} 
that $u$ is integrable on $\R$.  Moreover $u$ is everywhere less than
$\tfrac{1}{2(k-2)}$, so we in fact have a bound on the $L^1$ norm of
$\left . u \right |_{\R}$ that depends on $\phi$ but which is
independent of $p$.

Let $v$ be the function on $\H$ given by Lemma
\ref{lem:quadratic-supersolution} for $g = \left . u \right |_\R$.
Since $v = O(\epsilon(y))$ and $y(p) \geq r(p) - C$, the theorem will
follow if we show $u \leq v$, or equivalently that the function $\eta
= u-v$ is nowhere positive.  Note that $\eta$ is smooth on $\H$ and
continuous on the closure $\bar{\H}$.

Suppose for contradiction that $\eta$ is positive at some point, so
the closed set $Q = \eta^{-1}([\epsilon,\infty)) \subset \bar{\H}$ is
nonempty for some $\epsilon > 0$.  Lemma
\ref{lem:quadratic-supersolution} gives $\eta < 0$ on $\partial \H$,
hence $Q \subset \H$.  The same lemma and Corollary~\ref{cor:coarse-bound} respectively
show $v \to 0$ and $u \to 0$ as $z \to \infty$, hence $\eta \to 0$ as
$z \to \infty$, and $Q$ is compact.  Therefore $\eta$ has a positive
maximum at some point in $Q$.

Recall that we set $f(u)= 2e^u - 2e^{-(k-1)u}$, that $u$ satisfies
$\Delta u = f(u)$, and that $v$ satisfies $v \geq 0$ and $\Delta v =
2k v - k(k-2)\,v^2 \leq f(v)$.  Therefore
\begin{align*}
\Delta \eta &\geq 2e^u - 2e^{-(k-1)u} - 2e^v + 2e^{-(k-1)v}\\
&=2e^v(e^{\eta} -1) - 2e^{-(k-1)v}(e^{(-k-1)\eta} -1).
\end{align*}
At a maximum we have $0 \geq \Delta \eta$, which in combination with
the inequality above gives
$$
(e^\eta -1) \leq e^{-(k-2)v}(e^{-(k-1)\eta} -1).
$$
This shows $\eta \leq 0$ at any maximum, which is the desired contradiction.
\end{proof}

The $C^0$ bound of the previous theorem is easily improved to a $C^1$ bound:

\begin{cor}
\label{cor:c1-bound}
Let $\phi$ and $u$ be as above, and let $|\nabla f|_{\phi}$ denote the
norm of the gradient of a function $f$ with respect to the
$|\phi|^{2/k}$-metric.  Let $r$ denote the $|\phi|^{2/k}$-distance
from a point $p$ to the zero set of $\phi$.  If $r > (R+1)$, where $R$
is the constant from Theorem \ref{thm:exponential-bound}, then
$$ | \nabla(u - u_\phi) |_{\phi}(p) \leq C \exp(-\sqrt{2k} \: r) / \sqrt{r}).$$
\end{cor}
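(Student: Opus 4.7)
The plan is to upgrade the $C^0$ bound from Theorem \ref{thm:exponential-bound} to a $C^1$ bound via standard interior elliptic estimates, after passing to a natural coordinate for $\phi$ in which the defining equation \eqref{eqn:vortex-in-natural-coord} takes a particularly tractable form.

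First I would fix a point $p$ with $r(p) > R+1$ and pick a local natural coordinate $w$ for $\phi$ on a neighborhood of the $|\phi|^{2/k}$-ball $B$ of radius $1$ about $p$. The hypothesis $r(p) > R+1$ guarantees that $B$ is disjoint from the zero set of $\phi$; since the $|\phi|^{2/k}$-metric is flat away from these zeros, $B$ is then an honest Euclidean disk and a natural coordinate is globally defined on it. In this coordinate $|\phi|^{2/k} = |dw|^2$, $u_\phi \equiv 0$, and $u$ solves \eqref{eqn:vortex-in-natural-coord}, so $|\nabla(u - u_\phi)|_\phi$ coincides with the ordinary Euclidean gradient norm $|\nabla u|$ of $u$ with respect to $w$.

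Next I would collect uniform bounds on $B$. For any $q \in B$, the triangle inequality gives $r(q) \geq r(p) - 1 > R$, so Theorem \ref{thm:exponential-bound} yields $0 \leq u(q) \leq A\,\eps(r(p)-1)$. Since $\eps(t-1)/\eps(t)$ is bounded on $[R+1,\infty)$, this gives $\sup_B |u| = O(\eps(r(p)))$ with a constant depending only on $A$, $k$, and $R$. In particular $u$ is small on $B$ for $r(p)$ large, and because $f(u) = 2e^u - 2e^{-(k-1)u}$ vanishes at $0$ with bounded derivative there, we also get $\sup_B |\Delta u| = \sup_B |f(u)| = O(\eps(r(p)))$.

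Finally I would invoke the standard interior gradient estimate for Poisson's equation on the unit disk (as in \cite[Thm.~3.9]{gilbarg-trudinger}, the very reference already used in the proof of Theorem \ref{thm:continuity}) to conclude
$$ |\nabla u|(p) \leq K\bigl(\sup_B |u| + \sup_B |\Delta u|\bigr) = O(\eps(r(p))). $$
No step here presents a real obstacle; the only point requiring care is in the setup, namely that the natural coordinate be defined on the entire unit $|\phi|^{2/k}$-ball around $p$. This is precisely what the strengthened hypothesis $r > R+1$ (rather than $r > R$) delivers, and it is the reason for the slight shrinking of the domain of validity in the statement.
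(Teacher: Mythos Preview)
Your proposal is correct and follows essentially the same approach as the paper: pass to a natural coordinate on a unit $|\phi|^{2/k}$-disk around $p$, use Theorem \ref{thm:exponential-bound} to bound $u$ and hence $\Delta u = f(u)$ uniformly on that disk, and then apply the interior gradient estimate \cite[Thm.~3.9]{gilbarg-trudinger}. Your write-up is in fact slightly more detailed than the paper's, making explicit the triangle-inequality step and the boundedness of $\eps(t-1)/\eps(t)$.
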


\begin{proof}
Working as above in $\phi$-natural coordinates, where $u_\phi = 0$ and
the $\phi$-gradient becomes the Euclidean one, we simply require a
pointwise $C^1$ bound on the function $u$.  Since $u$ satisfies
\eqref{eqn:vortex-in-natural-coord} and
$$ e^u - e^{-(k-1)u} \leq C |u| \text{ for }|u| < 1,$$
the bound on $u$ from the previous theorem shows that $|\Delta u|$ is
also proportionally small throughout a disk of radius $1$ centered at
$p$.  Applying the standard interior gradient estimate for Poisson's
equation (see e.g.~\cite[Thm.~3.9]{gilbarg-trudinger}) to this disk then gives the desired
bound for the derivative of $u$ at its center.
\end{proof}

\section{From polynomials to polygons}
\label{sec:polynomials-to-polygons}

The results of the previous section give the following
existence theorem for affine spheres with prescribed polynomial Pick differential:
\begin{thm}
\label{thm:polynomial-sphere-existence}
For any polynomial cubic differential $C$ on the complex plane, there
exists a complete hyperbolic affine sphere in $\R^3$ that is
conformally equivalent to $\C$ and which has Pick differential $C$
with respect to some conformal parameterization.  This affine sphere
is uniquely determined by these properties, up to translation and the
action of $\SL_3\R$.
\end{thm}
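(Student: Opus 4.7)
The plan is to assemble this statement directly from the machinery already in place: existence follows from the $k=3$ case of the coupled vortex equation together with the frame-integration Proposition~\ref{prop:conformal data enough}, and uniqueness follows from Theorem~\ref{thm:uniqueness} combined with ODE uniqueness for the structure equations~\eqref{eqn:structure}. The substantive analytic work has already been carried out in Section~\ref{sec:vortex}, so no step here should present a serious obstacle; the argument is essentially bookkeeping.

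For existence, observe that Wang's equation~\eqref{eqn:wang} for the log-density $u$ of the Blaschke metric is precisely the $k=3$ case of the coupled vortex equation~\eqref{eqn:vortex-log} applied to the polynomial cubic differential $\phi = \sqrt{2}\,C$. Theorem~\ref{thm:existence} then yields a smooth, complete, nonpositively curved conformal metric $h = e^u|dz|^2$ on $\C$ satisfying Wang's equation with cubic differential $C$. Applying Proposition~\ref{prop:conformal data enough} to this pair $(h,C)$ integrates the structure equations~\eqref{eqn:structure} to produce a complete hyperbolic affine sphere $f : \C \to \R^3$ with Blaschke metric $h$ and Pick differential $C$; by Theorem~\ref{thm:completeness-equivalence} (invoked inside that proposition) it is also a proper embedding.

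For uniqueness, suppose $M_1, M_2 \subset \R^3$ are two complete hyperbolic affine spheres, each normalized to have center at the origin and each admitting a conformal parameterization $f_i : \C \to M_i$ realizing $C$ as its Pick differential. By Theorem~\ref{thm:npc} each Blaschke metric $h_i$ is nonpositively curved, and since both solve Wang's equation for the same $C$, Theorem~\ref{thm:uniqueness} forces $h_1 = h_2$. The associated complexified frames $F_1, F_2$ then satisfy the same linear system $F^{-1}dF = \omega$ from~\eqref{eqn:structure}, so a routine computation gives $F_2 = A\,F_1$ for a constant $A \in \GL_3\C$. Because each $F_i$ takes values in a fixed right coset of $\GL_3\R$, we have $A \in \GL_3\R$, and the normalization $H\equiv -1$ at the origin restricts $A$ to $\SL_3\R$. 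Allowing the center to lie anywhere in $\R^3$ then yields the asserted uniqueness up to translation and $\SL_3\R$. The only subtlety is the freedom in choosing the conformal parameterization that realizes $C$: any two such parameterizations of the same affine sphere differ by an automorphism of $(\C,C)$, which can be absorbed into the identifications before comparing frames, so it introduces no additional ambiguity.
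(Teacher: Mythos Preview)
Your proof is correct and follows essentially the same route as the paper: existence via the $k=3$ case of Theorem~\ref{thm:existence} with $\phi=\sqrt{2}\,C$ fed into Proposition~\ref{prop:conformal data enough}, and uniqueness via Theorem~\ref{thm:npc} plus Theorem~\ref{thm:uniqueness} to pin down the Blaschke metric, then ODE uniqueness for the frame. Your added remarks on why $A\in\SL_3\R$ and on the parameterization freedom are more explicit than the paper's treatment but not materially different.
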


\begin{proof}
Let $\sigma$ be the complete, nonpositively curved conformal metric
satisfying \eqref{eqn:vortex} for $\phi = \sqrt{2} C$, given by
Theorem \ref{thm:existence}.  By Proposition \ref{prop:conformal data
  enough} the pair $\sigma, C$ can then be integrated to an affine
spherical immersion $f : \C \to \R^3$ which by Theorem
\ref{thm:completeness-equivalence} is properly embedded.  Let $M$
denote its image.

Suppose $M'$ is another complete affine sphere conformally
parameterized by $\C$ which has Pick differential $C$.  Then the Blaschke
metric $\sigma'$ of $M'$ is another solution of \eqref{eqn:vortex} for
the same differential $\phi = \sqrt{2} C$ which is complete, and by
Theorem \ref{thm:npc}, nonpositively curved.  By Theorem
\ref{thm:uniqueness} we have $\sigma'=\sigma$.  Therefore, after
translating $M'$ so that it is centered at the origin, its
complexified frame satisfies the same structure equations
\eqref{eqn:structure} as that of $M$.  Hence the frames differ by a
fixed element $A \in \SL_3\R$, and $M'$ is the image of $M$ by the
composition of $A$ and a translation.
\end{proof}

Though we will not use the following result in the sequel, we note in
passing that uniqueness of the ``polynomial affine sphere'' considered
above can be shown even under weaker hypotheses; we can drop the
assumption of embeddedness and replace it with a curvature condition:

\begin{thm}
\label{thm:stronger-polynomial-sphere-uniqueness}
Suppose $f : \C \to \R^3$ is an affine spherical immersion with
polynomial Pick differential and nonpositively curved Blaschke metric.  Then
the image of $f$ is the complete affine sphere associated to $C$ by
Theorem \ref{thm:polynomial-sphere-existence}.
\end{thm}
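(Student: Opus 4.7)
The plan is to reduce this strengthened uniqueness statement to the uniqueness half of Theorem \ref{thm:polynomial-sphere-existence} by upgrading the nonpositive curvature hypothesis to completeness of the Blaschke metric. Once completeness of $h$ is in hand, the argument proceeds exactly as before: Wang's equation \eqref{eqn:wang-intrinsic} is equation \eqref{eqn:vortex} for $\phi = \sqrt{2}\,C$ and $k=3$, so by Theorem \ref{thm:uniqueness} the Blaschke metric $h$ coincides with the unique complete, nonpositively curved solution; since both $f$ and the affine spherical immersion produced by Theorem \ref{thm:polynomial-sphere-existence} then satisfy the structure equations \eqref{eqn:structure} for the same pair $(h,C)$, their complexified frame fields differ by left multiplication by a fixed element of $\GL_3\C$, which after normalizing centers at the origin lies in $\SL_3\R$. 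In particular the images of $f$ and of the standard polynomial affine sphere agree up to the action of $\SL_3\R$.

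The key step is therefore the completeness of $h$. Reading Wang's equation \eqref{eqn:wang-intrinsic} as $K_h = -1 + 2|C|_h^2$, the hypothesis $K_h \leq 0$ is equivalent to the pointwise bound $|C|_h^2 \leq \tfrac{1}{2}$. Unpacking this with $h = e^u |dz|^2$ and $|C|_h = |C|e^{-3u/2}$ gives
$$h \;\geq\; 2^{1/3}\,|C(z)|^{2/3}\,|dz|^2 \;=\; \bigl|\sqrt{2}\,C\bigr|^{2/3},$$
which is precisely the singular flat conformal metric associated with the cubic differential $\sqrt{2}\,C$. Since $C$ is a nonzero polynomial, $|C(z)| \to \infty$ as $z \to \infty$, and a direct computation (or the natural-coordinate half-plane description of Section \ref{subsec:half-planes-rays}) shows that $|\sqrt{2}\,C|^{2/3}$ is complete on $\C$, the cone points at the zeros of $C$ causing no obstruction. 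A conformal metric bounded below by a complete conformal metric is itself complete, so $h$ is complete.

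The main subtle point in this argument is really the correct reading of the hypotheses: we are not assuming $f$ is an embedding or even proper, so we cannot appeal to Theorem \ref{thm:completeness-equivalence} to obtain completeness for free. Instead we must extract completeness directly from the curvature condition and Wang's equation, which is what the lower bound above accomplishes. Conversely, once we have completeness of $h$, we no longer need Li's theorem or any properness argument: the conclusion of the theorem is a statement about the image of $f$ in $\R^3$, and this follows purely from the ODE uniqueness of the integration \eqref{eqn:structure} after Theorem \ref{thm:uniqueness} identifies $h$ with the canonical Blaschke metric. Thus the proof has the clean form ``nonpositive curvature $+$ polynomial $C$ $\Longrightarrow$ complete $h$ $\Longrightarrow$ canonical $h$ $\Longrightarrow$ canonical frame field,'' with only the first implication requiring any work.
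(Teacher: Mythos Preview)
Your proposal is correct and follows essentially the same route as the paper: use Wang's equation and the nonpositive curvature hypothesis to bound $h$ below by a multiple of $|C|^{2/3}$, observe that this flat metric is complete because $C$ is a polynomial, conclude that $h$ is complete, and then invoke the uniqueness already established (either directly via Theorem \ref{thm:uniqueness} as you do, or via the uniqueness clause of Theorem \ref{thm:polynomial-sphere-existence} as the paper does). Your write-up is slightly more explicit about unpacking the final frame-field step, but there is no substantive difference in the argument.
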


\begin{proof}
By Theorem \ref{thm:completeness-equivalence}, it suffices to show
that the Blaschke metric of $f$ is complete, for then the hypotheses
of the uniqueness statement in Theorem
\ref{thm:polynomial-sphere-existence} are again satisfied.

From the intrinsic formulation of Wang's equation
\eqref{eqn:wang-intrinsic} we find that nonpositive curvature of the
Blaschke metric $h$ implies $|C|_h \leq \tfrac{1}{2}$.
Equivalently $h$ is bounded below by a constant
multiple of the conformal metric $|C(z)|^{2/3} |dz|^2$.  Since $C$ is
a polynomial, it follows that both of these metrics are bounded below
by a multiple of the Euclidean metric $|dz|^2$ on the complement of a
compact set, and hence both are complete.
\end{proof}

It would be interesting to know whether the curvature condition can
also be dropped, i.e. 
\begin{question}
Does there exist an affine spherical immersion $f : \C \to \R^3$ with
polynomial Pick differential whose Blaschke metric has positive curvature at
some point?
\end{question}
Of course it follows from the developments above that such an immersion
cannot be proper, and its Blaschke metric must be incomplete.

Having settled the basic existence and uniqueness results for
polynomial affine spheres, the goal of the rest of this section is to
show that the convex domains associated with these affine spheres are polygons.

Recall that $\star_d \subset \C$ is the union of $(d+3)$ evenly spaced
rays from the origin that includes $\R^+$.  We call a space homeomorphic to
$\star_d$ an \emph{open star}, and in such a space the homeomorphic images of the
rays are the \emph{edges}.

\begin{thm}
\label{thm:polygon}
Let $f : \C \to M \subset \R^3$ be a conformal parameterization of a
complete hyperbolic affine sphere whose Pick differential $C$ is a polynomial
of degree $d$.  Then $M$ is asymptotic to the cone over a convex
polygon $P$ with $d+3$ vertices.

Moreover, if $C$ is monic then the projectivization of $f(\star_d)$
gives an embedded open star in $P$ whose edges tend to the vertices.
\end{thm}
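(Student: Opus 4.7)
The plan is to realize the polygon $P$ as a projective limit, comparing the polynomial affine sphere to a chain of Tzitzeica surfaces at infinity via the exponential Blaschke-error bounds of Section~\ref{sec:vortex}.

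The first step is to cover a neighborhood of infinity in $\C$ by the $(d+3)$ standard $C$-right-half-planes $(U_k,w_k)$ of Proposition~\ref{prop:standard-half-planes}. In each $U_k$ the natural coordinate $w_k$ normalizes $C$ so that Wang's equation takes the form \eqref{eqn:vortex-in-natural-coord} with $k=3$ and $u_\phi \equiv 0$. Theorem~\ref{thm:exponential-bound} and Corollary~\ref{cor:c1-bound} then give a pointwise bound of order $e^{-2\sqrt{3}\,r}/\sqrt{r}$ for the Blaschke log-density $u$ and its gradient, where $r$ is the associated natural distance to the zeros of $C$. Equivalently, the $\mathfrak{sl}_3\R$-valued structure form $\alpha$ of \eqref{eqn:structure} for the polynomial affine sphere differs from the connection form $\alpha_T$ of the normalized Tzitzeica surface by an exponentially small error of this size throughout each $U_k$.

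The second step is a frame comparison. Writing $F = G\cdot F_T$ with $F_T(w)=H(w)F_T(0)$ the Tzitzeica frame from equation~\eqref{eqn:norm-titeica-def} and $G$ an $\SL_3\R$-valued function, a direct calculation gives $G^{-1}\,dG = \mathrm{Ad}(F_T)(\alpha-\alpha_T)$, so that $|G^{-1}\,dG|$ is controlled by the product of $|\mathrm{Ad}(H)|$ and the Blaschke error. Along a ray $w=te^{i\theta}$ the adjoint norm of $H$ grows like $e^{c(\theta)t}$, where $c(\theta)$ is the piecewise-smooth function given by the maximum of differences of diagonal entries of $\h(e^{i\theta})$; one has $c(\theta)\leq 2\sqrt{3}$ with equality at only $2(d+3)$ ``unstable'' angles counted once around infinity. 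For each stable $C$-ray the ODE for $G$ has an integrable right-hand side and $G$ converges to some $G_\infty\in\SL_3\R$; the projectivization of $f$ then tends along the ray to a specific vertex of the Tzitzeica triangle $G_\infty\cdot\triangle_0$. Since every ray of $\star_d$ is stable and eventually lies in some $U_k$ by Proposition~\ref{prop:standard-half-planes}(v), this yields $(d+3)$ candidate vertices in $\RP^2$.

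To assemble them into a single convex polygon, the third step is to analyze the transition across each unstable direction by integrating $\alpha$ along an arc of a large circle of radius $R$ crossing it. Near an unstable angle $\theta_*$ one has $c(\theta) = 2\sqrt{3} - c_1(\theta-\theta_*)^2 + O((\theta-\theta_*)^4)$, and combining the Gaussian factor in $\theta$ produced by $|\mathrm{Ad}(H)|\cdot|\alpha-\alpha_T|$ with the $1/\sqrt{r}$ factor in the error shows that the arc integral converges as $R\to\infty$ to a finite element of $\mathfrak{sl}_3\R$ supported in a single off-diagonal slot; exponentiating gives a specific elementary unipotent $U_*\in\SL_3\R$ identifying the $G_\infty$'s of the two adjacent stable sectors. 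Such a unipotent fixes exactly one projective line together with one point on it, so each adjacent pair of limiting Tzitzeica triangles shares exactly one vertex or exactly one edge. Traversing a loop at infinity one crosses $2(d+3)$ unstable directions, and the specific slot of each $U_*$ makes the resulting chain of $2(d+3)$ projective Tzitzeica triangles alternate between vertex-inscribed and edge-circumscribed types; the closed chain then outlines a convex $(d+3)$-gon $P$ to whose cone the affine sphere is asymptotic. Each ray of $\star_d$ lies at a stable angle whose projective Tzitzeica limit is a vertex-inscribed triangle sharing a single vertex with $P$, so projectivizing $f(\star_d)$ gives an embedded open star in $P$ whose $(d+3)$ edges tend to the vertices.

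The main obstacle is the transition analysis across the unstable directions: the $1/\sqrt{r}$ factor in Theorem~\ref{thm:exponential-bound} is exactly what makes the Gaussian arc integral yield a finite nonzero element (without it the integral would blow up), and identifying the precise off-diagonal slot of each elementary unipotent $U_*$ in terms of the structure of $\h$ near the corresponding unstable direction is what enforces the vertex/edge alternation and hence the convexity and correct vertex count of $P$.
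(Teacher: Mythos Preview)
Your proposal is correct and follows essentially the same approach as the paper: your comparison frame $G$ is exactly the paper's osculation map $\hat{F}=F\,F_T^{-1}$, and your three steps correspond to the paper's Lemmas~\ref{lem:stability}, \ref{lem:unipotent-factors}, and \ref{lem:vee} together with the final assembly argument. The only cosmetic difference is that the paper organizes the gluing by showing that within each half-plane the three stable limits $L_-,L_0,L_+$ all produce the same ``vee'' $L_0\cdot V\subset\partial\P(M)$ (since the unipotents fix the relevant edges pointwise), and then matches vees between adjacent half-planes, whereas you phrase the same content as an alternation of vertex-inscribed and edge-circumscribed triangles.
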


Before starting the proof we will introduce a key tool, the
\emph{osculation map}, and outline how the theorem will follow from an
analysis of the asymptotic behavior of this map using estimates from
the previous section.

Let $z$ be a natural coordinate for the Pick differential defined in a region
$U$.  (In this section we will \emph{not} be working with the global
coordinate in which $C$ is a polynomial, which in previous sections had also
been denoted $z$.)  Restricting the conformal parameterization $f$ to
$U$, we can consider it as a function of $z$, denoted $f(z)$.  Let
$F(z) = (f, f_z, f_{\bar{z}})$ be the associated complexified frame
field.  As before let $F_T(z)$ denote the frame field of the
normalized \titeica surface $T$.  Define the \emph{osculation map}
$\hat{F} : U \to \GL_3\R$ by
$$ \hat{F}(z) = F(z) F_T^{-1}(z).$$
Note that the value of this function lies in $\GL_3\R$ because both
frame values $F(z)$ and $F_T(z)$ lie in the same right coset of
$\GL_3\R$ within $\GL_3\C$ (as described in Section \ref{sec:affine}).

Evidently $\hat{F}$ is constant if and only if $f$ is itself a
\titeica surface, and more generally, left multiplication by
$\hat{F}(z_0)$ transforms the normalized \titeica surface to one which
has the same tangent plane and affine normal as $f$ at the point
$f(z_0)$.  In this sense $\hat{F}(z_0)$ represents the ``osculating''
\titeica surface of $f$ at $z_0$.

Recall the map $H(z) = \exp(\h(z))$ we used in
subsection~\ref{sec:affine} (equation \eqref{eqn:Hdef}) to
parameterize the \titeica surface.  Then a calculation with the affine
structure equations \eqref{eqn:structure} shows that the derivative
$\hat{F}^{-1} d\hat{F} \in \Omega^1(U,\gl_3\R)$ of the osculation is
given by
\begin{equation}
\begin{split}
\label{eqn:osculation-derivative}
\hat{F}^{-1} d\hat{F} &= \Ad_{F_T} \left ( F^{-1} dF - F_T^{-1} dF_T \right )\\
&= \Ad_{H(z)} \Theta(u(z))
\end{split}
\end{equation}
where 
\begin{equation}\label{eqn:frame-error}
\begin{split}
\Theta(u(z) ) &= \Ad_{F_T(0)} \left [ \;\; \begin{pmatrix}
0 & 0 & \frac{1}{2}e^u - 1 \\
0 & u_z  & 0 \\
0 & 2e^{-u} - 1 & 0 
\end{pmatrix} \right .
dz\\&\text{\hspace{1.83cm}}
+\left .
\begin{pmatrix}
0 & \frac{1}{2}e^u - 1 & 0 \\
0 & 0 & 2e^{-u} - 1 \\
0 & 0 & u_{\bar{z}}
\end{pmatrix}
d\bar{z} \;\;\right ]
\end{split}
\end{equation}
and $e^u |dz|^2$ is the Blaschke metric of $M$.

Notice that the estimates from the previous section show that
$\Theta(u(z))$ is rapidly decaying toward zero as the distance from
$z$ to the zeros of $C$ increases (since $F_T(0)$ is a constant matrix
and in these coordinates the functions 
$\frac{1}{2}e^u - 1$, $u_z$, and $u_{\bar{z}}$ are all
exponentially small).  Ignoring the conjugation by the diagonal matrix
$H(z)$ in \eqref{eqn:osculation-derivative} for a moment, this
suggests that $\hat{F}(z)$ should approach a constant as $z$ goes to
infinity---since its derivative is approaching zero---which would mean
that the affine sphere $f$ is asymptotic to a \titeica surface.

However, the function $H(z)$ is itself exponentially growing as a
function of $z$, with the precise rate of growth depending on the
direction.  Thus the actual asymptotic behavior of the osculation map
depends on the competition between frame growth (i.e.~$H$) and Blaschke
error decay (i.e.~$\Theta$) as described in the introduction.  In most
directions, the exponential decay of $\Theta$ is faster than the
growth of $H$, giving a well-defined limiting \titeica surface and
thus a portion of the projectivized image of $M$ that is modeled on a
triangle.  In exactly $2(d+3)$ \emph{unstable directions} there is an exact balance,
which allows the limit \titeica surface to shift.

Thus, using the osculation map, the proof Theorem \ref{thm:polygon}
splits into four steps:
\begin{enumerate}
\item \textbf{Finding stable limits:} By considering the osculation
map restricted to a ray in a standard half-plane, use the exponential
bounds from Theorem \ref{thm:exponential-bound} and Corollary
\ref{cor:c1-bound} to show the existence of a limit in any stable
direction (and locally constant as a function of the ray)

\item \textbf{Finding unipotent factors:} By considering the
osculation map restricted to an arc joining two rays on either side of
an unstable direction, use the same bounds to show that the ray limits on
either side differ by composition with a unipotent element of $\SL_3\R$.

\item \textbf{Finding triangle pieces:} Show that there is a ``vee''
(two edges of a triangle) in the boundary of the projectivization of
$M$ corresponding to each interval of stable directions.

\item \textbf{Assembling the polygon:} Use the geometry of the unipotent
factors to show that these triangle pieces glue up to form a polygon with $(d+3)$
vertices.
\end{enumerate}

\begin{proof}[Proof of Theorem \ref{thm:polygon}.]
\mbox{}
Any polynomial affine sphere can be parameterized so that its Pick
form is monic by composing an arbitrary parameterization with a
suitable automorphism $z \mapsto \lambda z$, $\lambda \in \C^*$.  Therefore
we can (and do) assume $C$ is monic throughout the proof, since the only
part of the theorem that involves a specific parameterization
(i.e.~the image $f(\star_d)$) includes monicity as a hypothesis.

Let $U$ be one of the standard half-planes for $C$ given by
Proposition \ref{prop:standard-half-planes}.  Our first goal is to
understand the part of $\bdry \P(M)$ that arises from \emph{rays} in
$U$, i.e.~paths of the form $\gamma(t) = b + e^{i \theta}t$ where
$t \geq 0$ and $b$ is arbitrary.  Here we call $\theta \in [-\pi/2,\pi/2]$ the
\emph{direction} of the ray.

\boldpoint{Step 1: Finding stable limits.}
We will say that such a ray is \emph{stable} if $\theta \not\in
\{-\pi/2,-\pi/6,\pi/6,\pi/2\}$.  Note the possible directions of stable
rays form three intervals of length $\pi/3$, which we denote by
$$ J_- = (-\pi/2,-\pi/6), \;\; J_0 = (-\pi/6,\pi/6), \;\; J_+ =
(\pi/6,\pi/2).$$

Recall from Section \ref{sec:cubic} that a \emph{quasi-ray} is a path
that can be parameterized so that it eventually lies in a half-plane,
in which it has distance $o(t)$ from a ray parameterized by arc length
$t$.  We say a quasi-ray is stable if the direction of some associated
ray is stable.

The ``stability'' of rays and quasi-rays in these directions refers to
convergence of the osculation map:

\begin{lem}
\label{lem:stability}
If $\gamma$ is a stable ray or quasi-ray, then $\lim_{t \to \infty}
\hat{F}(\gamma(t))$ exists.  Furthermore, among all such rays only
three limits are seen:  There exist $L_-, L_0, L_+ \in \GL_3\R$ such
that 
$$\lim_{t \to \infty} \hat{F}(\gamma(t)) = \begin{cases}
L_- &\text{ if }\theta \in J_-\\
L_0 &\text{ if }\theta \in J_0\\
L_+ &\text{ if }\theta \in J_+
\end{cases}$$
\end{lem}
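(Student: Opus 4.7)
My plan is to obtain convergence along a single stable ray or quasi-ray from the matrix ODE $\hat F' = \hat F\cdot \iota_{\gamma'}\omega$ with $\omega = \Ad_{H(z)}\Theta(u)$ given by \eqref{eqn:osculation-derivative}--\eqref{eqn:frame-error}, by showing its coefficient is absolutely integrable along the path. The coefficient exhibits the competition described in the introduction: the entries of $\Theta(u(z))$, namely $\tfrac12 e^u-1$, $2e^{-u}-1$, $u_z$, $u_{\bar z}$, each decay like $O(e^{-2\sqrt3\,t}/\sqrt t)$ along a ray $\gamma(t) = b+e^{i\theta}t$ in a standard half-plane by Theorem~\ref{thm:exponential-bound} and Corollary~\ref{cor:c1-bound} (applied to $\phi = \sqrt2\,C$, after converting the intrinsic $|\phi|^{2/3}$-distance to the Euclidean $z$-distance $t$), while $\Ad_{H(z)}$ multiplies the $(i,j)$ off-diagonal entry of $\Theta$ by $e^{2t(\cos\theta_i - \cos\theta_j)}$, where $\theta_k = \theta - 2\pi k/3$. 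A short trigonometric computation identifies the three differences $\cos\theta_i - \cos\theta_j$ (up to sign) with $\sqrt3\cos(\theta+\tfrac\pi6)$, $\sqrt3\cos(\theta-\tfrac\pi6)$, and $\sqrt3\sin\theta$, so that $\max_{i\neq j}|\cos\theta_i - \cos\theta_j|\leq \sqrt3$ with equality precisely at the four directions $\theta\in\{-\pi/2,-\pi/6,\pi/6,\pi/2\}$. Hence for stable $\theta$ every entry of $\iota_{\gamma'}\omega$ is dominated by $Ce^{-\varepsilon(\theta)t}/\sqrt t$ with $\varepsilon(\theta)>0$, making the ODE coefficient integrable and producing a limit $\lim_{t\to\infty}\hat F(\gamma(t))\in \GL_3\R$ via the convergent path-ordered exponential.

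To see that only three limits arise, I would compare two rays within a single stable interval by integrating $\omega$ over a connecting circular arc. Fix any compact subinterval $[\alpha_1,\alpha_2]\subset J_\bullet$: on it the inequality $\max_{i\neq j}2|\cos\theta_i - \cos\theta_j|\leq 2\sqrt3 - \delta$ is strict and uniform, and the $|\phi|^{2/3}$-distance from $z = re^{i\theta}$ in the sector $\{\theta\in[\alpha_1,\alpha_2],\, r\geq R\}\subset U$ to the zero set of $\phi$ is bounded below by a positive multiple of $r$ (since the zeros lie in a compact subset of $\mathbb C\setminus U$). Together these give a uniform pointwise bound $\|\omega\|_z = O(e^{-\delta'T}/\sqrt T)$ on the arc of $z$-radius $T$ from $Te^{i\alpha_1}$ to $Te^{i\alpha_2}$; since the arc has $z$-length $T(\alpha_2 - \alpha_1)$, its contribution to the path-ordered exponential is of order $\sqrt T\,e^{-\delta'T}\to 0$, so $\hat F$ has a common limit along the two rays. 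Exhausting $J_\bullet$ by such compact subintervals produces a single constant $L_\bullet$ for every ray in $J_\bullet$; independence of the base point $b$ follows from the same pointwise bound applied to a segment joining $b$ to $0$. Finally, a stable quasi-ray $\gamma(t)=e^{i\theta}t+\delta(t)$ with $|\delta(t)|=o(t)$ eventually enters any sector $[\alpha_1,\alpha_2]$ containing $\theta$ in its interior, so the same integrability argument gives it a limit, and comparison to the straight ray $e^{i\theta}t$ via a segment of length $o(t)$, on which $\|\omega\|$ is uniformly exponentially small, forces the two limits to agree.

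The main obstacle I anticipate is the careful bookkeeping required to convert Theorem~\ref{thm:exponential-bound} from the intrinsic $|\phi|^{2/3}$-metric, in which it is stated, into the Euclidean $z$-coordinate of a standard half-plane, so that its decay rate matches precisely the maximum growth rate $2\sqrt3$ of $\Ad_{H(z)}$. This matching is what pins down the four unstable directions as exactly the endpoints of $J_-, J_0, J_+$, and it is also why the borderline $1/\sqrt t$ factor in Theorem~\ref{thm:exponential-bound} remains too large to yield convergence at those four angles—producing, as one expects from the next lemma, the unipotent transition factors rather than a limit.
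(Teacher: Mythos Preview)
Your proposal is correct and follows essentially the same route as the paper: bound the coefficient of the osculation ODE by playing the exponential decay of $\Theta(u)$ (Theorem~\ref{thm:exponential-bound} and Corollary~\ref{cor:c1-bound}) against the growth of $\Ad_{H(z)}$, obtain integrability along stable rays, and then compare rays in the same interval $J_\bullet$ by integrating $\omega$ over a connecting path. The paper uses straight segments where you use circular arcs, and treats two arbitrary rays directly rather than first handling rays through the origin and then base points, but these are purely organizational differences.

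One small caution on the quasi-ray case: the clause ``the same integrability argument gives it a limit'' is not justified as stated, because the definition $\gamma(t)=e^{i\theta}t+\delta(t)$ with $|\delta(t)|=o(t)$ places no control on $|\gamma'(t)|$, so $\int_0^\infty \|\iota_{\gamma'}\omega\|\,dt$ could in principle diverge. The paper---and your own second clause---sidesteps this entirely: one only compares $\hat F(\gamma(t))$ to $\hat F(e^{i\theta}t)$ via the segment of length $|\delta(t)|=o(t)$, on which the pointwise bound $\|\omega\|=O(e^{-\alpha t})$ (with $\alpha=2\sqrt3-c(\theta)>0$) makes the transport $g_t\to I$. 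Since $\hat F(e^{i\theta}t)\to L_\bullet$ from the ray case, this alone gives $\hat F(\gamma(t))\to L_\bullet$; no direct integrability along $\gamma$ is needed or claimed in the paper.
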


\begin{proof}
First we consider rays, and at the end of the proof we show that
quasi-rays have the same behavior.

Let $\gamma$ be a ray and for brevity write $G(t) =
\hat{F}(\gamma(t))$. By \eqref{eqn:osculation-derivative} we have
$$ G(t)^{-1} G'(t) = \Ad_{H(\gamma(t))} \Theta(u) (\gamma'(t))$$
Applying Theorem \ref{thm:exponential-bound} and Corollary
\ref{cor:c1-bound} to $u$ and $\phi = \sqrt{2} C$, and using that
$|\gamma'(t)| = 1$, we have
$$\Theta(u)(\gamma'(t)) = O(e^{-2\sqrt{3} t} / \sqrt{t}).$$
Note that the exponential decay rate of $2
\sqrt{3}$ (rather than $\sqrt{6}$ seen in the theorems cited) reflects the fact that we are working in coordinates where $C =
2 dz^3$ and $\phi = 2^{3/2} dz^3 = (\sqrt{2} dz)^3$, so
$|\phi|^{2/3}$-distances are related to $|dz|^2$ distances by a factor of $\sqrt{2}$.

Conjugating $\Theta$ by the diagonal matrix $H(z)$ multiplies the
$(i,j)$ entry by
\begin{equation}
\label{eqn:conjugation-by-diagonal}
\lambda_{ij} := \exp( 2 \Re(z(\omega^{(1-i)} - \omega^{(1-j)})).
\end{equation}
In this case $z = \gamma(t) = b + e^{i \theta} t$, and taking the
maximum over $i$ and $j$ we find 
\begin{equation}
\label{eqn:c-of-theta}
\lambda_{ij} = O(e^{c(\theta) t})
\end{equation}
where the optimal coefficient $c(\theta)$ has a simple geometric
description: Inscribe an equilateral triangle in $|z|=2$ with one
vertex at $e^{i \theta}$.  Project the triangle orthogonally to $\R$
and let $c(\theta)$ be the length of the resulting interval.

In particular, the coefficient $c(\theta)$ achieves its maximum $2 \sqrt{3}$ exactly
when one of the sides of the triangle is horizontal, or equivalently, if
and only if the ray is \emph{not} stable.

Combining these bounds for $\Theta$ and $\lambda_{ij}$, we find that
for any stable ray, we have definite exponential decay in the
equation satisfied by $G$, i.e.
$$ G(t)^{-1} G'(t) = O(e^{-\alpha t} / \sqrt{t})$$
where $\alpha = 2 \sqrt{3} - c(\theta) > 0$. 
Standard ODE techniques (see Lemma \ref{lem:ode-small-coef}.(ii) in Appendix
\ref{appendix:ode}) then show that $\lim_{t \to \infty} G(t)$ exists.

Now suppose that $\gamma_1$ and $\gamma_2$ are stable rays with
respective angles $\theta_1,\theta_2$ that belong to the same interval
($J_-$, $J_0$, or $J_+$).  We will show that $G_1(t)^{-1} G_2(t) \to
I$ as $t \to \infty$, where $G_i(t) = \hat{F}(\gamma_i(t))$.  This
means that $\hat{F}$ has the same limit along these rays, giving
$L_-$, $L_0$, and $L_+$ as in the statement of the lemma.

For any $t \geq 0$ let $\eta_t(s) = (1-s) \gamma_1(t) + s \gamma_2(t)$ be
the constant-speed parameterization of the segment from $\gamma_1(t)$
to $\gamma_2(t)$.  Let $g_t(s) = \Hat{F}(\eta_t(0))^{-1}
\Hat{F}(\eta_t(s))$, which satisfies
\begin{equation}
\label{eqn:path-joining-rays}
\begin{split}
g_t^{-1}(s) g_t'(s) &= \Ad_{H(\eta_t(s))} \Theta( u ) (\eta_t'(s))\\
g_t(0) &= I\\
g_t(1) &= G_1(t)^{-1} G_2(t)
\end{split}
\end{equation}
Since $|\eta_t'(s)| = O(t)$, the analysis above shows that
$g_t^{-1}(s) g_t'(s) = O(\sqrt{t} e^{-\alpha t})$
where now $\alpha = \left (2 \sqrt{3} - \sup_{\theta_1 \leq \theta \leq
  \theta_2} c(\theta) \right ) > 0$ because $\pm \pi/2, \pm \pi/6 \not \in [\theta_1,\theta_2]$.
In particular by making $t$ large enough we can
arrange for $g_t^{-1}(s) g_t'(s)$ to be uniformly small for all $s \in
[0,1]$.  Once again standard ODE methods (Lemma \ref{lem:ode-small-coef}.(i)) give the desired convergence,
$$G_1(t)^{-1}G_2(t) = g_t(1) \to I \text{ as }t \to \infty.$$

Finally, suppose that $\gamma_1$ is a stable quasi-ray, and $\gamma_2$
the ray that it approximates (with direction $\theta$).  We proceed as
above to study $\eta_t(s) = (1-s) \gamma_1(t) + s \gamma_2(t)$ and the
restriction of the frame field to this homotopy from $\gamma_1$ to
$\gamma_2$.  In this case we have the stronger bound on the derivative
$|\eta_t'(s)| = o(t)$, and the previous bound on $g_t^{-1}(s) g_t'(s)$
applies again with exponent $\alpha = (2 \sqrt{3} - c(\theta))$.
Thus as before we find $g_t(1) \to I$ as $t \to \infty$, and that the
frame field has the same limit on the stable quasi-ray $\gamma_1$ as
on an associated stable ray $\gamma_2$.
\end{proof}

\boldpoint{Step 2: Finding unipotent factors.}
Next we will analyze the behavior of the osculation map near an
unstable ray in order to understand the relationship between $L_-$, $L_0$,
and $L_+$.

\begin{lem}
\label{lem:unipotent-factors}
Let $L_-, L_0, L_+$ be as in the previous lemma.  Then there exist $a,b \in \R$ such that
\begin{equation}
\label{eqn:unipotent-factors}
L_-^{-1} L_0 =  \begin{pmatrix}
1 & a & \\
 & 1 & \\
 &  & 1
\end{pmatrix}  \;\text{ and }\;
L_0^{-1} L_+ = \begin{pmatrix}
1 &  & b\\
 & 1 & \\
 &  & 1
\end{pmatrix},
\end{equation}
where the matrix entries not shown are zero.
\end{lem}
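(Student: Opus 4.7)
The plan is to compute $L_-^{-1}L_0$ by integrating the Maurer--Cartan form $\hat F^{-1}d\hat F$ along a circular arc $\gamma_R(\theta)=Re^{i\theta}$, $\theta\in[\theta_1,\theta_2]$, with $\theta_1\in J_-$ and $\theta_2\in J_0$ enclosing the unstable direction $\theta_*=-\pi/6$; the factor $L_0^{-1}L_+$ is handled symmetrically by an arc crossing $\theta_*=+\pi/6$. The mechanism is a precise balance: the pointwise decay $O(e^{-2\sqrt 3R}/\sqrt R)$ from Theorem~\ref{thm:exponential-bound} and Corollary~\ref{cor:c1-bound} matches the exponential growth rate of exactly one entry of the conjugation factor $\Ad_{H(z)}$ at the unstable direction, so after integration this entry contributes a Gaussian of bounded $L^1$ mass (behaving like a nascent delta function) while every other entry contributes a vanishing integral.

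Working in the natural coordinate where $C=2\,dz^3$, set $G_R(\theta)=\hat F(\gamma_R(\theta))$, so by~\eqref{eqn:osculation-derivative} we have $G_R^{-1}G_R'=\mathcal{A}_R(\theta)$ with $\mathcal{A}_R(\theta)=\Ad_{H(Re^{i\theta})}\Theta(u)(iRe^{i\theta})$. Applying Lemma~\ref{lem:stability} to the rays of angles $\theta_1$ and $\theta_2$ gives $G_R(\theta_1)\to L_-$ and $G_R(\theta_2)\to L_0$, so the task reduces to computing $\lim_{R\to\infty}\mathcal{T}\exp\int_{\theta_1}^{\theta_2}\mathcal{A}_R(\theta)\,d\theta$. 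From~\eqref{eqn:conjugation-by-diagonal}--\eqref{eqn:c-of-theta}, the $(i,j)$-entry of $\mathcal{A}_R(\theta)$ is bounded by $C\sqrt R\cdot e^{R(c_{ij}(\theta)-2\sqrt 3)}$, where $c_{ij}(\theta)=2\Re((\omega^{1-i}-\omega^{1-j})e^{i\theta})$. A direct calculation of the six off-diagonal $c_{ij}$ shows that only $c_{12}(\theta)=2\sqrt 3\cos(\theta+\pi/6)$ attains $2\sqrt 3$ on $[\theta_1,\theta_2]$ (uniquely at $\theta_*=-\pi/6$), with quadratic expansion $c_{12}(\theta)=2\sqrt 3-\sqrt 3(\theta+\pi/6)^2+O((\theta+\pi/6)^4)$; the other off-diagonal $c_{ij}$ have their maxima at the outlying unstable directions $\pm\pi/2$ or $+\pi/6$, all outside $[\theta_1,\theta_2]$ if it is chosen appropriately.

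Consequently, for each $(i,j)\neq(1,2)$ one has $\int_{\theta_1}^{\theta_2}|(\mathcal{A}_R)_{ij}|\,d\theta\leq C\sqrt R\,e^{-\delta R}\to 0$ for some $\delta>0$, while for the $(1,2)$ entry the substitution $\psi=\sqrt R(\theta+\pi/6)$ converts the integrand into $(\mathrm{bounded})\cdot e^{-\sqrt 3\psi^2}\,d\psi$, whose integral converges to an explicit real constant $a$ as $R\to\infty$. Since $e_{12}^2=0$, the Dyson expansion of the path-ordered exponential terminates in the limit: in $\sum_{n\geq 0}\int_{s_1\leq\cdots\leq s_n}\mathcal{A}_R(s_n)\cdots\mathcal{A}_R(s_1)\,ds$ every term with $n\geq 2$ either contains a non-critical entry factor (contributing zero in the limit by Fubini and the $L^1$ estimates above) or consists entirely of $(1,2)$-entry factors whose matrix product vanishes. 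Therefore $L_-^{-1}L_0=I+a\,e_{12}$, which is the first formula of~\eqref{eqn:unipotent-factors}. The same argument across $\theta_*=+\pi/6$ singles out $c_{13}(\theta)=2\sqrt 3\cos(\theta-\pi/6)$ and the entry $e_{13}$, giving $L_0^{-1}L_+=I+b\,e_{13}$.

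The technical heart, and the main obstacle, is the Gaussian concentration step: without the sharp $1/\sqrt R$ improvement in Theorem~\ref{thm:exponential-bound} the critical integral would diverge, and without the exact quadratic vanishing of $2\sqrt 3-c_{12}$ at its minimum it would vanish; thus both the error decay rate and the precise $1/\sqrt R$ prefactor are used in an essential way. The remaining work, justifying the Dyson termination and the uniform convergence $G_R(\theta_i)\to L_\pm$ at the arc endpoints, reduces to standard ODE estimates of the type already used in Lemma~\ref{lem:stability} (cf.\ Lemma~\ref{lem:ode-small-coef}).
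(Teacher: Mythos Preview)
Your proposal is correct and follows essentially the same approach as the paper: integrate the osculation derivative along a circular arc of radius $R$ crossing a single unstable direction, observe that exactly one matrix entry of $\Ad_{H}$ attains the critical growth rate $2\sqrt 3$ there (namely $E_{12}$ at $-\pi/6$ and $E_{13}$ at $+\pi/6$), and use the quadratic expansion of $c(\theta)$ at its maximum together with the $e^{-2\sqrt 3 R}/\sqrt R$ bound to see that this entry contributes a bounded Gaussian integral while all others vanish in the limit. The only packaging difference is that the paper isolates the ODE step as Lemma~\ref{lem:ode-nearly-abelian} (coefficient equals an $L^1$-bounded scalar times a fixed nilpotent, plus a uniformly small remainder), whereas you invoke the Dyson series directly and use $e_{12}^2=0$ to kill the higher terms; these are equivalent. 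One small point: your claim that the critical integral ``converges to an explicit real constant $a$'' is not established directly---the Gaussian bound gives only uniform boundedness---but this is harmless, since convergence of $G_R(\theta_1)^{-1}G_R(\theta_2)\to L_-^{-1}L_0$ (already known from Lemma~\ref{lem:stability}) forces the scalar integral to converge a posteriori, which is exactly how the paper argues as well.
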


\begin{proof}
We give a detailed proof for $L_0^{-1}L_+$ and then indicate what must
be changed to handle the other case.  We begin as in the last part of the
previous proof, i.e.~joining two rays by a path and studying the
restriction of $\hat{F}$ to the path.

Consider the rays $\gamma_0(t) = t$ and $\gamma_+(t) = e^{i \pi/3} t$.
The restrictions $G_0 = \hat{F} \circ
\gamma_0$ and $G_+ = \hat{F} \circ \gamma_+$ have respective limits $L_0$
and $L_+$.  For any $t>0$, join $\gamma_0(t)$ to $\gamma_+(t)$ by
a circular arc
$$ \eta_t(s) = e^{is} t, \text{ where } s \in [0,\pi/3]$$
and let $g_t(s) = \hat{F}(\eta_t(0))^{-1} \hat{F}(\eta_t(s))$.  Then $g_t :
[0,\pi/3] \to \GL_3\R$ satisfies the ordinary differential equation
\eqref{eqn:path-joining-rays}
with $G_1, G_2$ replaced by $G_0, G_+$.

Unlike the previous case, however, the coefficient
$$ M_t(s) := \Ad_{H(\eta_t(s))} \Theta( u ) (\eta_t'(s))$$
that appears in this equation is not exponentially small in $t$
throughout the interval.  At $s=\pi/6$, conjugation by $H(\eta_t)$
multiplies the $(1,3)$ entry of $\Theta( u )$ by a factor of
$\exp(2\sqrt{3}t)$, exactly matching the exponential decay rate for
$\Theta$ and giving  
$$M_t(\pi/6) = O(|\eta_t'| / \sqrt{t}) = O(\sqrt{t}),$$
where in the second equality we used $|\eta_t'| = t$.

However, this potential growth in the coefficient matrix $M_t$ is seen
\emph{only} in this $(1,3)$ entry, because by
\eqref{eqn:conjugation-by-diagonal} the other entries are scaled by
smaller exponential factors.  (That is, the elementary matrix
$E_{1,3}$ is the leading eigenvector of $\Ad_{H(\eta_t)}$.)
Furthermore the effect rapidly decays as the angle moves away from
$\pi/6$: For $\theta \in [0,\pi/3]$ and $c(\theta)$ as in
\eqref{eqn:c-of-theta} we have
$$ c(\theta) = 2 \sqrt{3} \cos \left (\frac{\pi}{6} - \theta \right ) \leq 2
\sqrt{3} -(\theta - \pi/6)^2.$$
Combining these two observations we can separate the unbounded entry in
$M_t(s)$ and write
$$ M_t(s) = M^0_t(s) + \mu_t(s) E_{13}$$
where $M^0_t(s) = O(\exp(-\alpha t))$ for some $\alpha > 0$, $E_{13}$
is the elementary matrix, and
$$ \mu_t(s) = O \left ( |\eta_t'| \exp \left ((2 \sqrt{3} - c(s))t
\right ) / \sqrt{t} \right ) = O\left (\sqrt{t} \exp \left (-(s -
\pi/6)^2 t \right ) \right ).$$ This upper bound is a Gaussian
function in $s$, normalized such that its integral over $\R$ is
independent of $t$.  (As $t \to \infty$ this Gaussian approximates a
delta function at $s=\tfrac{\pi}{6}$.)  Therefore the function $\mu_t(s)$ is
uniformly absolutely integrable over $s \in [0,\pi/3]$ as $t \to
\infty$.

With a coefficient of this form---an integrable component with
values in a fixed $1$-dimensional space, plus a small error---it
follows from Lemma \ref{lem:ode-nearly-abelian} that the solution of the
initial value problem \eqref{eqn:path-joining-rays} satisfies
$$ \left \| g_t(\pi/3) - \exp \left ( E_{13} \int_0^{\pi/3} \mu_t(s))
\right) \right \| \to 0 \text{ as } t \to \infty.$$
Since $g_t(\pi/3) = \hat{F}(t)^{-1} \hat{F}(e^{i \pi/3} t) \to
L_0^{-1} L_+$ as $t \to \infty$, this gives the desired unipotent
form \eqref{eqn:unipotent-factors} for some $b \in \R$.

The value of $L_-^{-1}L_0$ is computed by a nearly identical argument
applied to rays at angles $-\pi/3$ and $0$.  The only difference is
that at $\theta = -\pi/6$, the leading eigenvector $\Ad_{H(e^{i
    \theta} t)}$ is the elementary matrix $E_{12}$, which becomes the
dominant term in the coefficient $M_t(s)$.  Exponentiating we find
$L_-^{-1}L_0$ has the desired form \eqref{eqn:unipotent-factors}.
\end{proof}

\boldpoint{Step 3: Finding triangle pieces.}  We now turn to studying
the shape of the projectivized image $\P(M) \subset \RP^2$. Let $V$
denote the union of the edges $e_{110}$ and $e_{101}$ of the standard
triangle $\triangle_0 \subset \RP^2$ that is the image
of the normalized \titeica surface $T$.  (Recall the notation for
vertices and edges of this triangle was described in Section
\ref{sec:cubic}.)

In the following proposition, we say that a ray has \emph{height} $y$
if it contains the point $z = 1 + i y$ in $U$, where $y \in \R$.

\begin{lem}[Projective limits in a half-plane]
\label{lem:vee}
Let $L_0 \in \GL_3\R $ be a limit of the osculation map of an affine
sphere $M$ as above.  Then the following table describes the
projective limits of $f$-images of stable (quasi-)rays in $U$:
\begin{center}
\begin{tabular}{l l l}
\hline
\textbf{Type of path $\gamma$} & \textbf{Direction $\theta$} & \textbf{Projective limit $p_\gamma$ of $f(\gamma)$}\\
\hline
Quasi-ray& $\theta \in (-\tfrac{\pi}{2}, -\tfrac{\pi}{3})$ & $p_\gamma = L_0 \cdot v_{001}$\\
Ray (of height $y$)& $\theta = -\tfrac{\pi}{3}$ & $p_\gamma \in L_0 \cdot
e_{101}^\circ$\\
& & $(p_\gamma \to L_0 \cdot v_{001}$ as $y \to -\infty)$\\
Quasi-ray& $\theta \in (-\tfrac{\pi}{3}, \tfrac{\pi}{3})$, $\theta \neq \pm \tfrac{\pi}{6}$ & $p_\gamma = L_0 \cdot v_{100}$\\
Ray (of height $y$)& $\theta = \tfrac{\pi}{3}$ & $p_\gamma \in L_0 \cdot
e_{110}^\circ$\\
& & $(p_\gamma \to L_0 \cdot v_{010}$ as $y \to \infty)$\\
Quasi-ray& $\theta \in (\tfrac{\pi}{3},\tfrac{\pi}{2})$ & $p_\gamma = L_0 \cdot v_{010}$\\
\hline
\end{tabular}
\end{center}
And in particular:
\begin{itemize}
\item The projectivization of any stable quasi-ray of angle
zero in $U$ tends to $L_0 \cdot v_{100} \in L_0 \cdot V$ (by the middle row of
the table), and
\item We have $L_0 \cdot V \subset \bdry \P(M)$ (since $V =
v_{001} \cup e_{101}^\circ \cup v_{100} \cup e_{110}^\circ \cup
v_{010}$).
\end{itemize}
\end{lem}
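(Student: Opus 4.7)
The plan is to use the osculation map to compare $f(\gamma(t))$ asymptotically with the normalized \titeica map. Extracting the first column of the identity $F(z) = \hat{F}(z) F_T(z)$ gives
$$ f(z) = \hat{F}(z) \cdot T(z).$$
Since $\hat{F}(\gamma(t)) \to L_0 \in \GL_3\R$ by Lemma \ref{lem:stability} and the $\GL_3\R$-action on $\RP^2$ is continuous, the projective limit $[f(\gamma(t))]$ exists and equals $L_0 \cdot \lim_{t \to \infty} [T(\gamma(t))]$ whenever the right side exists. The lemma therefore reduces to computing the projective limit of $T(\gamma(t))$ for each type of stable path in the table; the concluding assertion $L_0 \cdot V \subset \bdry \P(M)$ is then automatic, since the projective limits of sequences $f(z_n)$ with $z_n \to \infty$ lie in $\bdry \P(M)$ by the proper embeddedness supplied by Theorem \ref{thm:completeness-equivalence}.

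By the explicit parameterization \eqref{eqn:norm-titeica-def}, $T(z)$ is a positive multiple of $(e^{2\Re(z)},\,e^{2\Re(z/\omega)},\,e^{2\Re(z/\omega^2)})$, so along a ray $\gamma(t) = z_0 + e^{i\theta}t$ the three coordinates have exponential rates
$$ r_1(\theta) = 2\cos\theta,\qquad r_2(\theta) = 2\cos(\theta - 2\pi/3),\qquad r_3(\theta) = 2\cos(\theta + 2\pi/3).$$
A direct trigonometric comparison (e.g.~via $\cos A - \cos B = -2\sin\tfrac{A+B}{2}\sin\tfrac{A-B}{2}$) shows that $r_1$ is strictly largest on $(-\pi/3,\pi/3)$, $r_2$ on $(\pi/3,\pi)$, and $r_3$ on $(-\pi,-\pi/3)$. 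On each such interval exactly one coordinate dominates, and the projective limit of $T(\gamma(t))$ is the corresponding standard basis vector $v_{100}$, $v_{010}$, or $v_{001}$.

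At the critical directions $\theta = \pm\pi/3$ two rates are tied and strictly exceed the third, so the limit lies on an open edge of $\triangle_0$. For $\theta = \pi/3$ we have $r_1 = r_2 = 1 > -2 = r_3$, and parameterizing the ray to pass through $z_0 = 1 + iy$ gives projective limit $[e^{2\Re(z_0)}:e^{2\Re(z_0/\omega)}:0] \in e_{110}^\circ$. A short calculation with $z_0 = 1+iy$ yields the ratio $e^{3 - y\sqrt{3}}$ of the first to second entries, which tends to $0$ as $y \to \infty$, confirming $p_\gamma \to L_0 \cdot v_{010}$; the case $\theta = -\pi/3$ is symmetric and yields the corresponding point on $e_{101}^\circ$, approaching $L_0 \cdot v_{001}$ as $y \to -\infty$.

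For the quasi-ray cases, write $\gamma(t) = e^{i\theta}t + \delta(t)$ with $|\delta(t)| = o(t)$ and exploit that $z \mapsto H(z)$ is a group homomorphism from $(\C,+)$ to the diagonal torus in $\SL_3\R$ (since $\h(z)$ is $\R$-linear with values in a commutative subalgebra), giving $T(\gamma(t)) = H(\delta(t)) \cdot T(e^{i\theta}t)$. Each coordinate is thereby multiplied by a subexponential factor $e^{O(|\delta(t)|)} = e^{o(t)}$, which is dominated by the strict exponential gap $e^{\alpha t}$ ($\alpha > 0$) separating the leading rate from the others at any non-critical stable angle. Hence the projective limit along a stable quasi-ray agrees with that of its associated ray. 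This last step is the only real subtlety: the quasi-ray rows of the table are restricted to open intervals of \emph{non-critical} directions precisely so that a definite exponential gap is available to absorb the $o(t)$ transverse deviation; at the critical angles $\pm\pi/3$ the gap collapses and the limit becomes sensitive to the correction, which is why those directions are treated only via honest rays.
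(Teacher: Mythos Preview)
Your argument has a genuine gap: you assert that $\hat{F}(\gamma(t)) \to L_0$ for \emph{every} stable ray or quasi-ray in $U$, but Lemma~\ref{lem:stability} says no such thing. That lemma gives three distinct limits $L_-$, $L_0$, $L_+$ according to whether $\theta$ lies in $J_- = (-\tfrac{\pi}{2},-\tfrac{\pi}{6})$, $J_0 = (-\tfrac{\pi}{6},\tfrac{\pi}{6})$, or $J_+ = (\tfrac{\pi}{6},\tfrac{\pi}{2})$. So your computation actually yields, for instance, $p_\gamma = L_+ \cdot v_{010}$ for $\theta \in (\tfrac{\pi}{3},\tfrac{\pi}{2})$ and $p_\gamma \in L_- \cdot e_{101}^\circ$ for $\theta = -\tfrac{\pi}{3}$, not the $L_0$-images claimed in the table. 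Only the subinterval $(-\tfrac{\pi}{6},\tfrac{\pi}{6})$ of the middle row is covered by what you wrote.

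The missing step is exactly where Lemma~\ref{lem:unipotent-factors} enters: since $L_0^{-1}L_+$ is unipotent and fixes the line through $e_{110}$ pointwise, one has $L_+ \cdot v_{100} = L_0 \cdot v_{100}$, $L_+ \cdot e_{110} = L_0 \cdot e_{110}$, and $L_+ \cdot v_{010} = L_0 \cdot v_{010}$; the analogous statement for $L_-^{-1}L_0$ and $e_{101}$ handles the remaining rows. This replacement of $L_\pm$ by $L_0$ is the real content of the lemma, and without it the conclusion $L_0 \cdot V \subset \bdry \P(M)$ does not follow. Your \titeica limit computations and the quasi-ray perturbation argument are fine; you just need to run them with the correct osculation limit in each sector and then invoke the unipotent factors to collapse everything to $L_0$.
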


\begin{proof}

First, using the explicit formula \eqref{eqn:norm-titeica-def} for the
normalized \titeica surface $T$, it is easy to calculate the
projective limit $v_\gamma$ of the $T$-image of any ray or quasi-ray
in the right half-plane.  (At this point stability is not relevant.)
The result is a table for $T$ analogous to the one we seek for $f$
(compare \cite[Tbl.~2]{loftin:compactification-i}):

\begin{center}
\begin{tabular}{l l l}
\hline
\textbf{Type of path $\gamma$} & \textbf{Direction $\theta$} & \textbf{Projective limit $v_\gamma$ of $T(\gamma)$}\\
\hline
Quasi-ray& $\theta < -\tfrac{\pi}{3}$ & $v_\gamma = v_{001}$\\
Ray (of height $y$) & $\theta = -\tfrac{\pi}{3}$ & $v_\gamma \in e_{101}^\circ$\\
& & $(v_\gamma \to v_{001}$ as $y \to -\infty)$\\
Quasi-ray& $ \theta \in (-\tfrac{\pi}{3}, \tfrac{\pi}{3})$ & $v_\gamma = v_{100}$\\
Ray (of height $y$) & $\theta = \tfrac{\pi}{3}$ & $v_\gamma \in e_{110}^\circ$\\
& & $(v_\gamma \to v_{010}$ as $y \to \infty)$\\
Quasi-ray& $\tfrac{\pi}{3} < \theta$  & $p_\gamma = v_{010}$\\
\hline
\end{tabular}
\end{center}

Now suppose $\gamma$ is a \emph{stable} ray or quasi-ray in $U$, and let
$L_\gamma = \lim_{t \to \infty} \hat{F}(\gamma(t))$.  Since $f(z) =
\hat{F}(z) T(z)$, we find that the projective limits $v_\gamma$ of
$\P(T(\gamma))$ and $p_\gamma$ of $\P(f(\gamma))$ are related by
\begin{equation}
\label{eqn:ray-limit}
p_\gamma = L_\gamma \cdot v_\gamma
\end{equation}
Note that since $\gamma$ is a divergent path, each
point $p_\gamma$ obtained in this way lies on the boundary of $\P(M)$.

By Lemma \ref{lem:stability} we have $L_\gamma \in \{ L_-, L_0, L_+
\}$ with the value depending only on $\theta$.  Hence
the combination of formula \eqref{eqn:ray-limit} and the table of
\titeica limits gives the following characterization of $f$-limits:

\begin{center}
\begin{tabular}{l l l}
\hline
\textbf{Type of path $\gamma$} & \textbf{Direction $\theta$} & \textbf{Projective limit $p_\gamma$ of $f(\gamma)$}\\
\hline
Quasi-ray& $\theta \in (-\tfrac{\pi}{2},-\tfrac{\pi}{3})$ & $p_\gamma = L_- \cdot v_{001}$\\
Ray (of height $y$) & $\theta = -\tfrac{\pi}{3}$ & $p_\gamma \in L_-
\cdot e_{101}^\circ$\\
& & $(p_\gamma \to L_- \cdot v_{001}$ as $y \to -\infty)$\\
Quasi-ray& $ \theta \in (-\tfrac{\pi}{3},-\tfrac{\pi}{6})$ & $p_\gamma = L_- \cdot v_{100}$\\
Quasi-ray& $ \theta \in (-\tfrac{\pi}{6}, \tfrac{\pi}{6})$ & $p_\gamma = L_0 \cdot v_{100}$\\
Quasi-ray& $ \theta \in (\tfrac{\pi}{6}, \tfrac{\pi}{3})$ & $p_\gamma = L_+ \cdot v_{100}$\\
Ray (of height $y$) & $\theta = \tfrac{\pi}{3}$ & $p_\gamma \in L_+ \cdot
e_{110}^\circ$\\
& & $(p_\gamma \to L_+ \cdot v_{010}$ as $y \to \infty)$\\
Quasi-ray& $\theta \in (\tfrac{\pi}{3}, \tfrac{\pi}{2})$ &
$p_\gamma = L_+ \cdot v_{010}$\\
\hline
\end{tabular}
\end{center}

\medskip

This is nearly the characterization of projective limits we seek; if
we replace all instances of $L_-$ and $L_+$ with $L_0$ in the table
above (and coalesce the middle three rows, where the limit becomes the
same) we obtain exactly the statement of the lemma.

The proof is completed by using Lemma \ref{lem:unipotent-factors} to
verify that in each place that $L_-$ or $L_+$ appears in the previous
table, the projective transformation is applied to a point in $\RP^2$
where it has the same action as $L_0$.  Of the six affected rows,
there are actually only two cases to consider:

\begin{itemize}
\item Since $L_0 = L_+ U_+$ where $U_+$ is a unipotent that fixes
the line in $\RP^2$ containing $e_{110}$ pointwise, we have
$$L_0 \cdot e_{110} = L_+U_+ \cdot e_{110}= L_+ \cdot e_{110}.$$
Since $v_{100}, v_{010}$ are the endpoints of $e_{110}$, it
follows that $L_+$ can be replaced by $L_0$ in the previous table.\\

\item Since $L_0 = L_- U_+$ where $U_-$ is a unipotent that fixes
the line in $\RP^2$ containing $e_{101}$ pointwise, we have
$$L_0 \cdot e_{101} = L_-U_- \cdot e_{101} = L_- \cdot e_{101}$$
Since $v_{100}, v_{010}$ are the endpoints of $e_{110}$, it
follows that $L_-$ can be replaced by $L_0$ in the previous table.
\end{itemize}
\end{proof}

\boldpoint{Step 4: Assembling the polygon.}  
So far we have been working in a single fixed half-plane $U$ for the
Pick differential $C$.  Now we consider how the picture changes as we move
between the standard half-planes $U_0, \ldots, U_{d+2}$ associated to
$C$ by Proposition \ref{prop:standard-half-planes}.  By the
construction above, we obtain the following for each $0 \leq i \leq
d+2$:
\begin{itemize}
\item A set of limits $L^{(i)}_-,L^{(i)}_0,L^{(i)}_+ \in \GL_3\R$ of
the osculation map in $U_i$ restricted to stable rays
\item Unipotent elements as in Lemma \ref{lem:unipotent-factors} that
relate these limits, and
\item The conclusion that $L_0^{(i)} \cdot V \subset \bdry \P(M)$.
\end{itemize}

Thus each of the half-planes gives a piece of the boundary of $\bdry
\P(M)$ that is a ``vee'', i.e.~the image of $V$ by a projective
transformation.

By studying the overlap
between these edge pairs, we can finally establish:

\begin{lem}
The projectivization $\P(M)$ of the affine sphere $M$ is a convex
polygon with $d+3$ vertices.  The projectivization of $f(\star_d)$
is an embedded open star in $\P(M)$ whose edges tend to the
vertices.
\end{lem}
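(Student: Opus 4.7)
My plan is to assemble the $d+3$ vees $L^{(k)}_0 \cdot V \subset \bdry \P(M)$ produced by Lemma \ref{lem:vee} (one per standard half-plane $U_k$) into a closed cyclic chain of projective line segments bounding a convex polygon, and then to identify $\P(f(\star_d))$ as an embedded star inside that polygon.

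The central step is matching the ends of adjacent vees on the overlap $U_k \cap U_{k+1}$. Writing $P_k := L^{(k)}_0 \cdot v_{100}$ for the middle vertex of the $k$th vee, I would exploit the coordinate change $w_{k+1} = \omega^{-1} w_k + c$ on the overlap: a global direction at $U_k$-angle $\theta$ lies at $U_{k+1}$-angle $\theta - 2\pi/3$, so the overlap covers $U_k$-angles in $(\pi/6, \pi/2)$ and $U_{k+1}$-angles in $(-\pi/2, -\pi/6)$. Splitting this overlap at the intermediate direction $\pi/3$ in $U_k$ (equivalently $-\pi/3$ in $U_{k+1}$) produces two subranges, and reading the tables of Lemma \ref{lem:vee} from both viewpoints on each subrange yields the identities
\begin{equation*}
L^{(k)}_0 \cdot v_{100} = L^{(k+1)}_0 \cdot v_{001}, \qquad L^{(k)}_0 \cdot v_{010} = L^{(k+1)}_0 \cdot v_{100}.
\end{equation*}
The first says the middle vertex of vee $k$ is the ``leading tip'' of vee $k+1$, and the second that the ``trailing tip'' of vee $k$ is the middle vertex of vee $k+1$. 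Hence the three vertices of vee $k$ are precisely $P_{k-1}, P_k, P_{k+1}$, in agreement with the vertex-inscribed-triangle picture in Figure \ref{fig:triangles}.

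With these identifications, the line segments $L^{(k)}_0 \cdot e_{110}$ and $L^{(k+1)}_0 \cdot e_{101}$ share both endpoints ($P_k$ and $P_{k+1}$) and hence coincide in $\RP^2$. The $d+3$ vees therefore glue along shared edges into a closed cyclic polygonal chain with vertices $P_0, \ldots, P_{d+2}$. That this chain is all of $\bdry \P(M)$ follows because every boundary point is a projective limit of a divergent sequence in $\C$; by Proposition \ref{prop:standard-half-planes} any such sequence is eventually inside some $U_k$, and Lemma \ref{lem:vee} then places the limit on $L^{(k)}_0 \cdot V$. Convexity of $\P(M)$ is inherited from the convexity of the asymptotic cone via Theorem \ref{thm:completeness-equivalence}, so $\P(M)$ is a convex polygon with $d+3$ vertices.

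For the embedded open star, since $C$ is monic, Proposition \ref{prop:standard-half-planes}(v) shows that each ray of $\star_d$ at angle $2\pi k/(d+3)$ is eventually a stable quasi-ray of $U_k$-angle $0 \in J_0$, and Lemma \ref{lem:vee} identifies the projective limit of its $f$-image as $L^{(k)}_0 \cdot v_{100} = P_k$; distinct rays thus converge to distinct polygon vertices. Injectivity of $\P \circ f$---which holds because the complete hyperbolic affine sphere $M$ lies strictly on one side of the origin (Theorem \ref{thm:completeness-equivalence}) so each radial line meets $M$ at most once---then makes $\P(f(\star_d))$ an embedded open star with edges tending to $P_0, \ldots, P_{d+2}$. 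The main technical obstacle throughout is the overlap bookkeeping in the matching step: matching only along the shared-edge direction $\theta = \pi/3$ recovers the equality of the shared edge as an unoriented segment but leaves ambiguous which endpoint corresponds to which, and only by separately treating both subranges of the overlap do we obtain the specific vertex identifications needed to collapse the apparent $2(d+3)$ count of boundary features down to $d+3$ vertices joined by $d+3$ shared edges.
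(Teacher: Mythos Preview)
Your overlap bookkeeping is careful and the vertex identifications
\[
L_0^{(k)}\cdot v_{100}=L_0^{(k+1)}\cdot v_{001},\qquad
L_0^{(k)}\cdot v_{010}=L_0^{(k+1)}\cdot v_{100}
\]
match the paper's; obtaining them by reading Lemma~\ref{lem:vee} on the two subranges $(\pi/6,\pi/3)$ and $(\pi/3,\pi/2)$ of the overlap, rather than from the height parameter along the $\theta=\pm\pi/3$ rays as the paper does, is a legitimate alternative.

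There is, however, a genuine gap. You never show that the points $P_0,\ldots,P_{d+2}$ are \emph{distinct}. Your sentence ``distinct rays thus converge to distinct polygon vertices'' asserts precisely what must be proved: nothing in your argument rules out the closed chain wrapping $m$ times around a polygon with $(d+3)/m$ vertices. Injectivity of $\P\circ f$ on the interior of $\C$ is irrelevant here, since the $P_k$ are boundary limits, not interior points. The paper fills this gap with a topological separation argument: if $P_i=P_j=:x$ for non-adjacent $i,j$, then $\beta=\gamma_i\cup\gamma_j$ separates $\C$, so $\P(f(\beta))$ separates the disk $\P(M)$; but $\P(f(\beta))$ is a properly embedded arc limiting on the single boundary point $x$ at both ends, so one complementary component accumulates only at $x$ on $\partial\P(M)$. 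This contradicts the fact that the neighboring rays $\gamma_{j-1}$ and $\gamma_{j+1}$ lie in different components of $\C\setminus\beta$ and have projective limits $P_{j-1},P_{j+1}\neq x$.

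A smaller issue: your surjectivity argument (``every boundary point is a projective limit of a divergent sequence, which is eventually in some $U_k$, and Lemma~\ref{lem:vee} then places the limit on $L_0^{(k)}\cdot V$'') does not work as stated, because a general divergent sequence need not be a quasi-ray, and the table in Lemma~\ref{lem:vee} only describes limits along rays and quasi-rays at stable angles. The paper instead observes that the assembled map $\Gamma:P_{d+3}\to\partial\P(M)$ is a local homeomorphism of compact connected Hausdorff spaces (each adjacent edge-pair embeds, being the image of $V$ under a projective transformation), hence a covering map, hence surjective. This is easily repaired, but the injectivity gap above requires the new idea.
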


\begin{proof}
\renewcommand{\qedsymbol}{}

Consider the $f$-images of rays in $U_i$ with angle $\theta = \pi/3$,
which by the previous lemma projectively limit on the edge $L^{(i)}_0
\cdot e_{110}^\circ$.

By Proposition \ref{prop:standard-half-planes}, the next half-plane
$U_{i+1}$ (with index understood mod $d+3$) intersects $U_i$ in a
sector that contains all but an initial segment from each of these
rays.  In the coordinate $z_{i+1}$ of $U_{i+1}$, these rays have angle
$\theta_{i+1} = -\pi/3$.  Hence by applying the previous lemma in
$U_{i+1}$ we find the $f$-images of the same rays
projectively limit on $L^{(i+1)}_0 \cdot e_{101}^\circ$, and thus
$$ L^{(i)}_0 \cdot e_{110}^\circ = L^{(i+1)}_0 \cdot e_{101}^\circ.$$
By continuity of projective transformations, we have the same equality
for the associated closed edges.  Furthermore, the previous lemma
characterizes the behavior of the limit point as a function of the
height of the ray, determining which pairs of endpoints are identified,
namely:
\begin{equation*}
\begin{split}
L^{(i)}_0 \cdot v_{100} &= L^{(i+1)}_0 \cdot v_{001},\\
L^{(i)}_0 \cdot v_{010} &= L^{(i+1)}_0 \cdot v_{100}.
\end{split}
\end{equation*}
Thus if we orient the edge pair $V$ from $v_{001}$ to
$v_{010}$, we have found that the union of $L^{(i)}_0 \cdot V$
and $L^{(i+1)}_0 \cdot V$ is an oriented chain
of three edges in $\bdry \P(M)$.

Allowing $i$ to vary we find that the overlapping edge pairs
$\{ L^{(i)}_0 \cdot V \}_{i=0\ldots d+2}$ assemble into a map 
$$\Gamma : P_{d+3} \to \bdry \P(M) \simeq S^1$$
where $P_{d+3}$ is an abstract $(d+3)$-gon, considered as a simplicial
$1$-complex.  By construction $\Gamma$ is linear on each edge, its
restriction to any pair of adjacent edges is an embedding (with image
$L^{(i)}_0 \cdot V$, for some $i$), and the image of any vertex of
$P_{d+3}$ is a corner of the convex curve $\bdry \P(M)$ which can be
described as $L_0^{(i)} \cdot v_{100}$ for some $i$.

Because adjacent pairs of edges embed, the map $\Gamma$ is a local
homeomorphism of compact, connected Hausdorff spaces.  Thus $\Gamma$
is a covering map, and in particular surjective.  The image $\bdry
\P(M)$ is therefore a polygon and, considering that polygonal curve as
a $1$-complex, the covering is simplicial.

To identify the image as a $(d+3)$-gon, it remains to show that
$\Gamma$ is injective, which follows if it is injective on vertices.

Recall from Proposition \ref{prop:standard-half-planes} that for each
$i$ there is an edge $\gamma_i$ of the standard star $\star_d$ that is
eventually contained in that half-plane $U_i$ in which it is a
quasi-ray with direction $\theta_i = 0$. (This is the point at which
we use that $C$ is monic in an essential way.)  Applying Lemma
\ref{lem:vee} to these quasi-rays find that the projectivizations
$\P(f(\gamma_i))$ tend to the points $L_0^{(i)} \cdot v_{100}$, that
is, the images of vertices of $P_{d+3}$ by $\Gamma$.

Suppose for contradiction that $\P(\gamma_i)$ and $\P(\gamma_j)$ have
the same limit point $x \in \bdry \P(M)$ and $i \neq j$.  Note $i \neq
j \pm 1$ mod $(d+3)$ since neighboring vertices (and the edge they
span) map to distinct points by Lemma \ref{lem:vee}.

The union $\beta = \gamma_i \cup \gamma_j$ of two edges of $\star_d$
separates $\C$ into two components, and since $\P(f)$ is a
homeomorphism onto the convex domain $\P(M)$, the image curve
$\P(f(\beta))$ separates $\P(M)$.  All branches of $\star_d$ contain
the origin, but except for this common point the paths $\gamma_{j+1}$
and $\gamma_{j-1}$ lie in different components of $\C \setminus
\beta$.  Neither of $\gamma_{j \pm 1}$ has projective image
converging to $x$ since these are the neighbors of $\gamma_j$.  Thus
each component of $\P(M \setminus f(\beta))$ accumulates on at least
one boundary point of $\P(M)$ that is distinct from $x$.  This is a
contradiction, however, because $\P(f(\beta))$ is a properly embedded path in
the open disk $\P(M)$ that limits on a single boundary point $x\in
\bdry \P(M)$ in both directions, so one of its complementary disks has
$x$ as the only limit point on $\bdry \P(M)$.

Thus we find that $\Gamma$ is injective, and that the projectivized
image $\P(f(\star_d))$ gives an embedded star in $\P(M)$ that limits
on the vertices of the polygon, giving an adjacency-preserving
bijection of them with the edges of $\star_d$.

This completes the proof of the lemma, and of Theorem
\ref{thm:polygon}.\end{proof}
\end{proof}

\section{From polygons to polynomials}
\label{sec:polygons-to-polynomials}

The main goal of this section is to establish the converse of Theorem
\ref{thm:polygon}:

\begin{thm}
\label{thm:polynomial}
Let $M$ be a complete hyperbolic affine sphere in $\R^3$ asymptotic to
the cone over a convex polygon with $n$ vertices.  Then the Blaschke
metric of $M$ is conformally equivalent to the complex plane $\C$ and
its Pick differential is a polynomial cubic differential of degree $(n-3)$.
\end{thm}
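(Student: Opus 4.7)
The plan is to compare the affine sphere $M$ with the \titeica surfaces over the vertex-inscribed triangles of $P$, applying the Benoist-Hulin continuity principle (Corollary~\ref{cor:blaschke-and-pick-continuous}) together with projective naturality. This comparison will yield a ``buffer'' neighborhood of the ideal boundary of $M$ on which the Pick differential does not vanish and the Blaschke metric is approximately flat; the parabolic conformal type and polynomial nature of $C$ will then follow by inspection of this flat structure near infinity.

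More concretely, for each vertex $v_i$ of $P$ let $\triangle_i \subset P$ be the vertex-inscribed triangle at $v_i$, and let $T_i$ be the complete affine sphere over $\triangle_i$, which by Cheng-Yau (Theorem~\ref{thm:cheng-yau}) and the homogeneity statement in Theorem~\ref{thm:npc} is projectively equivalent to the normalized \titeica surface $T$. Choose $g_i \in \SL_3\R$ with $g_i(\triangle_i) = \triangle_0$ and $g_i(T_i) = T$; then $g_i(P)$ is a convex polygon containing $\triangle_0$ and sharing with it the two edges at $g_i(v_i)$. The central technical claim I would establish is the following: at every point of $M$ sufficiently close to one of these two shared edges, the Blaschke metric and Pick differential of $M$ are uniformly comparable (with multiplicative constants independent of the point) to those of $T_i$. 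To prove this I would exploit the transitive diagonal action $\{H(\zeta)\}$ on $T$ from \eqref{eqn:norm-titeica-def} to renormalize: given a point $p$ of $g_i(M)$ close to a shared edge, conjugate by $H(-\zeta)$ for appropriate $\zeta$ to push $p$ back to a fixed base point inside $\triangle_0$, thereby pushing $g_i(P)$ forward to a convex polygon $P_\zeta \supset \triangle_0$. As $p$ approaches the shared edge, $P_\zeta$ should Hausdorff-converge to $\triangle_0$ because the diagonal action collapses the vertices of $g_i(P)$ lying on the far side of $\triangle_0$ onto the non-shared edge. Corollary~\ref{cor:blaschke-and-pick-continuous} applied at the fixed base point then delivers the desired comparability after pulling back by $H(\zeta)$ and $g_i^{-1}$.

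With this comparison in hand, the remainder is structural. Taking the union over all $n$ vertices produces a neighborhood $B$ of infinity in $M$ (with compact complement) on which $C$ is nowhere zero and the Blaschke metric is comparable to the flat metric $|C|^{2/3}$. The zeros of $C$ therefore lie in the compact set $M \setminus B$ and are finite in number. Since the Blaschke metric is complete and comparable to $|C|^{2/3}$ on $B$, the flat $|C|^{2/3}$-structure is complete outside a compact set, and natural coordinates for $C$ provide a flat atlas on $B$. A local asymptotic analysis near each vertex $v_i$, using the \titeica comparison above, exhibits $B$ as a union of $n$ cyclically arranged standard $C$-half-planes matching the model of Proposition~\ref{prop:standard-half-planes}. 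Since $M$ is simply connected (by Theorem~\ref{thm:completeness-equivalence}) and has this half-plane structure at infinity together with a compact core containing finitely many cone-type singularities (the zeros of $C$), filling in the $n$ ends and cone points yields a closed simply connected Riemann surface, hence $\CP^1$, so $M \simeq \C$. The existence of exactly $n$ standard $C$-half-planes at infinity of $\C$ then forces $C$ to be a polynomial cubic differential, and matching the number of half-planes to the polynomial degree (Proposition~\ref{prop:standard-half-planes} in reverse) gives $\deg C = n-3$.

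The hard part will be the \titeica comparison of the second paragraph: Corollary~\ref{cor:blaschke-and-pick-continuous} only controls affine sphere data on compact subsets of the convex domain, yet we must compare at points converging to the boundary. The resolution is to use the noncompact diagonal symmetry group of the normalized \titeica surface to trade ``near the boundary of $\triangle_0$'' for ``near a fixed interior base point after renormalization,'' and the main thing to verify is that the renormalized polygons $P_\zeta$ really do Hausdorff-converge to $\triangle_0$ along every approach to a shared edge. This reduces to an explicit analysis of how the diagonal one-parameter subgroup acts on the vertices of $P$ lying opposite the shared edge, together with convexity of $P$; I expect no essential obstacle beyond careful bookkeeping of the projective normalization.
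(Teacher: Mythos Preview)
Your \titeica comparison (second paragraph) is exactly the paper's Lemma~\ref{lem:locally-bilip-half-planes}, and your proof outline---renormalize by the diagonal torus action, verify Hausdorff convergence of the translated polygons to $\triangle_0$, invoke Corollary~\ref{cor:blaschke-and-pick-continuous} at a fixed base point---matches the paper's argument. The immediate consequences you draw (finitely many Pick zeros, $|C|^{2/3}$ complete because comparable to the Blaschke metric outside a compact set) are the paper's Lemma~\ref{lem:pick-properties}.

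The gap is in your third paragraph. The bilipschitz comparison relates the Euclidean metric on $\{\Re(z)>R\}$ (the \titeica parameter) to the $|C|^{2/3}$-metric on a region of $M$ via the \emph{non-holomorphic} radial projection $M_v$. It does not show that region is a $C$-right-half-plane in the sense of Proposition~\ref{prop:standard-half-planes}: the natural coordinate for $C$ is a different, holomorphic map, and a flat simply connected region bilipschitz to a Euclidean half-plane need not have half-plane image under its developing map. Without this, neither your compactification argument nor your appeal to ``Proposition~\ref{prop:standard-half-planes} in reverse'' (which is not a stated result) is justified. You are conflating the \titeica coordinate pulled back by $M_v$ with a natural coordinate for $C$ on $M$.

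The paper instead uses only what the comparison directly yields---completeness of $|C|^{2/3}$ and finiteness of the zero set---and then proceeds by standard complex-analytic lemmas. Parabolic type: if $M\simeq\Delta$, write $C=pH$ with $p$ polynomial and $H$ zero-free; the flat metric $|H|^{2/3}$ on $\Delta$ cannot be complete (its developing map would give $\Delta\simeq\C$), and since $p$ is bounded on $\Delta$ neither can $|C|^{2/3}$, contradicting Lemma~\ref{lem:pick-properties}. Polynomiality: write $C=p\,e^G$ with $G$ entire and adapt an argument of Osserman--Finn to show completeness of $|C|^{2/3}$ forces $G$ constant. Degree: simply invoke the already-proved Theorem~\ref{thm:polygon}, which says a polynomial Pick differential of degree $d$ yields a $(d{+}3)$-gon, hence $d=n-3$. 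This reuse of the forward direction is the efficient move your direct half-plane count is trying to replace.
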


\begin{proof}
If $n=3$ then $M$ is a \titeica surface, so the statement follows
immediately.  For the rest of the proof we assume $n \geq 4$.

Let $P = \P(M)$ be the convex polygon.  For any vertex $v$ of $P$ let
$\tau_v$ be the triangle formed by $v$ and its two neighboring
vertices.  We say $\tau_v$ is the \emph{vertex inscribed triangle} of
$P$ at $v$.

Considering the triangle $\tau_v$ as the projectivized image of a
\titeica affine sphere, we can choose a parameterization $T_v : \C \to
\R^3$ so that the projective image of the positive real axis is
asymptotic to $v$.

By convexity $\tau_v$ is contained in $P$, so for each $z \in \C$
there is a unique point $M_v(z) \in M$ collinear with $T_v(z)$ and
the origin.  This gives a smooth embedding $M_v : \C \to M$.

\begin{lem}
\label{lem:locally-bilip-half-planes}
Equip $\C$ with the Euclidean metric $|dz|^2$ and $M$ with either the
Blaschke metric or the Pick differential metric $|C|^{2/3}$.  Then the
differential $dM_v : T_z\C \to T_{M_v(z)}M$ is bilipschitz when $\Re(z)$ is
large.  Moreover there are constants $R, \Lambda$ such that for any $z$ with
$\Re(z) \geq R$ and any $\xi \in T_z\C$ we have
\begin{equation*}
\label{eqn:comparable-metrics}
\frac{1}{\Lambda} \|\xi\| \leq \| dM_v(\xi) \| \leq \Lambda \|\xi\|.
\end{equation*}
\end{lem}

\begin{proof}

We will show that the differential of the map from the \titeica
surface over $\tau_v$ to $M$ obtained by projecting along rays through
$0 \in \R^3$ is bilipschitz in the region corresponding to $\Re(z)
\geq R$.  Since $M_v$ is the composition of this projection with the
parameterization $T_v$, and since both the Blaschke and Pick
differential metrics of the \titeica surface are multiples of $|dz|^2$
in that parameterization, the lemma will follow.

For the remainder of the proof we consider the images of both $T_v$
and $M_v$ to be parameterized by their common projectivization, which
is the triangle $\tau_v$.  Composing this parameterization with the
inverse of $T_v$, the coordinate $z$ of $\C$ becomes a function on
the triangle $\tau_v$; we denote the image of $p \in \tau_v$ by 
$z(p)$.  We must
show that for any $p \in \tau_v$ with $\Re(z(p))$ large, the
respective metrics of $T_v$ and $M$ are uniformly comparable at $p$.

The triangle $\tau_v$ is an orbit of a maximal torus in $\SL_3\R$ (as
is the surface $T_v$ itself).  Fix a basepoint $p_0 \in \tau_v$ and
for any other $p \in \tau_v$ let $A(p)$ be the element of this torus
mapping $p$ to $p_0$.

The key observation is is that by taking $\Re(z(p))$ large enough, we
can assure that the image $A(p) \cdot P$ of the polygon $P$ is
arbitrarily close to $\tau_v$ in the Hausdorff topology.

To see this, first normalize with a projective transformation so that
$\tau_v$ is the standard triangle $\triangle_0$, the vertex $v$ is
$v_{100}$, and $T_v$ is the normalized \titeica surface.  Then $A(p) =
H(z(p_0)) H(z(p))^{-1}$ is diagonal and taking $\Re(z(p))$ to be large
makes the $(1,1)$ entry of $A(p)$ small.  This means that the
projective action of $A(p)$, while preserving the two shared edges of
$P$ and $\tau_v$, maps the rest of $P$ very close to the third edge of
$\tau_v$ (which is $e_{011}$ in this normalization): geometrically,
the map $A(p)$ sends $p$ to $p_0$ and fixes the vertices of the
triangle $\tau_v$, with $v$ being a repelling fixed point, and the
other vertices being hyperbolic fixed points.  Thus for $\Re(z(p))$
large enough, the image $A(p) \cdot P$ lies in any chosen Hausdorff
neighborhood of $\tau_v$.

Now we use the projective naturality of the Blaschke metric and the
Pick differential.  Instead of comparing the metrics of the affine
spheres over $\tau_v$ and $P$ at an arbitrary point $p \in \tau_v$, it
suffices to compare the metrics of the affine spheres over $A(p) \cdot
\tau_v = \tau_v$ and $A(p) \cdot P$ at the fixed point $p_0$.  By
Corollary \ref{cor:blaschke-and-pick-continuous}, both the Blaschke
metric and the Pick differential metric at $p_0$ vary continuously in
the Hausdorff topology on pointed convex sets.  Taking $R$ large
enough, we can assume that $A(p) \cdot P$ lies in a neighborhood of
$\tau_v$ such that the Blaschke metric on the tangent space at $p_0$
is $\Lambda$-bilipschitz to that of the \titeica surface over $\tau_v$, and
similarly for the Pick differential metric, as required.
\end{proof}

We remark that the proof above actually shows more: By taking $R$
large enough, we can make the constant $K$ as close to $1$ as we like.
However, we will not need this refined version of the estimate in what
follows.

In the normalized \titeica surface, the projectivized image of $\{
\Re(z) > R\}$ is the intersection of a neighborhood of the union of
$v_{100}$ and the open edges $e_{110}^\circ$, $e_{101}^\circ$ with the
interior of the standard triangle (see Figure
\ref{fig:barrier-and-core}a).  Correspondingly, the part of $\tau_v$
in which the estimate of the previous Lemma applies is a neighborhood
of $v$ and the adjacent open edges.  It is bounded by a curve that
joins the neighboring vertices of $v$, namely, the image of $\Re(z)=R$
in the \titeica surface over the vertex inscribed triangle (see Figure
\ref{fig:barrier-and-core}b).  We call this the \emph{barrier curve at
  $v$}.

Applying Lemma~\ref{lem:locally-bilip-half-planes} to each vertex of
$P$ in turn, we find that its conclusion applies in a set of $n$
half-planes that cover all but a compact subset $K$ of the interior of
$P$ (see Figures \ref{fig:barrier-and-core}c,\ref{fig:barrier-and-core}d); this set is a closed
curvilinear polygon bounded by arcs from the barrier curves.  We call
$K$ the \emph{core} of $P$.

\begin{figure}
\begin{center}
\includegraphics[width=\textwidth]{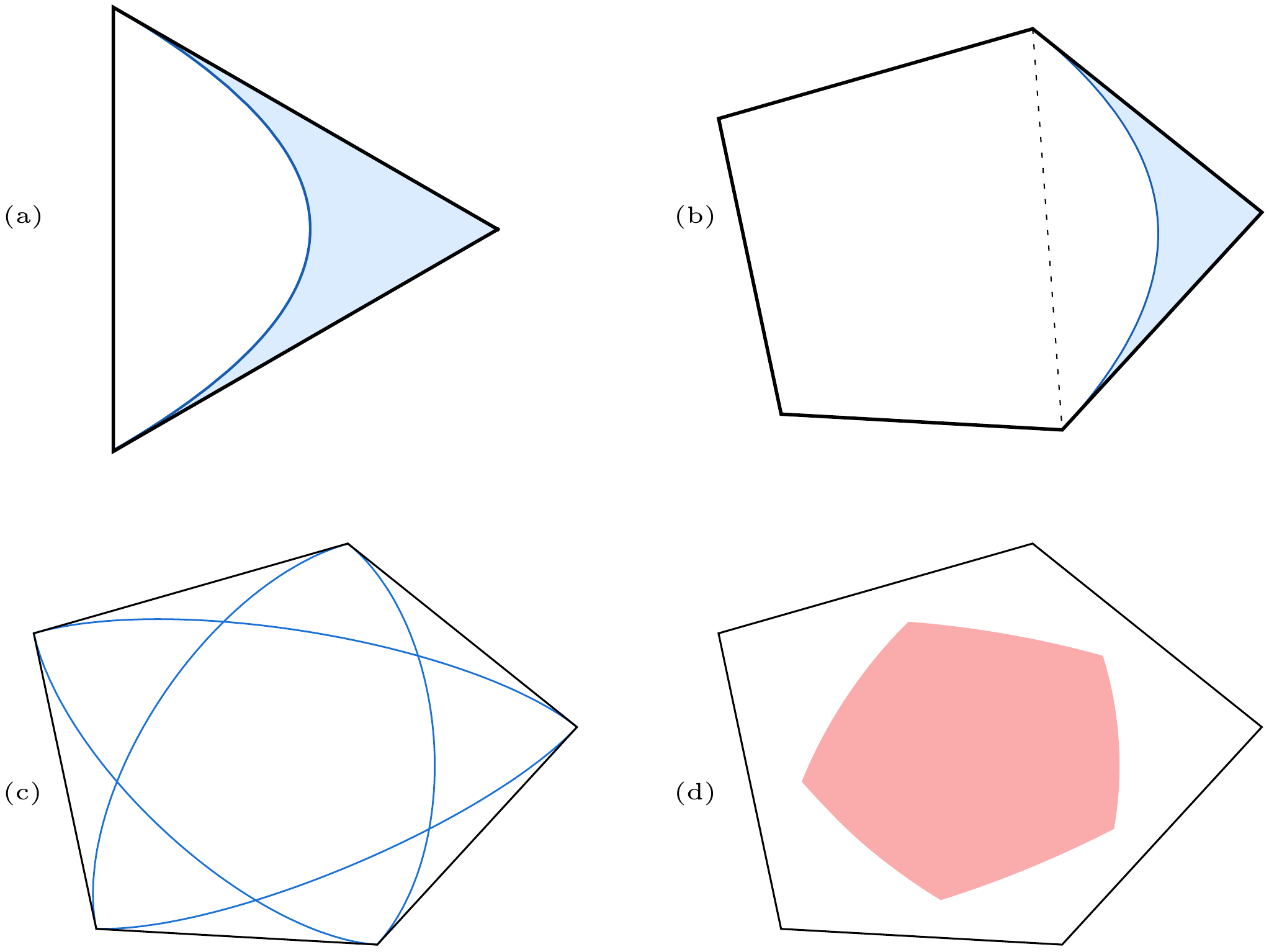}
\caption{(a) The image of $\{\Re(z) > R\}$ in the triangle
  projectivization of a \titeica surface; (b) The corresponding region
  in a vertex inscribed triangle of a polygon; (c) Repeating this at
  each vertex, we obtain a set of \textbf{barrier curves}, and (d) the
  \textbf{core}, a compact set containing all zeros of the Pick differential.}
\label{fig:barrier-and-core}
\end{center}
\end{figure}

Using this construction, we can establish some key properties of the
Pick differential and Blaschke metric:

\begin{lem}
\label{lem:pick-properties}
\mbox{}
\begin{rmenumerate}
\item The Pick differential has finitely many zeros.
\item The Pick differential metric $|C|^{2/3}$ of $M$ is quasi-isometric to the
Blaschke metric of $M$, and in particular, it is complete.
\end{rmenumerate}
\end{lem}

\begin{prooflist}
\item The Pick differential has no zeros in any of the half-planes given by
Lemma \ref{lem:locally-bilip-half-planes}, hence the zeros all lie in
the core $K$, which is compact.  The Pick differential is holomorphic and does
not vanish identically, so its zeros have no accumulation point.  The
zero set is therefore compact and discrete, hence finite.

\item Outside the core, the Pick differential and Blaschke metrics of $M$ are
uniformly comparable.  Because the core is compact, it has finite
diameter for both metrics.  Thus a geodesic for one metric can be
split into a part of bounded diameter and a part in which the other
metric is bounded above and below, giving quasi-isometry.  The
Blaschke metric is complete, so this shows $|C|^{2/3}$ is complete as
well.
\end{prooflist}

By analyzing the continuity of the construction Lemma
\ref{lem:locally-bilip-half-planes} as a function of the vertices of
the polygon, we can also show that this compact set containing the
zeros of the Pick differential for $P$ has the same property for polygons
sufficiently close to $P$.  This observation will be used in Section
\ref{sec:mapping}.

\begin{lem}
\label{lem:compact-containing-zeros}
Let $P \in \Poly_n$ be a convex polygon in $\RP^2$.  Then there exists
a compact subset $\hat{K}$ of the interior of $P$ and a neighborhood $U$ of
$P$ in $\Poly_n$ with the following property:  If $P' \in U$ and if
$M'$ is the complete hyperbolic affine sphere asymptotic to the cone
over $P'$, then all of the zeros of the Pick differential of $M'$ lie over $K$.
\end{lem}

\begin{proof}
While the construction of barrier curves in a polygon involves some
choices, we will show that one can make the construction continuous in
a small neighborhood of $P$, i.e.~so that the barrier curve at a
vertex varies continuously in the Hausdorff topology when a small
deformation is applied to the vertices of $P$.  Of course this will
also imply that the core $K(P)$ varies continuously as well.

The lemma will then follow by taking $\hat{K}$ to be a compact set containing a
neighborhood of $K(P)$.  For $P'$ sufficiently close to $P$, the core
$K(P')$ will be contained in this neighborhood of $K(P)$ and hence
$\hat{K}$ will contain the Pick zeros of $P'$ as well.

To choose barrier curves continuously, first consider polygons $P$
that have a fixed vertex inscribed triangle $T$ at $v$ (that is, we
have fixed the location of $v$ and its two neighbors).  In the proof
of Lemma \ref{lem:locally-bilip-half-planes} the barrier is
constructed as the image of a vertical line $\{ \Re(z) = R \}$ in the
conformal parameterization of the \titeica surface over $T$.  The
barrier curve is completely determined by the real number $R$, which
must be large enough so that the associated subset of a maximal torus
in $\SL_3\R$ maps the polygon $P$ into a certain Hausdorff
neighborhood of $T$.  Choosing $R$ large enough, we can ensure this
not only holds for $P$, but also for the union of all polygons in a small
neighborhood of $P$ that share this vertex inscribed triangle.  Hence
a fixed barrier curve works for all of these polygons.

Now consider the general case, i.e.~polygons $P'$ near $P$ with no
restriction on the vertices.  Working in a sufficiently small
neighborhood of $P$ gives a natural bijection from the vertices of
$P'$ to those of $P$.  As in the normalization construction of Section
\ref{sec:polygons}, there is a unique projective transformation
$A(P')$ that maps four chosen vertices of $P'$ to the corresponding
vertices of $P$, and this projective map varies continuously with
$P'$.  Selecting $v$, its two neighbors, and an arbitrary fourth
vertex, we get normalizing projective transformations $A(P') \in
\SL_3\R$ so that $A(P') \cdot P'$ shares the vertex inscribed triangle
at $v$ with $P$.

Thus, after applying a projective transformation $A(P')$, we are
reduced to the case considered before, where a fixed barrier curve
could be used.  We therefore define the barrier curve for $P'$ by
applying $A(P')^{-1}$ to this fixed curve.  Since $A(P')$ is a
continuous function of $P'$, the curves constructed this way also vary
continuously.
\end{proof}

Returning to consideration of a fixed affine sphere $M$ over a polygon
$P$, we can now identify the conformal type of the Blaschke metric (or
the conformally equivalent Pick differential metric).

\begin{lem}
\label{lem:conformally-parabolic}
The affine sphere $M$ is conformally equivalent to $\C$.
\end{lem}

\begin{proof}
Since $M$ is simply-connected and noncompact, we need only show that
it is not conformally equivalent to the unit disk $\Delta$.

Suppose for contradiction that $M \simeq \Delta$ and write $C = C(z) dz^3$
where $C(z)$ is a holomorphic function.  By Lemma
\ref{lem:pick-properties}.(i) we have $C(z) = p(z) H(z)$ where $p$ is
a polynomial and $H$ has no zeros.

The unit disk does not admit a complete \emph{flat} conformal metric,
since the developing map of the Euclidean structure induced by such
a metric would be a conformal isomorphism $\Delta \to \C$.  
The conformal metric $|H|^{2/3} |dz|^2$ is flat, because $\log |H|$ is harmonic,
and therefore it is not complete.

But a divergent path of finite $|H|^{2/3}$-length also has finite
$|C|^{2/3}$-length because the polynomial $p$ is bounded on
$\Delta$.  Thus the Pick differential metric is not complete, contradicting
Lemma \ref{lem:pick-properties}.(ii).
\end{proof}

We remark that the finiteness of the zero set of $C$ means that the
\emph{integral curvature} of the Pick differential metric is finite.  Huber
showed that any Riemann surface which admits a complete conformal
metric of finite integral curvature is conformally parabolic
\cite[Thm.~15]{huber:subharmonic-functions}, and the proof above is an
adaptation of Huber's argument to this special case.  For smooth
conformal metrics and simply-connected surfaces, the same result was
proved earlier by Blanc and Fiala \cite{blanc-fiala}.

\begin{lem}
The function $C(z)$ is a polynomial.
\end{lem}

The follows from a lemma of Osserman
\cite[Lem.~9.6]{osserman:minimal-surfaces}, generalizing a result of
Finn \cite[Thm.~17]{finn}.  While these authors consider complete
conformal metrics of the form $|f|^2 |dz|^2$, where $f$ is
holomorphic, their arguments easily extend to $|C|^{2/3}$.  For the
reader's convenience we sketch the argument while incorporating the
necessary changes for this case:

\begin{proof}
Write $C(z) = p(z) e^{G(z)}$ where $p$ is a polynomial and $G$ an
entire function on $\C$.  We show that completeness of $|C|^{2/3}$ implies
that $G$ is constant.

Taking an integer $N > \frac{1}{3} \deg(p)$ we have
\begin{equation}
\label{eqn:c-length-estimate}
|C(z)|^{1/3} = O \left ( \left |z^N e^{G(z)} \right | \right )
\text{ as } z \to \infty.
\end{equation}
The function $F(z)$ with $F'(z) = z^N e^{G(z)}$ and $F(0) = 0$ has a
zero of order exactly $N+1$ at $0$, hence $\zeta = F^{1/(N+1)}$ is
single-valued and has an inverse function $z(\zeta)$ in some neighborhood
of $0$.

In fact this inverse must exist globally: Otherwise there would be a radial path
of the form $t \mapsto t \zeta_0$, $t \in [0,1)$, $|\zeta_0| = R$ on which 
$z(\zeta)$ is defined but cannot be extended.  The image $\Gamma$ of this path by $z(\zeta)$ satisfies
$$ \int_\Gamma |z^N e^{G(z)}| |dz| = \int_\Gamma |F'(z)| |dz| =
\int_\Gamma |d(\zeta^n)| =
R^{N+1}.$$ The path $\Gamma$ is not divergent, since by
\eqref{eqn:c-length-estimate} this would contradict completeness.
Thus along the path there is a sequence $z_n(\zeta_n) \to z_0$ with $\zeta_n
\to \zeta_0$.  But $F'(z_0) \neq 0$, allowing extension of
$z(\zeta)$ over $\zeta_0$, a contradiction.

Thus $F^{1/(N+1)}$ is entire and invertible, hence linear, making $F$
a polynomial.  Thus $G$ is constant, and $C$ is also a polynomial.
\end{proof}

By Theorem \ref{thm:polygon}, the degree $d$ of the polynomial $C(z)$
is $(n-3)$, completing the proof of Theorem \ref{thm:polynomial}.
\end{proof}

\section{Mapping of moduli spaces}
\label{sec:mapping}

The two preceding sections show that a complete hyperbolic affine
sphere is asymptotic to a polygon if and only if it has conformal type
$\C$ and polynomial Pick differential, and that all polynomials arise
in this way from polygons.  In this section we combine and extend
these results to prove the main theorem (also relying on the Cheng-Yau
theorem and the results of Section \ref{subsec:continuity} on
continuous variation of solutions to the vortex equation).
Precisely, we show:

\begin{thm}
\label{thm:main-in-text}
For any integer $d \geq 0$, the construction of an affine sphere with
polynomial Pick differential given by Theorem
\ref{thm:polynomial-sphere-existence} induces a $\Z/(d+3)$-equivariant
homeomorphism 
$$ \alpha : \NormCubic_d \to \NormPoly_{d+3},$$
and thus also a quotient homeomorphism $\boldalpha : \ModCubic_d \to \ModPoly_{d+3}$.
\end{thm}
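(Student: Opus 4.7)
The plan is to assemble the map $\alpha$ directly from Theorems~\ref{thm:polynomial-sphere-existence}, \ref{thm:polygon}, and \ref{thm:polynomial} together with the Cheng--Yau theorem, and then verify continuity using Theorem~\ref{thm:continuity} on one side and Corollary~\ref{cor:blaschke-and-pick-continuous} on the other. First, I would construct $\alpha$ as follows: given $C \in \NormCubic_d$, Theorem~\ref{thm:polynomial-sphere-existence} produces a complete affine sphere $M \subset \R^3$ with Pick differential $C$ and conformal parameterization $f : \C \to M$, uniquely determined up to the $\SL_3\R$-action. Since $C$ is monic, Theorem~\ref{thm:polygon} asserts that $\P(M)$ is a convex $(d+3)$-gon and that $\P(f(\star_d))$ is an embedded open star in $\P(M)$ whose $k$-th edge tends to a distinguished vertex $v_k$. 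Applying the unique element of $\SL_3\R$ that carries $(v_1,v_2,v_3,v_4)$ to $(q_1,q_2,q_3,q_4)$ yields a normalized polygon, which I define to be $\alpha(C) \in \NormPoly_{d+3}$. Injectivity follows from the uniqueness clause of Theorem~\ref{thm:polynomial-sphere-existence} combined with the rigidity of the normalization; surjectivity comes from starting with $P \in \NormPoly_{d+3}$, invoking Cheng--Yau (Theorem~\ref{thm:cheng-yau}) to obtain a complete affine sphere over $P$, applying Theorem~\ref{thm:polynomial} to see that its Pick differential is a polynomial of degree $d$ in any uniformizing coordinate, and then choosing the unique affine reparameterization that makes it monic and centered.

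For equivariance, a generator $\zeta = e^{2\pi i/(d+3)}$ of $\Z/(d+3)$ acts on $\NormCubic_d$ by pullback along $z \mapsto \zeta z$, which rotates the standard star $\star_d$ by one edge. On the polygon side (Proposition~\ref{prop:polygon-cyclic-quotient}), the corresponding $\Z/(d+3)$-action cycles the vertex labeling. Because $\alpha$ reads the normalization from the labeling induced by edges of $\star_d$, it conjugates these two actions and therefore descends to a bijection $\boldalpha : \ModCubic_d \to \ModPoly_{d+3}$.

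For continuity of $\alpha$, I would argue as follows. When $C$ varies continuously in $\NormCubic_d$, Theorem~\ref{thm:continuity} gives $C^1$ variation of the log-density of the Blaschke metric on any compact subset of $\C$. Hence the $\gl_3\C$-valued form in~\eqref{eqn:structure} varies continuously in the $C^0$ topology on compacta, and standard ODE continuous dependence yields uniform compact convergence of the affine frame $F$ for a fixed initial value. The vertices of the image polygon are the projective limits of $\P(f)$ along the edges of $\star_d$ computed in Step~3 of the proof of Theorem~\ref{thm:polygon} via the osculation limits $L_0^{(i)}$. The key point is that the exponential error decay of Theorem~\ref{thm:exponential-bound} and the exponential factor $c(\theta)$ controlling the growth of $H$ along a stable direction combine to give uniform exponential convergence of the osculation map along any compact family of stable rays; this uniformity is extracted by tracking the constants in Section~\ref{subsec:estimates} and Lemma~\ref{lem:stability} as continuous functions of the polynomial coefficients. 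For continuity of $\alpha^{-1}$, Corollary~\ref{cor:blaschke-and-pick-continuous} provides continuous variation of the Blaschke metric and Pick differential on compacta in the $C^k$ topology for the Cheng--Yau affine sphere, and Lemma~\ref{lem:compact-containing-zeros} exhibits a single compact set $\hat K$ that contains the Pick zeros of all polygons in a neighborhood of a fixed $P$. The uniformizing coordinate on $M$, pinned down by the monic-and-centered conditions plus the selection of a distinguished first vertex, then varies continuously with the polygon, and the coefficients of the polynomial Pick differential---recoverable from finitely many derivatives at any basepoint outside $\hat K$---vary continuously in turn.

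The hardest step is the continuity of $\alpha$: since the vertices of the target polygon are asymptotic invariants of $M$, the pointwise estimates of Sections~\ref{sec:vortex} and \ref{sec:polynomials-to-polygons} must be upgraded to hold uniformly on a neighborhood in $\NormCubic_d$, and the unipotent-factor analysis of Lemma~\ref{lem:unipotent-factors} must likewise be shown to behave continuously. Once this uniformity is in hand, the remainder is routine: $\alpha$ and $\alpha^{-1}$ are continuous mutual inverses, so $\alpha$ is a homeomorphism, and the induced quotient map $\boldalpha$ is a homeomorphism of orbifolds by the universal property of the quotient topology.
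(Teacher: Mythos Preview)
Your construction of $\alpha$, the equivariance check, injectivity, and surjectivity are essentially the same as the paper's. The real divergence is in the two continuity arguments, and here the paper's route is both different and easier.

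For continuity of $\alpha$, you propose to track the osculation limits $L_0^{(i)}$ and make the estimates of Section~\ref{sec:polynomials-to-polygons} uniform over a neighborhood in $\NormCubic_d$. This is plausible but laborious, and you correctly flag it as the hardest step. The paper sidesteps it entirely: it introduces an auxiliary map $\alpha^\circ$ normalized by fixing the complexified frame at $0\in\C$ (rather than at infinity), so that the developing maps $f_C^\circ$ solve a fixed initial value problem with continuously varying coefficients. Continuity in the Hausdorff topology then follows from two elementary observations on a large ball $\bar B_R$: the image $f_C^\circ(\bar B_R)$ approximates $P$ from inside, and the tangent planes at finitely many sample points in $\bar B_R$ determine an outer polygon approximating $P$ from outside. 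Both vary continuously by ODE dependence on compacta, and Proposition~\ref{prop:hausdorff-vertex-equivalent} upgrades Hausdorff to vertex convergence. No uniform asymptotic estimates are needed.

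For continuity of $\alpha^{-1}$, your argument has a gap. You assert that the uniformizing coordinate ``varies continuously with the polygon,'' but Corollary~\ref{cor:blaschke-and-pick-continuous} only gives $C^k$ convergence on fixed compacta in the polygon; the map $z$ is a global conformal object, and its continuous dependence on the domain is not an immediate consequence of local $C^k$ convergence of the metric. Recovering the polynomial coefficients from jets at a basepoint presupposes you already control the coordinate in which the Pick differential is polynomial. The paper avoids this by proving instead that $\alpha$ is \emph{proper}: if $C_n\to\infty$ in $\NormCubic_d$, the $|C_n|^{2/3}$-diameter of the zero set diverges, while Lemma~\ref{lem:compact-containing-zeros} and Corollary~\ref{cor:blaschke-and-pick-continuous} force these zero sets to lie in a fixed compact $\hat K$ of bounded Pick-metric diameter, a contradiction. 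Properness plus continuity plus bijectivity between manifolds gives a homeomorphism without ever analyzing $\alpha^{-1}$ directly.
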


\begin{proof}
The proof will proceed in several steps.

\boldpoint{Construction of the map.}  Let $C \in \NormCubic_d$ be a
normalized polynomial cubic differential of degree $d$.  By Theorem
\ref{thm:polynomial-sphere-existence} we have a complete conformally
parameterized affine sphere $f_C^0 : \C \to \R^3$ with Pick
differential $C$.  By Theorem \ref{thm:polygon}, the projectivized
image of this affine sphere is a polygon $P_C^0$ and the rays of
$\star_d$ map to curves that limit projectively to the vertices of
$P_C^0$.  Thus the counterclockwise order of the edges of $\star_d$,
starting from $\R^+$, induces a labeling of the vertices of $P_C^0$ by
$(p_1, \ldots, p_{d+3})$.  Let $A$ be a projective transformation that
normalizes this polygon at $p_1$, i.e.~mapping $(p_1,p_2,p_3,p_4)$ to
$(q_1,q_2,q_3,q_4)$.  Then $P_C := A \cdot P_C^0 \in \NormPoly_{d+3}$
is the projectivized image of the conformally parameterized affine
sphere $f_C := A \cdot f_C^0$ (which still has Pick differential $C$).
Define
$$\alpha(C) := P_C.$$

To summarize, $\alpha(C)$ is the polygon obtained by solving the
vortex equation with cubic differential $C$, integrating to obtain an
affine sphere with vertices naturally labeled by the $(d+3)$-roots of
unity, and then adjusting by a projective transformation to normalize
the polygon at the first vertex.

\boldpoint{Equivariance.}  Let $\zeta = \exp(2 \pi i/(d+3))$
be the generator of the group $\unity_{d+3}$
of $(d+3)$-roots of unity and denote its action on
$\NormCubic_d$ by pushforward through $z \mapsto \zeta z$ by $C
\mapsto \zeta \cdot C$.

Since the Pick differential of $f_C(\zeta z)$ is the pullback
$\zeta^{-1} \cdot C$, we find that $f_{\zeta \cdot C}(\zeta z)$ has
Pick differential $\zeta^{-1} \zeta \cdot C = C$, so by the uniqueness
part of Theorem \ref{thm:polynomial-sphere-existence} there exists $A
\in \SL_3\R$ such that
$$f_{\zeta \cdot C}(\zeta z) = A \cdot f(z).$$
Note that $z \mapsto \zeta z$ permutes the rays of $\star_d$, acting
as a $(d+3)$-cycle.  Thus up to projective transformations the
normalized polygons $\alpha(C)$ and $\alpha(\zeta \cdot C)$ are the
same, but under this isomorphism the labeling of their vertices by $1,
\ldots, (d+3)$ is shifted by one.  This is the definition of the
action of $\rot$, the generator of the $\Z/(d+3)$ action on
$\NormPoly_d$, and so $\alpha$ is equivariant.

It follows that $\alpha$ induces a map $\boldalpha : \ModCubic_d \to
\ModPoly_{d+3}$, and that for any $C \in \Cubic_d$ the image
$\alpha([C])$ is simply $\SL_3\R$ equivalence class of the
projectivized image of $f_C$ or $f_C^0$ (i.e.~in describing the quotient map, no
normalization is required).

\boldpoint{Injectivity.}  If $\alpha(C) = \alpha(C') = P$ then the
uniqueness part of the Cheng-Yau theorem (\ref{thm:cheng-yau})
shows that the associated affine spheres coincide, and so the two
conformal parameterizations $f_C$, $f_{C'}$ are related by an
automorphism of $\C$.  Both $C$ and $C'$ are normalized, so this
automorphism must be multiplication by a $(d+3)$-root of unity, which
permutes the rays of $\star_d$.  Since $\alpha(C) = \alpha(C')$, we
also have that $f_C$ and $f_{C'}$ induce the same map from the rays of
$\star_d$ to vertices of $P$.  Hence this permutation must be trivial
and the automorphism is the identity, i.e.~$C=C'$.

\boldpoint{Surjectivity.}  By equivariance it is enough to check that
$\boldalpha$ is surjective.  Let $[P] \in \ModPoly_{d+3}$.  We can
choose a representative oriented polygon $P$ that lies in a fixed
affine chart $\C \simeq \R^2 \simeq \{(x_1, x_2, x_3) \suchthat x_3 =
1 \} \subset \R^3$ of $\RP^2$ in such a way that the orientation of
$P$ agrees with that of $\C$.

By the Cheng-Yau existence theorem (\ref{thm:cheng-yau}) we have a
complete hyperbolic affine sphere $M_P$ asymptotic to the cone over
$P$.  By Theorem \ref{thm:polynomial}, there is a conformal
parameterization of $M_P$ by $\C$ such that Pick differential is a
polynomial cubic differential $C$.  Using Theorem
\ref{thm:polynomial-sphere-existence} we obtain another parameterized
affine sphere $M_C$ with Pick differential $C$, which by definition
has projectivization representing $\boldalpha([C])$.

By the uniqueness part of Theorem
\ref{thm:polynomial-sphere-existence} there is an element of $\SL_3\R$
mapping $M_C$ to $M_P$, thus identifying their oriented
projectivizations and giving $\boldalpha([C]) = [P]$.  Hence
$\boldalpha$ is surjective.

\boldpoint{Continuity.}  For $d \leq 1$ all of the moduli spaces in
question are finite sets with the discrete topology, and there is
nothing to prove.  We assume for the rest of the proof that $d > 1$.

First we consider a map related to $\alpha$ in which the polygon is
normalized in a different way.  For any $C \in \NormCubic_d$ we can
compose $f_C$ with an element of $\SL_3\R$ so that its complexified
frame at $0 \in \C$ agrees with that of the normalized \titeica
surface.  Let $f_C^\circ : \C \to \R^3$ denote the resulting map; note
that the projectivized image of $f_C^\circ$ is a polygon, but not
necessarily a normalized one.  We have the associated map
\begin{equation}
\begin{split}
\alpha^\circ : \NormCubic_d &\to \Poly_{d+3},\\
C &\mapsto \P(f_{C}^\circ(\C)).
\end{split}
\end{equation}

The advantage of working with $\alpha^\circ$ is that the shared frame
at the origin means that developing maps $\{ f^\circ_C \}_{C \in
  \NormCubic_d}$ are solutions of a fixed initial value problem for
the system of ODEs \eqref{eqn:structure} where only the
coefficients of the system are varying.  In contrast, the maps $\{
f_{C} \}$ have a shared normalization only ``at infinity''.

Using Theorem \ref{thm:continuity} we will now show that
$\alpha^\circ$ is continuous with respect to the Hausdorff topology.

Fix $C \in \NormCubic_d$ and $\epsilon > 0$.  We must find a
neighborhood $V$ of $C$ in $\NormPoly_{d+3}$ such that for all $C' \in
V$ we have $\alpha^\circ(C) \subset N_{\epsilon}(\alpha^\circ(C'))$
and $\alpha^\circ(C') \subset N_{\epsilon}(\alpha^\circ(C))$.

Let $P = \alpha^\circ(C)$.  First select a radius $R$ large enough so
that so that $P \subset N_{\epsilon/2}(f_C^\circ(B_R))$ where $\bar{B}_R =
\{ |z| \leq R \}$.  Now, by Theorem \ref{thm:continuity} we can ensure
that the Blaschke metric densitities of $C$ and $C'$ are arbitrarily
close in $C^1(\bar{B}_R)$ by making the coefficients of $C$ and $C'$
sufficiently close.  By \eqref{eqn:structure} this shows that the
coefficients of the respective connection forms can be made uniformly
close ($C^0$), and applying continuous dependence of solutions to ODE
initial value problems gives the same conclusion for $f_C^\circ,
f_{C'}^\circ$ (and moreover, for their respective frame fields
$F_C^\circ$, $F_{C'}^\circ$).  In particular we can choose a
neighborhood of $C$ so that $\P(f_{C'}^\circ)$ is $\epsilon/2$-close
to $\P(f_{C}^\circ)$ in $\bar{B}_R$, giving
$$\alpha^\circ(C) = P \subset N_{\epsilon}(f_C'(\bar{B}_R)) \subset N_{\epsilon}(\alpha^\circ(C'))$$
for all $C'$ in this neighborhood.

The ``outer'' continuity follows similarly by
considering tangent planes to the affine sphere.  Recall that the
image of $f_C^\circ$ is strictly convex and asymptotic to the boundary
of the cone over $P$, which is a polyhedral cone with $d+3$ planar
faces (corresponding to the edges of $P$).  
Thus, we may approximate each
of the planes in $\R^3$ containing one of these faces as closely as we wish by the tangent plane to an appropriately chosen point on $f_C^\circ$.
Selecting one
such point for each face---call these ``sample points''---the tangent
planes become lines in $\RP^2$ which determine a $(d+3)$-gon $\hat{P}$
that approximates $P$.  Moreover, by convexity $f_C^\circ$ lies above
its tangent planes, which means that $P$ lies
inside $\hat{P}$.  We call $\hat{P}$ the \emph{outer polygon}
determined by the sample points.

Choose $R>0$ so that $\bar{B}_R$ contains a set of $(d+3)$ sample points for
which the outer polygon approximates $P$ well enough that 
$\hat{P} \subset N_{\epsilon/2}(P)$.  As above we conclude from Theorem
\ref{thm:continuity} that any $C'$ close to $C$ determines a frame
field $F_{C'}^\circ$ that is uniformly close to $F_{C'}^\circ$ on
$\bar{B}_R$, and in particular the tangent planes to $f_{C'}^\circ$
approximate those of $f_{C}^\circ$.  Thus there is a neighborhood of
$C$ in which the outer polygon for $C'$ lies in a
$\epsilon/2$-neighborhood of that for $C$, giving
$$\alpha^\circ(C') \subset N_{\epsilon/2}(\hat{P}) \subset
N_{\epsilon}(P) = N_\epsilon(\alpha^\circ(C)),$$
for all $C'$ in this neighborhood.

Thus $\alpha^\circ$ is continuous with respect to the Hausdorff
topology.  By Proposition \ref{prop:hausdorff-vertex-equivalent} this
implies that $\alpha^\circ$ is also continuous with respect to the
usual vertex topology on $\Poly_{d+3}$.

Finally, we return to the original map $\alpha$: Since the polygon
$\alpha(C)$ is simply the normalization of $\alpha^\circ(C)$, the
continuous variation of the vertices of $\alpha^\circ(C)$ implies that
the projective transformations that accomplish this normalization also
vary continuously.  Hence the map $\alpha$ is the composition of
$\alpha^\circ$ with a continuous family of projective transformations,
and hence also continuous.

\boldpoint{Continuity of the inverse.}
At this point we have shown that $\alpha$ is a continuous bijection
between spaces homeomorphic to $\R^N$.  To establish continuity of
$\alpha^{-1}$ we need to show that $\alpha$ is closed.  Since proper
continuous maps on locally compact spaces are closed, it suffices to
show that $\alpha$ is a proper map.

Suppose for contradiction that $\alpha$ is not proper, i.e.~that there
exists a sequence $C_n \to \infty$ in $\NormCubic_{d}$ so that
$\alpha(C_n) \to P$.  Write $P_n = \alpha(C_n)$.

Let $Z_n \subset \C$ denote the set of roots of the polynomial $C_n$.
Note that the cardinality of $Z_n$ is at most $d$.  Since $C_n$ is
monic and centered, a bound on the diameter of the set $Z_n$ (with
respect to the Euclidean metric of $\C$) would give a bound on all
coefficients of $C_n$.  Since $C_n \to \infty$, no such bound applies,
and we find that the Euclidean diameter of $Z_n$ is unbounded as $n
\to \infty$.  Replacing $C_n$ with a subsequence, we assume from now
on that the diameter of $Z_n$ actually tends to infinity.

Let $r_n$ denote the diameter of the set $Z_n$ with respect to the
flat metric $|C_n|^{2/3}$ of the cubic differential.  We claim that
$r_n$ tends to infinity.  To see this, consider paths in $\C$ that
connect all of the zeros of $C_n$, i.e.~maps $[0,1] \to \C$ such that
$Z_n$ is a subset of the image.  We call these \emph{spanning paths}.
Ordering the elements of $Z_n$ and connecting them in order by
$|C_n|^{2/3}$-geodesic segments gives a spanning path of length at most
$(d-1)r_n$.  We will show that the minimum length of a spanning path
tends to infinity, and hence that $r_n \to \infty$ as well.

Suppose that there exists a spanning path whose Euclidean distance
from $Z_n$ is never greater than $k$.  Then the Euclidean
$k$-neighborhood of $Z_n$ is connected and hence the set $Z_n$ has
Euclidean diameter at most $2kd$.  Since this diameter tends to
infinity, we find that for large $n$, any spanning path contains a
point that is very far from $Z_n$ in the Euclidean sense.  Since
$|C_n(z)| > 1$ whenever $d(z,Z_n) > 1$ (by monicity), this also shows
that the minimum $|C_n|^{2/3}$-length such a path diverges as $n \to
\infty$, as required.

On the other hand, Lemma \ref{lem:compact-containing-zeros} gives a
compact set $\hat{K}$ in the interior of $P$ that contains the zeros
of the Pick differential for all polygons in a neighborhood $U$ of $P$.  Since $\alpha(C_n)
\to P$, for large $n$ we have $\alpha(C_n) \in U$.  Since the
restrictions of the Pick differential metrics to $\hat{K}$ vary continuously in the
Hausdorff topology (by Corollary
\ref{cor:blaschke-and-pick-continuous}), the diameter of $\hat{K}$ in
the Pick differential metric of $\alpha(C_n)$ is bounded as $n \to \infty$, as
is the sequence $r_n$.  This is the desired contradiction.

We conclude that the continuous bijection $\alpha$ is proper, and so
$\alpha^{-1}$ is continuous.
\end{proof}

This completes the proof of Theorem \ref{introthm:main} from the introduction.

\section{Complements and conjectures}
\label{sec:conjectures}

In this final section we discuss alternative approaches to some of the
results proved above and a few directions for further work related to
the homeomorphism $\boldalpha : \ModCubic_d \to \ModPoly_{d+3}$.

\subsection{Continuity method}

By the Invariance of Domain Theorem, a continuous, locally injective,
and proper map between manifolds of the same dimension is a
homeomorphism.  Using this to establish that a map is homeomorphic is
sometimes called the ``continuity method''.

Since we establish continuity, injectivity, and properness of
the map $\alpha$, the continuity method could be applied to show that
it is a homeomorphism.  Such an approach would obviate the construction of the inverse
map $\alpha^{-1}$ and the need to establish surjectivity of $\alpha$,
giving a slightly shorter proof of the main theorem.

We prefer the argument given in Section \ref{sec:mapping} because it
highlights the way in which existence theorems for the vortex equation
and the Monge-Ampere equation (i.e.~the Cheng-Yau theorem) give rise
to mutually inverse maps between moduli spaces.  Also, while the
continuity method typically gives only indirect information about the
properties of the inverse map, we hope that the explicit constructions of
both $\alpha$ and $\alpha^{-1}$ will be helpful toward further study
of the local or differential properties of this isomorphism of moduli
spaces.

\subsection{Hilbert metric geometry}

There is a classical projectively invariant Finsler metric on convex
domains in $\RP^n$, the \emph{Hilbert metric}, which is defined using
projective cross-ratios, generalizing the Beltrami-Klein model of the
hyperbolic plane (see e.g.~\cite{busemann-kelly}).  This metric is
Finsler and is not Riemannian unless the domain is bounded by a conic
\cite{kay:ptolemaic-inequality}.  Thus, in general the Hilbert metric
is quite different from the Blaschke and Pick differential (Riemannian)
metrics considered above.

However, Benoist and Hulin used the Benz\'{e}cri cocompactness theorem
in projective geometry to show that the Hilbert metric of a properly
convex domain in $\RP^2$ is uniformly comparable to the Blaschke
metric, in the sense that the ratio of their norm functions is bounded
above and below by universal constants
\cite[Prop.~3.4]{benoist-hulin:finite-volume}.  The same arguments
show that some multiple of the Hilbert metric gives an upper bound on
the Pick differential metric.

Therefore, in any instance where coarse geometric properties of the
Blaschke metric are considered, this comparison principle would allow
one to work instead with the Hilbert metric.  Since the Hilbert
metrics of polyhedra have been extensively studied (e.g.~in
\cite{de-la-harpe:simplices} \cite{foertsch-karlsson} \cite{bernig}
\cite{colbois-verovic} \cite{colbois-verovic-vernicos}), one might ask
whether such results could be brought to bear on the study of
polygonal affine spheres.  We mention here only one result in this
direction, an alternative proof of a weaker form of Theorem
\ref{thm:polygon}:

\begin{thm}
\label{thm:polygon-weak}
Suppose $M \subset \R^3$ is an complete affine sphere conformally equivalent to $\C$ and
having polynomial Pick differential $C$.  Then $M$ is asymptotic to the cone over
a convex polygon.
\end{thm}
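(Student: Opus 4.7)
The plan is to bypass the detailed osculation analysis of Section \ref{sec:polynomials-to-polygons} by combining the Blaschke-metric estimates of Section \ref{sec:vortex} with the Benoist-Hulin comparison to the Hilbert metric, and then applying elementary Hilbert geometry of convex domains near their boundary.

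First, by the uniqueness statement in Theorem \ref{thm:polynomial-sphere-existence}, $M$ is the affine sphere produced by solving the coupled vortex equation for $\phi = \sqrt{2}\,C$, and by the Cheng--Yau theorem (Theorem \ref{thm:cheng-yau}) its projectivization is a properly convex domain $\Omega = \P(M) \subset \RP^2$; the task is to show that $\bdry\Omega$ is a finite union of line segments. Since $C$ is polynomial of some degree $d$, Proposition \ref{prop:standard-half-planes} supplies $d+3$ standard half-planes $(U_k, w_k)$ whose union covers the complement of a compact set in $\C$, with $C = 2\,dw_k^3$ on each $U_k$. On each such half-plane the Blaschke metric is $\sigma = e^u |dw_k|^2$ with $|u|$ and $|\nabla u|$ exponentially small as one moves into $U_k$, by Theorem \ref{thm:exponential-bound} and Corollary \ref{cor:c1-bound}; in particular $(U_k, \sigma)$ is quasi-isometric to a standard Euclidean half-plane, with constants approaching $1$ at infinity.

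Projecting $f(U_k)$ to $\RP^2$ and applying the Benoist-Hulin bi-Lipschitz comparison \cite[Prop.~3.4]{benoist-hulin:finite-volume} between the Hilbert metric $d_\Omega$ and the pushforward Blaschke metric on $\Omega$, each $\P(f(U_k))$ is the image of a quasi-isometric embedding of a Euclidean half-plane into $(\Omega, d_\Omega)$ that accumulates on some compact set $E_k \subset \bdry\Omega$. Since $f$ is a proper embedding (Theorem \ref{thm:completeness-equivalence}) and the $U_k$ exhaust a neighborhood of infinity in $\C$, the union $\bigcup_k E_k$ covers all of $\bdry\Omega$. The main geometric step is to show that each $E_k$ lies in a single line segment of $\bdry\Omega$: Euclidean-parallel rays in $U_k$ produce, after composing with $\P \circ f$, pairs of quasi-geodesic rays in $(\Omega, d_\Omega)$ that remain at uniformly bounded Hilbert distance; using the cross-ratio formula $d_\Omega(p,q) = \tfrac{1}{2}|\log[a,p,q,b]|$, one can show that such pairs of rays can only coexist when the corresponding chords share endpoints on a common line segment in $\bdry\Omega$, since otherwise the logarithm of the cross-ratio forces the distance to grow unboundedly. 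Hence each $E_k$ is contained in a line and, by compactness and connectedness, is a (possibly degenerate) line segment. Therefore $\bdry\Omega$ is covered by at most $d+3$ segments and $\Omega$ is a convex polygon.

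The main obstacle is the Hilbert-geometric step: although the intuition that parallel quasi-geodesic rays in a Hilbert geometry must ``collapse'' onto a single boundary edge is essentially Hilbert-geometric folklore, one must quantify this in a way compatible with the specific bi-Lipschitz constants of Benoist-Hulin, and treat carefully the compact core where the various $U_k$ transition---verifying via properness of $f$ that it projects to a compact subset disjoint from $\bdry\Omega$, so that no boundary points are missed by the $E_k$.
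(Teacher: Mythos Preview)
Your approach and the paper's share the same opening moves---the vortex estimates of Section~\ref{sec:vortex} and the Benoist--Hulin bilipschitz comparison between the Blaschke and Hilbert metrics---but diverge at the crucial step.  The paper does not argue half-plane by half-plane; instead it observes that since $|C|^{2/3}$ is globally quasi-isometric to the Euclidean plane (a polynomial flat metric), and since the Blaschke metric is quasi-isometric to $|C|^{2/3}$ by Corollary~\ref{cor:coarse-bound}, the Hilbert metric on $\Omega$ is quasi-isometric to $\R^2$.  The conclusion that $\Omega$ is a polygon then follows in one stroke from the theorem of Colbois--Verovic \cite{colbois-verovic}, which says that any convex domain whose Hilbert metric quasi-isometrically embeds in a normed vector space is a polytope.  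This packages exactly the Hilbert-geometric content you are trying to supply by hand.

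Your direct argument has a gap beyond the one you flag.  Even granting the ``folklore'' claim that two quasi-geodesic rays at bounded Hilbert distance land on a common boundary segment, this only constrains the limit set of \emph{one} family of parallel rays in $U_k$, not the full accumulation set $E_k$.  In fact the paper's Lemma~\ref{lem:vee} shows that the accumulation set of a single standard half-plane is a ``vee'' consisting of \emph{two} edges meeting at a vertex, so your conclusion that $E_k$ lies in a single line is false as stated.  (The overall theorem would survive---finitely many vees still give a polygon---but the argument as written does not.)  There is also a prior issue: without Gromov hyperbolicity there is no Morse lemma, so quasi-geodesic rays in a general Hilbert geometry need not converge to boundary points at all, and the cross-ratio computation you sketch applies to genuine chords, not to their quasi-geodesic perturbations.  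The cleanest repair is to assemble the half-plane estimates into a single global quasi-isometry $(\Omega,d_\Omega)\to\R^2$ and invoke Colbois--Verovic---at which point you have reproduced the paper's proof.
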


\begin{proof}
Colbois and Verovic showed that a convex domain in $\RP^n$ whose
Hilbert metric is quasi-isometric to a normed vector space (or even
which quasi-isometrically embeds in such a space) is a convex
polyhedron \cite{colbois-verovic}.  Applying the $n=2$ case of this
theorem, we can then conclude that the projectivization of $M$ is a
convex polygon if we show that its Hilbert metric is quasi-isometric
to the Euclidean plane.

For any polynomial cubic differential $C$, the singular flat
metric $|C|^{2/3}$ is quasi-isometric to the Euclidean plane.  Hence
the Pick differential metric of $M$ has this property.

Theorem \ref{thm:stronger-polynomial-sphere-uniqueness} shows that the
Blaschke metric of $M$ comes from a solution of the vortex equation
for the polynomial Pick differential, whereupon Corollary
\ref{cor:coarse-bound} implies that the Pick differential and Blaschke metrics
of $M$ are quasi-isometric.  Hence the Blaschke metric of $M$ is
quasi-isometric to the plane.

Since the Hilbert metric is bilipschitz to the Blaschke metric, it too is
quasi-isometric to the plane, as required.
\end{proof}

Note that this argument does not relate the number of vertices of the
polygon to the degree of the polynomial.  It would be interesting to
know if these Hilbert-geometric techniques could be pushed further to
give a complete proof of Theorem \ref{thm:polygon}.

\subsection{Pick zeros and the Fence conjecture}

By Theorem \ref{thm:polynomial}, each convex polygon $P$ in $\RP^2$
with $n$ vertices is associated to an affine sphere whose Pick differential is
a polynomial of degree $(n-3)$.  The Pick zeros therefore give a
projective invariant of $P$ whose value is a set of $(n-3)$ interior
points counted with multiplicity.  While constructed through
transcendental and analytic methods, it would be interesting to
understand whether any properties of the Pick zeros can be related
directly to the projective or algebraic geometry of the polygon $P$.

We will state one conjecture in this direction about bounding the Pick
zeros in terms of diagonals of $P$.  To formulate it, we first recall
the compact region constructed in Section \ref{sec:polygons-to-polynomials} which
contains all of the Pick zeros.  Corresponding to each vertex $v$ of
$P$ there is a smooth arc inside $P$ (a barrier curve) which joins the
neighbors $v_-,v_+$ of $v$ and which lies inside the triangle
$\triangle v v_- v_+$.  We cut $P$ along these arcs, each time
discarding the region on the same side of the barrier as $v$.  What
remains is the \emph{core}.

Some simple computer experiments, in which the Pick zeros of some
families of convex $n$-gons were computed for $n \leq 7$, suggest that
it might be possible to replace the barrier curve at $v$ with the line
segment $[v_-,v_+]$ and still bound the Pick zeros.  That is, cutting
away from $P$ each of the triangles formed by a consecutive triple of
vertices, we obtain a smaller convex $n$-gon, which we call the
\emph{fence}, and we conjecture:

\begin{conj}[Fence conjecture]
\label{conj:fence}
For any convex polygon in $\RP^2$ with $n \geq 5$ vertices, the Pick
zeros lie inside the fence.  Equivalently, the Pick differential of the affine
sphere over a convex polygon has no zeros over the vertex inscribed
triangles.
\end{conj}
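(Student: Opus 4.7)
By Wang's equation \eqref{eqn:wang-intrinsic}, $K_h = -1 + 2|C|_h^2$, together with the nonpositive curvature statement of Theorem~\ref{thm:npc}, the zero set of the Pick differential coincides with $\{K_h = -1\}$, so the Fence conjecture is equivalent to the strict inequality $K_h > -1$ on the portion of the affine sphere $M$ lying over each vertex inscribed triangle $\tau_v$ of $P$. A direct computation in a local conformal parameter with $h = e^u|dz|^2$, using that $|C|_h^2 \leq \tfrac{1}{2}$ by the curvature bound, gives away from the zeros of $C$
\[
\Delta_h \log|C|_h^2 \:=\: -6\bigl(1-|C|_h^2\bigr) \:\leq\: -3,
\]
so $\log|C|_h^2$ is strongly superharmonic in the intrinsic Blaschke metric off its polar set. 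This is the analytic tool I would use to propagate lower bounds on $|C|_h^2$.

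My overall strategy is a continuity argument in moduli space made possible by the main theorem. Since $\NormPoly_n$ is contractible by Proposition~\ref{prop:polygon-global-topology} and the Pick zero set depends continuously on the polygon by Theorem~\ref{thm:main-in-text}, the set $\mathcal{F}_n \subset \NormPoly_n$ of polygons whose Pick zeros all lie in the open fence is open. It is nonempty: for the regular $n$-gon, the $\Z/n$ symmetry of Proposition~\ref{prop:polygon-cyclic-quotient} combined with the equivariance of $\boldalpha$ forces the normalized Pick differential to be a scalar multiple of $z^{n-3}\,dz^3$, so all $n-3$ zeros collapse to the single point over the centroid of the regular polygon, which lies well inside the fence. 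Consequently the conjecture reduces to the residual claim that \emph{for no polygon does the Pick differential vanish at a point of any chord $[v_{i-1},v_{i+1}]$}.

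To attack this residual claim, I would combine the superharmonic inequality above with the projective-naturality comparison underlying Section~\ref{sec:polygons-to-polynomials}. On the region of $M$ over $\tau_v$, the function $\log|C|_h^2$ is superharmonic with the strong pointwise bound above; along the two edges of $\tau_v$ shared with $\bdry P$ it tends to $\log\tfrac{1}{2}$ uniformly by the Blaschke error estimate of Theorem~\ref{thm:exponential-bound}. To produce a lower bound along the chord itself, I would apply a one-parameter family of torus elements $A_t \in \SL_3\R$ stabilizing $\tau_v$, chosen so that $A_t \cdot P \to \tau_v$ in the Hausdorff topology, and invoke Corollary~\ref{cor:blaschke-and-pick-continuous} on a fixed compact neighborhood of the chord to compare the Pick differential of $A_t \cdot P$ with the constant nonzero Pick differential of the \titeica surface over $\tau_v$, transferring the resulting positive lower bound back via projective naturality to a bound on $|C|_h^2$ along $[v_-,v_+]$ for the original polygon.

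The principal obstacle, and the reason the statement remains conjectural, is precisely this chord transfer. In the \titeica parameterization of $\tau_v$ used in Lemma~\ref{lem:locally-bilip-half-planes}, the chord $[v_-,v_+]$ sits at conformal infinity, so the torus rescaling that localizes near points close to the shared edges of $\tau_v$ fails to localize near the chord in the same way. Superharmonicity alone does not prevent $-\infty$ singularities of $\log|C|_h^2$ at interior points of the region over $\tau_v$, so any successful proof must exploit a more global feature of $P$. An alternative and perhaps more promising route is a Loftin-type neck-separation degeneration \cite{loftin:neck-separation}: one collapses the vertex $v$ toward the chord $[v_-,v_+]$, tracks the $n-3$ Pick zeros through the degeneration using the asymptotic analysis of Section~\ref{sec:polynomials-to-polygons}, and verifies that no zero crosses the chord at any intermediate polygon; if successful this would close the continuity argument outlined above.
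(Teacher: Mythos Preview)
This statement is a \emph{conjecture} in the paper, not a theorem: the authors explicitly leave it open, supporting it only with numerical experiments (Figure~\ref{fig:fence}) and the remark that it would follow if the restriction $\{\Re(z)>R\}$ could be removed from Lemma~\ref{lem:locally-bilip-half-planes}. There is therefore no proof in the paper to compare your proposal against.

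Your write-up is honest about this: you lay out a continuity strategy, identify the residual closedness step (no Pick zero on a chord $[v_-,v_+]$), and then name precisely the obstruction---the chord lies at conformal infinity in the \titeica parameterization of $\tau_v$, so the torus-rescaling/Benoist--Hulin comparison of Lemma~\ref{lem:locally-bilip-half-planes} gives no control there. That diagnosis matches the paper's own discussion. So what you have is a reasonable plan of attack with a correctly located gap, not a proof; the neck-separation idea at the end is speculative and would itself require substantial new analysis.

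One technical correction: your superharmonic computation is off by a factor. With $h=e^u|dz|^2$, $|C|_h^2=|C|^2e^{-3u}$, and Wang's equation \eqref{eqn:wang}, one gets away from the Pick zeros
\[
\Delta_h\log|C|_h^2 \;=\; e^{-u}\bigl(-3\Delta u\bigr) \;=\; -6 + 12\,|C|_h^2 \;=\; -6\bigl(1-2|C|_h^2\bigr),
\]
not $-6(1-|C|_h^2)$. Since $|C|_h^2\leq\tfrac12$ by Theorem~\ref{thm:npc}, this yields only $\Delta_h\log|C|_h^2\leq 0$, i.e.\ ordinary superharmonicity, not the uniform bound $\leq -3$ you claimed. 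This does not by itself wreck the strategy, but it does mean the inequality alone cannot push a boundary lower bound into the interior with any quantitative decay, reinforcing that the real work is the chord estimate you already flagged as missing.
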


\begin{figure}
\begin{center}
\includegraphics[width=\textwidth]{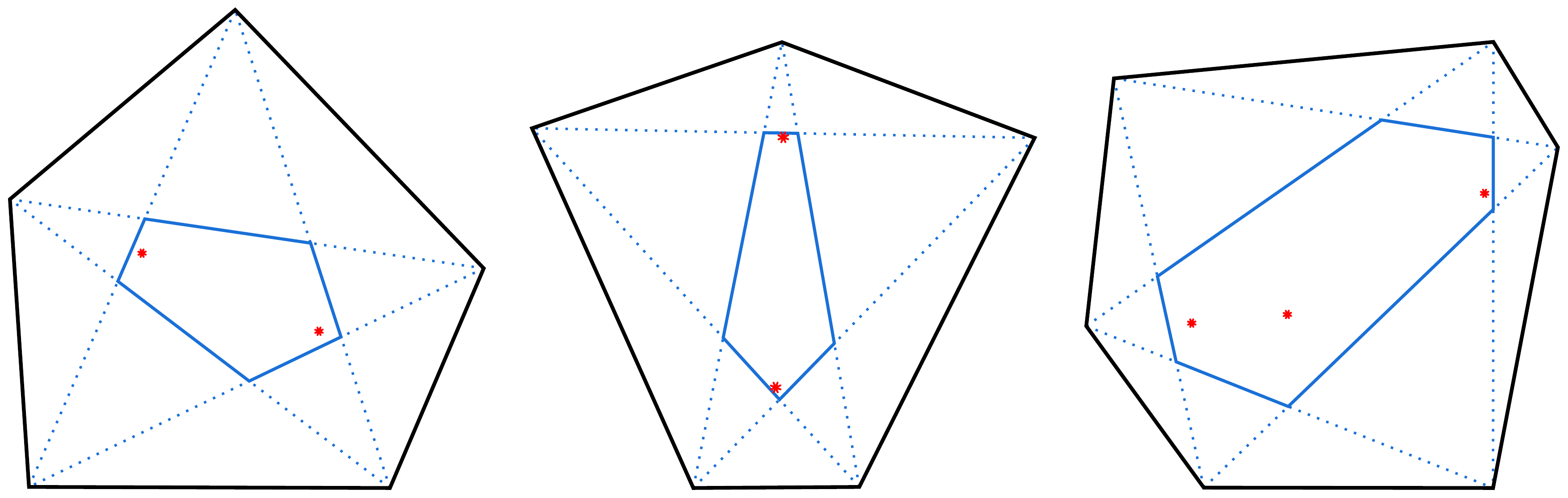}
\caption{Pick zeros and fences for some convex polygons, computed by
  numerical solution of the vortex equation and integration of the
  affine frame field.}
\label{fig:fence}
\end{center}
\end{figure}

The restriction to $n \geq 5$ only excludes cases in which the
conjecture is vacuous or trivial: For triangles there are no Pick
zeros.  For quadrilaterals, the fence always reduces to a point, which
is the intersection of the two diagonals.  All convex quadrilaterals
are projectively equivalent, and symmetry considerations show the
intersection of diagonals is also the unique zero of the Pick differential
(for which we could choose $C=z dz^3$ as a representative).

Finally we note that the fence conjecture can be seen as a limiting
case of the construction of the barrier curves in Section
\ref{sec:polygons-to-polynomials}: That construction involves the
choice of a real constant $R$ so that the barrier curves 
correspond to $\{ \Re(z) = R \}$ in the \titeica affine
spheres over the inscribed triangles.
Lemma \ref{lem:locally-bilip-half-planes} implies that any
sufficiently large $R$ can be used.  On the other hand, the fence is
obtained as the limit of these curves when $R$ tends to $-\infty$.
Thus the fence conjecture would follow if the hypothesis $\{
\Re(z) > R \}$ could be dropped in Lemma
\ref{lem:locally-bilip-half-planes}.

\subsection{Differentiability, Poisson structures, and flows}

The spaces $\NormCubic_d$ and $\NormPoly_{d+3}$ are smooth manifolds,
and we have shown that the map $\alpha$ is a homeomorphism between
them.  We expect that $\alpha$ has additional regularity:

\begin{conj}
\label{conj:diffeo}
The map $\alpha : \NormCubic_d \to \NormPoly_{d+3}$ is a diffeomorphism.
\end{conj}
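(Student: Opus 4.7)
The plan is to reduce the conjecture to showing that $\alpha$ is $C^\infty$ smooth with everywhere-injective differential. Since Theorem~\ref{thm:main-in-text} already exhibits $\alpha$ as a homeomorphism between smooth manifolds of the same dimension, these two properties imply via the inverse function theorem that $\alpha^{-1}$ is smooth as well. Smoothness itself should follow by factoring the construction of $\alpha$ into three successive stages: (a) the PDE step $C \mapsto u_C$, solving Wang's equation; (b) the ODE step $u_C \mapsto F_C$, integrating the structure equations~\eqref{eqn:structure}; and (c) the asymptotic step $F_C \mapsto P_C$, extracting the vertices as projective limits of $F_C$ along the rays of $\star_d$.

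The core analytic step (a) calls for the implicit function theorem in a weighted Banach space framework. Writing Wang's equation as $\Delta u = F(u,C)$ with $F(u,C) = 2e^u - 4|C|^2 e^{-2u}$, its $u$-linearization at a solution $u_C$ is the Schrödinger operator
$$L_C \;=\; \Delta - \bigl(2e^{u_C} + 8|C|^2 e^{-2u_C}\bigr),$$
whose potential is strictly positive and, by Corollary~\ref{cor:coarse-bound} and Theorem~\ref{thm:exponential-bound}, is asymptotically equal in each of the standard half-planes of Proposition~\ref{prop:standard-half-planes} to the constant $2k = 6$ in the natural flat coordinate. This is precisely the mass in the Helmholtz-type equation governing Lemma~\ref{lem:linearization-solution}. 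Working in exponentially weighted Hölder spaces $e^{\mu r} C^{k,\alpha}(\C)$, with $\mu \in (0,\sqrt{6})$ and $r$ the $|C|^{2/3}$-distance to the zero locus of $C$, one should obtain that $L_C$ is an isomorphism—by maximum principle bounds combined with standard Fredholm theory for asymptotically constant-coefficient elliptic operators on $\C$ (compare the framework developed by Biquard--Boalch for irregular Higgs bundles). This yields $C^\infty$ dependence of $u_C$ on the finite-dimensional parameter $C$. Step~(b) is then a routine application of smooth dependence of ODE solutions on parameters. Step~(c) requires smoothness of the ray limits and unipotent transition factors of Lemma~\ref{lem:unipotent-factors} as functions of $C$; this should follow by differentiating under the integral in the Gaussian expressions derived there, using the uniform exponential convergence already established in Section~\ref{sec:polynomials-to-polygons} to dominate the tails.

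For injectivity of $d\alpha$ at a point $C$, suppose $\dot C \in T_C \NormCubic_d$ satisfies $d\alpha|_C(\dot C) = 0$. Differentiating the whole construction produces a variation $\dot u_C = L_C^{-1}(\partial_C F \cdot \dot C)$ of the Blaschke metric and a frame variation $\dot F_C = F_C \cdot \xi$, where $\xi : \C \to \gl_3\R$ solves the linearization of~\eqref{eqn:structure}. The hypothesis is that all $2(d+3)$ asymptotic limits and unipotent transition factors governing $\alpha$ vanish at first order. Re-running the asymptotic analysis of Section~\ref{sec:polynomials-to-polygons} at the linearized level, expressing the linearized unipotent factors as $\R$-linear functionals of $\dot C$ via the same Gaussian integrals, one should conclude that all coefficients of $\dot C$ vanish.

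The main obstacle, I expect, is the Fredholm setup underlying (a): one must match the weight $\mu$ to the indicial exponents of the model operator $\Delta - 2k$ in each of the $(d+3)$ standard half-planes, verify that these indicial conditions are avoided uniformly in $C$ on compacta of $\NormCubic_d$, and upgrade Fredholmness to invertibility (here the strict positivity of the potential is what makes maximum-principle uniqueness available). Once this analytic foundation is laid, steps~(b), (c), and the injectivity argument should be technically demanding but conceptually routine applications of the estimates already developed in Sections~\ref{sec:vortex} and~\ref{sec:polynomials-to-polygons}.
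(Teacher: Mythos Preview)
This statement is Conjecture~\ref{conj:diffeo} in the paper, not a theorem: the paper does not prove it and explicitly leaves it open. There is therefore no ``paper's own proof'' to compare your proposal against. What the paper does offer, in the paragraphs immediately following the conjecture, is an informal discussion of what a proof would require: a sufficiently strong smooth-dependence estimate for solutions of Wang's equation on the cubic differential (with the caveat that the vertices of $\alpha(C)$ are determined by the fine limiting behavior at infinity, via the unipotent factors of Lemma~\ref{lem:unipotent-factors}), and, for $\alpha^{-1}$, control of the $k$-jet of the Blaschke metric as a function of the polygon vertices together with control of the uniformizing coordinate.

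Your proposal is a plausible outline along the same lines the paper gestures at---factoring through the PDE, ODE, and asymptotic steps, and invoking weighted Fredholm theory for the linearization $L_C$. But it is a sketch, not a proof: the Fredholm setup in exponentially weighted H\"older spaces on a domain with $(d+3)$ asymptotic ends is genuinely nontrivial to set up and verify (you acknowledge this as ``the main obstacle''), and the claim that the linearized unipotent factors yield enough independent functionals on $T_C\NormCubic_d$ to force $\dot C = 0$ is asserted without argument. Since the paper itself regards these as open issues, your proposal should be understood as a program for attacking the conjecture rather than a proof of it.
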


Of course, since $\alpha$ is $\Z/(d+3)$-equivariant, the conjecture is equivalent to the
statement that the quotient map $\boldalpha : \ModCubic_d \to
\ModPoly_{d+3}$ is a diffeomorphism of orbifolds.

The differentiability of $\alpha$ itself would follow from a
sufficiently strong estimate concerning the smooth dependence of the
solution to Wang's equation on the holomorphic cubic differential.
While estimates of this type are routine when considering a fixed
compact subset of the domain, the global nature of the map $\alpha$
would seem to require more control.  For example, the constructions of
Section \ref{sec:polynomials-to-polygons} show that the vertices of
the polygon $\alpha(C)$ are determined by fine limiting behavior of
the Blaschke metric at infinity, through the unipotent factors
constructed in Lemma \ref{lem:unipotent-factors}.

Similarly, the differentiability of $\alpha^{-1}$ might be established
by studying the dependence of the $k$-jet of the Blaschke metric at a
point of a convex polygon as a function of the vertices, generalizing
Theorem \ref{thm:support-function-continuous}.  However, one would
also need to control the variation of the uniformizing coordinate $z$
in which the Pick differential becomes a polynomial.

Assuming for the moment that $\alpha$ is a diffeomorphism, several
questions arise about its possible compatibility with additional
differential-geometric structures of its domain and range.

For example, in addition to its complex structure, the space
$\ModCubic_d$ carries a holomorphic action of $\C^*$, which is the
quotient of the action on polynomials by scalar multiplication.
Restricting to the subgroup $S^1 = \{ e^{i \theta} \} \subset \C^*$
gives a flow on $\ModCubic_d$ with closed leaves, which we call the
\emph{circle flow}.  Since the Wang equation involves the cubic
differential only through its norm, the Blaschke metric (as a function
on $\C$) is constant on these orbits, and the associated affine
spheres are intrinsically isometric.  The extrinsic geometry is
necessarily changing, however, since the image of a (nontrivial)
$S^1$-orbit by $\boldalpha$ is a circle in $\ModPoly_{d+3}$.

It is natural to ask whether the images of circle orbits in
$\ModPoly_{d+3}$ could be recognized in terms of intrinsic features of
that space, or in terms of projective geometry of polygons, without
direct reference to the map $\boldalpha$.  

For example, there is a natural Poisson structure on a space closely related
to $\ModPoly_k$: Define a \emph{twisted polygon} with $k$ vertices to
be a map $P : \Z \to \RP^2$ that conjugates the translation $i \mapsto
(i+k)$ of $\Z$ with a projective transformation $M \in \SL_3\R$.  Here
we say $M$ is the \emph{monodromy} of the twisted polygon.  The space
$\Tilde{\ModPoly}_k$ of $\SL_3\R$-equivalence classes of twisted
$k$-gons is a real algebraic variety which is stratified by conjugacy
classes of the monodromy, and which contains $\ModPoly_k$ as the
stratum with trivial monodromy. The variety $\Tilde{\ModPoly}_k$ is
smooth and of dimension $2k$ in a neighborhood of $\ModPoly_k$.

Ovsienko, Schwartz, and Tabachnikov introduced in \cite{OST} a natural
Poisson structure on $\Tilde{\ModPoly}_k$ as part of their study of
the \emph{pentagram map}, a dynamical system on polygons and twisted
polygons which, in our terminology, maps a convex polygon to its
fence.  We wonder if this same Poisson structure could be related to
the circle flow considered above, and in particular if the circle flow
is defined by a Hamiltonian.  More precisely, we ask:

\begin{question}
Conjugating the circle flow on $\ModCubic_d$ by $\boldalpha$ we obtain
a flow on $\ModPoly_{d+3}$.  Is it the restriction of
Hamiltonian flow on $\Tilde{\ModPoly}_{d+3}$?
\end{question}

We remark that recent work of Bonsante-Mondello-Schlenker give an
affirmative answer to a seemingly analogous question for quadratic
differentials on compact surfaces: In \cite{bms1} they introduce a
circle flow on $\sT(S) \times \sT(S)$ that is induced by the $e^{i
  \theta}$ multiplication of holomorphic quadratic differentials and a
harmonic maps construction that involves the $k=2$ case of the vortex
equation \eqref{eqn:vortex} from Section \ref{sec:vortex}.
In \cite{bms2} it is shown that this \emph{landslide flow} is
Hamiltonian for the product of Weil-Petersson symplectic structures.

We close with a final conjecture about the circle flow which was
suggested by computer experiments in the pentagon case ($d=2$).

\begin{figure}
\begin{center}
\includegraphics[width=0.7\textwidth]{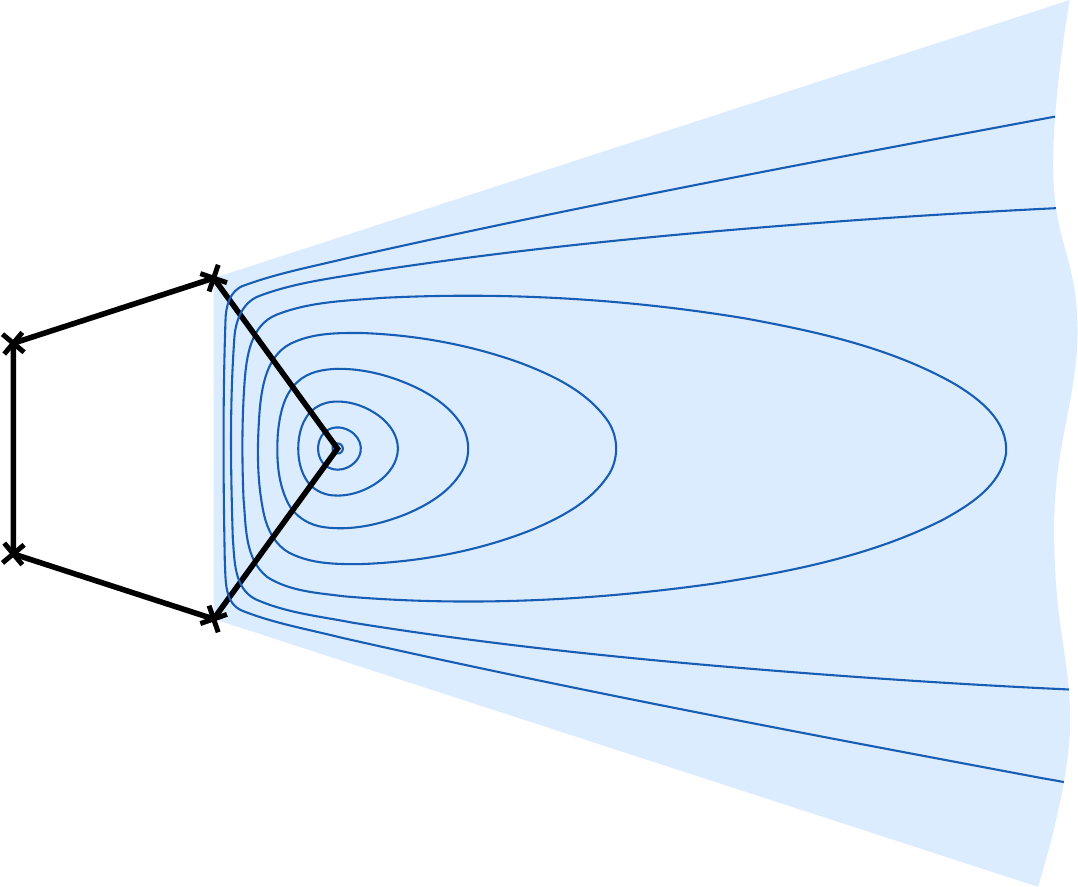}
\caption{Stratification of $\NormPoly_5$ by level sets of the product
  of corner invariants.  Here $\NormPoly_5$ is identified with the set
  of possible locations for a fifth vertex when the first four are
  fixed (the marked points).  Thus $\NormPoly_5$ is a triangle in $\RP^2$ (the shaded region,
  which extends beyond this affine chart).}
\label{fig:circles}
\end{center}
\end{figure}

Associated to each vertex $v$ of a polygon $P$ in $\RP^2$ there is a
projective invariant $x_v = x_v(P) \in \R$ known as the \emph{corner
  invariant}.  It is defined as follows: Consider the chain of five
consecutive vertices of the polygon in which $v$ is the middle.  Join
$v$ to each other vertex in the chain by lines, obtaining four lines
that are concurrent at $v$.  The cross ratio of these lines is $x_v$.

Schwartz observed in \cite{schwartz:pentagram} that a pentagon in
$\RP^2$ is uniquely determined up to projective transformations by its
corner invariants, and more generally that the product $X = \prod_v
x_v$ of the corner invariants is a ``special'' function on the space
of pentagons.  For example, this function is invariant under the
pentagram map and it has a unique minimum at the regular pentagon.
The non-minimal level sets of the function $X$ foliate the rest of
$\ModPoly_{5}$ by real-algebraic curves homeomorphic to $S^1$.

Our computational experiments suggest that these curves are images of
orbits of the circle flow on $\ModCubic_2$:

\begin{conj}
The map $\boldalpha : \ModCubic_2 \to \ModPoly_5$ sends each orbit of
the circle flow to a level set of the product of the corner
invariants.
\end{conj}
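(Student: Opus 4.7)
The plan is to exploit the fact that along each orbit of the circle flow the Blaschke metric is invariant, so only the phase of the Pick differential varies, and to parlay this into an explicit formula for the $\theta$-derivative of the product of corner invariants. After renormalizing $\lambda C$ to monic centered form for $C=(z^2+c)dz^3$ and $\lambda = e^{i\theta}$, one finds that the circle flow acts on $\NormCubic_2 \simeq \C$ by a rotation of $c$; in particular its orbits are the circles $|c|=r$, and the claim is that $X(\boldalpha([c]))$ depends only on $|c|$.

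By Theorems \ref{thm:existence} and \ref{thm:uniqueness}, the log-density $u$ solving \eqref{eqn:wang} depends only on $|C|$, so along an orbit the Blaschke metric, the natural coordinate system, and the flat geometry $|C|^{2/3}$ are all unchanged; the only variation is the phase of the entries involving $C$ and $\bar{C}$ in the structure form \eqref{eqn:structure}. Differentiating this form in $\theta$ produces an inhomogeneous linear equation for $\dot F = \partial F/\partial\theta$ whose inhomogeneity is supported in exactly two matrix positions. Combining this with the asymptotic analysis of Section \ref{sec:polynomials-to-polygons}---in particular the osculation-map limits of Lemma \ref{lem:stability} and the unipotent transition formula of Lemma \ref{lem:unipotent-factors}---I would then extract explicit expressions for the $\theta$-derivatives of the five vertices $p_1,\ldots,p_5$ of the pentagon $\boldalpha([c])$ in terms of the asymptotic data on the ten standard rays.

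The crucial remaining step is to show that
$$ \frac{d}{d\theta}\log X \;=\; \sum_{i=1}^5 \frac{d}{d\theta}\log x_{p_i} \;=\; 0. $$
Each summand is the derivative of a cross-ratio of four lines through a single vertex, so it can be expressed locally using the frame $F$ and its $\theta$-derivative at (or near) that vertex. The desired vanishing should come from the closing-up relation for the pentagon---the fact that the ordered product of the five unipotent factors of Lemma \ref{lem:unipotent-factors} is the identity in $\SL_3\R$---whose $\theta$-linearization yields a single global constraint on the ten scalar unipotent parameters. Verifying that this constraint is exactly the one required to force the five individual cross-ratio derivatives to cancel is the main obstacle, since the $x_{p_i}$ depend on the vertices in a nonlinear way and no one of the five summands need have a simple closed form. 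As an alternative route, motivated by the preceding question and by the analogy with the landslide flow of \cite{bms1,bms2}, one might try to identify the circle flow with the Hamiltonian flow of $\log X$ with respect to the restriction to $\ModPoly_5 \subset \tilde{\ModPoly}_5$ of the Ovsienko--Schwartz--Tabachnikov Poisson structure; this would prove the conjecture and answer the preceding question at once, at the cost of first establishing the relevant Hamiltonian structure on the Higgs-bundle side of the correspondence.
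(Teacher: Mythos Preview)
The statement you are addressing is a \emph{conjecture} in the paper, not a theorem; the authors offer no proof at all, only numerical evidence from computer experiments in the pentagon case. So there is no proof in the paper to compare your proposal against.

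That said, your proposal is not a proof either, and you are candid about this: you correctly isolate the key analytic input (the Blaschke metric depends only on $|C|$, so along a circle orbit only the phase of the Pick entries in the structure form varies) and you set up a plausible variational framework, but the decisive step---that the closing-up relation for the pentagon forces $\sum_i \tfrac{d}{d\theta}\log x_{p_i}=0$---is left unverified. A few specific cautions. First, the closing-up constraint in $\SL_3\R$ is eight-dimensional, not one-dimensional, so ``a single global constraint'' understates what is available; the question is whether the particular linear functional $\sum_i d\log x_{p_i}$ lies in the span of the linearized closing-up relations, and that is a concrete but nontrivial linear-algebra check you have not carried out. Second, Lemma~\ref{lem:unipotent-factors} produces \emph{two} unipotent factors per half-plane (one for each unstable direction crossed), so for $d=2$ there are ten such factors alternating between the $E_{12}$- and $E_{13}$-types, not five of a single type; your count and the resulting product identity need to be adjusted accordingly. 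Third, the alternative Hamiltonian route you sketch would require establishing that the circle flow is Hamiltonian for the Ovsienko--Schwartz--Tabachnikov bracket, which is precisely the content of the open question immediately preceding the conjecture in the paper---so that route assumes at least as much as it proves.
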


The corner invariants and stratification by level sets of $X$ can also
be defined on the manifold cover $\NormPoly_5$, and we recall from
Section \ref{sec:polygons} that this space is naturally a triangle in
$\RP^2$.  The corresponding stratification of this triangle is shown
in Figure \ref{fig:circles}.

Generalizing the previous conjecture, it would be interesting to know
whether there exist any nontrivial circle orbits in $\ModCubic_d$ that
map to real-algebraic curves in $\ModPoly_{d+3}$ for $d > 2$, or more
generally whether these circle orbits are contained in real-algebraic
subvarieties of positive codimension.  Positive answers would evince a
compatibility between $\boldalpha$ and the algebraic structure of
$\ModPoly_{d+3}$.

\appendix

\section{Existence of standard half-planes}
\label{appendix:half-planes}

In this appendix, we construct the half-plane subdomains we need for
our analysis of the large scale geometry of the affine spheres in
Section~\ref{sec:polygons-to-polynomials} and of the basic decay
estimates for the general vortex equation in
Section~\ref{subsec:estimates}.
We begin with the case of finding half-planes for cubic differentials
and then generalize the argument to holomorphic differentials of any
order.

\subsection{Half-planes for Cubic Differentials.}
Recall the precise statement from Section \ref{sec:cubic}:

\begingroup
\def\thethm{\ref{prop:standard-half-planes}}
\begin{prop}[Standard half-planes]
Let $C$ be a monic polynomial cubic differential.  Then there are
$(d+3)$ $C$-right-half-planes $\{ (U_k, w_k) \}_{k=0,\ldots,d+2}$ with the
following properties:
\begin{rmenumerate}
\item The complement of $\bigcup_k U_k$ is compact.

\item The ray $\{ \arg(z) = \frac{2 \pi k}{d+3} \}$ is
eventually contained in $U_k$.

\item The rays $\{ \arg(z) = \frac{2 \pi (k\pm 1)}{d+3} \}$ 
are disjoint from $U_k$.

\item On $U_k \cap U_{k+1}$ we have $w_{k+1} = 
\omega^{-1}
w_k +
c$ for some constant $c$, and each of $w_k, w_{k+1}$ maps this
intersection onto a sector of angle $\pi/3$ based at a point on $i \R$. (Recall $\omega = \exp(2
\pi i/3)$.)

\item Each ray of $\star_d$ is a $C$-quasi-ray of angle zero in the
associated half-plane $U_k$.  More generally any Euclidean ray in $\C$
is a $C$-quasi-ray and is eventually contained in $U_k$ for some $k$.
\end{rmenumerate}
\end{prop}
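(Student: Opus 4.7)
The plan is to build the $U_k$ by asymptotic analysis of natural coordinates outside a large disk. Write $C = p(z)\,dz^3$ with $p$ monic of degree $d$, and fix $r_0$ so that all zeros of $p$ lie in $\{|z|<r_0\}$. On the complement, any local holomorphic branch of $(\tfrac{1}{2}C)^{1/3}$ has the form $2^{-1/3}z^{d/3}(1+O(|z|^{-1}))\,dz$, so integration from a chosen basepoint yields a natural coordinate with leading expansion
\[
w(z) \;=\; \tfrac{3 \cdot 2^{-1/3}}{d+3}\,z^{(d+3)/3} + O(z^{(d+2)/3}),
\]
where the fractional power uses the branch compatible with the chosen cube root. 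For each $k = 0,\ldots,d+2$, I would consider the sector
\[
\Sigma_k \;=\; \Bigl\{\, |z|>r_0 \,:\, \bigl|\arg z - \tfrac{2\pi k}{d+3}\bigr| < \tfrac{3\pi+\eta}{2(d+3)}\,\Bigr\}
\]
for a small $\eta>0$. Since $\Sigma_k$ is simply connected and zero-free, I can fix a branch of $(\tfrac{1}{2}C)^{1/3}$ and a basepoint on the ray $R_k := \{\arg z = 2\pi k/(d+3)\}$ so that the resulting natural coordinate $\tilde{w}_k$ maps $R_k$ asymptotically onto $\R^+$ -- a unique choice among the three branches (which differ by factors $\omega^{\pm 1}$). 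Because the $(d+3)/3$-power scales angular widths correspondingly, the image $\tilde{w}_k(\Sigma_k)$ contains any half-plane $\{\Re\tilde{w}_k > T_k\}$ once $T_k$ is large enough, and then $U_k := \tilde{w}_k^{-1}(\{\Re\tilde{w}_k > T_k\})$ with $w_k := \tilde{w}_k - T_k$ is a $C$-right-half-plane.

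Properties (i)--(iii) then follow from elementary geometry: the $(d+3)$ angular widths sum to $3\pi + (d+3)\eta > 2\pi$, so the $U_k$ cover a punctured neighborhood of infinity; $R_k \subset \Sigma_k$ with $\tilde{w}_k$-image eventually exceeding $T_k$; and $R_{k\pm 1}$ lies outside the angular range of $\Sigma_k$ provided $\eta < \pi$. For (iv), on the connected overlap $U_k \cap U_{k+1}$ both $w_k$ and $w_{k+1}$ are natural coordinates for the same differential, so $w_{k+1} = \zeta\, w_k + \mathrm{const}$ for some cube root of unity $\zeta$. I would pin down $\zeta = \omega^{-1}$ by tracking a point on the angular midray of the overlap at $\arg z = \tfrac{2\pi(k+1/2)}{d+3}$: the asymptotic expansion gives $\arg \tilde{w}_k \to +\pi/3$ and $\arg \tilde{w}_{k+1} \to -\pi/3$ as $|z|\to\infty$ along this midray, so the rotation factor is $e^{-2\pi i/3} = \omega^{-1}$. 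Pulled back via $w_{k+1} = \omega^{-1} w_k + \mathrm{const}$, the half-plane $\{\Re w_{k+1} > 0\}$ becomes a half-plane in the $w_k$-plane whose bounding line crosses $i\R = \partial\{\Re w_k > 0\}$ at an angle of $\pi/3$; thus the overlap is a wedge of opening angle $\pi/3$ with vertex on $i\R$, and applying $w_{k+1}$ reproduces the same sector in its coordinates.

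For (v), any Euclidean ray $\gamma(t) = b + e^{i\theta}t$ eventually enters some $\Sigma_k$, and the asymptotic expansion then gives
\[
w_k(\gamma(t)) \;=\; c\,e^{i\psi(\theta)}\,t^{(d+3)/3} + O(t^{(d+2)/3})
\]
for a positive constant $c$ and an angle $\psi(\theta)$ determined by $\theta$ and the branch. Reparameterizing by $s = c\,t^{(d+3)/3}$ presents the image as $e^{i\psi(\theta)}\,s + o(s)$ (the error control is automatic, since $(d+2)/3 < (d+3)/3$ makes the correction sub-linear in $s$), confirming that $\gamma$ is a $C$-quasi-ray of angle $\psi(\theta)$; for the rays of $\star_d$, the branch-normalizing convention forces $\psi(2\pi k/(d+3))=0$. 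The main obstacle I anticipate is the sign bookkeeping in (iv), namely that the cube-root-of-unity factor is $\omega^{-1}$ rather than $\omega$: this reduces to the direction of the branch rotation under $z \mapsto z^{(d+3)/3}$ as one moves counterclockwise from $\Sigma_k$ to $\Sigma_{k+1}$, and it is cleanly verified by the angular-midray computation above.
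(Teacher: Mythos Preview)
Your approach is essentially the same as the paper's: both construct the half-planes by asymptotic analysis of natural coordinates near infinity, with the paper working via an intermediate coordinate $\zeta_k \sim z^{(d+3)/3}$ and you working directly with sectors $\Sigma_k$ in the $z$-plane, and your midray computation for the $\omega^{-1}$ factor in (iv) is exactly right. One point you glide over is the global injectivity of $\tilde{w}_k$ on $\Sigma_k$ (needed for $(U_k,w_k)$ to be a genuine $C$-right-half-plane); the paper handles this by showing the natural coordinate is a small perturbation of a translation on a nearly-convex region, and you should make the analogous check explicit.
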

\addtocounter{thm}{-1}
\endgroup

\begin{proof}
The point of the proof is to treat $C$ as a small deformation
of $z^d$, and to construct half-planes for $C$ as small deformations of
the ones described above for $z^d$.  

We construct $U_k$ and then verify its properties.  Define
\begin{equation}
\label{eqn:zeta}
 \zeta_k = \tfrac{3}{d+3} z^{\frac{d+3}{3}} \exp \left ( \tfrac{2 \pi
  i k}{d+3} \right ).
\end{equation}
Here we use the principal branch of the
logarithm to define this fractional power of $z$, so $\{ \zeta_k \in
\R^+ \}$ corresponds to $\{ \arg{z} = 2 \pi k/(d+3) \}$, and so that
$\zeta_k$ is a conformal coordinate on a sector centered at $\{
\arg{z} = 2 \pi k/(d+3) \}$ mapping it to $\C \setminus \R^-$.

For the moment we fix $k$ and for brevity write $\zeta = \zeta_k$.
Observe that $z^d dz^3 = d\zeta^3$.  For $C = C(z) dz^3$ where $C(z)$
is a general monic polynomial of degree $d$, we instead have
$$C = \left ( 1 + O\left (|\zeta|^{-\frac{3}{d+3}} \right) \right) d\zeta^3 $$
where the implicit constant depends on $C$ but can be made uniform
if an upper bound is imposed on the coefficients of the polynomial.

Restricting attention to $|\zeta|$ large enough so that
$C$ is nonzero, we find that $C$ has holomorphic cube root of the form
\begin{equation}
\label{eqn:cube-root}
\sqrt[3]{C} = \left( 1 + O\left (|\zeta|^{-\frac{3}{d+3}} \right) \right)d\zeta
\end{equation}

Fix a small $\epsilon > 0$.  For any $s \gg 0$, consider the region
$$ \Omega_{\epsilon,s} = \left \{ \zeta \suchthat \arg(\zeta - s) \in
\left (-\tfrac{\pi}{2}-\epsilon, \tfrac{\pi}{2} +\epsilon \right)  \right\} $$
which is a ``slightly enlarged $C$-right half-plane'' with $s$ on its
boundary. 

This domain has the following properties:
\begin{itemize}
\item It is nearly convex, i.e.~any pair of points $x,y \in
\Omega_{\epsilon,s}$ are joined by a path $\gamma \subset
\Omega_{\epsilon,s}$ of length $|\gamma| \leq L|x-y|$, for some $L =
L(\epsilon) > 1$, and 
\item The real part of $\zeta$ approaches $-\infty$ at a linear rate
on the boundary, i.e.~if $\zeta \in \partial \Omega_{\epsilon,s}$ and
$|\zeta|$ is large enough, then $-\Re(\zeta) \geq c |\zeta|$ for some
$c>0$.
\item It is far from the origin, i.e.~the minimum of $|\zeta|$ on
$\Omega_{\epsilon,s}$ is $s \cos(\epsilon) \gg 0$.
\end{itemize}
Since $\tfrac{d}{d+3} < 1$, the estimate \eqref{eqn:cube-root} and the
last property above show that for any $\delta > 0$ we can choose $s$
large enough so that
\begin{equation}
\label{eqn:ratio-almost-one}
 \left | \frac{\sqrt[3]{C}}{d \zeta} - 1 \right | < \delta
 \end{equation}
throughout $\Omega_{\epsilon,s}$.

Now integrate $\sqrt[3]{C}$ as in \eqref{eqn:developing-map} to get a
natural coordinate $w$ for $C$.  The estimate above shows that $w$ is
approximately a constant multiple of $\zeta$.  For example, it follows
easily from this bound and the near-convexity of $\Omega_{\epsilon,s}$
that $w$ is injective on $\Omega_{\epsilon,s}$ as long as
$L \delta < 1$.  Fix $s$ large enough so that this holds.

The linear growth of $-\Re(\zeta)$ on $\partial \Omega_{\epsilon,s}$
and the sublinear bound on $(\sqrt[3]{C} / d \zeta - 1)$ from \eqref{eqn:cube-root}
also show that
that $\Re(w)$ is bounded from above on $\partial
w(\Omega_{\epsilon,s})$, and therefore that $w(\Omega_{\epsilon,s})$
contains a half-plane $\{ \Re(w) > t \}$.  Let $U_k^{(t)}$ denote the
region in the $z$-plane corresponding to $\{ \Re(w) > t \}$, and let
$w_k = w - t$ be the adjusted natural coordinate making
$(U_k^{(t)},w_k)$ into a $C$-right-half-plane.

Applying \eqref{eqn:cube-root} again we can estimate the shape of
$U_k^{(t)}$ in the $\zeta$ coordinate: It is a perturbation of a right
half-plane $\{ \Re(\zeta) > c \}$ by $o(|\zeta|)$, and thus for any
$\epsilon' > 0$ it contains all but a compact subset of the sector of
angle $\pi - \epsilon'$ centered on $\{ \zeta \in \R^+\}$.  It also
follows that for $t$ large enough, the set $U_k^{(t)}$ is disjoint
from the rays $\arg(\zeta) = \pm 2 \pi/3$ (which lie in the left
half-plane $\{ \Re(\zeta) < 0\}$).  Fixing such $t$, let $U_k =
U_k^{(t)}$.

Using $z = \left ( \frac{d+3}{3} \zeta \right )^{\frac{3}{d+3}}$, we
have corresponding estimates for the shape of $U_k$ in the $z$
coordinate, which show that it is asymptotic to a sector of angle
$\frac{3 \pi}{d+3}$.  More precisely, for any $\delta' >0$ there
exists $R>0$ so that $U_k$ contains the part of a sector of angle $\tfrac{3
\pi}{(d+3)} - \delta'$ outside the $R$-disk, i.e.
\begin{equation}
\label{eqn:z-sector-almost-contained}
\left \{ |z| > R \text{ and } \left |\arg(z) - \tfrac{2 \pi
  k}{d+3}\right| < \tfrac{3
  \pi}{2(d+3)} - \tfrac{\delta'}{2} \right \} \subset U_k.
\end{equation}
Here $R$ depends on $k$ for the moment, and we also note that $U_k$ is disjoint from $\{ \arg(z) = \tfrac{2 \pi (k \pm 1)}{d+3} \}$.
The latter condition means that $U_k$ is contained in the sector
\begin{equation}
\label{eqn:z-sector-enclosing}
U_k \subset \left \{ |\arg(z) - \tfrac{2 \pi k}{d+3}\right| < \tfrac{2 \pi}{(d+3)} \}.
\end{equation}

Repeating the construction above for each $k$ we obtain $(d+3)$ such
$C$-right-half-planes, and by taking a maximum over radii of excluded
balls, we assume \eqref{eqn:z-sector-almost-contained} holds for a
uniform constant $R$.

To complete the proof we must verify (i)--(v).

Property (ii) is an immediate consequence of
\eqref{eqn:z-sector-almost-contained} and (iii) is
immediate from \eqref{eqn:z-sector-enclosing}.  Taking the union of
\eqref{eqn:z-sector-almost-contained} over $0 \leq k \leq (d+2)$ also
shows $\{ |z| > R \} \subset \bigcup_k U_k$, giving (i).

Now we consider the relation between natural coordinates on $U_k \cap
U_{k+1}$.  Since any two natural coordinates are related by an
additive constant and a power of $\omega$, the ratio $dw_{k+1}/dw_k$
is constant.  To establish (iv) we need only show this constant is
equal to $\omega$.  It is immediate from \eqref{eqn:zeta} that the
coordinates $\zeta_k, \zeta_{k+1}$ satisfy $d\zeta_{k+1}/d\zeta_k =
\omega$ on their common domain.  Since the natural coordinate $w_k$
for $U_k$ satisfies $dw_k = (1 + o(|\zeta_k|) ) d\zeta_k$, we find
that $dw_{k+1} / dw_k$ approaches $\omega$ at infinity, and is
therefore equal to $\omega$ everywhere.

Finally, the $C$-right-half-plane $U_k$ is constructed so that the ray $\arg(z)
= 2 \pi k /(d+3)$ of corresponds to $\{ \zeta \in \R^+ \}$,
and integrating \eqref{eqn:cube-root} along this path shows that
it is a $C$-quasi-ray of angle zero.  Similarly, any ray in $\C$
eventually lies in one of the sectors
\eqref{eqn:z-sector-almost-contained}, and therefore in some $U_k$,
where \eqref{eqn:cube-root} shows it is a $C$-quasi-ray.  Thus
(v) follows.
\end{proof}

\subsection{Half-planes for $k$-differentials.}

We now extend and adapt some of the previous discussion of half-planes
to $k$-differentials $\phi = \phi(z) dz^k$, where $\phi(z)$ is a
polynomial of degree $d$.  These results are used in section
\ref{subsec:estimates}.

Define a \emph{$|\phi|$-upper-half-plane} to be a pair $(U,w)$ where
$U \subset \C$ is an open set and $w : U \to \H$ a conformal map to
the upper half-plane $\H$ such that $|\phi| = |dw|^k$ on $U$.  We will
show that every point in $\C$ that is far enough from the zeros of
$\phi$ lies in such a half-plane.

Note that unlike the discussion for cubic differentials above, the
phase of $\phi$ is ignored here; a $|\phi|$-upper-half-plane is also a
$|e^{i \theta} \phi|$-upper-half-plane.  We are also constructing
\emph{upper} half-planes for the absolute value of a $k$-differential,
rather than the \emph{right} half-planes for a cubic differential that we
did previously.  These different conventions are convenient for the
respective applications of the constructions in the main text.

Define $r : \C \to \R^{\geq 0}$ by
$$ r(p) = \mathrm{d}_{|\phi|}(p,\phi^{-1}(0)) $$
where $\mathrm{d}_{|\phi|}(\param,\param)$ denotes the distance function
associated to the singular flat metric $|\phi|^{2/k}$.  Thus $r$ is
the $|\phi|^{2/k}$-distance to the zeros of $\phi$, or equivalently
the maximal radius of a flat disk that embeds in $(\C,|\phi|^{2/k})$
with center at $p$.

\begin{prop}
\label{prop:k-differential-half-planes}
Let $\phi = \phi(z) dz^k$ be a $k$-differential on $\C$ with $\phi(z)$
a monic polynomial of degree $k$.  Let $K$ be a compact set in the
plane containing the zeroes of $\phi$. Then there are constants
$C,c,R_0$ with $c>0$ so that for any point $p\in \C$ with $r(p) >
R_0$, there exists a $|\phi|$-upper-half-plane $(U,w)$ with $U \cap K
= \emptyset$ such that $\im(w(p)) \geq r(p) - C$.  In addition, on the
boundary of this half-plane we have $r(x) \geq c |\Re(w(x))|$, for $x$
large.
\end{prop}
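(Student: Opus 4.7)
The plan is to adapt the proof of Proposition~\ref{prop:standard-half-planes} from cubic to $k$-differentials, treating $\phi$ as a perturbation of the model $\phi_0 = z^d dz^k$ outside a large compact set. For the model, $\zeta = \frac{k}{d+k} z^{(d+k)/k}$ (on a sector with a chosen branch of the $(d+k)/k$-power) is a natural coordinate, with $\phi_0 = d\zeta^k$. For the general monic polynomial $\phi(z)$ of degree $d$, the expansion $\phi^{1/k} = \bigl(1 + O(|\zeta|^{-k/(d+k)})\bigr)\, d\zeta$ at infinity is the direct analogue of \eqref{eqn:cube-root}, so in particular $|\phi^{1/k}/d\zeta - 1|$ can be made arbitrarily small for $|\zeta|$ large.

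First I would use the embedded metric ball $B(p, r(p))$ in $(\C, |\phi|^{2/k})$ to define a local natural coordinate $w$ near $p$, choosing its phase so that a length-minimizing geodesic from $p$ to $\phi^{-1}(0)$ points downward in the $w$-plane; this places $p$ at height $\Im(w(p)) = r(p)$. I would then extend $w$ by analytic continuation outward from $p$. Identifying the sector in the $z$-plane asymptotic to the direction of $p$ and applying the near-convexity and small-perturbation argument of Proposition~\ref{prop:standard-half-planes}---for near-convexity constant $L$ and error bound $\delta$ satisfying $L\delta < 1$, the coordinate $w$ is univalent---shows that the extension is univalent on a slightly enlarged sector and its image contains a shifted half-plane $\{\Im w > t\}$. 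After a real translation, taking $U$ to be the preimage of $\H$ gives the required $|\phi|$-upper-half-plane; by choosing $R_0$ large one has $U \cap K = \emptyset$, while $\Im(w(p)) \geq r(p) - C$ follows from the perturbation estimate, with $C$ absorbing the bounded drift between $\zeta$ and $w$ across the extension.

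For the boundary estimate, $\partial U$ is the $w$-preimage of $\R$, which asymptotes to two Euclidean rays in $\C$ going to infinity (the edges of the sector). Along such a ray $\arg z = \theta_0$ we have $|\phi(z)|^{1/k} \sim |z|^{d/k}$, so integrating gives the $|\phi|^{2/k}$-distance from a point at $|z| = \rho$ on the ray to $K$ bounded below by a constant times $\rho^{(d+k)/k}$. The same expansion yields $|\Re(w(x))| \sim |\zeta(x)| \sim \rho^{(d+k)/k}$ on the boundary. Comparing the two gives $r(x) \geq c\,|\Re(w(x))|$ once $|z(x)|$ is sufficiently large.

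The main technical obstacle will be the univalence of the extended coordinate $w$ on $U$ together with the disjointness $U \cap K = \emptyset$; both follow from the sublinear error bound on $|\phi^{1/k}/d\zeta - 1|$ and the near-convexity argument, exactly as in the cubic case, with only notational substitutions of $k$ for $3$. The constants $C$, $c$, and $R_0$ depend on the coefficients of $\phi$ and on $K$ through the rate at which the perturbation tends to zero.
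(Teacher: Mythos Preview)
Your approach is essentially the paper's: both treat $\phi$ as a perturbation of $z^d\,dz^k$, pass to $\zeta = \tfrac{k}{d+k} z^{(d+k)/k}$ where $\phi^{1/k} = \bigl(1+O(|\zeta|^{-k/(d+k)})\bigr)\,d\zeta$, and use the near-convexity argument of Proposition~\ref{prop:standard-half-planes} to get univalence of the integrated coordinate on an enlarged sector whose image contains a half-plane. The boundary estimate $r(x)\ge c\,|\Re(w(x))|$ is handled in the same way in both.

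The one point where your execution differs is the height estimate $\Im(w(p)) \geq r(p) - C$. You normalize $w$ at $p$ so that the geodesic toward the nearest zero points downward (placing $p$ at height $r(p)$), then extend and absorb a ``bounded drift''. This is correct in spirit, but you leave implicit two things: the phase alignment between your metric-ball normalization and the $\zeta$-sector (the geodesic direction only approximately agrees with the negative $\zeta$-axis, with angular error $O(1/|\zeta(p)|)$), and the uniform bound on the shift needed to land in $\H$. The paper avoids both by integrating $\phi^{1/k}$ from a \emph{fixed} base point $s$ on the $\zeta$-axis (independent of $p$), then rotating by the small angle $\arg(w_0(p))$ so that $w(p)\in i\R^+$. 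The point $p_0$ with $w(p_0)=0$ then has modulus bounded in terms of the coefficients of $\phi$ alone, and the triangle inequality
\[
r(p) \leq d_{|\phi|}(p,p_0) + r(p_0) = \Im(w(p)) + r(p_0)
\]
gives the estimate with $C = \sup\{r(q): |q|\le \text{const}\}$. This makes the uniformity in $p$ transparent without tracking any drift between $\zeta$ and $w$.
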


\begin{proof}
Pulling back by $z \mapsto e^{i \theta} z$ we can reduce to the case
where $p \in \R$, at the cost of replacing the monic polynomial with
one having leading coefficient of unit modulus.  We assume this from
now on.

The basic existence argument is very similar to Proposition
\ref{prop:standard-half-planes}, so we will simply explain what must
be changed.  (The direct translation of that argument will of course give a
$\phi$-right half-plane; at the last step we will rotate by $\tfrac{\pi}{2}$.)  Define
$$ \zeta = \frac{k}{d+k} z^{\frac{d+k}{k}} $$
using the principal branch of the logarithm, so that $z^d dz^k =
(d\zeta)^k$.  Expressing $\phi$ in this coordinate and estimating as
in the proof of Proposition
\ref{prop:standard-half-planes} we find
$$ \phi^{\frac{1}{k}} = e^{i \eta} \left ( 1 + O\left (|\zeta|^{-\frac{k}{d+k}}
\right ) \right ) d\zeta, $$ for some $\eta \in \R$.  Now fix a small positive constant
$\epsilon$.  For any $s>0$ define
$$ \Omega_{s,\epsilon} = \left \{ \zeta \: | \: \arg(\zeta-s) \in \left (
-\tfrac{\pi}{2}-\epsilon, \tfrac{\pi}{2}+\epsilon \right ) \right
\}.$$ For $s$ large enough this region is disjoint from $K$
(hence it contains no zeros of $\phi$) and as before we find that
integration of $\phi^{\frac{1}{k}}$ gives a conformal mapping of $\Omega_{s,\epsilon}$.  More
precisely, defining
$$ w_0(\zeta) = \int_s^\zeta e^{-i \eta} \phi^{\frac{1}{k}} $$
we have $|dw_0|^k = |\phi|$, $w_0(s) = 0$, and the map $w_0$ is a small perturbation of a
translation, i.e.
\begin{equation}
\label{eqn:almost-translation}
 w_0(\zeta) = (\zeta - s) + O(|\zeta-s|^{\frac{d}{d+k}})
\end{equation}
We fix such $s$, noting that this constant can be taken to depend only on the
coefficients of the polynomial $\phi(z)$ (and not on the point $p$ under consideration).

The estimate above shows that the boundary of $w_0(\Omega_s)$ is
approximated by the union of two rays $\{ \arg w_0 = \pm
(\tfrac{\pi}{2} + \epsilon) \}$, with the actual boundary being a
displacement of this by $o(|w_0|)$.  In particular $\Re(w_0) \to
-\infty$ linearly along this boundary curve, and the same holds
for a sufficiently small rotation of this region about the origin, e.g.~the
set $e^{i \theta} w_0(\Omega_s)$ for $\theta < \frac{\epsilon}{2}$.
We conclude that $e^{i \theta} w_0(\Omega_s)$ contains a right
half-plane $\{ \Re(w_0) > t \}$ for a constant $t$ depending only on
the coefficients of $\phi(z)$, and for all sufficiently small
$\theta$.

Estimate \eqref{eqn:almost-translation} also shows that $\arg(w_0(p))$
is small for large $p \in \R^+$.  Assuming $p$ is large enough so that
$\arg w_0(p) < \frac{\epsilon}{2}$, and defining $w_1 = e^{-i \arg
  w_0(p)} w_0$ we have $w_1(p) \in \R^+$ and
$w_1(\Omega_{s,\epsilon})$ contains $\{ \Re(w_1) > t \}$.  Finally,
taking $w = i(w_1 - t)$ we get a conformal map onto the upper
half-plane $\H$ taking $p$ to a point on $i \R$.

We claim $(U, w)$ is the desired $|\phi|$-upper-half-plane, where $U$
is the region in the $z$-plane corresponding to $w^{-1}(\H) \subset
\Omega_{s,\epsilon}$.  First, we have constructed this set under the
assumption that $|p|$ is larger than some constant depending on
the coefficients of
$\phi(z)$; since the function $r$ is continuous, we can choose $R_0$ so
that $r(p) > R_0$ implies that $|p|$ is sufficiently large.  We
have $|dw| = |dw_0|$ and thus $|dw|^k = |\phi|$ and this region is
a $|\phi|$-upper-half-plane.  The function $r$ grows linearly on the
boundary of $U$ because the boundary of $w(\Omega_{s,\epsilon})$
approximates (with sublinear error) a ray whose argument differs from
that of the boundary of $U$ by at least $\frac{\epsilon}{2}$; this
ensures a zero-free disk centered at each boundary point of $U$ with
$|\phi|$-radius growing 
linearly with $|w|$, giving the desired
constant $c$.

Finally, we must consider the relation between $\Im(w(p))$ and $r(p)$.
Let $p_0$ denote the point that corresponds to the origin in the
$w$-plane.  The segment on $i \R$ in the $w$-plane from $p$ to $p_0$
is a $|\phi|$-geodesic of length $\Im(w(p))$, hence
$r(p) \leq \Im(w(p)) + r(p_0)$.  But from the definition of the map
$w$ we see that, in the $z$-plane, the point $p_0$ has modulus bounded
in terms of the constants $s$ and $t$ chosen above, which in turn
depend only on the coefficients of $\phi$.  Thus $r(p_0)$ is bounded
by the supremum of $r$ on a fixed closed disk in the $z$-plane, and
taking $C$ to be this supremum we conclude $\Im(w(p)) \geq r(p) - C$.
\end{proof}

\section{ODE asymptotics}
\label{appendix:ode}

In this section we collect some results on asymptotics of solutions to
initial value problems for ODE that are used in Section
\ref{sec:polynomials-to-polygons}.  These techniques and results are
certainly well-known; our goal here is simply to collect precise
statements and corresponding references to standard texts.

We consider the equation
\begin{equation}
\label{eqn:ode}
F'(t) = F(t) A(t)
\end{equation}
on intervals $J \subset \R$, where the coefficient $A : J \to \gl_n\R$
is a continuous function and the solution is a matrix-valued
function $F : J \to \GL_n\R$.  This equation is equivalent
to the statement that $A(t) dt = F(t)^{-1} dF(t)$ is the pullback of
the Maurer-Cartan form on $\GL_n\R$ by the map $F$.

In the small coefficient case ($A$ near zero), one expects the solution to
\eqref{eqn:ode} to be approximately constant.  To quantify this, fix a
norm $\| \param \|$ on the space of $n \times n$ matrices.
Considering bounded and unbounded intervals separately, we have:

\begin{lem}\mbox{}
\label{lem:ode-small-coef}
\begin{rmenumerate}
\item There exist $C, \delta_0 > 0$ such that if $\|A(t)\| < \delta < \delta_0$ for
all $t \in [a,b]$, then the solution $F$ of \eqref{eqn:ode} with $F(a)
= I$ satisfies $|F(t) - I| < C \delta$ for all $t \in [a,b]$.  The
constants $C$ and $\delta_0$ can be taken to depend only on an upper
bound for $|b-a|$.

\item If $\displaystyle \int_a^\infty \| A(t)\| dt < \infty$ then any solution of
\eqref{eqn:ode} on $[a,\infty)$ satisfies $F(t) \to F_0$ as $t
\to \infty$, for some $F_0 \in \GL_n\R$. 
\end{rmenumerate}
\noproof
\end{lem}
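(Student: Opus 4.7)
The plan is to derive both parts from the integrated form of the equation together with Gr\"onwall's inequality.

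For part (i), I will rewrite \eqref{eqn:ode} as the integral equation
$$F(t) = I + \int_a^t F(s) A(s)\, ds.$$
Taking norms and applying Gr\"onwall's inequality to $\|F(t)\|$ on $[a,b]$ yields the a priori bound
$$\|F(t)\| \leq \exp\!\left(\int_a^t \|A(s)\|\, ds\right) \leq e^{\delta(b-a)}.$$
Substituting this bound back into the integral representation gives
$$\|F(t) - I\| \leq \int_a^t \|F(s)\|\,\|A(s)\|\, ds \leq \delta (b-a)\, e^{\delta(b-a)}.$$
With $L$ an upper bound for $|b-a|$, choosing any $\delta_0$ (say $\delta_0 = 1$) and setting $C = L e^{\delta_0 L}$ suffices. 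This step is routine; the only subtlety is verifying that $C$ depends only on $L$ and not on the particular interval, which is immediate from the final inequality.

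For part (ii), I will first produce a global bound on $\|F(t)\|$ for $t \in [a,\infty)$. By the same Gr\"onwall estimate as above,
$$\|F(t)\| \leq \|F(a)\| \exp\!\left(\int_a^\infty \|A(s)\|\, ds\right) =: M < \infty,$$
where finiteness uses the integrability hypothesis. Then for $a \leq s < t$,
$$\|F(t) - F(s)\| = \left\| \int_s^t F(\tau) A(\tau)\, d\tau \right\| \leq M \int_s^t \|A(\tau)\|\, d\tau,$$
and the right-hand side tends to $0$ as $s,t \to \infty$ because $\|A(\cdot)\| \in L^1([a,\infty))$. Hence $\{F(t)\}$ is Cauchy in $\gl_n\R$ and converges to some matrix $F_0$.

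The main (and only slightly nontrivial) step is to verify that the limit $F_0$ lies in $\GL_n\R$ rather than merely $\gl_n\R$. For this I will use Liouville's formula
$$\det F(t) = \det F(a) \cdot \exp\!\left(\int_a^t \tr A(\tau)\, d\tau\right).$$
Since $|\tr A(\tau)| \leq n \|A(\tau)\|$, the integral in the exponent converges absolutely as $t \to \infty$, so $\det F(t)$ tends to a nonzero limit, and hence $F_0 \in \GL_n\R$.
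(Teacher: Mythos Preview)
Your proof is correct. The paper does not actually prove this lemma: it simply cites Hartman's textbook, noting that part (i) follows from \cite[Lemma IV.4.1]{hartman:ode} applied to the equation satisfied by $F(t)-I$, and part (ii) from \cite[Theorem X.1.1]{hartman:ode}. Your direct Gr\"onwall argument is the standard elementary route underlying those references, and your use of Liouville's formula to verify $F_0 \in \GL_n\R$ (a point the paper leaves entirely to the citation) is the natural way to close that gap. So your approach is more self-contained than the paper's, while covering exactly the same content.
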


While we have stated these results only for the $\GL_n \R$ case, they
are standard facts about linear ODE that can be found, for example, in
\cite{hartman:ode}: Part (i) is an application of \cite[Lemma
IV.4.1]{hartman:ode} to the equation satisfied by $F(t) - I$, while
(ii) follows from \cite[Theorem X.1.1]{hartman:ode}.

Next we consider the case when the coefficient $A(t)$ is not
pointwise bounded, but instead is nearly concentrated in a
1-dimensional subspace of $\gl_n\R$ and has bounded mass.

\begin{lem}
\label{lem:ode-nearly-abelian}
There exist $M, C, \delta_1 > 0$ with the following property: Let
$A(t) = s(t) \cdot X + B(t)$ where $X \in \gl_n \R$ and $s : [a,b] \to
\R$ and $B : [a,b] \to \gl_n \R$ are continuous functions.  If $\int_a^b
|s(t)| \: dt < M$ and $\|B(t)\| < \delta < \delta_1$ for all $t \in
[a,b]$, then the solution of \eqref{eqn:ode} with $F(a) = I$
satisfies
$$\left \| F(t) - \exp \left ( \left
(\int_a^t s(t)dt \right ) \cdot X \right ) \right \| \leq C \delta,$$
for all $t \in [a,b]$.
\end{lem}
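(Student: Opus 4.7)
The plan is to compare $F$ directly with the abelian model
$$G(t) \;=\; \exp\!\bigl(S(t)\, X\bigr), \qquad S(t) := \int_a^t s(\tau)\,d\tau.$$
Since $X$ commutes with all of its own powers, $G$ is a genuine $\GL_n\R$-valued solution of equation~\eqref{eqn:ode} with the coefficient $A$ replaced by $s(t)X$; that is, $G'(t) = G(t)\cdot s(t)X$. The hypothesis $\int_a^b |s|\,d\tau < M$ combined with the standard estimate $\|\exp(Y)\| \leq \exp(\|Y\|)$ yields the uniform bound
$$\|G(t)\|,\ \|G(t)^{-1}\| \;\leq\; K := \exp(M\|X\|)$$
for all $t\in[a,b]$, so $G$ is a well-behaved comparison object for $F$.

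Next I would introduce the error matrix $E(t) := F(t) G(t)^{-1}$, with $E(a) = I$, and compute its right logarithmic derivative. Using $G^{-1}G' = s(t)X$ and $F^{-1}F' = s(t)X + B(t)$, a direct calculation gives
$$E'(t) \;=\; F(t)\bigl(A(t) - s(t)X\bigr) G(t)^{-1} \;=\; E(t)\cdot \widetilde B(t),$$
where $\widetilde B(t) := G(t) B(t) G(t)^{-1}$. The pointwise bound on $B$ together with the bound on $G^{\pm 1}$ gives $\|\widetilde B(t)\| \leq K^2 \delta$ throughout $[a,b]$, so the ODE satisfied by $E$ has uniformly small coefficient.

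Applying Lemma~\ref{lem:ode-small-coef}(i) to this ODE and choosing $\delta_1$ so that $K^2 \delta_1 < \delta_0$, I obtain $\|E(t) - I\| \leq C_1 \delta$ for all $t \in [a,b]$, with a constant $C_1$ depending on $M$, $\|X\|$, and the interval length. Writing $F(t) - G(t) = (E(t) - I) G(t)$ and invoking $\|G(t)\| \leq K$ then yields $\|F(t) - G(t)\| \leq C_1 K \delta =: C\delta$, which is precisely the desired estimate. The main subtlety, rather than any single hard step, is bookkeeping: Lemma~\ref{lem:ode-small-coef}(i) only produces a uniform $C_1$ once an upper bound for $|b-a|$ is fixed (otherwise the Gronwall integration behind that lemma degenerates and $\|E-I\|$ need not be $O(\delta)$). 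The lemma under consideration should therefore be read with $M$, $C$, and $\delta_1$ depending implicitly on $\|X\|$ and on an upper bound for $|b-a|$, both of which are harmless in the application to Lemma~\ref{lem:unipotent-factors}, where $[a,b] = [0,\pi/3]$ and $X = E_{13}$.
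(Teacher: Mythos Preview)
Your proof is correct and follows essentially the same approach as the paper: define the abelian model $G(t)=\exp\bigl(S(t)X\bigr)$, set $H(t)=F(t)G(t)^{-1}$ (your $E$), observe that $H$ satisfies \eqref{eqn:ode} with the conjugated coefficient $G B G^{-1}$, and apply Lemma~\ref{lem:ode-small-coef}(i). Your added remark about the implicit dependence of the constants on $\|X\|$ and on an upper bound for $|b-a|$ is accurate and worth noting; the paper leaves this tacit.
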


\begin{proof}
Define $G(t) = \exp \left ( \left ( \int_a^t s(t) \: dt \right) \cdot
X \right )$.  This function satisfies $G'(t) =
G(t) s(t) X$ and $G(a) = I$.  The integral bound on
$s(t)$ and continuity of the exponential map give a uniform upper bound on
$\| G(t) \|$ in terms of $M$ and $\|X\|$.

Let $H(t) = F(t) G(t)^{-1} $.  Then we have 
$$H'(t) =  H(t) \left( G(t)
B(t) G(t)^{-1} \right ).$$  Choosing $\delta_1$ small enough and using
the uniform bound on $\|G(t)\|$ we can assume that $\| G(t)B(t)
G(t)^{-1}  \| < C' \delta < \delta_0$ for some $C'$, where $\delta_0$ is the
constant from Lemma \ref{lem:ode-small-coef}.  Applying part (1) of
that lemma to the equation above we obtain
$$ \|F(t) G(t)^{-1} - I\| = \|H(t) - I\| < C'' \delta.$$
Since $\|G(t)\|$ is bounded this gives $\| F(t) - G(t)\| < C \delta$
as desired.
\end{proof}

\nocite{}

\vspace{1.5em}

\noindent Department of Mathematics, Statistics, and Computer Science\\
University of Illinois at Chicago\\
\texttt{ddumas@math.uic.edu}\\

\medskip

\noindent Department of Mathematics\\
Rice University\\
\texttt{mwolf@rice.edu}\vspace{-0.2em}

\end{document}